\newcommand\widecheck[1]{%
\savestack{\tmpbox}{\stretchto{%
  \scaleto{%
    \scalerel*[\widthof{\ensuremath{#1}}]{\kern-.6pt\bigwedge\kern-.6pt}%
    {\rule[-\textheight/2]{1ex}{\textheight}}
  }{\textheight}%
}{0.5ex}}%
\stackon[1pt]{#1}{\scalebox{-1}{\tmpbox}}%
}
\newcommand\W[1]{\widecheck{#1}}%
\DeclareMathOperator{\cov}{cov}
\DeclareMathOperator{\argmin}{argmin}
\renewcommand{\mod}{{~\sf mod~}}
\def \1{\mathds{1}}
\def \Conj{{\sf Conj}}
\def \hol{{\sf Hol}}
\def \Z{\mathbb{Z}}
\def \Rn#1#2{R_#1[#2]}
\def \DR#1#2{{\bf R}_{#1}[#2]}
\def \Ref{{\sf Feuilletage}}
\def \Qset#1{\mathbb{ Q}_{#1}^{\bullet,\rightarrow}}
\def \Qrec#1#2#3{ Q_{#1}^{(#2, #3)}}
\def \Tset { \mathbb{T}}
\def \Tal#1 {{ \bf T}_#1}
\def \LT#1{ \mathbb{ LT}_{#1}}
\def \t {{T}}
\def \tt#1#2{T_{#1}^{(#2)}}
\def \ttau{{\tau}}
\def \bt{{\bf t}} 
\def \btj #1{{\bf t}^{(#1)}}
\def \btnj #1#2{{\bf T}_{#1}^{(#2)}}
\def \BS{{\bf BS}}
\def \bs{{\bf bs}}
\def \RS{\overrightarrow{\sf Snakes}}
\def \PS{{\sf Snakes}^{\bullet}}
\def \RBS{
{\BS}}
\def \rbs{
{\bs}}
\def \pbs{\bs^{\bullet}}
\def \RR#1{{\bf r}[{#1}]}
\def \Rc{{\bf r}}
\newcommand{\Dyck}{{\sf Dyck}}
\newcommand{\permutation}{{permutation }}
\newcommand{\E}{\mathbb{E}}
\newcommand{\cM}{\mathcal{M}}
\newcommand{\fC}{f_0}
\newcommand\xoutpars[1]{\let\helpcmd\xout\parhelp#1\par\relax\relax}
\newcommand\soutpars[1]{\let\helpcmd\sout\parhelp#1\par\relax\relax}
\long\def\parhelp#1\par#2\relax{%
  \helpcmd{#1}\ifx\relax#2\else\par\parhelp#2\relax\fi%
}
\def \bls{{\tiny $\blacksquare$ }}
\def \R{{\sf R}}
\newcommand{\dVar}{d_{{\sf Var}}}
\newcommand{\cV}{\mathcal{V}}
\newcommand{\cD}{\mathcal{D}}
\newcommand{\by}{{\bf y}}
\newcommand{\D}{D}
\newcommand*\Bell{\ensuremath{\boldsymbol\ell}}
\def \QC{{QC^0[0,1]}}
\def\cro#1{\llbracket#1\rrbracket}
\def \ov#1{\overline{#1}}
\def \H{{\mathcal{H}}}
\def \app#1#2#3#4#5{\begin{array}{rccl} #1:&#2&\longrightarrow&#3\\ &#4&\longmapsto&#5\end{array}}
\def \as{\xrightarrow[n]{(as.)}}
\def \N{\mathbb{N}}
\def \R{\mathbb{R}}
\def \bB{{\bf B}}
\def \bc{{\bf c}}
\def \bh{{\bf h}}
\def \bH{{\bf H}}
\def \bL{{\bf L}}
\def \bQ{{\bf Q}}
\def \bX{{\bf X}}
\def \bY{{\bf Y}}
\def \bT{{\bf T}}
\def \PTC{{\sf PointedTreeClass}}
\def \Bs{{\tiny $\blacksquare$}}
\def \bar{\overline}
\def \Ba{{\bf a}}
\def \BA{{\bf A}}
\def \ba{\begin{align}}
\def \ea{\end{align}}
\def \be{\begin{eqnarray*}}
\def \ee{\end{eqnarray*}}
\def \ben{\begin{eqnarray}}
\def \een{\end{eqnarray}}
\def \bh{{\bf h}}
\def \beq{\begin{equation}}
\def \eq{\end{equation}}
\def \build#1#2#3{\mathrel{\mathop{\kern 0pt#1}\limits_{#2}^{#3}}}
\def \ba{{\bf a}}
\def \by{{\bf y}}
\def \cS{{\cal S}}
\def \dd{\xrightarrow[n]{(d)}}
\def \dis{\displaystyle}
\def \equi{\Leftrightarrow}
\def \eref#1{(\ref{#1})}
\def \P{\mathbb{P}}
\def \bC{{\bf C}}
\def \imp{\Rightarrow}
\def \l{\left}
\def \r{\right}
\def \sous#1#2{\mathrel{\mathop{\kern 0pt#1}\limits_{#2}}}
\def \sur#1#2{\mathrel{\mathop{\kern 0pt#1}\limits^{#2}}}
\def \eqd{\sur{=}{(d)}}
\def \w#1{\widetilde{#1}}
\def \bbT{{\mathbb{T}}}
\def \bbH{{\mathbb{H}}}
\def \bbL{{\mathbb{L}}}
\newcommand{\se}{{\sf e}}
\newcommand{\compact}{ \topsep0pt   \itemsep=0pt   \partopsep=0pt   \parsep=0pt}
\newcounter{c}
\def \bir{\begin{itemize}\compact \setcounter{c}{0}}
\def \itr{\addtocounter{c}{1}\item[($\roman{c}$)]}
\def \eir{\end{itemize}\vspace{-2em}~}
\newcounter{d}
\def \bia{\begin{itemize}\compact \setcounter{d}{0}}
\def \eia{\end{itemize}\vspace{-2em}~}
\newcounter{b}
\def \bi{\begin{itemize}\compact \setcounter{b}{0}}
\def \ei{\end{itemize}\vspace{-2em}~}
\def \bis{\begin{itemize}\compact }
\def \its{\item[{\bls}]}
\def \eis{\end{itemize}\vspace{-2em}~}
\def \bpar#1{\left\{\begin{array}{#1} }
\def \epar { \end{array}\right.}
\newtheorem{lem}{Lemma}[section]
\newtheorem{defi}[lem]{Definition}
\newtheorem{pro}[lem]{Proposition}
\newtheorem{theo}[lem]{Theorem}
\newtheorem*{theo*}{Theorem}
\newtheorem{rem}[lem]{Remark}
\newtheorem{OQ}{Open question}
\begin{document}

  \title{Iterated foldings of discrete spaces and their limits: candidates for the role of Brownian map in higher dimensions}
\author[$\dagger$]{\bf Luca Lionni}
\author[$\ddag$]{\bf Jean-François Marckert}
\affil[$\dagger$]{CNRS UMR 8243 - IRIF,  Universit\'e Paris Diderot - Paris 7
B\^atiment Sophie Germain, 75205 Paris Cedex 13, France.}
\affil[$\ddag$]{Univ. Bordeaux, CNRS, UMR 5800 - LaBRI, Bordeaux INP, 351 cours de la Libération, 33405 Talence Cedex-France}  


\date{}
\maketitle

 \begin{abstract} In this last decade, an important stochastic model emerged: the Brownian map. It is the limit of various models of random combinatorial maps after rescaling: it is a random metric space with Hausdorff dimension 4, almost surely homeomorphic to the 2-sphere, and possesses some deep connections with 
   Liouville quantum gravity in 2D.
In this paper, we present a sequence of random objects that we call $\D$th-random feuilletages (denoted by $\RR{\D}$), indexed by a parameter $\D\geq 0$ and which are candidate to play the role of the Brownian map in dimension $D$.  The construction relies on some objects that we name iterated Brownian snakes, which are branching analogues of iterated Brownian motions, and which are moreover limits of iterated discrete snakes. In the planar $D=2$ case, the family of discrete snakes considered coincides with some family of (random) labeled trees known to encode planar quadrangulations.
   
   Iterating snakes provides a sequence of random trees $(\bt^{(j)}, j\geq 1)$. The $D$th-random feuilletage $\RR{D}$ is built using $(\bt^{(1)},\cdots,\bt^{(D)})$: $\RR{0}$ is a deterministic circle, $\RR{1}$ is Aldous' continuum random tree, $\RR{2}$ is the Brownian map, and somehow, $\RR{D}$ is obtained by quotienting $\bt^{(D)}$ by $\RR{D-1}$.

  A discrete counterpart to $\RR{\D}$ is introduced and called the $\D$th random discrete feuilletage with $n+\D$ nodes ($\DR{n}{\D}$). The proof of the convergence of $\DR{n}{\D}$ to $\RR{\D}$ after appropriate rescaling in some functional space is provided (however, the convergence obtained is too weak to imply the Gromov-Hausdorff convergence). An upper bound on the diameter of $\DR{n}{\D}$ is $n^{1/2^{\D}}$. Some elements allowing to conjecture that the Hausdorff dimension of $\RR{\D}$ is $2^\D$ are given.
\end{abstract}

{\small \paragraph{Acknowledgements:}
 This works has been partially supported by ANR GRAAL (ANR-14-CE25-0014). This project has received funding from the European Research Council 
(ERC) under the European Union’s Horizon 2020 research and innovation 
programme (grant agreement No.~ERC-2016-STG 716083, "CombiTop").
 LL thanks Dario Benedetti  and Valentin Bonzom for useful discussions on the physical motivations.
}
\small
\tableofcontents
\normalsize
\section{Introduction}
\label{sec:intro}

\subsection{Presentation of the main objects}
\label{sec:PMO}

The question at the origin of this paper is the following: are there any random continuous objects likely to play the role of the ``Brownian map'' in higher dimensions?

The question is probably ill posed since it may not be so clear what the dimension of the Brownian map is: on the one hand it is indeed a.s.~homeomorphic to the 2-sphere (Le Gall \& Paulin \cite{LGP}), and on the other hand, by Le Gall \cite[Theo. 6.1]{LG7}, it has Hausdorff dimension 4... and due to its huge fluctuations, it is likely not possible\footnote{On $\R^\D$, the largest tuple of points $(x_1,\cdots,x_k)$ such that $i\neq j\imp d(x_i,x_j)=1$ is bounded by $(\D+1)$ when it is not the case on maps.}
to embed it isometrically in $(\R^\D,\|.\|_2)$ for any finite $\D$.
\par

Fixing the topological dimension as the base of our considerations, we are not aware of any family of combinatorial objects that would be the right candidates to play the role, in dimension $\D>2$, of the combinatorial maps which provide, in the $\D=2$ case, the Brownian map as a scaling limit. To our knowledge, the previous attempts, in a theoretical-physics context, either led to Aldous' continuous random tree,  to the Brownian map\footnote{This is very likely the case for certain simple models of random $4$-dimensional triangulations when using the distance on the dual graph \cite{Enhanced, Lionni:17}, and is conjectured for another model of random $3$-dimensional triangulations using the distance in the triangulation.}, or to a crumpled phase with ``infinite Hausdorff dimension"\footnote{\label{foot:note-0}More precisely, this last case corresponds to families of random $\D$-dimensional triangulations whose diameter is a.s.~bounded when the number of simplices goes to infinity, for which a scaling limit cannot be defined.}  \cite{Ambjorn1992, Thorleifsson1999, Gurau2014}.
We propose the construction, for every integer $\D\geq 1$, of a continuous random object which we call the $\D$th random feuilletage $(\RR{\D})$. By construction, $\RR{\D}$ will appear as the limit (for a topology discussed further) of a discrete analogue, which we call $\D$th random discrete feuilletage  $\DR n\D$, when a size parameter $n$ goes to $+\infty$. The latter can be viewed as obtained by $\D-2$  series of foldings of a random discrete surface.
The sequence $(\RR{\D},\D\geq 1)$ is encoded and built thanks to another new sequence of objects $(\rbs[\D],\D \geq 1)$: the $\D$th Brownian snake $\rbs[\D]$  is  --  {\it mutatis mutandis} -- a branching analogue of the $D$th iterated Brownian motion. The first Brownian snake, $\rbs[1]$, is the usual Brownian snake with lifetime process the normalized Brownian excursion $\se$.
The $D$th Brownian snake appears as the limit of 
a discrete counterpart after some appropriate normalizations, which we call $\D$th random discrete snake $\RBS_n[\D]$   (and $\rbs_n[\D]$ for the normalized version).
 \par
    By construction, $\RR{0}$ can be thought to be a deterministic cycle, $\RR{1}$ and  $\RR{2}$   will respectively be seen to coincide with Aldous' continuum random tree and with the Brownian map, and for $\D\geq 3$, we think that $\RR{\D}$ is a candidate to play the role of the Brownian map ``in dimension $\D$''. 
   \\

  \noindent {\bf What are these objects?} 
  The complete and rigorous construction will take pages, but let us try to provide some insights on this construction. First, some words about the terminology: ``feuilletage'' is a french word.\\ 
    \bls\,  It is the french word for ``foliation'', which is a mathematical term used in differential geometry to denote some equivalence relation on manifolds: depending on the context, the equivalence classes (the leaves) correspond to parts of the initial manifold; they are themselves equivalent to some ``regular'' spaces.\\
    \bls\,  In the ``art français de la patisserie'' (french art of pastry cooking), \it le feuilletage \rm or \it la pâte feuilletée \rm is the name for the puff pastry dough, which is the main ingredient of many sweet or salted pastries, as mille-feuille, galette des rois, bouchée à la reine, pâté lorrain, ... and even croissants (with some adjustments).
    The dough is obtained by placing some butter (6 mm thick say) on half a simple dough composed by flour, water and salt, shaped in a rectangular form (20 cm $\times 40$ cm, thick 5 mm say). Then, the dough is folded to cover the butter, and flattened into its initial shape $R$. It is then folded again, flattened, folded, flattened... Each time, the number of layers of butter is multiplied by 2. The pastry chef stops his/her work when the number of layers $2^n$ is large enough: 128 for example. After cooking, if this difficult recipe is well done, the layers are separated: we get the ``feuilletage''.\\
     The construction we will propose is similar to this feuilletage, of course, up to the ingredients and to the precise gestures of the cook... thus our choice of naming.
\medskip

In the following paragraphs, we often omit some precisions, such as how the different objects involved are rooted for instance. The precise definitions 
of the various objects will be given in Sec.~\ref{seq:IBSIM} for the continuous objects, and Sec.~\ref{sec:ISIMCO} and \ref{sec:GraphSection} for the discrete objects.\\
   \bls\, We start by discussing the content of  Fig.~\ref{fig:col1}, in which a well known and simple bijection is sketched: a planar tree having $n$ edges 
   and then $2n$ corners (say, rooted at a corner 0) can be encoded by a non-crossing partition\footnote{We use encodings of planar trees by non-crossing partitions whose Kreweras complements are matchings (disjoint sets encode all the vertices of the tree). For more details, see the end of Sec.~\ref{sec:maps-as-perm}}  on $\{0,\cdots, 2n-1\}$. The integers correspond to the corners of the tree when one turns around starting at 0, and then the partition is a way to present together the different corners of each node. 
   \begin{figure}[h!]
     \centering  \includegraphics[scale=1]{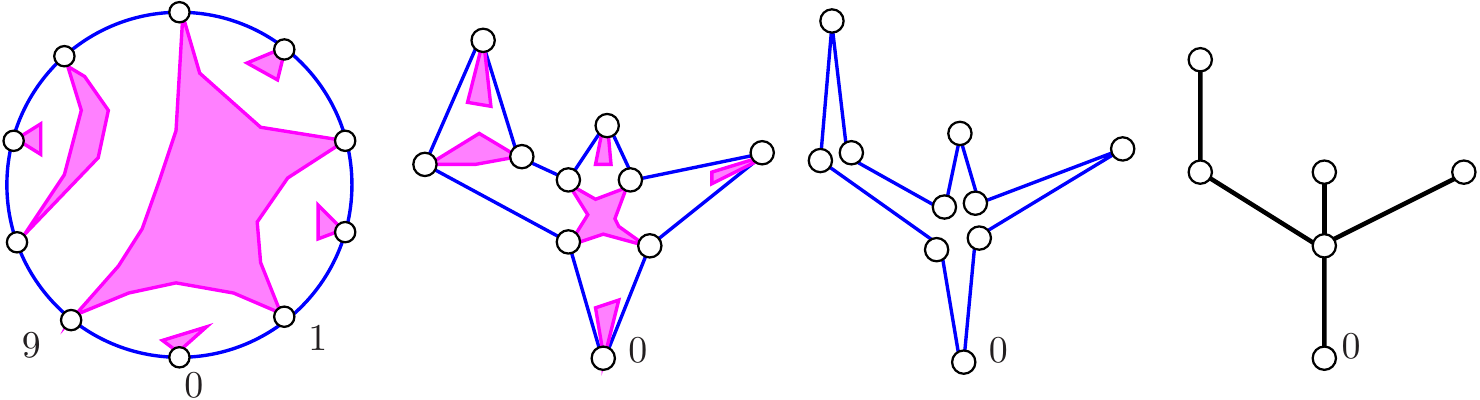}
     \caption{\label{fig:col1}A planar tree with 6 nodes and then 10 corners, seen as a folded circle. The corresponding parts of the non-crossing partitions are $\{0\}, \{1,3,5,9\}, \{2\},\{6,8\},\{7\}$.}
   \end{figure} As may be seen in  Fig.~\ref{fig:col1}, this bijection can be used to present a tree as a circle folded multiple times: these foldings are  encoded by the non-crossing partition on a finite subset of the (continuous)  circle (or on the discrete circle with $2n$ points). Through this bijection, trees and non-crossing partitions are essentially the same combinatorial objects. 
     For the sake of studying asymptotics of trees, one usually prefers to use contour processes instead of non-crossing partitions (Fig.~\ref{fig:col2}), since it allows gaining access to the toolbox of usual linear stochastic processes.
      It is now folklore that the contour process can be glued from below to recover the tree (Fig.~\ref{fig:col2}), so that the two points of view of Fig.~\ref{fig:col1} and Fig.~\ref{fig:col2} are equivalent.
  \begin{figure}[h!]
     \centering  \includegraphics[scale=0.9]{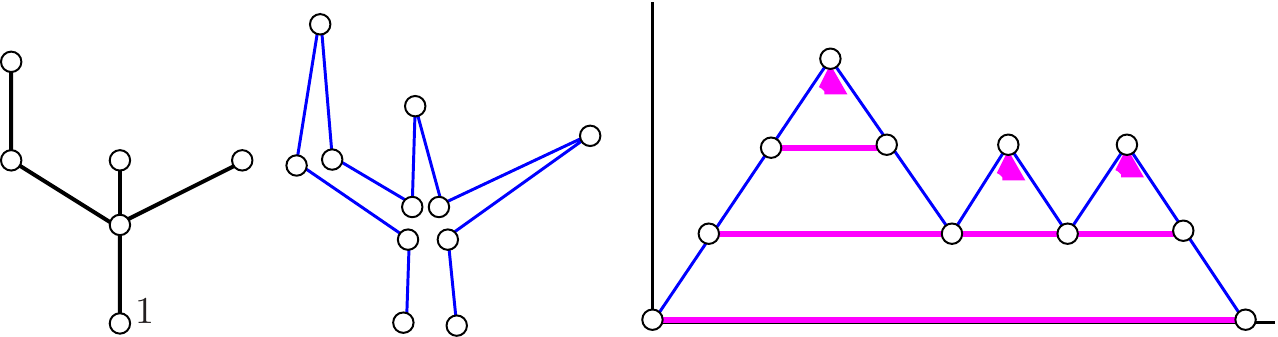}
     \caption{\label{fig:col2}A planar tree and its contour process.}
   \end{figure}\\
     \bls\,  Let us now focus on  Fig.~\ref{fig:col3}, which is the analogue of Fig.~\ref{fig:col1}, when the initial object is a tree instead of a circle. In this case, a tree is folded multiple times according to a non-crossing partition on its corners (for a corner numeration obtained by turning around it).
   \begin{figure}[h!] \centering  \includegraphics[scale=1]{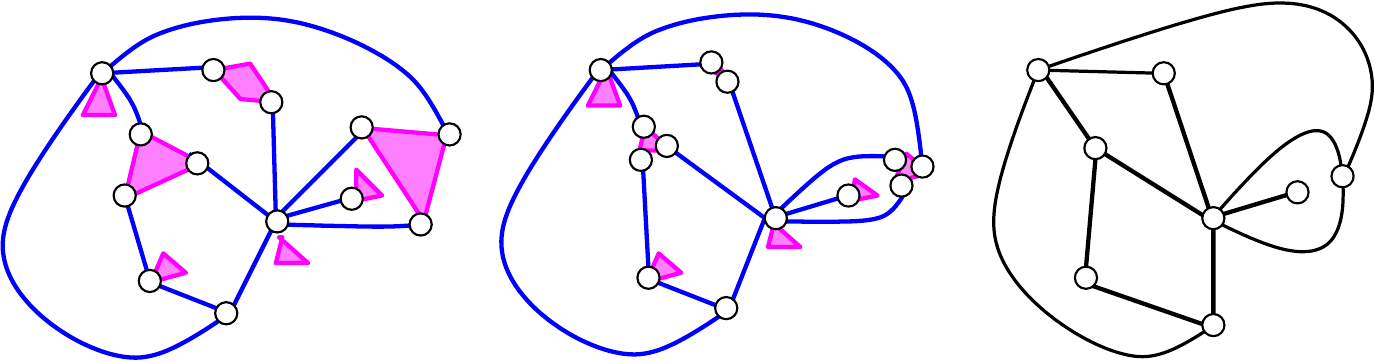}
     \caption{\label{fig:col3}A planar tree equipped with a non-crossing partition on (some corners) of its vertices. The identification of the corners of the tree belonging to the same part allows constructing a map.}
   \end{figure}
   Again, it is now classical that such a construction can be done on the plane by avoiding edge-crossings: the resulting object is a planar map (see Fig.~\ref{fig:col3}).  The converse is also true: it is possible to unplug the edges of any planar map in any order, until there are no more cycles, while preserving the connectedness. The resulting map is a tree, and keeping track of the ancient connections can be done thanks to a non-crossing partition on the corners of that tree.
   Hence, proper foldings of a tree, that is, two series of foldings of a circle, allow constructing a map. This point of view is powerful and at the origin of the first definition of the Brownian map  \cite{MM}: the non-crossing partition encodes a tree $\tt n 1$ (right of Fig.~\ref{fig:col4}), and together with the initial tree $\tt n2$, we have two objects that in turn can be encoded by linear processes. 
   In Fig.~\ref{fig:col4}, the black tree is $\tt n 1$, the blue tree $\tt n 2$: the nodes of the tree $\tt n 2$ are ``glued" in the corners of $\tt n1$. 
     \begin{figure}[h!] \centering  \includegraphics[scale=1]{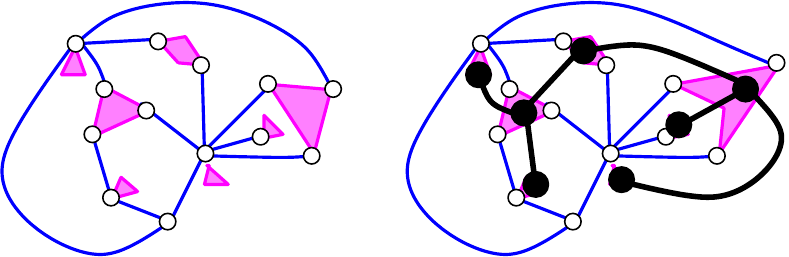}
     \caption{\label{fig:col4}Identification of the non-crossing partition with a tree. }
   \end{figure}
Starting from a uniform planar quadrangulation with $n$ faces, it is possible to construct the random tree
$\btnj n 1$ (the non-crossing partition) as well as the random tree 
$\btnj n 2$ (which contains all the edges of the map) using a bijection which allows controlling the distributions of 
$(\btnj n 1, \btnj n 2)$: 
$\btnj n 1$ is uniform in the set of rooted planar trees with $n$ edges\footnote{More precisely, this is the case when starting from a uniform rooted pointed quadrangulation with $n$ faces.}, 
$\btnj n 2$ has $2n$ edges;  the standard diameter of 
$\btnj n 1$ is $\sqrt{n}$ and that of 
$\btnj n 2$
is $n^{1/4}$ (see \cite{MM} for the representation of quadrangulations with $(\btnj n 1,\btnj n 2)$, construction relying on the Cori-Vauquelin-Schaeffer \cite{CV, Schae} correspondence).  \\ 
   \bls\, We presented a tree and a map respectively   as foldings of a circle and foldings of a tree: this will lead, up to some details (roots, sizes, degree of faces, etc),   to the two first discrete random feuilletages $\DR{n}{1}$ and $\DR{n}{2}$. 
  Moreover, $\DR{n}{0}$ can be considered to be the initial circle $\Z/2n\Z$. To build the sequence $(\DR{n}{\D},\D\geq 3)$, we will fold again and again: the $\D$th object will be constructed by a series of foldings of the $(\D-1)$th one, using an exterior source of randomness which will be a random non-crossing partition of the (nodes) corners of the $(\D-1)$th object. To be more precise, it is worth mentioning that we will rather  use the $(D-1)$th feuilletage to fold a tree, even if at the level of this preliminary presentation, these two ways of doing appear similar. \par
 In order to iterate the construction, a single possibility appears to resist all the requirements. Let us get  a glimpse of a 3-discrete 
 feuilletage $\Rn n 3$  (a deterministic combinatorial object in the support of  $\DR{n}{3}$):  it is obtained by a series of foldings of a planar quadrangulation, using an additional ``non-crossing" partition on its vertices.
 More precisely, the idea is to take three trees $(\tt n 1,\tt n 2,\tt n 3)$, with respectively $n$, $2n$ and $4n$ edges. The tree $\tt n 3$ will contain all the edges of $\Rn n 3$, $\tt n 2$ will encode a non-crossing partition on $\tt n 3$, and $\tt n 1$ a non-crossing partition on $\tt n 2$. Hence, folded by $\tt n 2$, the pair $(\tt n 2,\tt n 3)$ forms a planar quadrangulation 
  $\Qrec n23$,
  and  the additional foldings of the nodes of $\tt n 2$ by the  tree $\tt n 1$ (or the equivalent non-crossing partition) forms  $\Rn n 3$. Notice that the set of nodes of the quadrangulation $\Qrec n23$ coincides with the nodes of $\tt n 2$, so that identifying the nodes of $\tt n 2$ (using $\tt n 1$) provides an important number of additional identifications: the number of nodes in $\Rn n 3$ roughly coincides with those of $\tt n 1$, and then represents half the nodes of $\Qrec n23$. This construction gives us access to the toolbox of stochastic processes, required for considering the asymptotics of these objects. 
\begin{figure}[h!] 
\centering  
\includegraphics[scale=1]{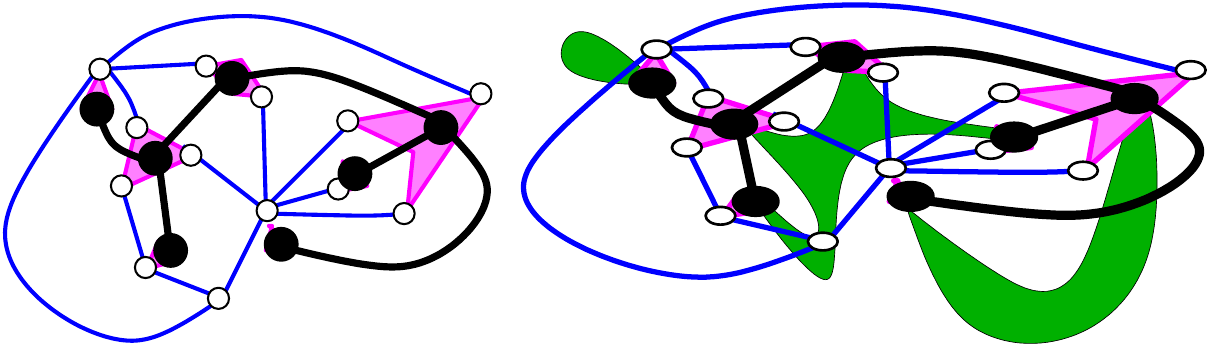}
\caption{\label{fig:col5}A planar map constructed from two trees $\tt n 1$ and $\tt n 2$. On the second picture, an additional non-crossing partition is added on the nodes (corners) of $\tt n 1$. For our construction, the ``green partition'' will be encoded by the tree ${T'}^{(1)}_n$, the pink one, corresponding to the black tree, will be the tree ${T'}^{(2)}_n$, and the blue tree, the one whose edges remain finally, is the tree ${T'}^{(3)}_n$. }
 \end{figure}
Of course, in view of Fig.~\ref{fig:col5}, the obtained structure is not planar, because the foldings of $\Qrec n23$ using $\tt n 1$  
  create a linear number of additional node identifications.\par 
  To produce the subsequent $\Rn n \D$ for $\D\geq 3$, we will just take a sequence of trees $(\tt n 1,\tt n 2,\ldots,\tt n \D)$, where $\tt n i$ will have $2^{i-1} n$ edges, and proceed to the identification of the nodes  of $\tt n \D$: for any $j<\D$, the tree $\tt n j$ is used to identify the corners of $\tt n {j+1}$ (as do non-crossing partitions), producing $\D$ successive series  of foldings of the vertices  of $\tt n {\D}$  starting from the circle. This allows identifying the nodes of the obtained object $\Rn n \D$ as those  of $\tt n 1$ (roughly), and the edges of $\Rn n \D$   
  as those of $\tt n \D$.  \\
  \bls\,  An issue in the construction sketched above, is to define a distribution on the set of objects under investigation for which the main characteristics of interest are tractable. We propose a construction for which the natural scalings\footnote{The asymptotic dependence of their diameters in the number of edges.} of the random trees $\btnj n 1,  \btnj n 2, \ldots, \btnj n \D$ 
  are respectively  $n^{1/2}, n^{1/4},\cdots,n^{1/2^\D}$.  We think that this  iterative construction of trees is interesting on its own: it is somewhat similar to the construction of the  iterated Brownian motions.

  Take a uniform rooted planar tree $\btnj n 1$ with $n$ edges, and use this tree as the underlying tree of a branching random walk with increments uniform in $\{0,1,-1\}$. That is, conditionally on $\btnj n 1=\t$ some fixed planar tree, equip each node  $u$ of $\t$ (different from the root) with a random variable $X_u$ uniform in  $\{0,1,-1\}$. A labeling  of each node $u$ is then obtained by summing the variables on the path from the root to $u$.  
  \begin{figure}[h!]
\centerline{\includegraphics{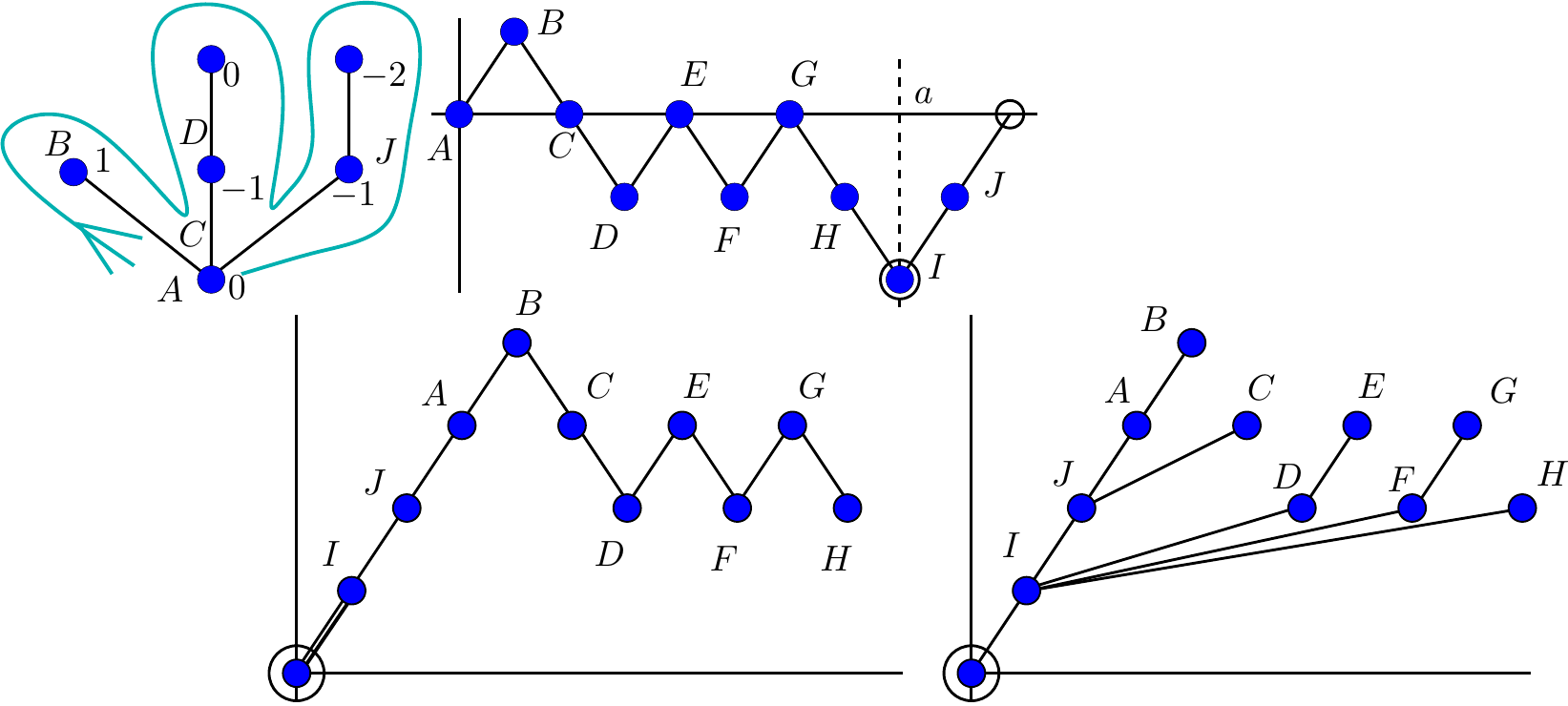}}
\caption{\label{fig:wlt}Definition of a new tree with the labels of a labeled tree. To a labeled tree (upper left) is associated a label sequence (upper right), and the conjugated labeled sequence (bottom left) is the height sequence of a new tree (bottom right).}
\end{figure}
On each branch, the labeling forms a random walk which starts at the label 0 of the root (see Fig.~\ref{fig:wlt}). To construct a second tree $\tt n 2$ from this labeling (see \cite{CV, Schae, MM}), walk around the tree as done in Fig.~\ref{fig:wlt}, and record the successive labels encountered (one per corner); we get the so-called label process $\bL_n^{(1)}$  (second picture in  Fig.~\ref{fig:wlt}).  Register $a= \min \argmin \bL_n^{(1)}$, the first time the label process reaches its minimum (dotted lines on the second picture). Now, on the third picture of  Fig.~\ref{fig:wlt}, a new process starting at 0 is obtained by adding the point $(1,1)$,  and then, by appending the increments after $a$ of $\bL_n^{(1)}$, and then those before $a$ (we perform the so-called conjugation of paths). The process obtained this way is positive on $\cro{1,n}$ and has increments $+1,-1,0$: it is the height process of a random tree $\btnj n 2$. 
Every realization $\tt n 2$ of $\btnj n 2$  is a planar tree in its own right: it can be used as the underlying tree of a {\it  branching random walk}
with the same increment distribution as before. The label process of this branching random walk $\bL_n^{(2)}$, 
 can in turn be used, after conjugation, as the height process of the third tree $\btnj n 3$, which will be used as the underlying tree of a new branching random walk... These iterations allow building successively, $\btnj n 1, \bL_n^{(1)}, \btnj n 2, \bL_n^{(2)}, \btnj n 3, \bL_n^{(3)}, \ldots$ \par 
For such a tree $\tt n 2$ constructed ``on'' a tree $\tt n 1$, since the process used to define the height process of $\tt n 2$ is the sequence of labels of the corners of  $\tt n 1$, and since each node of $\tt n 2$ (but its root) comes from one of the corners of $\tt n 1$ (see Fig.~\ref{fig:wlt}), there is an obvious way to identify the nodes of $\tt n 2$: identify two nodes of $\tt n 2$ if they come from different corners of the same node of $\tt n 1$ (in Fig.~\ref{fig:wlt}, the nodes of each one of the following sets $\{A,C,G\},\{B\},\{D,F\},\{E\},\{H,J\},\{I\}$ will be identified).
Hence, when a sequence of trees $(\tt n 1,\ldots,\tt n \D)$ has been defined as above, it is possible to identify the nodes of $\tt n j$ using the corners of $\tt n {j-1}$ (which then defines a non-crossing partition of the corners of $\tt n j$), as wanted.\\
\bls\, The distribution of each process involved can be described, and the asymptotic distributions characterized. In fact,  the construction is even simpler in the continuous setting, because many combinatorial details disappear in the limit. In a few words: start from $\btj  1$, Aldous' continuum random tree. This tree is then used as the underlying tree of a spatial\footnote{``Spatial" is an adjective that is used to distinguish the different processes into play: prosaically, it is just a usual linear Brownian motion.} Brownian branching process, which amounts to equipping  $\btj1$ with a compatible spatial Brownian motion indexed by the branches of  $\btj1$. 
In the literature, the tree  $\btj1$ equipped with this Brownian labeling is called Brownian snake with lifetime process the normalized Brownian excursion $\se$. The process $\se$ is the contour process of the underlying tree  $\btj1$, and the label process $(\Bell_x^{(1)}, 0\leq x \leq 1)$ in this setting is the process that gives the values of the spatial Brownian motion in accordance with the Brownian excursion. To iterate the construction, it suffices to conjugate the label process in order to get a non-negative process, ${\bh}^{(2)}$, which can be used as the height process of a continuum random tree $\btj2$, which in turn, can be used as the underlying tree of a Brownian snake with label process $\Bell^{(2)}$, and so on. Iteratively, we construct $\btj1,\Bell^{(1)},\bh^{(2)},\btj2,\Bell^{(2)},\bh^{(3)},\btj3,...$.
We believe that these iterated Brownian snakes are interesting on their own. Once these objects are defined, it is possible to use them to define iterated continuous random  feuilletages: for instance $\RR{3}$ is defined as the random tree $\btj3$, whose corners are identified using $\btj2$, whose corners are in turn identified using $\btj1$, in the same way as we proceeded in the discrete setting (see Fig.~\ref{fig:Bij-Sc12}).
\begin{figure}[h!]
\centering  \includegraphics[scale=0.25]{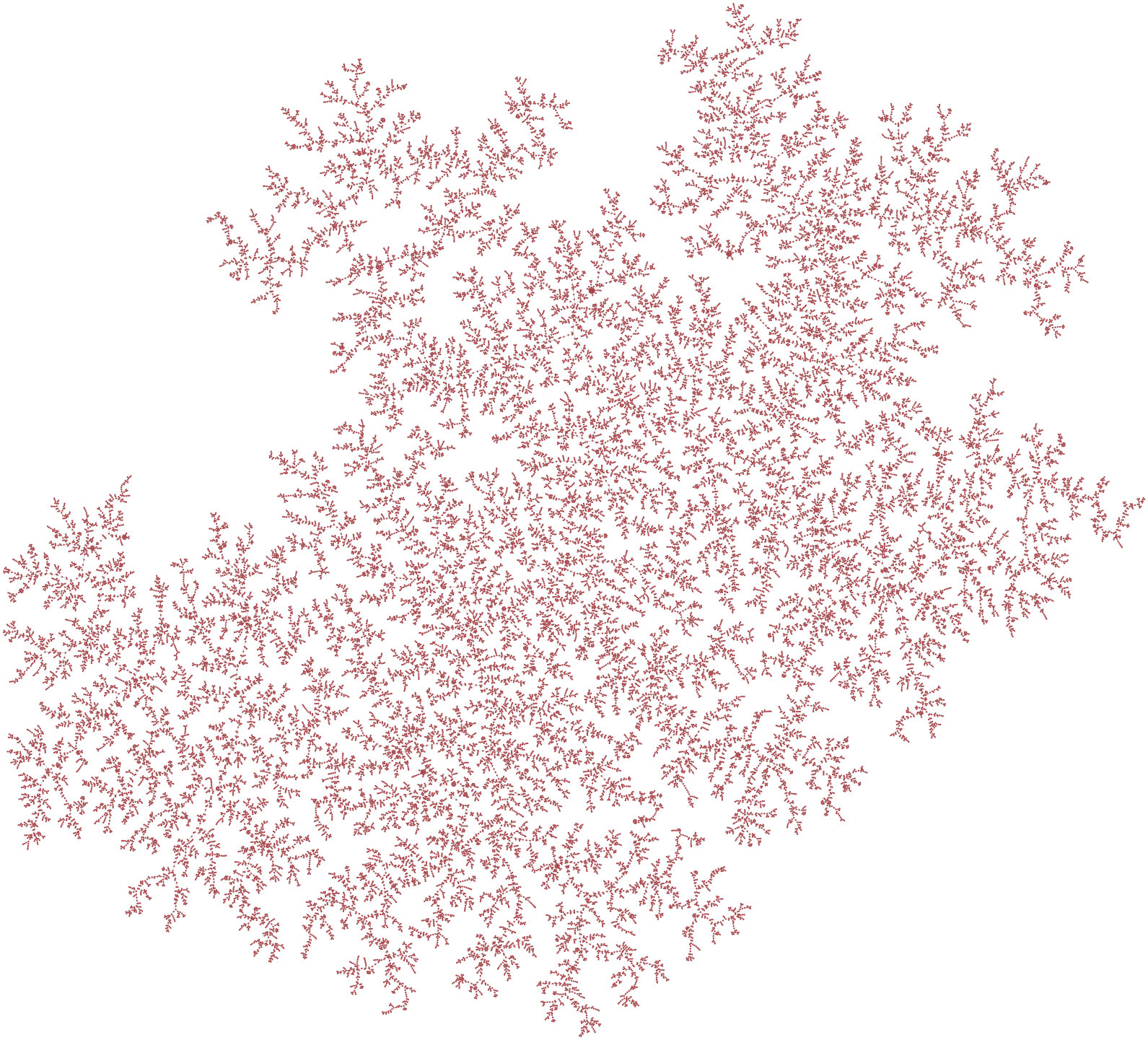}
\qquad
\includegraphics[scale=0.13]{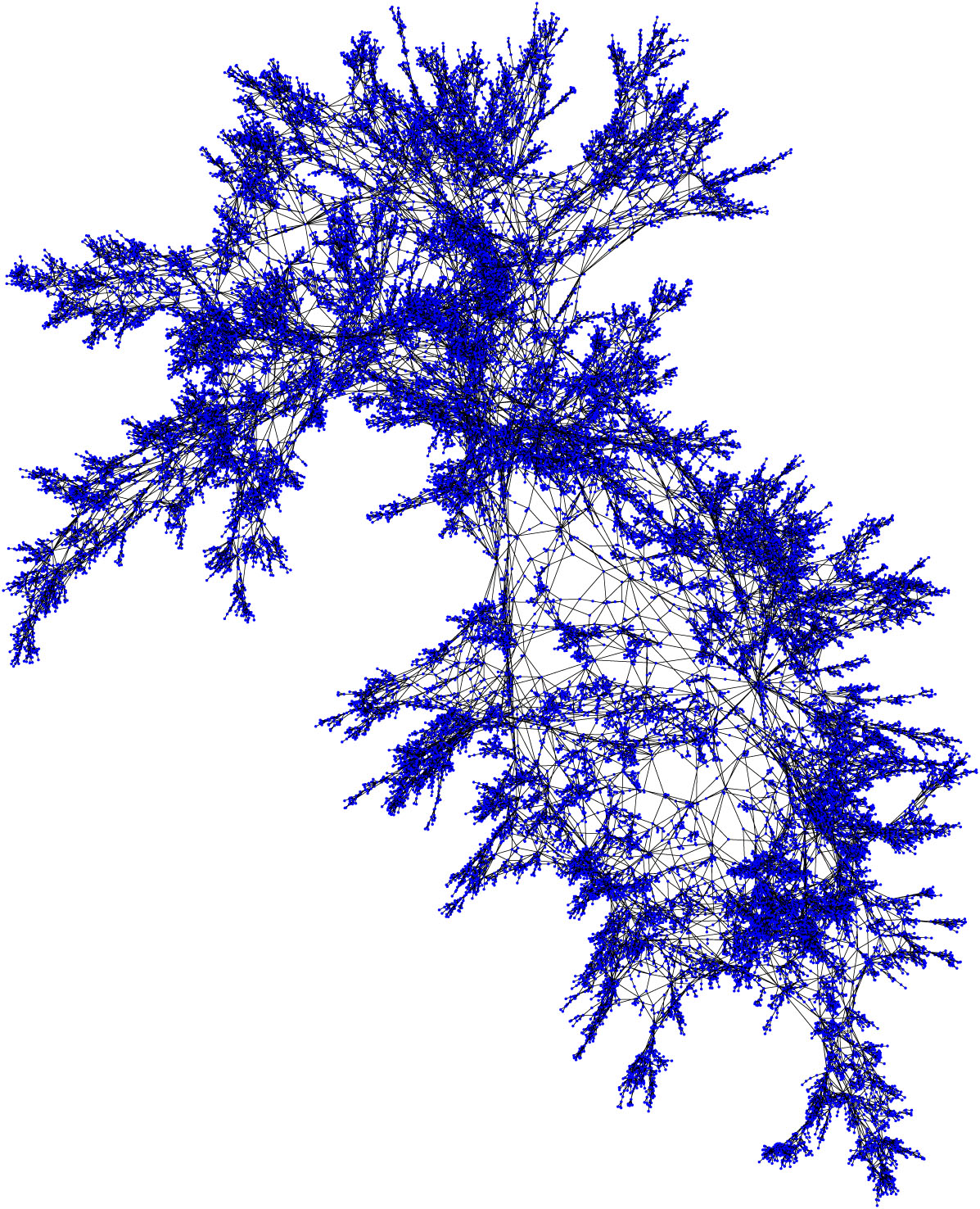} 
\qquad
\includegraphics[scale=0.13]{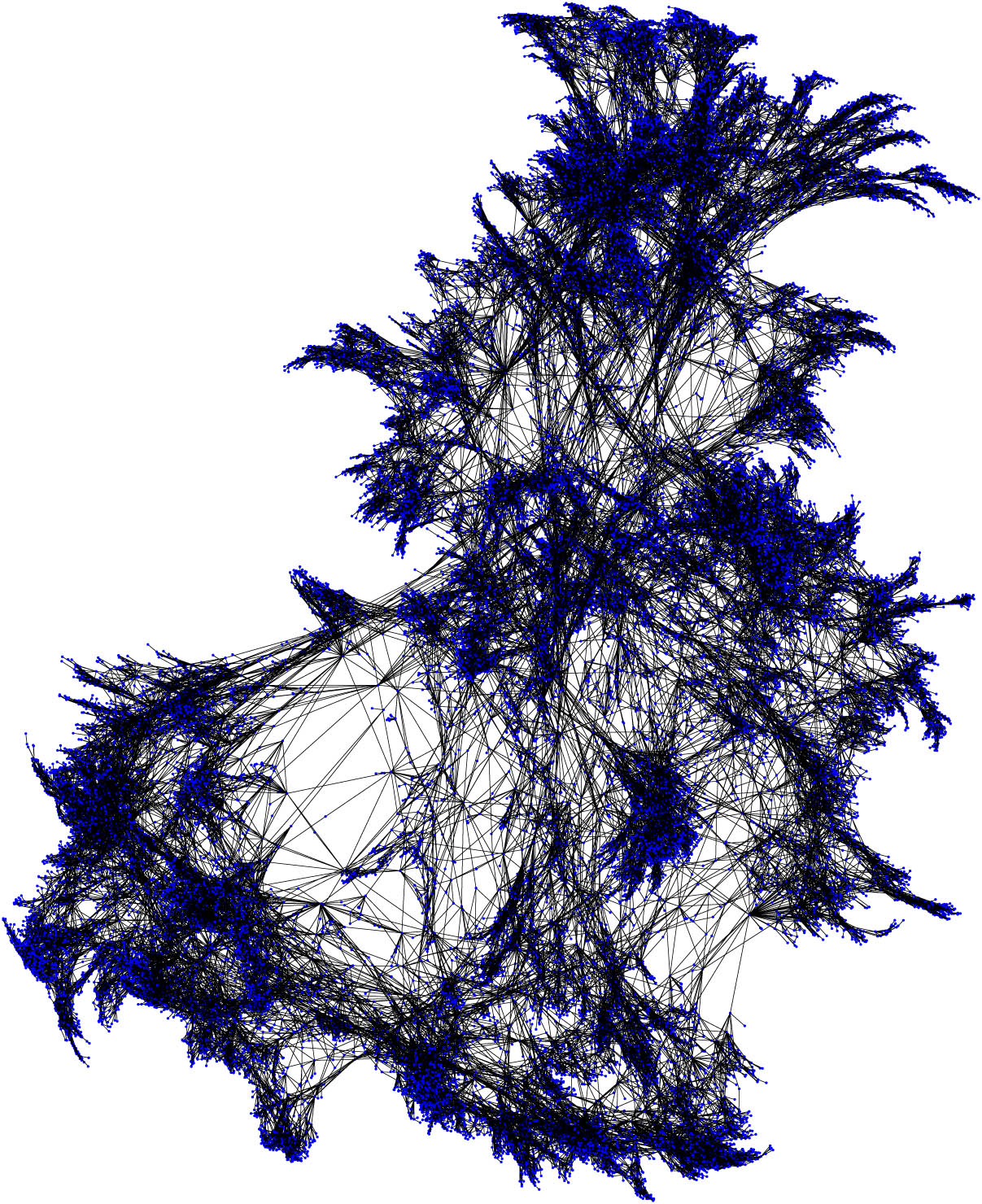}~\\

\centering \qquad\includegraphics[scale=0.125]{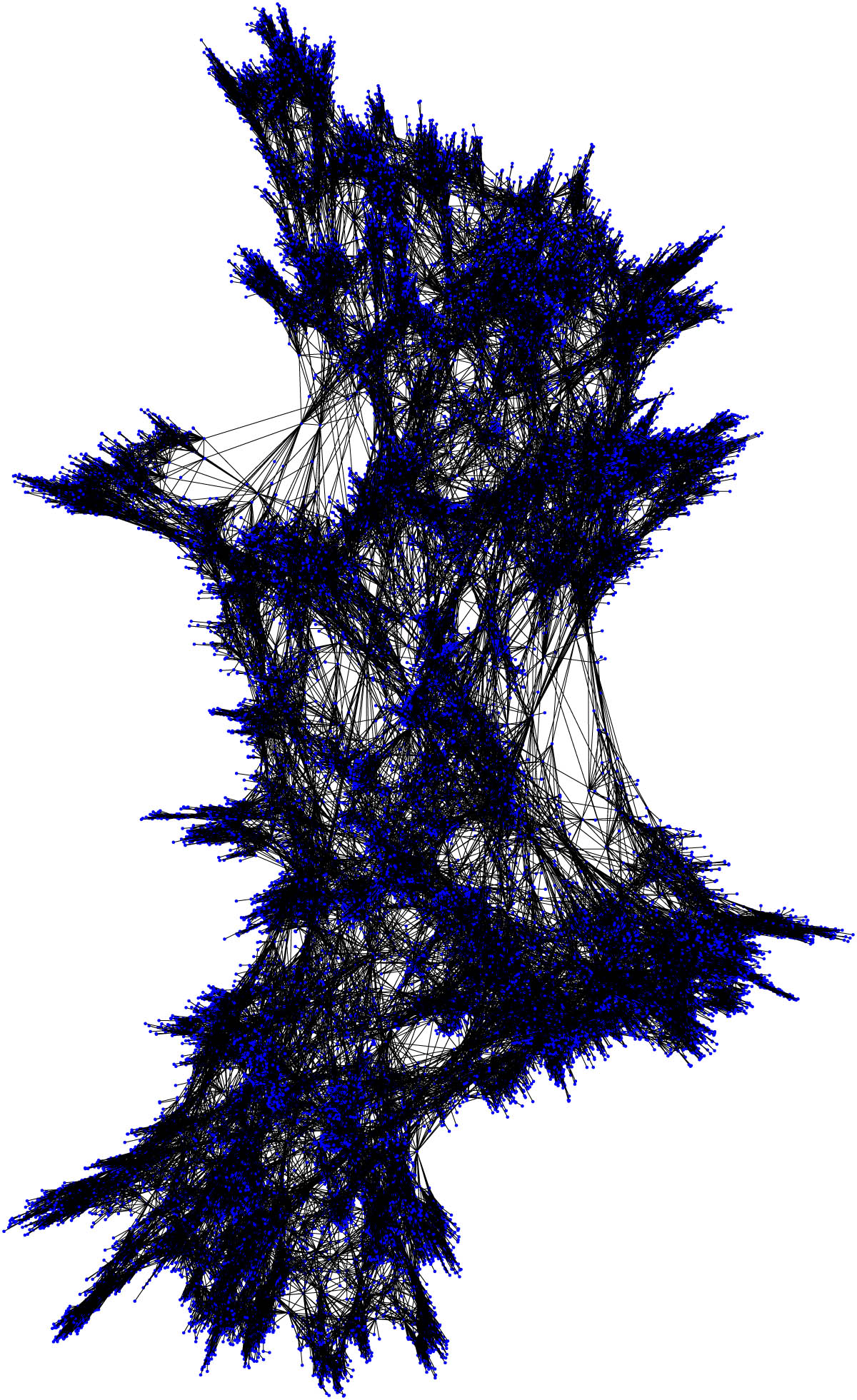} 
\qquad
 \includegraphics[scale=0.135]{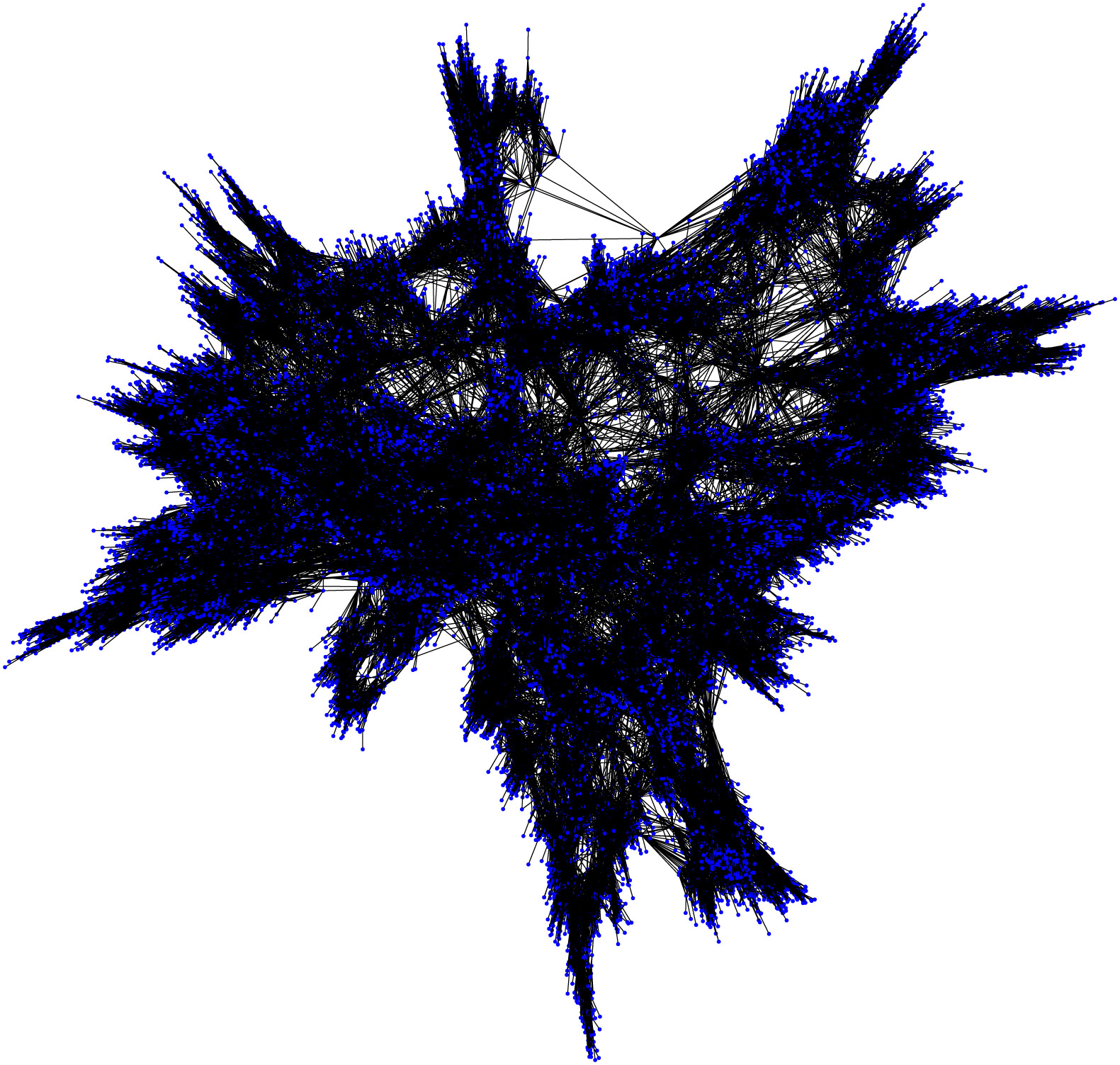}
 \quad
\includegraphics[scale=0.135]{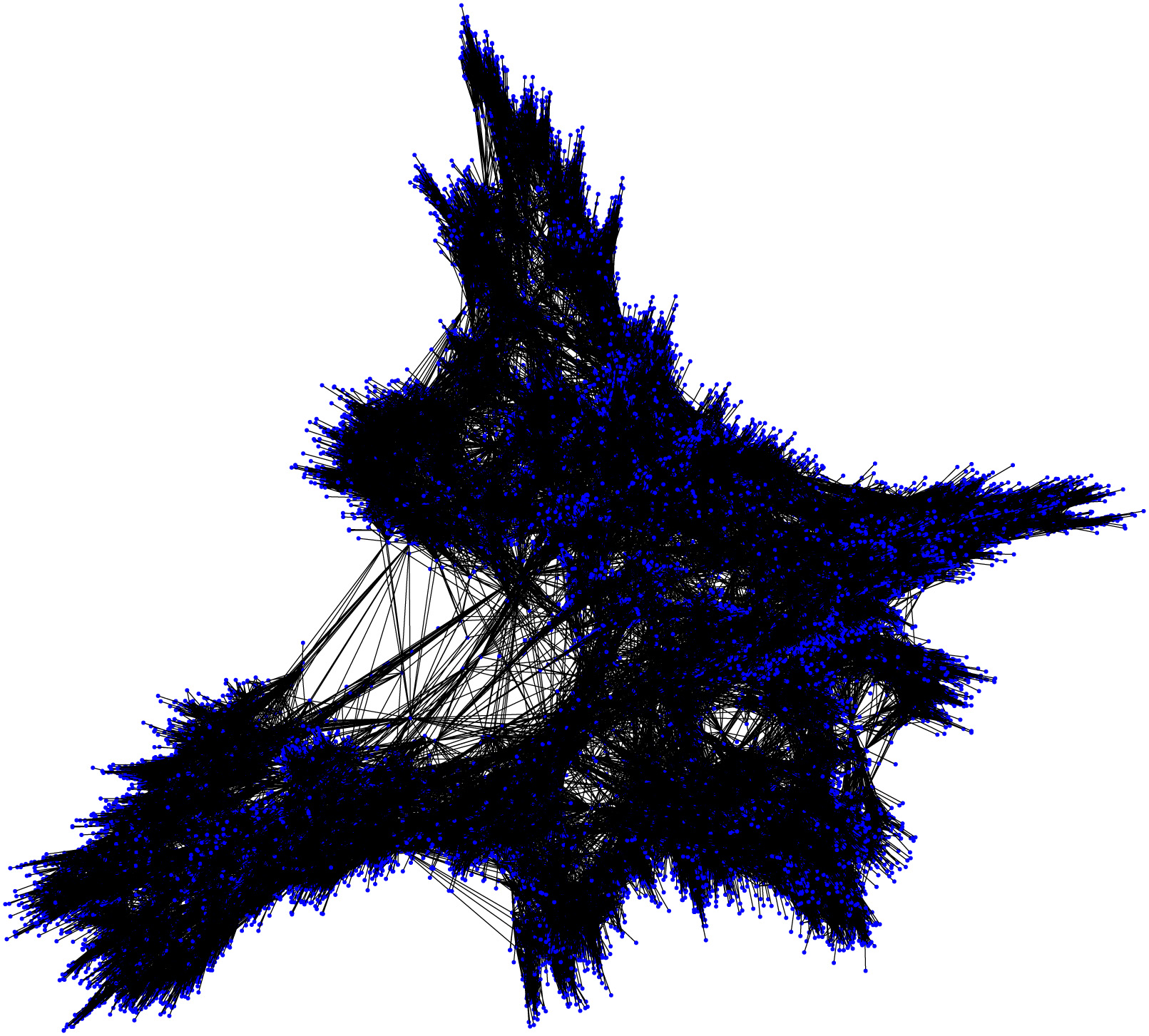}
 \caption{\label{fig:Bij-Sc12}Some images of randomly generated  $\Rn n \D$ for $ \D$ going from 1 to 6 (loops and multiple edges have been removed, keeping a unique single edge between adjacent vertices). The size of the vertex set is $50 000$. }
\end{figure}

\subsection{Folding a planar map to get a 3D object?}
This informal section is devoted to explaining that some constructions similar to that of the elements of $R_N[3]$ allow constructing ``3D discrete objects'', for a certain notion of $3D$ explained below. \par
 There are many equivalent definitions and points of view on planar maps, and ``the planarity'' is more or less intuitive depending on the chosen representation (see Sec.~\ref{sec:GraphSection}). All representations that amount to gluing polygons, or to drawing a graph on the sphere, lead to a straightforward conviction that planar maps are indeed discrete 2D spherical objects. 
\begin{figure}[!h]
\centerline{\includegraphics{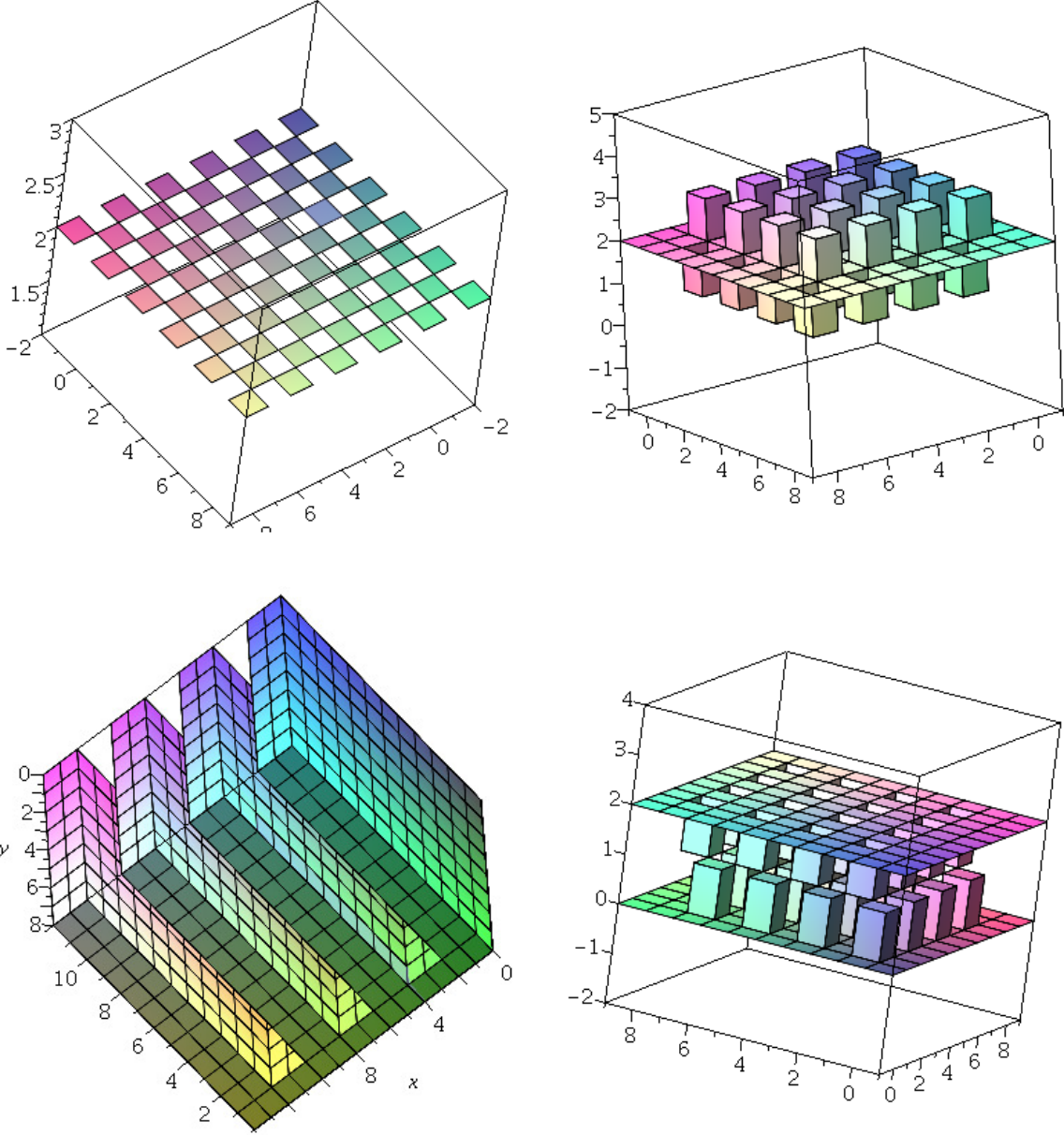}}
\caption{\label{fig:33D} }
\end{figure}

Similarly, if one says that a graph is 2D spherical when it can be drawn on the 2-sphere without edge crossings, a tentative definition of $\D$-dimensional  spherical graph could be that the graph is the 1-skeleton of a $\D$-dimensional cellular (pseudo) complex with the topology of the $\D$-sphere. But this is one possible answer, and even gluing tetrahedra along faces provides objects whose topology is in general very difficult to study.\par

 We propose to discuss the fact that some ``3D spherical like objects'' (in the sense above) possess a representation similar to the third feuilletage.  First, consider the graph $G_n$ having as vertex set the points of the  discrete cube $\{1,...,n\}^3$, and as edges, the pairs of points at Euclidean distance 1 from each other. In the sense above,  $G_n$ is a "3D-spherical-like" graph, as it is the 1-skeleton of a gluing of cubes which discretizes a 3-ball. And more generally, it may be argued that any reasonable definition of spherical graph dimension should give dimension 3 or higher to a graph having $G_n$ as subgraph.
 Now, we will present $G_n$ as a refolded map, very similar to some elements of $R_N(3)$ for some $N$. 

 The idea is the following: consider the chessboard type figure represented in Picture 1,  Fig. \ref{fig:33D}, obtained by taking a section of $\R^2$ (or $\Z^2$), in which a unit square out of two has been removed. Each of the lacking square is then used in Picture 2 as the basis of a cube with the same lacking face: half of these cubes are placed above the plane, half below in such a way that the west-south corners of the cubes above (resp.~below) forms, up to some translation, a section of $(2\Z)^2$. The obtained object is a quadrangulation (with a big square boundary) since all its inner faces have degree 4. Now, change a bit of point of view in Picture 2, and view what is represented as a kind of fabric with a texture: the fabric being the plane, the texture being made of cubes above and below the fabric. Now, imagine a large piece of fabric which is folded and sewn as on Picture 3: it is a quadrangulation with 4 ``layers'', each of them made by two large strips of fabric ``at distance 2''. Each layer is also at distance 2 from the next layer. Here, ''distance 2'' has to be understood for the usual metric in $\R^3$ since clearly this picture can be embedded isometrically in $\R^3$. Now add the texture to the fabric! Picture 4 figures what happens inside one layer (between two strips that are face to face) or between two layers of the fabric (outside the big quadrangulations, between the two strips that are face to face). Up to some details concerning the parity of the strips, the cubes that have size 1 and which are placed on ``planes'' at distance 2, intersect at their vertices only.
  Observe again Picture 4, and imagine the $\Z^3$ lattice in between the two planes: notice that every edge of the lattice belongs to exactly one of the cubes.
  What is shown on Picture 4 represents what happens inside each of the 4 layers, and between 3 inter-layers of Picture 3, so, it concerns a total section of width 13 of $\Z^3$.\par 

  Hence, it is possible to fold and identify vertices in  quadrangulations  to construct some objects whose underlying graphs contain  $G_n$.\par
  As a matter of fact, it may be argued that this example is probably not in the set $\DR{N}3$ for any $N$ large (one of the reasons being that the extracted trees $(t_2,t_3)$ of the textured fabric with the Cori-Vauquelin-Schaeffer bijection provides a tree $t_2$ whose height process has not the required  increments  $+1, -1$ or 0). But we hope that it illustrates the fact that such ``3D-like objects" can arise in the construction we propose.

\subsection{Contents of the paper}

{\paragraph{Convention.}\it All the objects we will introduce in the paper will be rooted, unless specified otherwise (as for instance in Sec.~\ref{sec:pointed}).} \medskip

The last two sections of the introduction are dedicated to a discussion on the context in which our approach takes place, as well as the motivations from theoretical physics.

In Sec.~\ref{seq:IBSIM}, after providing some notions about real trees (Sec.~\ref{sec:realtrees}) and snakes (Sec.~\ref{sec:brown-snake}), we describe directly in the continuum the iterative construction of the $\D$th Brownian snake $\rbs {} [\D]$ and the $\D$th random tree $\btj  \D$ (Sec.~\ref{sec:Iter-snakes}), and then of the $\D$th random feuilletage $\RR{\D}$ (Sec.~\ref{sec:Iter-Refold}). 

We start Sec.~\ref{sec:ISIMCO} by defining planar trees and their encodings using height and contour processes (Sec.~\ref{sec:qsdqsd}), and then iterated discrete snakes  $\RBS_n[ \D]$ and the iterated random discrete trees $\btnj n \D$ (Sec.~\ref{sec:Iter-snakes}), before introducing   the iterated random discrete feuilletages $\DR{n}{\D}$ (Sec.~\ref{sec:Iter-Refold}), as well as their normalized versions (Sec.~\ref{sec:nvotk}).

While the objects introduced in the previous section are rooted, Sec.~\ref{sec:pointed} is dedicated to the definition of {\it pointed} trees (Sec.~\ref{sec:pointed-snakes}), snakes (Sec.~\ref{sec:pointed-snakes}) and feuilletages (Sec.~\ref{sec:measure}). In this last section, we show the convergence in law of the pointed iterated random discrete feuilletages to the pointed iterated continuous random feuilletage  for a topology characterizing the convergence of the encoding trees (which does not imply the convergence for the Gromov-Hausdorff distance).

The proofs of the results in the previous sections are gathered in Section \ref{sec:proofs}.

While we rather use the encodings of trees by processes in the previous sections, in Sec.~\ref{sec:GraphSection} we review in the combinatorial map picture the objects introduced previously. We define a class of combinatorial objects that generalize combinatorial maps using nested non-crossing partitions and that contains the iterated discrete feuilletages. 

In Sec.~\ref{sec:Simulations}, we describe the simulations we performed in the aim of providing pictures and of making some statistics on the asymptotic diameter and on the volume of the balls of our random feuilletages, and comment on the problems encountered.

\subsection{Some references and comments on the approach}

The point of view we develop here could appear somewhat artificial, but it is actually very similar to the first works on the Brownian map \cite{CS,MM}. Planar quadrangulations and other similar simple families of maps were the only objects whose combinatorics was well understood at this time, 
 particularly thanks to the existence of a bijection (CVS) between the set of planar quadrangulations with $n$ faces and some sets of labelled trees (Cori \& Vauquelin \cite{CV}, Schaeffer \cite{Schae}). Building on this, Chassaing \& Schaeffer \cite{CS} made the first connection between the scaling limit of uniform planar quadrangulations and the Brownian snake with lifetime process the normalized excursion, from what they obtained the right scale $n^{1/4}$ for the diameter of uniform planar quadrangulations with $n$ faces.  The second author and Mokkadem introduced in \cite{MM} the Brownian map as the rescaled limit of random quadrangulations with $n$ faces (under a distribution close to the uniform distribution). The question of the convergence to the Brownian map for a nice topology implying the metric convergence (as the Gromov-Hausdorff topology) was known by the authors, but out of reach at this time. Another topology was used, a topology absolutely faithful to the CVS bijection: building on the latter, \cite{MM} proved that the set of quadrangulations with $n$ faces is in bijection with a set of pairs of trees $(\tt n 1, \tt n2)$ 
 as presented above.  
The pair of random trees $(\btnj n 1,\btnj n 2)$ associated with unif.~planar quadrangulations with $n$ faces converges in distribution after normalization to a limiting pair of continuous random trees $(\btj  1, \btj2)$. 
The Brownian map is defined in \cite{MM} to be  $\btj2$ 
quotiented by the non-crossing partition defined by $\btj1$ 
in a way analogous to what is done in the discrete case. 
This approach provides a direct construction of the Brownian map equipped  with a natural distance.\par

In \cite{MatingTrees}, Duplantier, Miller and Sheffield (DMS) investigate random surfaces obtained from the mating of two continuum random trees $T_{\se}$  and $T_{\tilde{\se}}$  encoded by two independent normalized Brownian excursions $\se$ and $\tilde{\se}$ (see Sec.~\ref{sec:ISIMCO}). 
They then identify in these trees the nodes with corner $t$, for all $t\in[0,1]$. If $(E_0,d_0)$ and $(E_1,d_1)$ are two metric spaces, and $\sim_R$ is an equivalence relation on $E_0\cup E_1$, then the quotient space $E=(E_0\cup E_1)/ \sim_R$ is a topological space. One can try to define a distance $d$ on $E$ by:
for $x,y\in E$,
$$
d(x,y) = \inf_k ~\inf_{x_1,\ldots,x_{2k}} ~ \inf_{(\varepsilon_j,\, 1\leq j \leq k) \in \{0,1\}^k} ~ \sum_{j=1}^{k} ~d_{\varepsilon_{j}}(x_{2j-1},x_{2j}),
$$
where $x_1\;\sim_R\;x,~ x_{2k}\; \sim_R\;y$, and, for any $j$, $x_{2j} ~ \sim_R~ x_{2j+1}$, and $x_{2j-1}$ and $x_{2j}$ are both elements on the same set $E_{\varepsilon_j}$. 
In other words, geodesic paths in $E$ are ``limits'' of connected paths in $E$ formed by alternating sections totally included in $E_0$ or in $E_1$. 
It turns out that this way of defining a distance on $E$ fails in general because it may happen that $d(u,v)=0 \not\imp u=v$ (as detailed in footnote \ref{foo:quo}).
In the case of the mating of trees, which is known to be homeomorphic to the $2$-sphere,  DMS do not consider the metric induced by this construction, but consider the topological properties of this space, together with a special space-filling path coming from the construction; they call this space the peanosphere. They study some stochastic processes defined on this rich structure, and make many connections, among others with the Brownian map, Gaussian free field, and Liouville quantum gravity in 2D. \medskip

Hence, the Brownian map has been defined before the proof of the convergence of its inner metric (Le Gall \cite{legall2013} and Miermont \cite{Miermont2013}),  before the proof of its property to be homeomorphic to the 2-sphere (Le Gall and Paulin \cite{LGP}, Miermont \cite{miermont2008}), and also before the proof of its connection with Liouville quantum gravity. To define the Brownian map before these considerations was probably a necessary step in this research field since knowing the limit even for a ``bad topology'' is always an advantage. In the same way the peanosphere is constructed by taking ``a formal topological'' limit of some discrete analogue constructed using binary trees with $n$ leaves, somehow independently of standard considerations concerning invariance principles since no proof of convergence is given: only ``the limit'' is considered.

What we propose here is to proceed as in the first construction of the Brownian map: we present a combinatorial model and its limit (the $\D$th random feuilletage). The first properties we are able to prove provide some first clues that this construction could indeed be analogous to the Brownian map. We hope that it could also lead to the construction of some peanospheres in higher dimensions.

\subsection{Motivation from theoretical physics.}
\label{sec:intro-phys}

 From a theoretical-physics perspective, the definition of an analogue of the Brownian map in higher dimensions is sought in the context of discrete approaches to quantum gravity, which aim at describing gravity at a microscopic ``quantum" level\footnote{Here we consider discrete quantum gravity as a statistical field theory: we use path integrals of the form $\int e^{-S}$ and not $\int e^{iS}$.}. In theories  such as  dynamical triangulations \cite{Ambjorn1992, David1992, QuantumGeom} or random tensor models \cite{GurauInvitation, GurauBook}, some family of $\D$-dimensional discrete spaces such as  $\D$-dimensional triangulations (simplicial pseudo-complexes), is seen as the set of random discrete space-times that can occur at a microscopic level. Their probabilities of occurrence are provided by Einstein's general relativity, or more precisely by discretizing {\it \`a la Regge} the Einstein-Hilbert action, which is the field-theoretical formulation of general relativity. 
Thereby,  the theory of gravity induces a distribution\footnote{Strictly speaking, each  $\D$-dimensional triangulation is assigned a positive weight which provides a classification of the discretized space-times. However, these weights cannot always be normalized to define a distribution.} over the set of  $\D$-dimensional triangulations of the same size, whose weights depend exponentially on their discrete curvature\footnote{A ``canonical" discrete curvature \cite{Regge} is defined on a  $\D$-dimensional triangulation by assuming that all edges have the same length. Then, the discrete curvature locally depends only on the number of $\D$-simplices around each $(\D-2)$-simplex.} (see e.g.~\cite{QuantumGeom, Lionni:17}).

Such theories, which attempt to describe quantum gravity as a continuum limit of a statistical system of random discrete space-times, are seen in analogy with the description of gas thermodynamics in statistical physics in the grand-canonical ensemble: the simplices are viewed as ``particles of space-time" and the  $\D$-dimensional triangulations as accessible states for the statistical system, each with a given Boltzman weight, obtained by discretizing the Einstein-Hilbert action. While this justifies this elegant approach, it is in fact the continuum/scaling limit that would be physically interpreted as a quantum theory of gravity, if it had consistent properties. One would then like to recover general relativity as an effective theory by taking a suitable limit from the scaling limit of these random discrete space-times\footnote{\label{foo:coarse} If these random discrete space-times converge towards a certain continuum limit (scaling limit),finding out if general relativity is recovered as an effective theory in a certain ``non-quantum" limit could involve defining suitable observables on the scaling limit, that would converge to their classical (i.e.~non-quantum) values throughout a coarse-graining process, or knowing how to describe this continuum limit in a field theoretic way, and then renormalizing this theory to translate it to our scales. But there are no known spaces so far to serve as toy-models for addressing this question. }. 

 For $\D>2$, in a certain regime (small Newton constant), this distribution selects a narrow class of very highly curved $\D$-spheres\footnote{All results discussed here are in the Euclidean case, in which time is not considered. Introducing ``time" can be done by requiring some additional causality condition on the  $\D$-dimensional triangulations \cite{AMBJORN2001CDT, LollCDTReview}. Numerical simulations seem to indicate that the continuum limit in dimension 4 has promising properties, however no exact result exists so far.} whose scaling limit  is the CRT \cite{Ambjorn1992, Thorleifsson1999, Gurau2014} (this is called the branched-polymer phase in physics).  
Other distributions over the set of  $\D$-dimensional triangulations have also been investigated, for instance based on topological criteria. A possible candidate could indeed have been the full set of triangulations of the $\D$-sphere, however this seems to lead to very singular $\D$-dimensional triangulations asymptotically, whose diameter are bounded \cite{Ambjorn1992, Thorleifsson1999}. This regime is called the crumpled phase in the physics literature, and it is expected that no scaling limit can be defined.
 In a well-defined continuum limit, one would like to recover something which resembles a random  emergent continuum ``$\D$-dimensional" space-time (as discussed previously, this is a very vague statement, however for many reasons, we should rule out the CRT and the crumpled phase).  The important question would then be how to find out whether it leads to general relativity in a certain limit (see the footnote \ref{foo:coarse}), however until that day, there are no known examples of continuum random spaces that could  serve this purpose. At this level, regardless of the precise notion of dimension for scaling limits of random graphs, even the construction of scaling limits of random graphs that are neither trees, nor surfaces of any genus, would be an  important step forward, by providing toy-models to address this question.

On the other hand, in dimension $D=2$, the link between combinatorial maps, matrix models, and later the Brownian map on one hand, and quantum gravity in dimension $\D=2$ on the other, has been investigated since the 80's \cite{MatrixReview, KPZ, CFTQG2, CFTQG3, DuplSheff}. It was then proven in 2016 \cite{MilSheff, MilSheff2, MilSheff3} that the Brownian map is indeed equivalent to Liouville quantum gravity \cite{Liouv}, a theory of random continuum surfaces introduced by Polyakov in the context of string theory \cite{Polya}. However, the proof of this equivalence does not rely on the fact that the initial distribution for the random maps is induced by general relativity. {Indeed, what matters in this framework is the convergence to the Brownian map (both for prooving the equivalence with Liouville quantum gravity and for the physical interpretation), which is ensured as long as, at the discrete level, only spheres 
 (families of combinatorial maps in $D=2$ with genus 0)
   are considered, with a uniform distribution for discrete spheres of the same size \cite{legall2013, BrownMapUniform, abraham2016, BrownMapOdd}. But spheres could  be selected from other criteria and distributions than that induced by Einstein-Hilbert\footnote{In which case the surfaces of genus 0 are selected in the regime of small Newton constant.}, e.g.~topological\footnote{It is equivalent in dimension 2 but not in higher dimensions.}, and only the physical interpretation at the discrete level would eventually be affected, not the conclusions at the continuum level.  
   Pushing this reasoning further, the convergence towards the Brownian map is universal: modifying the discrete models \cite{marckert2007InvPrinc, FirstPassPerc, MarzoukDegree}    
 leads to the same scaling limit, and thereby to Liouville quantum gravity (and this could even be expected for other models of random metric spaces). 
   Because of the failure in higher dimensions of the approach consisting in discretizing the Einstein-Hilbert action and using this distribution on  $\D$-dimensional triangulations, and because it is currently unclear how to find a class of discrete 3-dimensional spheres that could have a suitable scaling limit, if it exists,  the facts listed above provide a} strong motivation for trying to build interesting random continuum spaces from a more direct approach.

This is the aim of the present paper: working the other way around, we build directly candidates $(\RR{\D},\D\geq 1)$  to play the role of the Brownian map in higher dimensions $\D$, as limits of the $\D$th random discrete feuilletages $(\DR n{\D},\D\geq 1)$, which are obtained by iterated series of foldings of an initial discrete surface. Even if our construction does not rely on some classical combinatorial objects such as $\D$-dimensional triangulations, the discrete objects $\DR n\D$ we present have many good combinatorial properties, starting with the coincidence of $\DR n1$ with uniform rooted planar trees with $n$ edges, and of $\DR n2$ with rooted-pointed uniform quadrangulations with $n$ faces. 
    We stress again that  to our understanding, it is the continuum/scaling limit that would be understood as a quantum theory of gravity, not the discrete spaces, and if a theory was to be built directly from a suitable random continuous generalization of the Brownian map, and if this theory provided consistent results, for instance when defining observables and extracting their classical  (i.e.~non-quantum)  limits, then huge progress would have been done towards quantizing gravity, whether this random continuous generalization of the Brownian map was obtained as a limit of  $\D$-dimensional random triangulations or not. 
For a suitable definition of our discrete objects, we faced the difficult task of qualifying what a good limit would be. We isolated three important features they should have or that their limit should have, to be  suitable in the context of quantum gravity: \\
\bls\, The (limiting) continuous space's ``dimensions'' should be suitable in some sense. As mentioned above, it seems to us that the topological dimension could be a good notion, if it is well defined. {In dimension 3,} this excludes, for instance, the CRT or the Brownian map.\\
\bls\, The distances in the $\D$th large discrete random space should scale in a suitable way: for $\D\geq 3$, we do not expect the distances to scale as in large uniform planar trees or as in large planar maps, for instance. Also, we expect the Hausdorff dimension of the $\D$th limiting continuous object  ${\sf Haus}(\D)$ to be an increasing function of  $\D$.  As a matter of facts, we conjecture the distances in $\DR n\D$ (under a particular distribution we define) to be of order $n^{1 /{2^\D}}$ (in any case we will see that  $n^{1 /{2^\D}}$ is an upper bound on the diameter of $\DR n\D$).
This value seems suitable for the $\D$th discrete object.  \\
\bls \,``Uniformity" and universality: as mentioned above, in order to have a physical interpretation at the discrete level, it seems important to be able in the future to exhibit some sets of $\D$-dimensional triangulations selected according to ``natural criteria'' (e.g.~uniform in a set of triangulated spheres selected according to a physically motivated criterium), which would converge towards the space $\RR{\D}$ we built.
The construction of $\DR n\D$ for $n\geq 3$ is not given in these terms, that is in terms of gluing of  $\D$-dimensional simplices. However, our  iterative construction generalizes those of Schaeffer or BDG \cite{Bouttier2004}
  between labeled trees and maps, valid in $\D=2$. While these bijections produce uniform maps, they are not formulated in terms of gluings of elementary building blocks (such as polygons). \\
  
    It is worth noting that the critical exponent we find for the $\D$th random discrete feuilletages, associated with their asymptotic enumeration (called string susceptibility in physics), is
  \ben
  \gamma_\D=3/2 - \D.
  \een
This exponent generalizes the well-known universality class exponents $\gamma_1=1/2$ for random trees, and $\gamma_2=-1/2$ for random planar maps.

   The approach of this paper is to construct scaling limits of random graphs, in a way which allows keeping track of the distances. While this approach, based on the repeated use of the Cori-Vauquelin-Schaeffer bijection, renders the question of the topology of the scaling limit less intuitive than when considering scaling limits of discrete topological spaces, it is quite clear that the random feuilletages are neither random trees, nor random surfaces of any genus. Therefore, {\bf if the random feuilletages are indeed non-trivial, they would  provide the first example of random continuum spaces relevant in the context of $D$-dimensional quantum gravity for $D>2$}, i.e.~with which we could start understanding what it would mean for a Brownian space to obtain general relativity in a certain suitable ``non-quantum" limit (see the footnote \ref{foo:coarse}).

\section{Iterated Brownian snakes and  iterated random feuilletages}
\label{seq:IBSIM}
\setcounter{equation}{0}
All the random variables are assumed to be defined on a common probability space $(\Omega, {\cal A}, `P)$. \medskip 

{\bf Notations : }
We will denote by $\cro{a,b}$ the set $[a,b]\cap \Z$ equipped with its natural order. For an ordered finite set $I$, the notation $X(I)$ stands for the sequence $(X(i), i \in I)$ taken under the index order; hence, $X(\cro{0,5})=(X_0,X_1,\cdots,X_5)$. Finally, we will denote by $(x_n)$ the infinite sequence $(x_1,x_2,\cdots)$. \\
By convention $x+y \mod p$ stands for $(x+y) \mod p$.\par
  For a function $g:\R\to \R$ (or defined on an interval $[a,b]\subset \R$ only),  we denote by
  \[\widecheck{g}(x,y)= \min \{ g(u), u \in [ \min \{x,y\}, \max\{u,v\}]\}\]
  the minimum of $g$ on the interval with extremities $x$ and $y$.

\subsection{Continuous trees}
\label{sec:realtrees}

We start with a digression concerning the so-called ``iterated Brownian motion'':  take a sequence of independent two-sided Brownian motions $(\bB_i ,i\geq 0)$, meaning that for any $i$, $(\bB_i(s), s\geq 0)$ and $(\bB_i(-s),s\geq 0)$ are two standard linear Brownian motions starting at 0.
The  $\D$th Brownian motion (see e.g. Burdzy \cite{Bur93}) is the one dimensional process defined by
\ben
{\bf I}^{(\D)}(t)=\bB_\D (\bB_{\D-1}( \cdots  (\bB_1(t))\cdots)),~~~ t \in \R.
\een
The construction we propose for the $\D$th  Brownian snake $\rbs[\D]$  can be viewed as a kind of tree-like  counterpart to ${\bf I}^{(\D)}$: as explained in Section \ref{sec:PMO}, we will produce a sequence of labeled trees $\bigl(\bt^{(i)},\Bell^{(i)}\bigr)$, building $\bt^{(i+1)}$ thanks to $\Bell^{(i)}$, ``a Brownian labeling of $\bt^{(i)}$''. Up to some changes of roots, constructing $\bt^{(D)}$ will require $D-1$ iterated Brownian labelings.
The construction is tuned in such a way that the pair $(\bt^{(1)},\bt^{(2)})$ corresponds exactly to the random trees encoding the Brownian map.

Before that, we need to recall some facts concerning continuous trees and real trees.

\subsubsection*{Representation of (real) trees using continuous functions} 
The first brick we need to later define iterated snakes and feuilletages is the notion of continuum tree. We need to review some classical aspects of 
 the latter, which can be found in the literature in many references, notably in relation with the CRT or the Brownian map (see e.g.~Le Gall \& Duquesne \cite{DuLG}, Miermont \cite{Miermont2013}, Le Gall \cite{legall2013, LG19} or Miermont \& Le Gall \cite{MJFLG}, ...).

Compact $\mathbb{R}$-trees are compact metric spaces $(T,d)$ such that for every $a,b$ in $T$, there exists a unique injective function $f_{a,b}:[0,d(a,b)]\rightarrow T$, for which $f_{a,b}(0)=a$ and $f_{a,b}(d(a,b))=b$.\par

In the sequel, we present some continuous trees encoded by functions; these objects are rooted-ordered real trees (see Duquesne \cite{Duq} and references therein  
for a complete discussion on the relation between compact real trees and trees encoded by real valued fonctions).

Consider $C[0,1]$, the set of continuous functions $f:[0,1]\to\R$.  Let 
\ben
C^{+}[0,1]= \{ f\in C[0,1]~,~f([0,1])\subset \R^+, f(0)=f(1)=0\}
\een  be the subset of $C[0,1]$ of non-negative functions, null at 0 and 1.
With each function $g$ in $C^+[0,1]$, we define an equivalence relation in $[0,1]$ by
$$x\sous{\sim}{g}y ~~\Longleftrightarrow g(x)=g(y)=\W g(x,y).$$
\begin{figure}[!h]
\centering
 \includegraphics[scale=1]{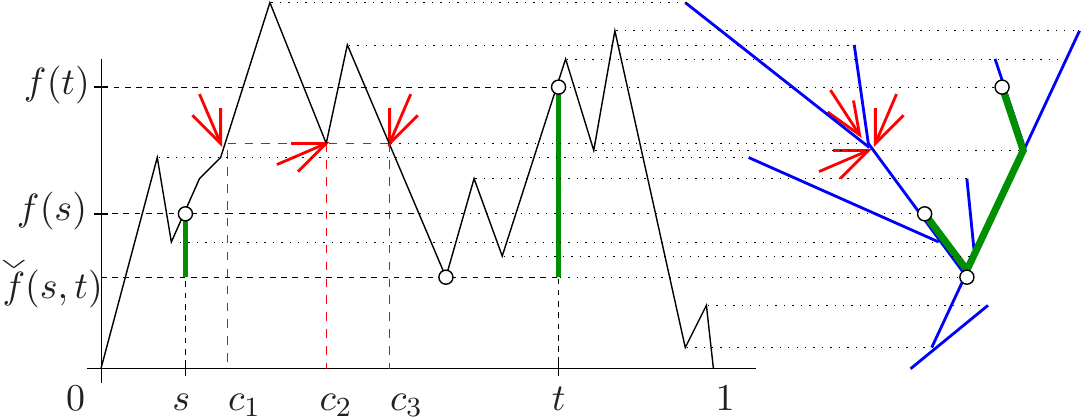}
 \caption{\label{fig:Arbre}Tree associated with a function taken in $C^+[0,1]$: the green path in the tree is visible in the functional encoding;
the red arrows point toward the three corners $c_1,c_2$ and $c_3$ of a vertex in the tree. }
\end{figure}

The set of  equivalence classes modulo $\sous{\sim}g$, 
$$T_g=[0,1]/\sim_g$$ is connected and possesses no cycle: it is a {\bf tree}, and its elements are therefore called {\bf nodes}  or {\bf vertices}. An example is shown in Fig.~\ref{fig:Arbre}. The canonical surjection  $c_g$ from $[0,1]$ into $T_g$ is denoted 
\[\app{c_g}{[0,1]}{T_g}{x}{c_g(x)=\dot x:=\{y\in [0,1]~,~x\sous{\sim }gy\}}.\]
It is the continuous analogue of the depth first traversal $c_T$ for discrete trees defined below  \eref{eq:Cc}. 
Seing $x\in[0,1]$ as the {\bf corner} of a node, $c_g(x)$ is precisely that node (and a node corresponds to a set of corners).
The set of nodes $T_g$ can be turned into a totally ordered set, by setting 
$$
\dot x<\dot y~~ \textrm{ iff }~~\inf \dot x<\inf \dot y,
$$
that is, the order of two nodes is  
inherited from the order of their first corners.
The class $\dot 0$ is the root of $  T_g$: the tree $  T_g$ is {\bf rooted}.\par
Let $x$ and $y$ be elements of $\dot x$ and $\dot y$. The node $\dot z \in  T_g$ defined by $z\in[x,y]$ and $g(z)=\W{g}(x,y)$ does not depend on the chosen representatives $x$ and $y$: $\dot z$ is called the highest common ancestor of $\dot x$ and $\dot y$. 
From there, we can define the notions of {\bf ancestor}, {\bf descendent}, {\bf subtree}, {\bf branch}, as in the discrete case (Sec.~\ref{sec:qsdqsd}). For instance, the set of ancestors of $\dot x$ in $T_g$ is defined as
\ben\label{eq:Ancestors}
{\sf Ancestors}_g(\dot x) = \{\dot y \in T_g~,~\inf(\dot y) \leq \inf (\dot x)\leq \sup(\dot x) \leq \sup(\dot y)\}.
\een
It is also the ancestral branch from $\dot x$. 
A node $\dot x$ is said to be an internal node if $\inf \dot x<\sup \dot x$, and it is said to be a leaf if $\inf \dot x=\sup \dot x$ (for a general function $f$, positions of local maxima correspond to leaves, but some leaves are not position of local maxima).\footnote{The normalized Brownian excursion can be obtained by rescaling the excursion of the Brownian motion which straddles 1. From here, it can be seen that the trajectory of the Brownian excursion inherits from the Brownian motion of many features. For example, it has a countable number of local minima or maxima (Ex.Chap.~III, 3.26 Revuz-Yor). Besides, the Brownian motion has the strong Markov property, and the property that the set $Z=\{t:B_t=0\}$ is a.s.~a closed set without isolated point (see Chap.~III Prop 3.12 Revuz-Yor), allows seeing that a.s., 0 (or any other point $x\in[0,1]$) is not a local maximum or minimum: for any $x\in[0,1]$, one has a.s.~$\inf \dot x=\sup \dot x$, even if a.s.~$x$ is not a local maximum. }

The distance between $\dot x$ and $\dot y$ is defined as
\ben
\label{eq:zgzr}
d_{ T_g}(\dot x, \dot y):=D_g(x,y),
\een
where 
\ben
\label{eq:Dg}
\app{D_g}{[0,1]^2}{\R^+}{(s,t)}{D_g(s,t):= g(s)+g(t)-2 \widecheck{g}(s,t)}.
\een
This map $D_g$ is  well defined since the r.h.s.~of \eref{eq:zgzr} does not depend on the elements $x$ and $y$ chosen in the classes $\dot x$ and $\dot y$. The fact that $d_{T_g}$ is indeed a distance is easy to check.
The function $g$ is called {\bf the contour process} of $T_g$ since 
\ben\label{eq:gcont}
d_{T_g}(\dot 0,\dot x) =D_g(0,x)=g(x), \textrm{ for any } x \in [0,1]
\een
which is the characterizing property of the contour process in the discrete case (see later Def.~\ref{def:contour-discr}).

  \subsection*{Trees as measured spaces}
  Denote by  ${\cal M}[0,1]$ the set of probability measures on $[0,1]$.
\begin{defi}
  Consider $\mu \in {\cal M}[0,1]$, $g\in C^+[0,1]$ and $T_g$ the associated tree. The pair $(T_g,\mu)$ is called a measured tree.
\end{defi}
For $g\in C^+[0,1]$, since $[0,1]$ is the corner set of the tree $T_g$, and $c_g:[0,1]\to T_g$ is the map which sends any corner $x$ on the associated vertex $c_g(x)\in T_g$, the measure $\mu$ is a measure on the corner sets, and its push-forward measure by $c_g$ is a measure on $T_g$.
There are two main reasons to enrich trees with measures, both being linked with discrete trees:\\
\bls\ In the discrete case up to a normalization,  for many models of random trees including those studied in the present paper, the number of nodes visited in the contour process between time $a$ and $b$ is (approx.) a proportion $b-a$ of the nodes (and also a proportion $b-a$ of the corners). Adding a measure component allows accounting for this and then expressing that, at the limit, the same property holds for the continuum random tree $T_g$ (the ``limiting measure'' being the Lebesgue measure).\\
\bls\ The second reason is the need to distinguish between discrete and continuous trees! If $g$ is not the null function, the set $[0,1]/\sim g$ has the cardinality of $\R$, regardless of whether $T_g$ ``is used to model'' a  discrete tree or not. Hence, when one embeds the set of discrete trees in the set of continuous trees using contour processes, their discrete nature is lost. Corner measures allow recovering corner positions, and then allow one to cover discrete and continuous objects by a single notion, which is sometimes compulsory (to prove convergence results, for example).
~\\
Let 
\[K=\{T_g:=(T_g,d_{T_g},\mu_g)~,~g \in C^+[0,1], \mu \in {\cal M}[0,1]\}\] be the set of such  rooted trees, considered as metric spaces, and equipped with a corner measure (in the following, we use the same notation for a real tree and the corresponding measured metric space).
\par
The set of trees $K$ is a metric space: we transport the metric and the topology from $(C[0,1],\|.\|_\infty)\times ({\cal M}[0,1],\dVar)$ (where $\dVar$ is the total variation distance) onto $K$, by setting the following distance on $K$: for $g$ and $f$ in $C^+[0,1]$,
\[d_K(T_g,T_f)=\|g-f\|_{\infty}+\dVar(\mu_f,\mu_g).\]
This makes of the set of trees $K$ a Polish space.

We define formally what we will call Aldous' continuum random tree (Aldous' CRT):
\begin{defi}We call Aldous' CRT, the tree $T_{\se}=(\se,d_{\se},\lambda)$ where $\lambda$ is the Lebesgue measure on $[0,1]$  (and where $\se$ is a Brownian excursion).
\end{defi}
We end this introduction to continuous trees by defining the change of root:
\begin{defi}\label{def:rer}
  For $g\in C^+[0,1]$ and $x\in[0,1]$, the tree $  T_g$ rerooted at its corner $x$ is the tree $T_h$ for 
  \[\left\{\begin{array}{ccl} h(s)&:=& D_{g}(x+s \mod 1,x), \textrm{ for } s \in [0,1],\\
             \mu_h(.) &=& \mu_g (x+. \mod 1) 
             \end{array}\right.. \] 
\end{defi}
This definition fits perfectly with \eref{eq:gcont}, since the distance to the corner $x$ in $T_g$ is indeed the function ``distance to the root" in $T_h$.

\begin{rem}[Important]\label{rem:mesvsnotmes} All along the paper we will encounter many continuous trees $T_g=(g,d_g,\mu_g)$ for which the corner measure {\bf will always be the Lebesgue measure on [0,1]}. We will also work with some normalized discrete trees with $N$ edges: in this case, the corner measure {\bf will always be the uniform corner measure}, that is, the  deterministic measure $\lambda_N=1/(2N) \sum_{k=0}^{2N-1} \delta_{k/(2N)}$ on $[0,1]$. When $N\to +\infty$,
\ben\label{eq:convLambda}
\lambda_N\to \lambda
\een 
for the classical weak convergence in ${\cal M}[0,1]$ (equipped with the total variation distance), so that this additional ``measure component'' does not modify the proof of convergence for trees, for snakes, and after that for feuilletages. Nevertheless, the presence of the measures allows defining properly the feuilletages, whose definition, in the discrete case, must take into account the actual nodes locations.\\
To avoid heavy notations, we will however often drop the measure component, but will recall its presence when it is crucial. In the proofs, the presence of measures will be simply completely dropped, since \eref{eq:convLambda} alone allows taking care of the convergence of the measure components.
\end{rem}

\subsection{Brownian snake}
\label{sec:brown-snake}
The Brownian snake is a classical object from probability theory, which before being used to defined the Brownian map in \cite{MM}, was mainly used in relation with superprocesses (see e.g.~\cite{DuLG}). To build the random  iterated feuilletages, we will define ``a notion of iterated snakes"; first, let us review some of the aspects of the Brownian snake. For references on (non-iterated) snakes, see e.g.~Le Gall \cite{legall2005} for continuous snakes, and  Marckert \& Mokkadem \cite{MMsnake}, Janson \& Marckert \cite{janmarck05} for discrete snakes and their convergence.\par
Consider the following set (of ``bridges'')
\ben
C^{0}[0,1]=\{g \in C[0,1]~,~g(0)=g(1)=0\}.
\een
\begin{defi} 
Let $g \in C^+[0,1]$ and $  T_g$ be the associated tree. A labeling of the rooted tree $  T_g$ is a map $\ell \in C^0[0,1]$ which satisfies
\ben\label{eq:ww}
 s\sim_g t \imp \ell(s)=\ell(t).
\een 
In other words, the labels of the corners of a  node coincide.
A pair $(g,\ell)$, where $\ell$ is a labeling of $  T_g$, is called tour of a continuous snake. 
We denote by
\ben
\RS = \l\{ (g,\ell) \in C^+[0,1] \times C^0[0,1]~,~\ell \textrm{ is a labeling of }   T_g\r\}
\een
the space of tours of continuous snakes, equipped with the uniform convergence topology.
\end{defi}
  \begin{rem}
As detailed previously, a corner measure is sometimes considered, so that the elements of $\RS$ will sometimes be viewed as 3-tuples $(g,\ell,\mu)$ instead (here $\mu \in{\cal M}([0,1])$, equipped with the distance
    \[D((g,\ell,\mu),(g',\ell',\mu'))=\|g-g'\|_{\infty}+\|\ell-\ell'\|_{\infty}+\dVar(\mu,\mu').\]
    To avoid too much heaviness, we remove this third component as long as it is not explicitly needed.
    \end{rem}
    In the literature, a snake is the name given to a family of trajectories $(w_x,x\in[0,1])$ indexed by the corners of a tree. The snake and its tour are related as follows. Taking $(g,\ell)$ in $\RS$, the snake with tour $(g,\ell)$ is the family of trajectories
$(w_x,x \in [0,1])$, where the lifetime of $w_x$ is $g(x)$, and 
\ben
w_x(h)= \ell(z)\quad  \textrm{ for } 0\leq h \leq g(x),
\een
where $z$ is one of the corners of the ancestors of $\dot x$ at height $h$. 
\begin{rem}\label{rem:SvsTS}
The natural maps that associate tours of snakes and snakes (both ways) are homeomorphic under natural topologies (see Marckert \& Mokkadem \cite{MMsnake}). The topology on the set of snakes is more involved than that 
for tours of snakes, since snakes are families of killed trajectories, when tours of snakes are just elements of $C^+[0,1]\times C^0[0,1]$. The homeomorphism evoked above makes it possible to transfer all convergence results obtained on tours of snakes to snakes. In the following, we will only deal with tours of snakes and we will often call them simply snakes, by abuse of language.
\end{rem}

\begin{defi}\label{def:TBS} Consider $\bX$ a random process taking its values in $C^+[0,1]$; we call tour of the Brownian snake with lifetime process $\bX$, the pair $(\bX, \Bell)$ where, conditionally on  $\bX=g\in C^+[0,1]$,   the process $(\Bell(s),s \in [0,1])$ is a centered Gaussian process with covariance function
\ben\label{eq:zfz}\cov(\Bell(x),\Bell(y))= \W{g}(x,y).\een
\end{defi}
\noindent\bls\, Hence the pair $(\bX, \Bell)$ is a.s.~the tour of a snake, since for $0\leq x \leq x'\leq 1$, 
\[g(x)=g(x')=\widecheck{g}(x,x') \imp\ \cov\bigl(\Bell(x) -\Bell({x'}), \Bell(x) -\Bell({x'})\bigr) = 0\imp\ \Bell(x)=\Bell({x'}).\]
However, the a.s.~continuity of $\Bell$ is not granted: it depends on the regularity of $\bX$ (all considerations on the H\"older exponents in the paper are developed for this reason),\vspace{+0.5ex}\\ 
\bls\, The (tour of the rooted) standard Brownian snake with lifetime process $g$ corresponds to the labeling of a continuum random tree with contour process $g$, by a Brownian motion starting at the root of the tree with the property that a node at height $h$ is labeled by a Brownian motion at time $h$, and for $\dot u$ and $\dot v$ two nodes of the tree $  T_g$, the Brownian trajectories $(\bB_s, 0\leq s \leq g_{u})$ and $(\bB_s', 0\leq s \leq g_{v})$ coincide  
on $[0,\W{g}(u,v)]$, and
\[\l(\bB_s-\bB_{\W{g}(u,v)}, \W{g}(u,v)\leq s \leq g_{u}\r) \textrm{ and }\l(\bB_s'-\bB_{\W{g}(u,v)}, \W{g}(u,v)\leq s \leq g_{v}\r)\textrm{ are independent}.\]
\bls\, The (tour of the rooted) {\it Brownian snake with lifetime process the normalized Brownian excursion} is the process which corresponds to this definition for $\bX={\se}$, that is, when the underlying continuous tree is Aldous' continuum random tree $T_{\se}$  (see e.g.~\cite{DuLG,MMsnake,janmarck05}).

\subsection{Iteration of snakes}
\label{sec:Iter-snakes}

To iterate the construction, we will associate a tree to the label process $\ell$ of a snake with contour process $(g,\ell)$. To this end assume for a moment that $\ell$ is continuous, and is an element of $C^{0}[0,1]$.

If $g$ is not the null function, the tree $T_g$ contains at least a non-trivial branch $b$, so that $\ell$ is a.s.~not positive on $[0,1]$ (since its range contains that of the Brownian motion on $b$), and then $\ell$ is not in $C^+[0,1]$, and therefore $\ell$ is not  the contour process of a tree. Pushed by combinatorial and technical considerations, we proceed as follows.
For a function $f$ in $C^0[0,1]$, define
\ben\bpar{lcl}
m(f) & := & \inf\{f(x)~,~x\in [0,1]\},\\
a(f) & := & \min \argmin f =\min \{ x~,~f(x)= m(f)\}.\epar
\een
The value $a(f)$ is the first hitting time of the minimum $m(f)$ for the function $f$ (left of Fig.~\ref{fig:CONJ2}).
\begin{figure}[!h]
\centering
 \includegraphics[scale=1]{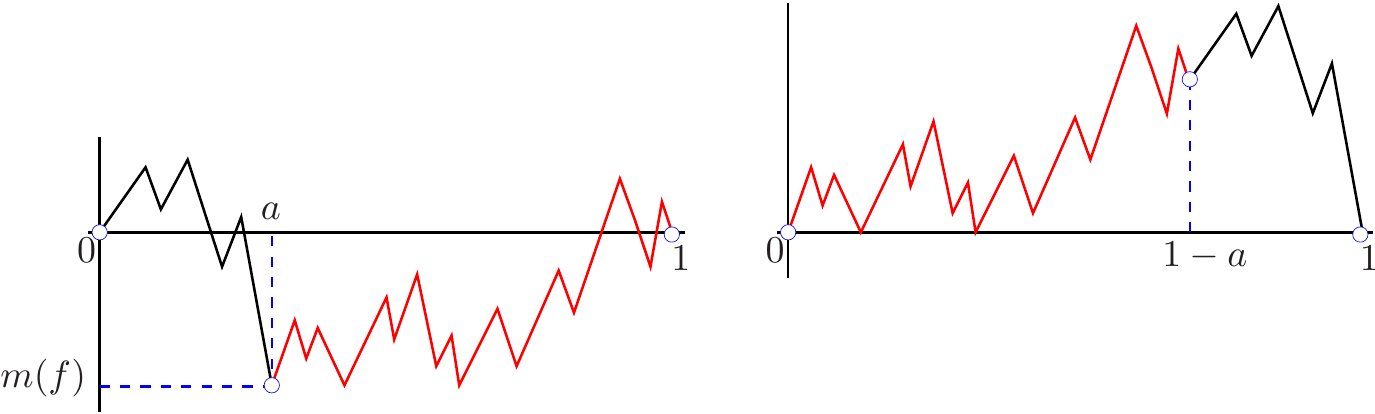}
\caption{\label{fig:CONJ2}Illustration of the map $f\mapsto \Conj(f)$: it exchanges the part of the graph before and after $a(f)$. }
\end{figure}

\paragraph{ Leading idea:} 
Consider  the so-called conjugation of paths, the map 
\ben
\label{eq:def-conj}\app{\Conj}{C^0[0,1]}{C^+[0,1]}{f}{x\mapsto f\l[(a(f)+x) \mod 1\r]-m(f)}.\een
This map, illustrated in Fig.~\ref{fig:CONJ2}, is notably known for sending  Brownian bridges onto Brownian excursions (if ${\sf b}$ is a  standard length-1 Brownian bridge, then
$\Conj({\sf b})\eqd {\se }$, 
Vervaat \cite{Vervaat1979}).

For $f\in C^0[0,1]$, since $\Conj(f)\in C^+[0,1]$, $\Conj(f)$ is naturally the contour process of a tree.
\begin{rem}\label{eq:fsddq}
Let $\mu$ be the corner measure of a tree $T_f$ and let $g=\Conj(f)$. The push-forward measure of $\mu$ by $\Conj$ is $\mu'$ defined by
$$\mu'(\cdot)= \mu( a(f)+\cdot \mod 1).$$
The measure $\mu'$ is the ``same corner measure'' as $\mu$ in the sense that it puts the same weight to the corners that are in correspondence on $T_g$ and $T_f$.
  \end{rem}
Hence, starting from any function  $f$ in $C^0[0,1]$ {\it including the label process of a snake} (taking that it belongs to $C^0[0,1]$),
one can consider the tree with contour $\Conj(f)$, and use it as the underlying tree of a branching Brownian motion.

Regarding the assumption that $\ell\in C^0[0,1]$, since $g(0)=g(1)=\W{g}(0,1)=0$, then a.s.~the label of the root is $\ell(0)=\ell(1)=0$. As for the continuity of $\ell$, the a.s.~existence (or not) of a continuous version for $\ell$, depends on the regularity of $g$. This is one of the (relative) difficulties of this construction.

\begin{defi}
  The space of rooted continuous $ \D$-snakes is defined to be $\RS^{ \D} :=\bigl(\RS\bigr)^{ \D}$,  equipped with the uniform topology.
  When we specify the corner measures, a rooted  $\D$-snake has the following form: $\l[\l(f^{(i)},\ell^{(i)},\mu^{(i)}\r),1\leq i \leq D\r]$.
  \end{defi}

\subsubsection*{Iterated rooted Brownian snakes}

\begin{defi} \label{defi:fzfe} For any positive integer $\D$,  we call $\D$th  Brownian snake the process
  \ben \label{eq:dethhe}
  \rbs[ \D]:=\l(\l[ \bh^{(1)},\Bell^{(1)}\r], \ldots,\l[ \bh^{( \D)},\Bell^{( \D)}\r] \r)
  \een
  taking its values in the space $\RS^{ \D}$, where:
  \bir 
  \itr the first tree is the continuum random tree: $\bh^{(1)}\eqd \se$,
  \itr for any $j$, conditionally on $\l[ \bh^{(1)},\Bell^{(1)},\ldots, \bh^{(j-1)},\Bell^{(j-1)},  \bh^{(j)}\r],$ the process $\Bell^{(j)}$ 
   is the label process of the rooted Brownian snake  with lifetime process $\bh^{(j)}$ (as defined in Def. \ref{def:TBS}),
  \itr  for $j\geq 2$, the contour of the $j$th random tree $\btj j:=T_{\bh^{(j)}}$ is
  \ben
  \bh^{(j)}= \Conj(\Bell^{(j-1)}).
  \een
  \eis ~\\
For any $j$, the corner measure on $\btj j$ is $\lambda$, the Lebesgue measure on $[0,1]$. 
\end{defi}

Hence, the standard Brownian snake with lifetime process the normalized Brownian excursion coincides with $\rbs[1]$ (see e.g.~\cite{legall2005,MMsnake,janmarck05}). Definition \ref{defi:fzfe} really defines an existing object only if the $\Bell^{(j)}$'s are all a.s.~continuous: this property is needed to define the continuous contour of the tree $\btj {j+1}$ using $\Bell^{(j)}$, and it needs to be proved.
\begin{theo}\label{theo:biendefini} For any $j\geq 1$,  $\Bell^{( j)}$ is a continuous process and then for any $\D\geq 1$, the process $\rbs[\D]$ is well defined.
\end{theo}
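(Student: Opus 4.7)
The plan is to proceed by induction on $j$, tracking not merely continuity but a strictly positive Hölder exponent $\alpha_j$ for $\bh^{(j)}$. The central tool will be a Kolmogorov-type moment bound combined with the covariance formula \eqref{eq:zfz}, which transfers regularity from the lifetime process to the label process while halving the Hölder exponent at each iteration. Since the definition at step $j+1$ only makes sense once $\Bell^{(j)}$ is known to be a.s.~continuous (so that $\bh^{(j+1)}=\Conj(\Bell^{(j)})$ is a well-defined element of $C^+[0,1]$), such an induction naturally establishes both the continuity and the well-posedness claims at once.

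For the base case $j=1$, I would simply invoke classical facts: $\bh^{(1)}\eqd \se$ is Hölder continuous of any exponent $\alpha_1<1/2$, and the label process $\Bell^{(1)}$ of the standard Brownian snake with lifetime process $\se$ is known to be a.s.~continuous and Hölder of any exponent less than $1/4$ (see \cite{legall2005,MMsnake}).

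For the induction step, assume $\bh^{(j)}$ is a.s.~Hölder of some exponent $\alpha_j>0$. The covariance formula \eqref{eq:zfz} says that, conditionally on $\bh^{(j)}$, the centered Gaussian variable $\Bell^{(j)}(x)-\Bell^{(j)}(y)$ has variance $D_{\bh^{(j)}}(x,y)$. A direct calculation gives the elementary bound
\[
D_g(x,y)=\bigl(g(x)-\widecheck{g}(x,y)\bigr)+\bigl(g(y)-\widecheck{g}(x,y)\bigr)\leq 2\sup_{u,v\in[x\wedge y,\,x\vee y]}|g(u)-g(v)|,
\]
so that $D_{\bh^{(j)}}(x,y)\leq C_j|x-y|^{\alpha_j}$ for some a.s.~finite random constant $C_j$. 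Gaussian moment estimates then yield, for every integer $p\geq 1$,
\[
\mathbb{E}\bigl[(\Bell^{(j)}(x)-\Bell^{(j)}(y))^{2p}\,\bigl|\,\bh^{(j)}\bigr]\leq c_p\,C_j^p\,|x-y|^{p\alpha_j}.
\]
Choosing $p$ with $p\alpha_j>1$ and applying Kolmogorov's continuity criterion will produce a continuous version of $\Bell^{(j)}$, which is moreover Hölder of any exponent strictly less than $\alpha_j/2$. Since $\Conj$, as defined in \eqref{eq:def-conj}, is a cyclic shift followed by subtraction of a constant, it preserves the Hölder modulus of continuity; hence $\bh^{(j+1)}=\Conj(\Bell^{(j)})$ is Hölder of the same exponent, closing the induction with the recursion $\alpha_{j+1}=\alpha_j/2$, and in particular $\alpha_j>0$ at every step.

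The main point to watch is that the Hölder exponent is halved at each iteration, and one must confirm that it remains strictly positive no matter how many levels are stacked; phrasing every regularity statement as ``any exponent strictly less than'' the critical value, and selecting $p$ depending on $j$ in Kolmogorov's criterion, sidesteps any boundary issue. A secondary subtlety is joint measurability of the iteratively constructed object: at each step $\Bell^{(j)}$ is conditionally Gaussian given the $\sigma$-field generated by the previously defined trees and labels, and I would handle this standardly by first constructing $\Bell^{(j)}$ on a countable dense set of corners and then extending by uniform continuity to an a.s.~continuous version that is measurable in the joint data.
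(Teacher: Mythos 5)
Your proof is correct and follows essentially the same route as the paper's: induction on $j$ tracking a Hölder exponent that is halved at each level, the covariance identity \eqref{eq:zfz} combined with the bound $D_{\bh^{(j)}}(x,y)\leq 2\,\hol_{\alpha_j}(\bh^{(j)})|x-y|^{\alpha_j}$, Kolmogorov's criterion applied conditionally on the tree, and the observation that $\Conj$ preserves the Hölder modulus. The only cosmetic difference is that you condition directly on $\bh^{(j)}$ (making the random Hölder constant $C_j$ a frozen finite constant), whereas the paper conditions on the event $\{\hol_q(\bh^{(j)})\leq M\}$ and in parallel tracks a quantitative tail bound on $\hol_q(\bh^{(j)})$ that it reuses later for tightness arguments; for the continuity statement itself both variants are equally valid.
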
 
We will indeed prove the continuity of the processes  $\Bell^{( j)}$ in Sec.~\ref{sec:proof-of-cont}.
\begin{rem}
\Bs\; There is no natural process $\rbs[0]$, since the Brownian bridge is not the label process of any tree. It is somehow the label process of the circle $\R/\mathbb{Z}$, which is not a tree.\\
\Bs\; The fact that $\Bell^{(j)}$ reaches its minimum only once a.s.~for $j\geq 2$ (on a leaf of $\bh^{(j)}$)
is unclear, but we conjecture that it is true; this property holds for $\Bell ^{(1)}$ (Le Gall \& Weill \cite[Prop. 2.5]{LGW}).\\
\Bs\; Constructing a version of $\rbs[ \D]$ conditioned by the non-negativity of all the $\Bell^{(i)}$, which is a singular conditioning, and which has been done by Le Gall \& Weill \cite{LGW} in the case of the Brownian snake $\rbs[1]$,  does not seem to be an easy task. This positive processes could be used to define more easily some feuilletages (as done below) without having to deal with what we call below ``tree synchronisations''.
\end{rem}

\subsection{The iterated random feuilletages $\RR \D$ }
\label{sec:Iter-Refold}

\subsubsection{ $\RR 2$ is the Brownian map}
\label{sec:fthz}

Let $\rbs[1]=(\bh^{(1)},\Bell^{(1)})$ be the standard Brownian snake; let again $\bh^{(2)}=\Conj(\Bell^{(1)})$.
Consider the tree $\btj 2 =T_{\bh^{(2)}}$, and set
$$
\Ba^{(1)}=\min \argmin \Bell^{(1)}. 
$$
By definition \eqref{eq:def-conj} of the map  $\Conj$, since $\bh^{(2)}(\cdot)=\Bell^{(1)}(\Ba^{(1)}+ \;\cdot \mod 1)- \min \Bell^{(1)}$, the corner $x-\Ba^{(1)} \mod 1$ of {$\btj 2$} corresponds to the corner $x$ in the tree $\btj 1$.
\begin{defi} Let $\rbs[2]=\l(\l[ \bh^{(1)},\Bell^{(1)}\r], \l[\bh^{(2)},\Bell^{(2)}\r] \r)$ be the 2nd Brownian snake.   The Brownian map $\RR 2$ is the topological space defined as $[0,1]/\sim_{2}$, where $\sim_{2}$ is the coarsest equivalence relation that extends the two following equivalence relations:
  \be
  x&\sim_{\bh^{(1)}}& y,\\
  \l(x-\Ba^{(1)} \r)\mod 1 &\sim_{\bh^{(2)}}& \l(y-\Ba^{(1)}\r) \mod 1.
  \ee
\end{defi}
These two relations are equivalent to
\be
D_{\bh^{(1)}}(x,y)&=&0,\\
D_{\bh^{(2)}}\l( x-\Ba^{(1)} \mod 1,y-\Ba^{(1)} \mod 1\r)&=&0,
\ee
where  $D_g$ was introduced in \eqref{eq:Dg} to define the distance $d_{  T_g}$ in the tree $  T_g$.

Let us discuss further the appearance of $\Ba^{(1)}$ in the considerations and the implications for iterations.

\subsubsection{Trees synchronization and $\RR \D$} 
\label{sec:TS}

One can argue that the worst hassle in the construction of the $D$th Brownian snake is the successive use of $\Conj$, which brings some extra random shifts at each iteration: these shifts are inherited by the $D$ random rooted feuilletages (in the $D=2$ case, these considerations appear in relation with the rooted pointed Brownian map). We will then take a moment to write the details of what we will call {\it trees synchronization}. Later on, we will see that the somehow non-continuity of this synchronization procedure will be at the origin of another complication, which will lead to the definition of pointed counterparts to the rooted snakes and to the random feuilletages.\par
Take a $\D$th Brownian snake $\rbs[\D]$ (with the same notation as in Def.~\ref{defi:fzfe}).
Set 
\ben
\Ba^{(m)} = \min \argmin \Bell^{(m)},~~\textrm{ for } m\in\cro{1, \D},
\een
so that again, $\bh^{(m+1)}(.)= \Conj(\Bell^{(m)})=\Bell^{(m)}\bigl((\Ba^{(m)}+. ) \mod 1\bigr) - \min \Bell^{(m)}$, and for any $m$,
\[\btj m =T_{\bh^{(m)}}.\]
In order to trace back all the shifts coming from the successive change of roots, we set
\ben
\BA^{(m)}=\Ba^{(1)}+\cdots+\Ba^{(m-1)},~~~\textrm{ for }m \geq 1.
\een
For example, the corner $x-
 \BA^{(3)} \mod 1$ of the tree  ${\btj 3 }$ corresponds to the corner $x-\BA^{(2)} \mod 1$ of ${\btj 2 }$, which in turn corresponds to the corner $x$ of  ${\btj 1 }$.

 \begin{defi}\label{defi:iBm} Let $\rbs[\D]=\l(\l[ \bh^{(1)},\Bell^{(1)}\r], \cdots,\l[ \bh^{(\D)},\Bell^{(\D)}\r]\r)$ be the $\D$th  Brownian snake.  We call $\D$th   random feuilletage $\RR{\D}$ the topological space
\ben
\RR{\D}:= [0,1]/\sim_{\D},
\een
where  $\sim_{\D}$ is the coarsest equivalence relation on $[0,1]$ refining all the following equivalence relations $\sim_{[m]}$ for $1\leq m \leq \D$, defined for  $x,y\in [0,1]$  by  
\ben\label{eq:qsfyur}
x \sim_{[m]} y 
\equi D_{\bh^{(m)}}\l(x-\BA^{(m)}\mod 1, y-\BA^{(m)}\mod 1\r)=0.
\een
\end{defi}
Hence, $x\sim_{\D} y$ if and only if there exists a finite sequence of identification points $((x_m,j_m),1\leq m\leq N) \in \l([0,1]\times \cro{1,\D}\r)^{N}$ such that, for $x_0:=x$, $x_{N+1}:=y$,
\ben \label{eq:jm}
x_m \sim_{[j_m]} x_{m+1} \textrm{~~ for } 0\leq m \leq N.
\een
\begin{rem} Formula \eref{eq:qsfyur} defines the feuilletage. In the discrete case we will transform \eref{eq:qsfyur} so that only corners corresponding to ``discrete nodes'' are identified. This will amounts to restricting  \eref{eq:qsfyur} to the support of the corner measure into play. This way of doing applies to continuous snakes too.
    \end{rem}

  \subsubsection{Some potential metrics on $\RR{\D}$ }
Here are two (potential) metrics on $\RR{\D}$ compatible with its topology:
\ben\label{def:d1}
d^{(1)}_{\RR{\D}}(x,y) = \inf_{r\geq 1}\;\inf_{1\leq m_1,\cdots,m_r\leq \D} \;\inf_{0\leq x_0,\cdots,x_{r} \leq 1} \sum_{j=0}^r D_{\bh^{(m_j)}}\l(x_{j}-\BA^{(m_j)}\mod 1,x_{j+1}-\BA^{(m_j)}\mod 1\r)
\een
where $x_0\sim_{\D} x$, $x_{m+1}\sim_{\D}y$, and
\ben\label{def:d2}
d^{(2)}_{\RR{\D}}(x,y) = \inf_{m}\;\inf_{0\leq x_0,\cdots,x_{2m+1} \leq 1} \sum_{i=0}^m D_{\bh^{(\D)}}\l(x_{2i}-\BA^{(\D)}\mod 1,x_{2i+1}-\BA^{(\D)}\mod 1\r)
\een
where $x_{2i+1}\sim_{\D} x_{2i+2}$, $x_0\sim_\D x$, $x_{2m+1}\sim_\D y$. Identifications can be viewed as distance-free jumps in the space $\RR{\D}$: they combine identifications coming possibly from several different trees $\btj {j_m}$. 
\medskip

\noindent \bls\, The metric $d^{(1)}_{\RR{\D}}$ is more symmetric: a traveller who wants to go from $x$ to $y$ has to walk on one of the trees $\btj {j}$ for $1\leq j \leq \D$; when it does so, the distance is given by the metric on this tree. If he is at a given moment at $a\in[0,1]$, he can jump at $b\in[0,1]$ if $a\sim_{\D} b$ without paying anything, 
or in other words, if $a$ and $b$ are two corners of the same node in one of the trees $\btj {j}$ for $1\leq j \leq \D$.   He can change tree whenever he wants to go on his travel, and the final distance for a path is the minimum on all possible trips of the sums of all the distances made on each of these trees.\\ 
\bls\, For the metric $d^{(2)}_{\RR{\D}}$, the traveller can only walk on the tree  $\btj {\D}$, but whenever he wants, if he is at $a\in[0,1]$, he can jump at $b\in[0,1]$ without paying anything if $a\sim_{\D}b$.

\begin{OQ} Are the distances $d^{(1)}_{\RR{\D}}$ and $d^{(2)}_{\RR{\D}}$  non-trivial for any $\D>2$? (that is, is the diameter of $\RR{\D}$ under $d^{(j)}_{\RR{\D}}$ a.s.~positive)?
\end{OQ}

\bis
\its The advantage of $d^{(1)}_{\RR{\D}}$ is that it is non-increasing in $ \D$, since when one passes from $ \D$ to $ \D+1$, the set on which the minimum is taken is larger for the inclusion order. 
 ``Geometrically'', new identifications are provided by the $( \D+1)$th tree. Moreover, with each subset of indices $\{i_1,\cdots,i_m\}$ included in $\cro{1, \D}$ one can associate the space; 
\[\RR{i_1,\cdots,i_m}:=((\ldots([0,1]/\sim_{[i_1]})/\ldots )/\sim_{[i_m]})\] with the analogue of distance $d^{(1)}$ given by taking the infimum in \eref{def:d1} only on these indices.
\its For $ \D=1$, $\RR{1}$ coincides topologically with Aldous' continuum random tree, and the metrics $d^{(1)}_{\RR{1}}$ and $d^{(2)}_{\RR{1}}$ are equal and coincide with the standard metric on this space.
\its For $ \D=2$, $\RR{2}$ coincides topologically with the Brownian map and  $d^{(2)}_{\RR{2}}$ corresponds to the standard metric on this space.
\eis

But we must say that we do not know the answers to the following questions:
\begin{OQ} For $ \D>2$, is it true that $d^{(j)}_{\RR{\D}}(x,y) =0 \imp x\sim_{ \D} y$ for the distance $j=1$ or $2$? It is true for $ \D=2$ as a consequence\footnote{\label{foo:quo} When one quotients a topological space as we did when we introduced $\sim_{ \D}$, it may happen that the ``distance inherited from the initial distance'' on this space is not a distance: for example, consider $E= [0,1]$ (or $[0,1]\cup[2,3]$), equipped with the usual distance $|.|$,  and quotiented by the equivalence relation $x\sim y$ iff $x=y$ or $x,y \in \mathbb{Q}$ (in other words,  identify all rational numbers). Clearly, the quotient space $E^{\star}$ is not reduced to a single point, but $d^{\star}(x,y)=0$ for all $x,y\in E^{\star}$ under ``the inherited distance $d^\star$'', since for any $x$, $d(x,\mathbb{Q})=0$ for all $x\in \R$, if $d(x,y)=|x-y|$ is the usual distance on $\R$. Hence, $d^\star$ is not a distance, since $d^\star(x,y)=0 \not\imp x=y$. If one further quotients $E^{\star}$ by $x\sim^{\star} y$ when $d^\star(x,y)=0$, then the space $E^{\star}$ becomes trivial, reduced to a single point.} of  Miermont \cite{Miermont2013} and Le Gall \cite{legall2013}.
\end{OQ}
As a consequence of Theorem \ref{theo:ini-rec}, which allows seeing that $\bh^{( \D)}$ is a.s.~H\"olderian with exponent $1/2^ \D-`e$, for any $`e>0$, it may be shown that the tree $ (\btj {\D},D_{\bh^{( \D)}})$ has Hausdorff dimension smaller than $2^{ \D}$.
  For $ \D\in\{1,2\}$, these upper bounds fit with the right values \cite{GallHaus,TDLFLG}.
  These bounds are also the Hausdorff dimensions of  $\l(\RR{\D} ,d^{(2)}_{\RR{\D}}\r)$ for $ \D \in \{1,2\}$.
\begin{OQ} What are the Hausdorff dimensions of the random trees  $(\btj {\D},D_{\bh^{( \D)}})$ and of the random feuilletages $(\RR{\D} ,d^{(2)}_{\RR{\D}})$ for $ \D>2$? If $2^\D$ is indeed the Hausdorff dimension of $(\btj {\D},D_{\bh^{( \D)}})$, then this value provides a lower bound for the dimension of $\l(\RR{\D} ,d^{(2)}_{\RR{\D}}\r)$... We conjecture that both spaces indeed have Hausdorff dimension  $2^{ \D}$.
    \end{OQ}

\begin{rem}  About the redundancy of the iterated Brownian snake: from $f$ to $\Conj(f)$, a change of origin has been done. If one has only $\Conj(f)$ in hand, the ``shift'' $a(f)$ cannot be recovered. It turns out that for our applications to \Rc, the shift is needed to ``synchronize'' the identifications provided by the different trees. Working directly and only with $f$ -- which is possible since $f$ determines $\Conj(f)$ --  is a bit annoying because it demands reintroducing $\Conj(f)$ everywhere, since the iteration we propose relies on the tree encoded by $\Conj(f)$.
  \end{rem}
  \begin{rem}When dealing with asymptotic discrete snakes, we will observe that the change of origin is not continuous, that is $\|f_n-f\|_{\infty}\to 0 \not\imp a(f_n)\to a(f)$ \footnote{To circumvent this problem, it would suffice to prove that a.s., $\#\argmin(\Bell^{(j)})=1$, for the iterated process $\Bell^{(j)}$ defined previously.}.  As a consequence, even if the sequence $(f_n)$ converges in $C[0,1]$,  the sequence of trees $(\Conj(f_n))$ may not converge in  the set of rooted trees, $K$. This issue explains the complications that will appear progressively in the sequel. The strategy we have adopted to treat them is to use the redundancy provided by the presence of $h_{j+1}=\Conj(f_j)$ together with $f_j$ in the iterated Brownian snake. The discontinuity of the map $a(\cdot)$ will result in the loss of the identity of the root corner in the iterated trees while the root vertex will still be well known: pointing a tree amounts to considering as equivalent two trees rooted at different corners of the same root vertex. Pointing is compatible with the snake construction in which the root vertex is labeled 0, whatever the considered root corner. We will therefore progressively turn our intention to pointed snakes, pointed feuilletages, and finally, state our main theorems for theses objects.
    \end{rem}

\section{Iterated snakes and feuilletages: combinatorial objects}
\label{sec:ISIMCO}\setcounter{equation}{0}
\noindent{\bf Notation : } The $i$th increment of any sequence $(x_i,i\geq 0)$  is denoted $\Delta x_i=x_{i}-x_{i-1}$.\\
{\bf Convention : } We make a great use of continuous processes $X$ obtained by linear interpolation of some random sequence of the form $(X_k,\, k\in\cro{0,n})$ or  of the form $(X_{k/n},  k\in\cro{0,n})$. We will keep the same notation $X$ for the continuous and discrete version, but we will name ``process'' the interpolated version, and ``sequence'' the discrete one (without additional warning).  
\medskip

The main aim of this section is to present the discrete iterated snakes and discrete  iterated feuilletages.

\subsection{Planar trees and their encodings}
\label{sec:qsdqsd}

\paragraph{Rooted planar trees.} For $\N^\star=\{1,2,\cdots,...\}$, consider $U=\{\varnothing\}\cup \bigcup_{k\geq 1} \N^\star{}^k$ the set of words on the alphabet $\N^{\star}$. For any word $w=w_1...w_k$ in $U$ where $w_j\in \N^{\star}$, $|w|=k$ is the length of $w$, also called the depth of $w$. For $u$ and $v$ in $U$, $uv$ stands for the concatenation of $u$ and $v$. 
\begin{defi} A rooted planar tree $T$ is a finite subset of $U$, containing $\varnothing$, stable by prefix (if $uv \in T$ for $u,v\in U$, then $u\in T$), and such that if $ui\in U$ for $u\in U$ and $i\in \N^{\star}$, then $uj\in T$ for $1\leq j \leq i$.
\end{defi}
An example is shown on the left of Fig.~\ref{fig:Arbre2}. The elements of $T$ are called nodes or vertices. For $u\in T$ and $j\in \N^{\star}$, if $uj\in T$,  then $uj$ is called a child of $u$, and $u$ is the parent of $uj$. The number of children of $u$ is $c_u(T)=\#\{j \in  \N^{\star}, uj \in T\}$. The prefixes of $u$ are called the  ancestors of $u$.
The size of a tree $T$, denoted by $|T|$ is its cardinality (its number of nodes). We also set
\[\|T\|=|T|-1,\]
the number of \underbar{edges} of $T$.
Denote by $\bbT$ the set of trees, and by $\bbT_n$ the subset of those  with $n$ {\bf edges}:
\[\bbT_n=\{ T \in \bbT~,~ \|T\|=n\}.\]
It may be proved by induction that for $n \geq 1$, the cardinality of $\bbT_{n}$ is the $n$th Catalan number:
\[ \#\bbT_{n}=C_{n}=\binom{2n}n/(n+1).\]
\begin{figure}[!h]
\centering
 \includegraphics[scale=0.85]{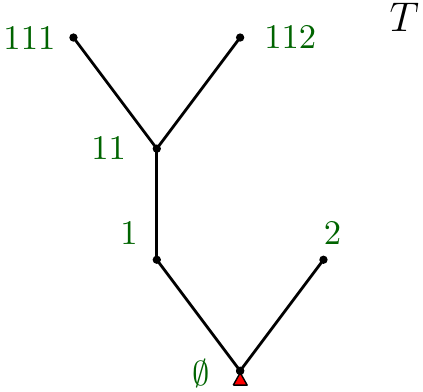}
 \hspace{0.8cm}
 \includegraphics[scale=0.8]{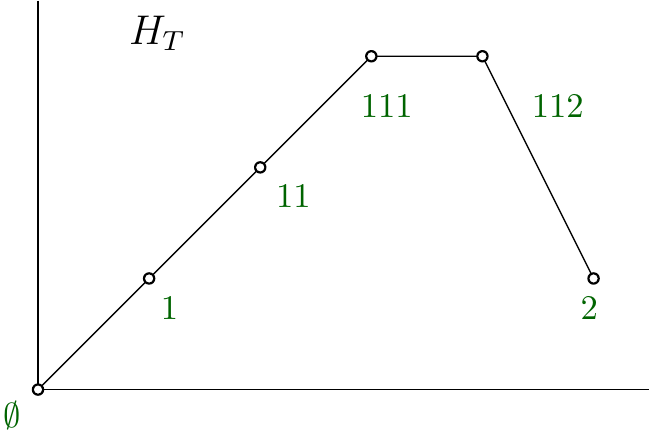}
 \hspace{0.8cm}
  \includegraphics[scale=0.8]{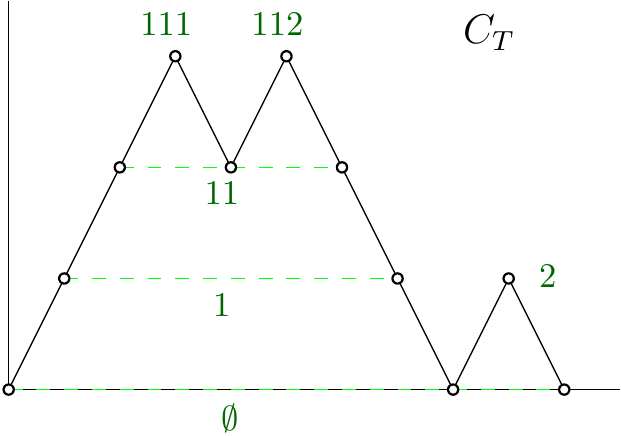}
\caption{\label{fig:Arbre2}From left to right: a rooted planar tree $T$, and the corresponding height process and contour process. 
}
\end{figure}

\vspace{-0.8cm}
\paragraph{Height sequence.} The lexicographical order on $U$ induces an ordering on any tree, and allows using some bijections to represent trees as sequences.

\begin{defi}
The height sequence of $T$ is the sequence $H_T\cro{0, \|T\|}$ of the successive heights of the nodes of $T$ sorted in lex.~order $u_0=\varnothing,u_1,\ldots,u_{\|T\|}$:
\ben
\label{HeightProcess}
H_T(k)=|u_k|, ~~\textrm{ for } 0\leq k \leq \|T\|.
\een
\end{defi}
Here is a classical result (see e.g.~\cite{DuLG, mm01}):
\begin{lem}
\label{lem:TtoH}
For any $n$, the map which to a tree associates its height sequence,
 $$
  \app{\Phi_n^{T\to H}}{\bbT_n}{\bbH_n}{T}{H_T},
  $$ is a bijection,
  where
 \ben
\bbH_n = \{H(\cro{0,n}), H_0=0,
 \Delta H_{i} \leq 1 \textrm{ and } H_{i} >0 \textrm{ for }i\geq 1 \}.
 \een
\end{lem}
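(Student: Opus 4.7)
The plan is to prove bijectivity by constructing an explicit inverse $\Psi_n : \bbH_n \to \bbT_n$. The argument has three ingredients: verifying that $\Phi_n^{T\to H}$ lands in $\bbH_n$, defining $\Psi_n$, and checking that the two compositions are the identity.

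For the first ingredient, fix $T \in \bbT_n$ and enumerate its nodes in lex order as $\varnothing = u_0 \prec u_1 \prec \cdots \prec u_n$. Trivially $H_T(0) = 0$, and $H_T(k) \geq 1$ for $k \geq 1$ since $u_k \neq \varnothing$. For the increment, $u_k$ is the lex-successor of $u_{k-1}$ in $T$: if $u_{k-1}$ has at least one child in $T$, then $u_k = u_{k-1} \cdot 1$ and $|u_k| = |u_{k-1}|+1$; otherwise $u_k$ has the form $v \cdot (j+1)$ where $v$ is a strict prefix of $u_{k-1}$ and $v \cdot j$ lies on the path from $v$ to $u_{k-1}$, so $|u_k| \leq |u_{k-1}|$. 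In both cases $\Delta H_T(k) \leq 1$, hence $H_T \in \bbH_n$.

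For the second ingredient, given $H \in \bbH_n$, define $u_0, \ldots, u_n$ inductively: set $u_0 := \varnothing$, and for $k \geq 1$ let $p_k$ be the unique prefix of $u_{k-1}$ with $|p_k| = H(k)-1$, which exists because $1 \leq H(k) \leq H(k-1)+1 = |u_{k-1}|+1$. Put $c_k := \max\{j \in \N^\star : p_k \cdot j \in \{u_0, \ldots, u_{k-1}\}\}$ with $\max \emptyset := 0$, and set $u_k := p_k \cdot (c_k+1)$. Finally define $\Psi_n(H) := \{u_0, \ldots, u_n\}$. A straightforward induction then shows that $\Psi_n(H) \in \bbT_n$: it contains $\varnothing$, it is prefix-stable because $p_k$ was inserted at an earlier step, and it satisfies the planar tree axiom because the children of each node $p$ enter $\Psi_n(H)$ in the consecutive order $p \cdot 1, p \cdot 2, \ldots$ by the very choice $c_k+1$; moreover the $u_k$ are in lex order and satisfy $|u_k| = H(k)$, so $\Phi_n^{T\to H}(\Psi_n(H)) = H$. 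The reverse identity $\Psi_n \circ \Phi_n^{T\to H} = \Id_{\bbT_n}$ follows because the same inductive rule applied to $H_T$ necessarily reconstructs the lex-ordered node sequence of $T$.

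The main obstacle is the inductive accounting showing that the children of every vertex $p \in \Psi_n(H)$ appear as $p \cdot 1, p \cdot 2, \ldots, p \cdot N$ without gaps. This relies on the fact that between two successive additions of children of $p$, only strict descendants of previously added children of $p$ can be inserted (they are precisely the nodes lex-situated in the interval $(p \cdot c, p \cdot (c+1))$), so $c_k$ is incremented by exactly one at each new child addition to $p$.
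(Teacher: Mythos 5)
Your proof is correct, and it follows the standard route of constructing the explicit inverse $\Psi_n$ that rebuilds the tree node-by-node in lexicographic order from the height sequence; the paper itself states this lemma as a classical fact and cites references rather than giving a proof, so there is no in-paper argument to compare against. The only place you wave a hand slightly is in asserting that $u_0,\ldots,u_n$ come out in lex order, but this follows readily from the two cases ($H(k)=H(k-1)+1$ gives $u_k=u_{k-1}\cdot 1$; $H(k)\le H(k-1)$ gives $u_k=p_k\cdot(c_k+1)$ with $c_k+1$ strictly larger than the digit of $u_{k-1}$ just after $p_k$), so the gap is cosmetic rather than substantive.
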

An example of a tree and corresponding height sequence is shown in Fig.~\ref{fig:Arbre2}. 
\begin{figure}[!h]
\centering
 \includegraphics[scale=1]{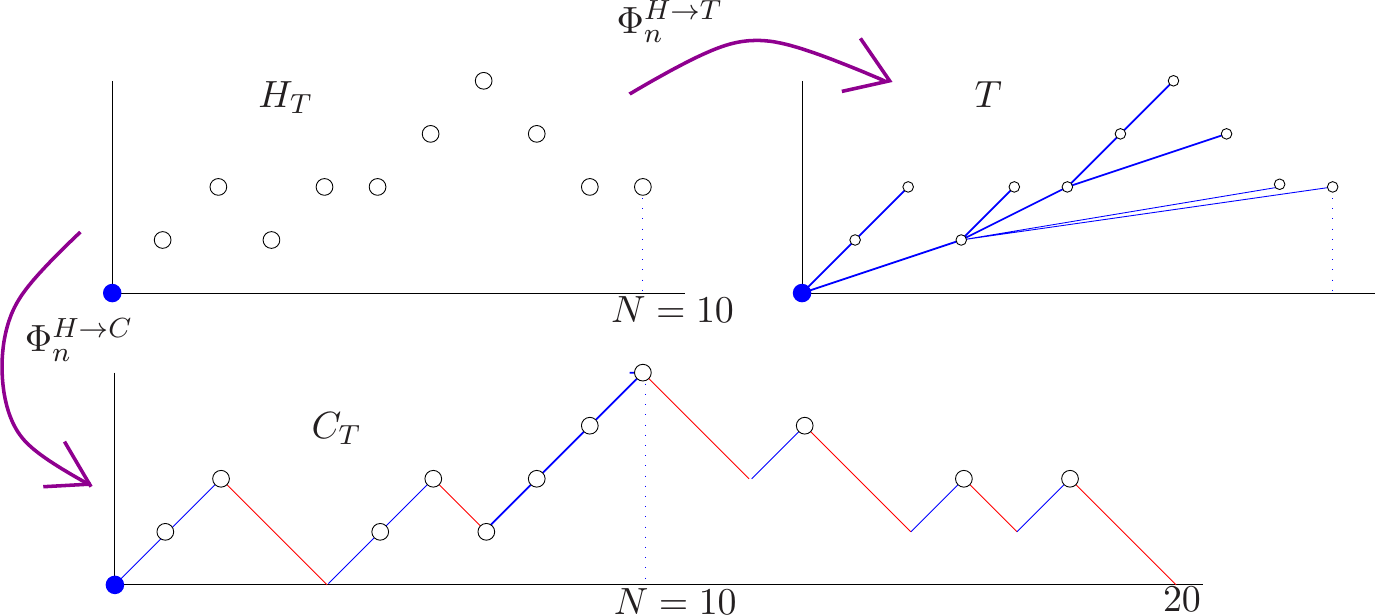}
\caption{\label{fig:HtoT}Illustration of the map $\Phi_n^{H\to T}$, and of the map  $\Phi_n^{H\to C}$;  the sequence of successive final heights of increasing steps in the contour process is exactly the height process. }
\end{figure}
The reverse bijection  $\Phi_n^{H\to T}:\bbH_n\to \bbT_n$  allows {\it constructing a tree from its height sequence}: 
\ben
\Phi_n^{H\to T}= \l(\Phi_n^{T \to H}\r)^{-1}.
\een
It will play an important role (see Fig.~\ref{fig:HtoT}): take any sequence $H(\cro{0,n})$ in $\bbH_n$, and draw the points $z_k=(k,H_k)$ for $k \in \cro{0, n}$ in the plane, as usual using a classical coordinate system: for any $1\leq k\leq n$, draw the segments
$[z_k,z_{\rho(k)}]$ with $\rho(k)=\max\{j <k, H_j=H_k-1\}$.  An example is given in Fig.~\ref{fig:HtoT} for the map $\Phi_n^{H\to T}$.

\paragraph{Contour sequence.} The {\it depth first traversal} of $T$ is a function
 $$ c_T:\cro{\,0,2\, \|T\|\,}\to T,$$ defined as follows: first $c_T(0)=\varnothing$. Assume that the image of $\cro{0,j}$ has been defined for some $0\leq j< 2\|T\|$, two cases arise:\\
-- if  $c_T(j)$ has some non-visited children, that is some children not in the list $c_T(\cro{0,j})$, then $c_T(j+1)$ is the smallest of these children for the lex.~order,\\
-- if all the children of $c_T(j)$ have been visited, then $c_T(j+1)$  is the parent of $c_T(j)$.
\begin{defi}
\label{def:contour-discr}
  The contour sequence  $C_T(\cro{\,0, 2\, \|T\|\,})$ of $T$ is the sequence defined by
\ben\label{eq:Cc}
C_T(k)=|c_T(k)|, \textrm{ for } k \in \cro{0,2\|T\|},
\een
that is the successive heights of the nodes of $T$ when turning around clockwise (see Fig.~\ref{fig:Arbre2} $(iii)$).
\end{defi}

For any $k\in\cro{0,2\|T\|-1}$, the pair $(c_T(k),c_T(k+1))$ is an edge of $T$. 
If $T$ is drawn in the plane, it is suitable to consider that $c_T$ is a walk around the tree, and that for any $0\leq k \leq 2\|T\|$,
\ben\label{eq:corner}
\Bigl(c_T\bigl( k-1 \mod 2 \|T\|\bigr),~c_T(k),~c_T(k+1 \mod 2 \|T\|)\Bigr)
\een
is the $k$th \textit{corner} of the tree. We will call this corner $c_T(k)$ for simplicity.\par
The following result is a classical result in combinatorics (see e.g.~\cite{mm01}).
\begin{lem}\label{lem:TtoC} For any $n \geq 1$, the map which to a tree associates its contour sequence,
\ben
\app{\Phi_n^{T\to C}}{\bbT_n}{\Dyck_{2n}}{T}{C_T},
\een
is a bijection, where $\Dyck_{2n}$ is the set of Dyck paths with $2n$ steps:
\ben\label{eq:Dyck}
\Dyck_{2n}=\{S(\cro{0,2n}),~S_0=S_{2n}=0 ,~\Delta S_i \in \{-1,1\} ~\textrm{and }~ S_i\geq 0,~  \forall i \in\cro{1,2n} \}.
\een
\end{lem}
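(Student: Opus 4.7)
The plan is first to verify that $\Phi_n^{T\to C}$ indeed lands in $\Dyck_{2n}$, and then to exhibit an explicit inverse $\Psi: \Dyck_{2n} \to \bbT_n$. For the well-definedness I would invoke the recursive description of $c_T$: each transition $c_T(i) \to c_T(i+1)$ moves either to an unvisited child (height $+1$) or to the parent (height $-1$), so $\Delta C_T(i) \in \{-1,+1\}$. Non-negativity is immediate from $C_T(i) = |c_T(i)| \geq 0$. An easy induction on $\|T\|$ shows that every edge is traversed exactly twice by $c_T$ (once downward, once upward when backtracking), so after $2\|T\|=2n$ steps one returns to the root, giving $C_T(0)=C_T(2n)=0$.

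The inverse $\Psi$ I would define constructively. Given $S \in \Dyck_{2n}$, build a sequence $(T_i, v_i)_{0 \leq i \leq 2n}$ of partial trees and current vertices by setting $T_0 = \{\varnothing\}$, $v_0 = \varnothing$, and iterating as follows: if $\Delta S_i = +1$, let $j = c_{v_{i-1}}(T_{i-1}) + 1$, set $v_i = v_{i-1} j$ and $T_i = T_{i-1} \cup \{v_i\}$; if $\Delta S_i = -1$, let $v_i$ be the parent of $v_{i-1}$ and $T_i = T_{i-1}$. Put $\Psi(S) := T_{2n}$. A straightforward invariant is $|v_i| = S_i$ for all $i$, so the parent in the down-step rule is always well-defined (since $\Delta S_i=-1$ and $S_i\geq 0$ force $S_{i-1}\geq 1$). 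By construction each $T_i$ is prefix-stable and respects the sibling ordering, so $T_{2n} \in \bbT$, and it has exactly $n$ edges, one per up-step of $S$.

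To conclude I would check $\Psi \circ \Phi_n^{T\to C} = \Id_{\bbT_n}$ and $\Phi_n^{T\to C} \circ \Psi = \Id_{\Dyck_{2n}}$ by a parallel induction on $i$ showing that, in either composition, the vertex $v_i$ produced coincides with $c_T(i)$ and $T_i$ coincides with $c_T(\cro{0,i})$. The main (mild) obstacle is to verify that the up-step rule of $\Psi$, which always creates the index $c_{v_{i-1}}(T_{i-1}) + 1$, matches the choice made by $c_T$, which visits the lex-smallest unvisited child: since $c_T$ necessarily visits the children of any node in the order $1, 2, 3, \ldots$ (otherwise the resulting tree would fail the property ``$uj\in T$ and $i<j$ imply $ui\in T$''), the smallest unvisited index at any given moment equals $c_{v_{i-1}}(T_{i-1}) + 1$. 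With this observation, both inductions go through without difficulty, and bijectivity follows.
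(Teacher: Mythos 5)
The paper itself offers no proof of this lemma: it simply states it as a classical combinatorial fact and refers the reader to \cite{mm01}. Your argument is a complete and correct constructive proof. The well-definedness check is right: each depth-first step moves either to a child or back to the parent, giving increments $\pm 1$; each edge is traversed exactly twice, so the walk has length $2n$ and returns to $0$; and non-negativity is automatic since $C_T(i)=|c_T(i)|$. The inverse $\Psi$ you define is the standard one (read the path left to right, create a fresh child on each up-step, backtrack to the parent on each down-step), and the invariant $|v_i|=S_i$ does indeed guarantee the down-step rule is applicable ($\Delta S_i=-1$ with $S_i\geq 0$ forces $|v_{i-1}|\geq 1$) while the up-step count gives $\|T_{2n}\|=n$. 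The one spot that deserves slightly more careful phrasing is the justification that $\Psi$ reproduces $c_T$: the correct reasoning is twofold --- the traversal is \emph{defined} to pick the lex-smallest unvisited child, and separately the tree axiom (``$ui\in T$ implies $uj\in T$ for $1\leq j\leq i$'') forces the children of any node to form an initial segment $\{1,\dots,k\}$ of $\N^{\star}$ --- and together these show the smallest unvisited child of $v_{i-1}$ is indexed $c_{v_{i-1}}(T_{i-1})+1$. Your parenthetical reads as though the tree axiom alone constrains the traversal order, which conflates these two facts, but the conclusion is right and the proof as a whole goes through.
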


The {\it distance between two nodes} $c_T(k)$ and $c_T(k')$ in the tree can be expressed in terms of $C_T$:
\ben
d_T\bigl(c_T(k),c_T(k')\bigr)=C_T(k)+C_T(k')-2 \widecheck{C}_T( k,k').
\een

As represented in Fig. \ref{fig:HtoT}, there is also a direct way to associate the contour process of a tree $T$ to its height process.
  \begin{lem}
  The map $\app{\Phi_n^{H\to C}}{\bbH_n}{\Dyck_{2n}}{H\cro{0,n}}{C\cro{0,2n}}$,  which sends a height process $H\cro{0,n}$ (of a tree $T$) to the corresponding contour process (the one of the tree $T$) is a bijection.
\end{lem}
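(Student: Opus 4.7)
The cleanest route is to exploit the two bijections already proved: Lemma \ref{lem:TtoH} gives $\Phi_n^{T\to H}:\bbT_n\to\bbH_n$ and Lemma \ref{lem:TtoC} gives $\Phi_n^{T\to C}:\bbT_n\to\Dyck_{2n}$. The map $\Phi_n^{H\to C}$ stated above associates to the height sequence of a tree $T\in\bbT_n$ the contour sequence of the same tree $T$, so by construction
\[
\Phi_n^{H\to C}\;=\;\Phi_n^{T\to C}\circ\bigl(\Phi_n^{T\to H}\bigr)^{-1}.
\]
As a composition of bijections it is a bijection, which settles the statement. I expect the whole proof to be no longer than this observation.

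For readability one can then supplement this with the explicit description of the inverse, already suggested by Fig.~\ref{fig:HtoT}. Given $C\in\Dyck_{2n}$, the $2n$ increments $\Delta C_i$ take values $\pm 1$ with exactly $n$ of them equal to $+1$ (because $C_0=C_{2n}=0$ and $C\geq 0$). Let $i_1<i_2<\cdots<i_n$ be the indices of the upward steps of $C$, and set
\[
H_0:=0,\qquad H_k:=C(i_k)\;\;\text{for }1\leq k\leq n.
\]
The plan is then to check three routine facts: (i) the sequence $H(\cro{0,n})$ produced this way lies in $\bbH_n$; (ii) $\Phi_n^{T\to C}\bigl(\Phi_n^{H\to T}(H)\bigr)=C$; and (iii) applied to the height sequence of any $T\in\bbT_n$, the recipe returns that same height sequence.

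For (i), positivity of $H_k$ for $k\geq 1$ follows from the fact that upward steps of $C$ land at a height $\geq 1$, and $\Delta H_k\leq 1$ comes from the observation that between two successive upward steps $i_k$ and $i_{k+1}$ the Dyck path may descend arbitrarily but then rises by exactly one at $i_{k+1}$, so $H_{k+1}\leq H_k+1$. Points (ii) and (iii) amount to the same combinatorial remark, which is the content of Fig.~\ref{fig:HtoT}: in the depth-first traversal $c_T$ of a tree, the $k$th time a new vertex is discovered (i.e.~the $k$th upward step in $C_T$) is exactly the $k$th moment visited by the height process $H_T$, and the heights recorded at those moments coincide. The main (and only) obstacle here is a careful bookkeeping of this correspondence between the depth-first walk around $T$ and its enumeration in lex order; once this is unpacked, both (ii) and (iii) are immediate. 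Neither route presents any real difficulty: the statement is a combinatorial corollary of Lemmas \ref{lem:TtoH} and \ref{lem:TtoC}, worth isolating only because subsequent arguments will routinely switch between height and contour encodings without passing through the tree.
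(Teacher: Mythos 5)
Your proof is correct and matches the paper's in substance: the paper's argument is precisely the observation that the values $H_1,\ldots,H_n$ are the heights recorded at first-visit times, which are exactly the upward steps of $C$, i.e.\ your supplementary description via the indices $i_1<\cdots<i_n$. The opening remark that $\Phi_n^{H\to C}=\Phi_n^{T\to C}\circ\bigl(\Phi_n^{T\to H}\bigr)^{-1}$ is a composition of the bijections of Lemmas~\ref{lem:TtoH} and~\ref{lem:TtoC} already settles the claim, and the rest of your write-up is the explicit inverse the paper also sketches.
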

\begin{proof}
The bijection is simple: both processes start at 0, and the contour process ends at 0. The successive values in $H\cro{1,n}$ correspond to the first visit times of the nodes according to the lexicographical  order. These heights are then the successive heights of the contour process $C\cro{0,2n}$ at the times $t_i$ such that $C_{t_i}=C_{t_i-1}+1$, since the contour process increases every time a new node is visited. \end{proof}
We will come back to this bijection in the proof of Theorem \ref{theo:ini-rec3}.

\subsection{Discrete snakes and iteration of discrete snakes}

\paragraph{Discrete snakes. } Labelings and snakes are defined as in the continuous case.
\label{eq:ds}
\begin{defi}\label{def:LT} Let $T$ be a planar tree, and $c_T=(c_T(k), 0\leq k \leq 2\|T\|)$ be its contour sequence.  A labeling of $T$ is a sequence $\ell_T=(\ell_T(k), 0\leq k \leq 2\|T\|)$ such that
  \[ c_T(k)=c_T(k')\imp \ell_T(k)=\ell_T(k'), \textrm{ for all } k,k'\in\cro{0,2\|T\|}.\]
The tour of the corresponding discrete snake is defined as $(C_T,L_T)$,  where
  \be
  C_T(k) &=& |c_T(k)|,\quad 0 \leq k \leq 2\|T\|,\\
  L_T(k) &=& \ell(c_T(k)),\,\,   0 \leq k \leq 2\|T\|,
  \ee
meaning that $L_T(k)=\ell(c_T(k))$ is the label of the $k$th node visited by the depth first traversal.

  \end{defi}
Let $\mu$ be a probability distribution on $\R$, and let $T$ be a planar tree (deterministic for the moment). The {\bf standard branching random walk} with underlying tree $T$ and $\mu$-distributed spatial increments is defined as follows: attribute to each node $u$ of $T$ different from the root $\varnothing$ a random variable $\bY_u$ where $(\bY_u, u\in T\setminus \{\varnothing\})$ is a family of independent random variables with common distribution $\mu$, and set $\bY_{\varnothing}=0$.
Now, consider the ``spatial'' labeling $\Bell=(\Bell(u),u\in T)$ of $T$ defined by
\ben
\Bell(u)=\sum_{v \preceq u} \bY_v,~~\textrm{ for any } u \in T,
\een
where the sum is taken on the set of ancestors $v$ of $u$.
Hence, $\Bell(\varnothing)=0$, and along each branch of $T$ the labels evolve as a random walk with increment distribution $\mu$.  
This definition extends to random trees, by sampling first the underlying tree $T$ at random, and by constructing the branching random walk using spatial increments independent of the tree $T$. \par

In the sequel, we will consider only branching random walks with increment distribution 
\ben\label{eq:nu}
\nu:=\frac{1}{3}\l(\delta_1+\delta_0+\delta_{-1}\r),
\een
so that the child of a vertex with label $l$ has label $l-1, l$, or $l+1$ with equal probability. For each branching random walk with underlying tree $T$ (random or not) and spatial increments $\nu$-distributed,  consider the spatial labeling $\Bell=(\Bell(u),u\in T)$.

When $T$ has $n$ edges, the label process $\bL_T$ of a labeled tree $(T,\Bell)$ is an element of $\bbL_{2n}$ where, for any $N\geq 0$,
\ben\label{eq:clN}
\bbL_{N}=\{L(\cro{0,N}), L_0=L_N=0 \textrm{ and } \Delta L_{j} \in \{-1,0,1\} \textrm{ for any }j 
\}.
\een
The (tour) of the random discrete snake with $n$ edges, is $(C_{\bT},\bL_\bT)$ for $\bT$ taken uniformly in $\Tset_n$.

\paragraph{Discrete conjugation map.} We here define a map  $\Phi_N^{L\to H}$ similar to the conjugation map \eqref{eq:def-conj} in the discrete setting: it sends $\bbL_N$ onto $\bbH_N$.
\begin{defi}\label{def:LtoH}
For each $N\geq 0$, the discrete conjugation map $\Phi_N^{L\to H}$ is the map
\ben
\app{\Phi_N^{L\to H}}{\bbL_N}{\bbH_{N}}{L(\cro{0,N})}{H(\cro{0,N}):=\Phi_N^{L\to H}(L(\cro{0,N}))}
\een
where $H$ is defined by 
   \ben\label{eq:qfqeds}
 \bpar{lll}  H(0)&=& 0\\
   H(j)&=&1+L\bigl(A+j-1 \mod N\bigr)-L(A), \textrm{ for } j \in \cro{1,N},
   \epar\een
   where 
$A = \min \argmin L(\cro{0,N})$ (since $L(0)=L(N)$, necessarily $0\leq A<N$). An example is shown in Fig.~\ref{fig:Conj}. 
\end{defi}
\begin{figure}[!h]
\centering
 \includegraphics[scale=1]{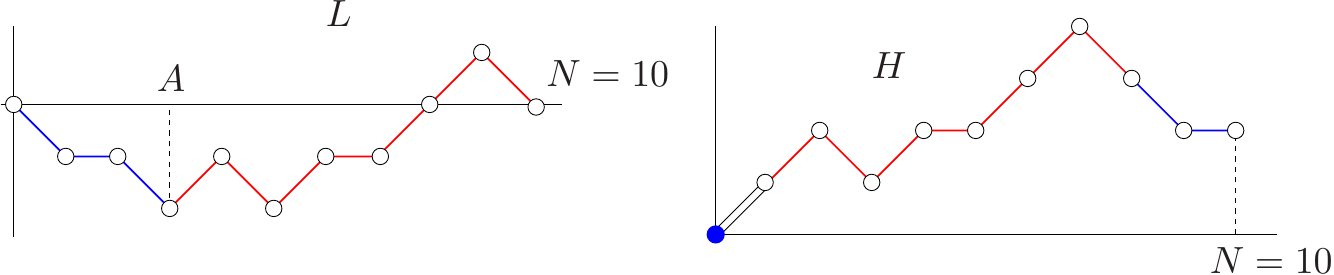}
\caption{\label{fig:Conj}The map $\Phi_{10}^{L\to H}$.
}
\end{figure}
\noindent \bls\, In the discrete case, height processes and contour processes are different objects, and as a matter of fact, they cannot be obtained by a simple conjugation of Lukaciewish walks (which are ``bridge type trajectories'' of random walks with increments in $\N\cup\{-1\}$ conditioned to end at $-1$ at the end),    since the height process ends at a positive position, and contour processes of discrete trees have only steps $\pm 1$ (see e.g.~\cite{mm01}, in which Lukaciewish walks are called depth first queue processes). \\
\noindent \bls\, When $A+j-1$ passes from $N-1$ to $N$,  $A+j-1 \mod N$ passes from $N-1$ to 0. Since $L(N)=L(0)=0$, this does not provoke any bad border effect on the increments $(\Delta H_j,1\leq j \leq N)$, which all belong to $\{+1,-1,0\}$.\\
\bls\, Height processes are not exactly elements of the set $C^0[0,N]$ (of continuous functions on $[0,N]$  starting and ending at 0), so that the nature of  $\Phi_N^{L\to H}$ is a bit different from that of $\Conj$. The composition  $\Phi_N^{H\to C}\circ \Phi_N^{L\to H}$ will be used and is closer in nature to the conjugation map (see e.g.~in Prop.\ref{pro:uqn}).

\paragraph{Iterated discrete snakes.} We define {\it iterated discrete snakes} as we did in the continuous case.

\begin{defi}\label{def:rds} For any positive integer $ \D$,  we call $ \D$th random discrete snake of size $n$, the process
  \ben \label{eq:dsiche}
  \RBS_n[ \D]:=\l(\l[ \bC^{(1)}_n,\bL^{(1)}_n\r], \ldots,\l[ \bC^{( \D)}_n,\bL^{( \D)}_n\r] \r),
  \een
  such that
  \bis
  \its $\bC^{(1)}_n$ is uniform in $\Dyck_{2n}$ (the contour process of a uniform planar tree with $n$ edges),
  \its if $\bC_n^{(j)}$ is defined for some $j\geq 1$, $\bL^{(j)}_n$ is the label process of a branching random walk with increment distribution $\nu$ as given in \eref{eq:nu} with underlying tree $\btnj n j$, the tree with contour process $\bC^{(j)}_n$,
  \its if for some $j\geq 2$, $\bL^{(j-1)}_n\in\bbL_{2^{j-1}n}$ is known, then the 
 $j$th tree $\btnj n j$  is defined by its height process, obtained by conjugation of $ \bL^{(j-1)}_n$,
  \ben
  \bH^{(j)}_n:= \Phi_{2^{j-1}n}^{L\to H} (\bL^{(j-1)}_n) \in\bbH_{2^{j-1}n},
  \een
  and its contour process is $\bC^{(j)}_n= \Phi_{2^{j-1}n}^{H\to C} \l(\bH^{(j)}_n\r)\in  {\sf Dyck}_{2^{j}n} $.
\eis
\end{defi}

Some simulations for the processes of $\RBS_{5000}[4]$ are shown in Fig.~\ref{fig:Bij-Sc15}. \\

\begin{figure}[h!]
\centering \includegraphics[scale=0.34]{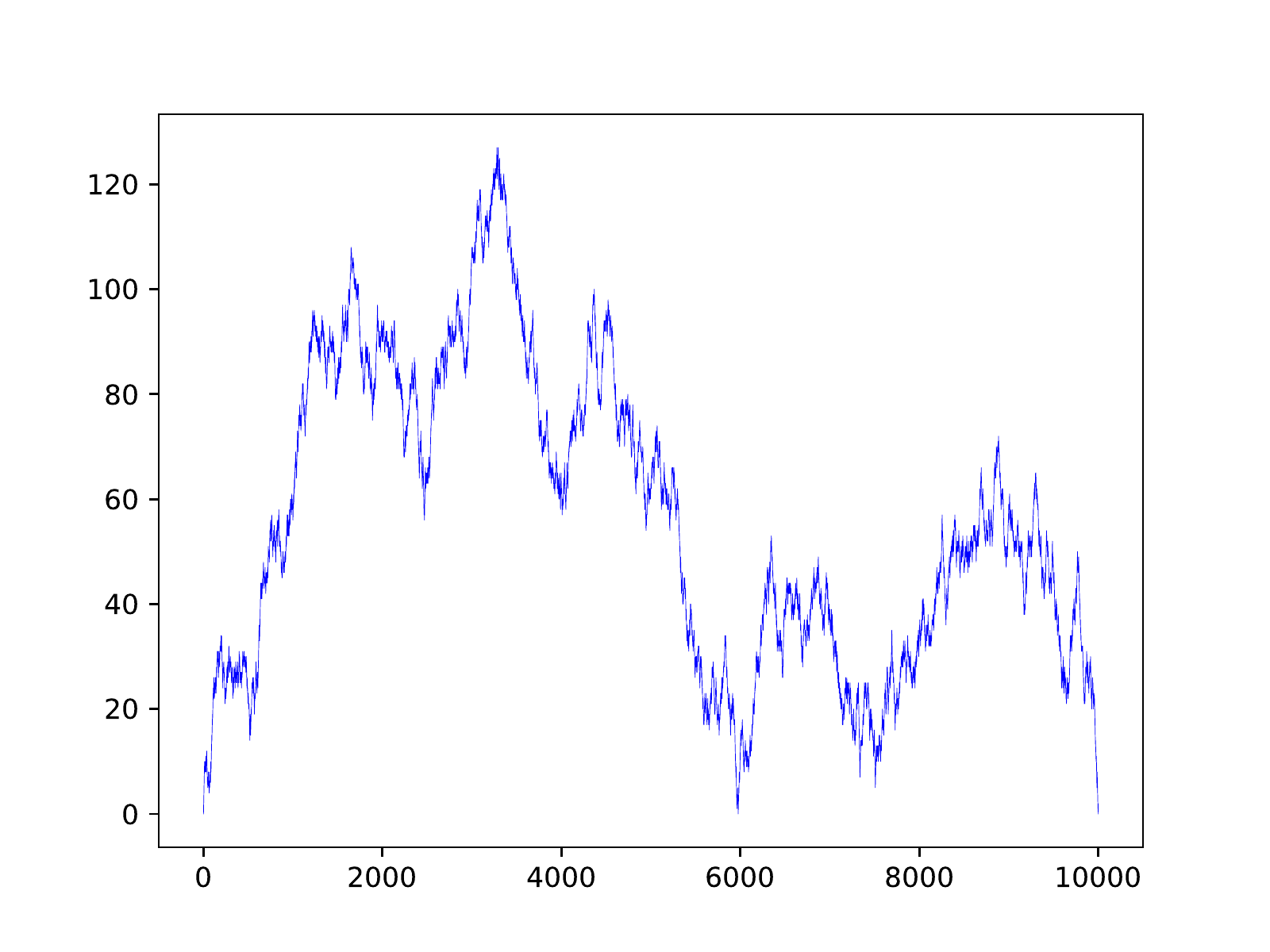}  \includegraphics[scale=0.34]{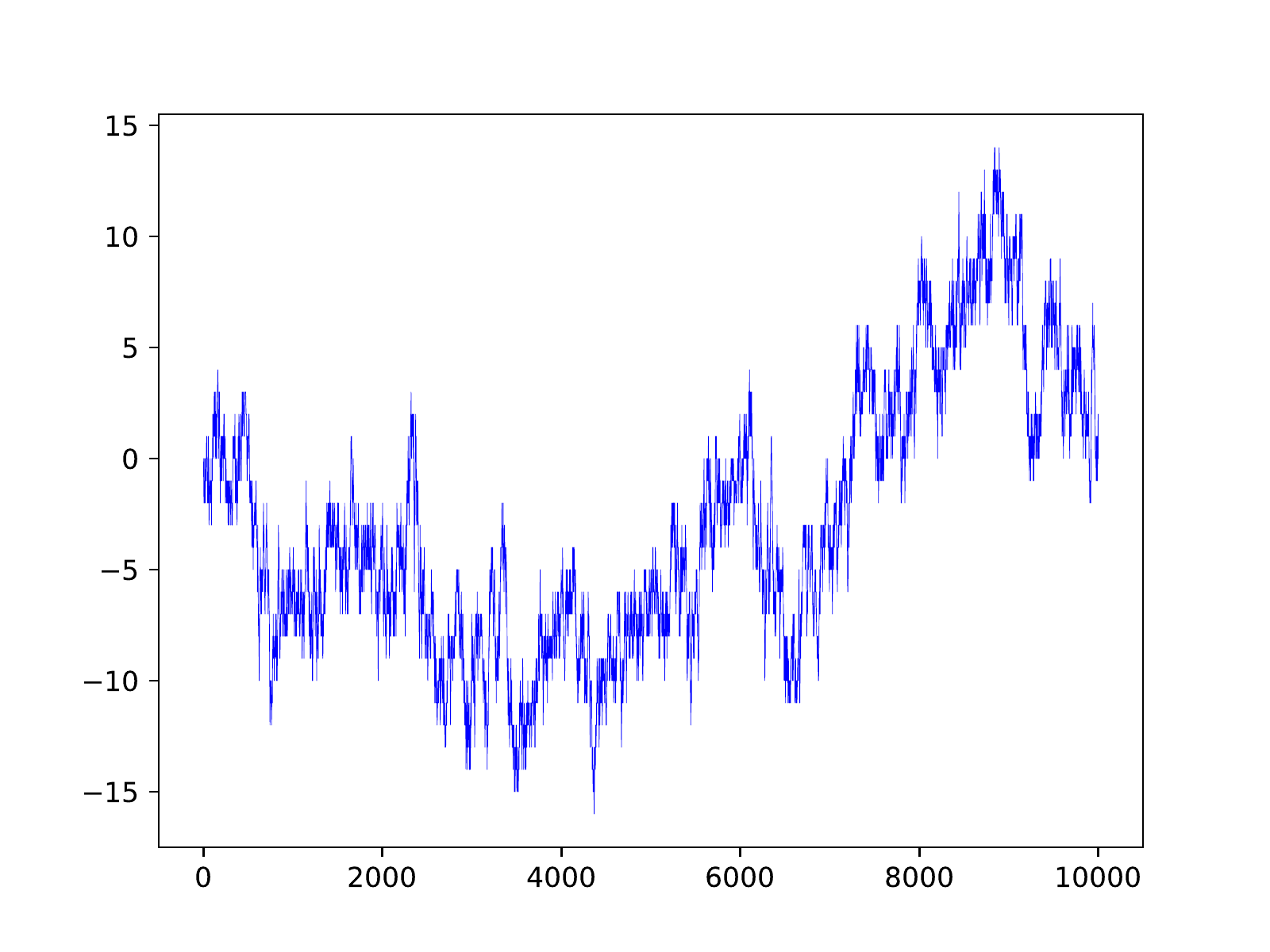} ~\\
\centering \includegraphics[scale=0.34]{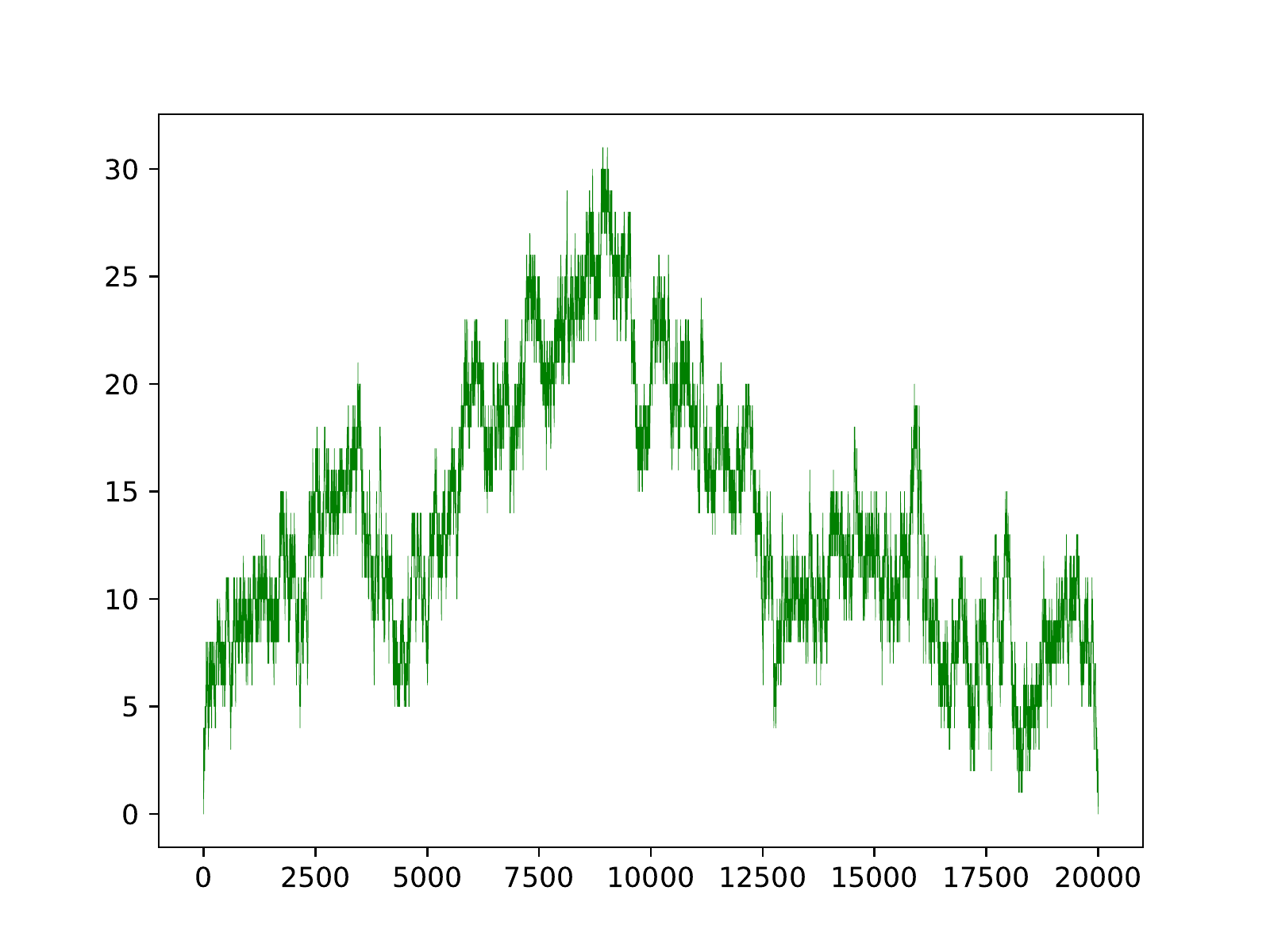}  \includegraphics[scale=0.34]{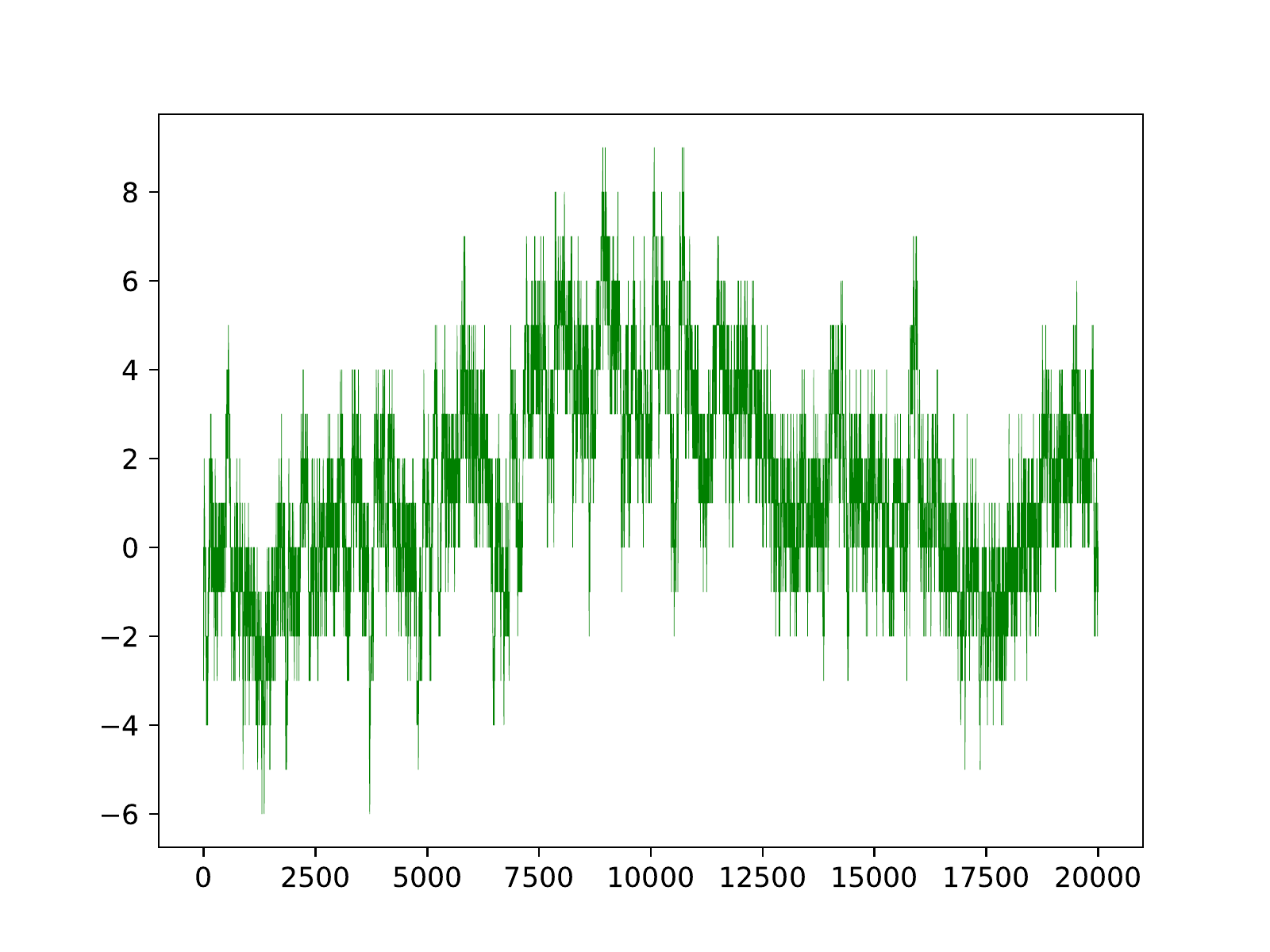} ~\\
\centering \includegraphics[scale=0.34]{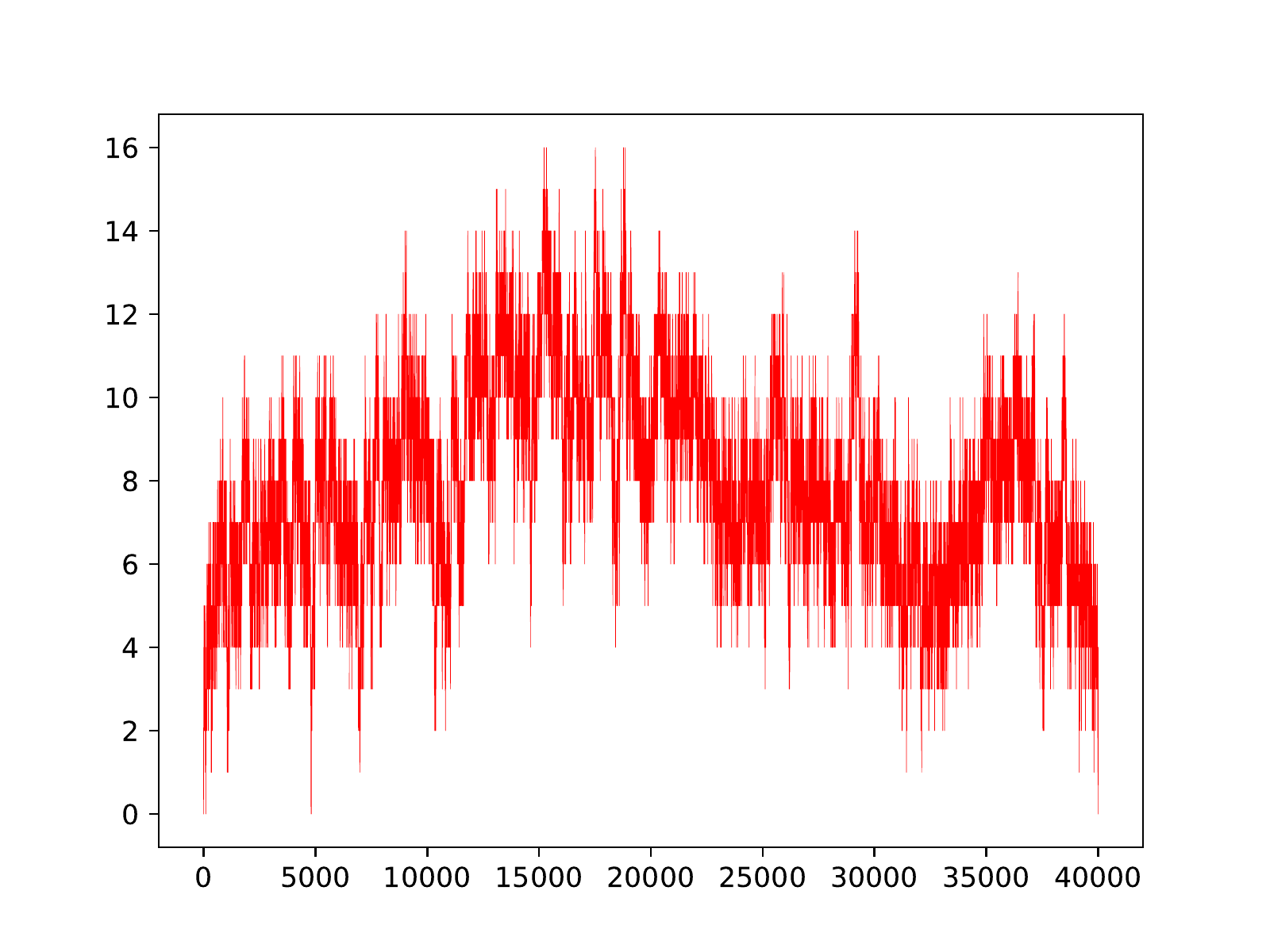}  \includegraphics[scale=0.34]{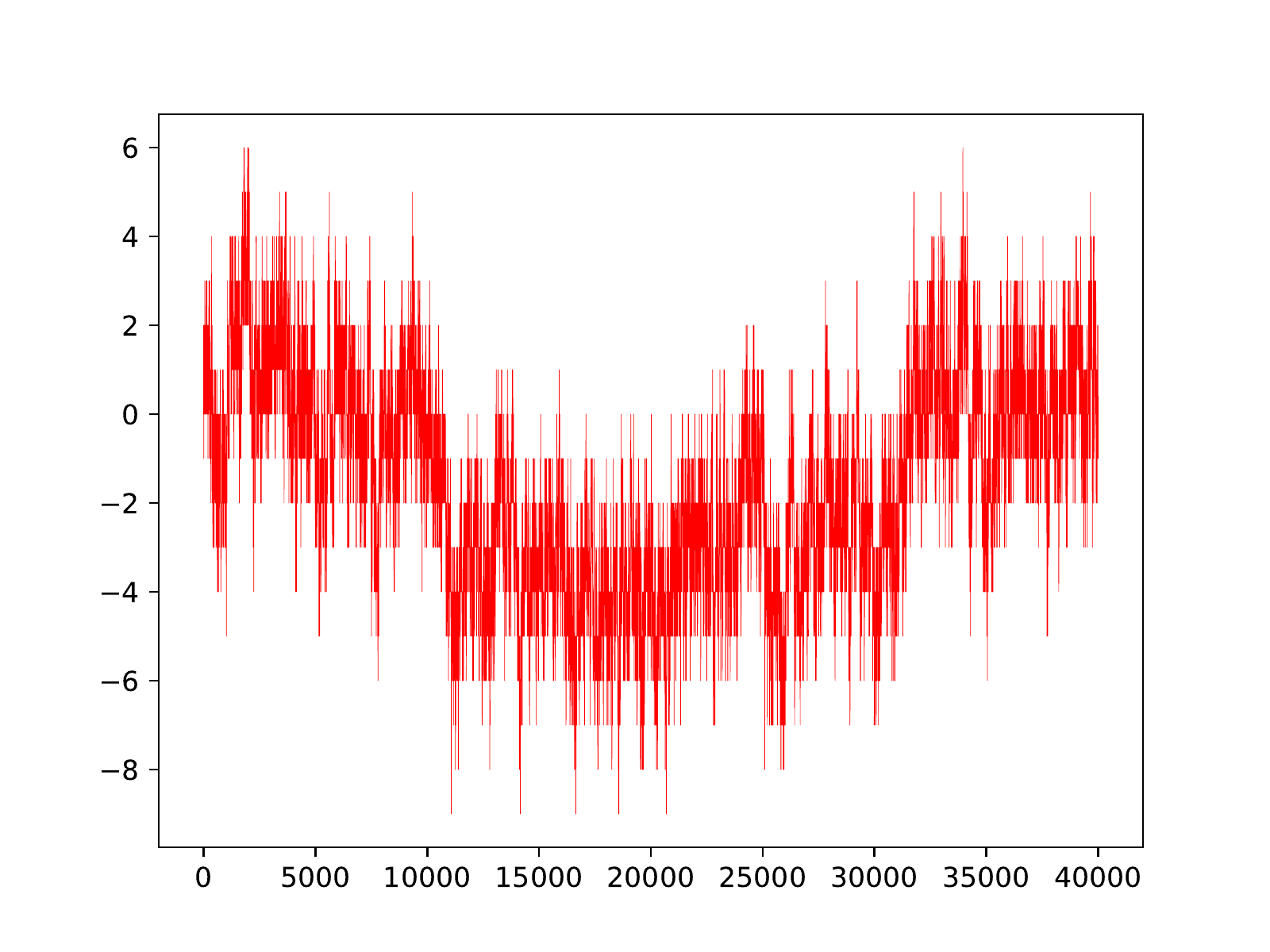} ~\\
\centering \includegraphics[scale=0.34]{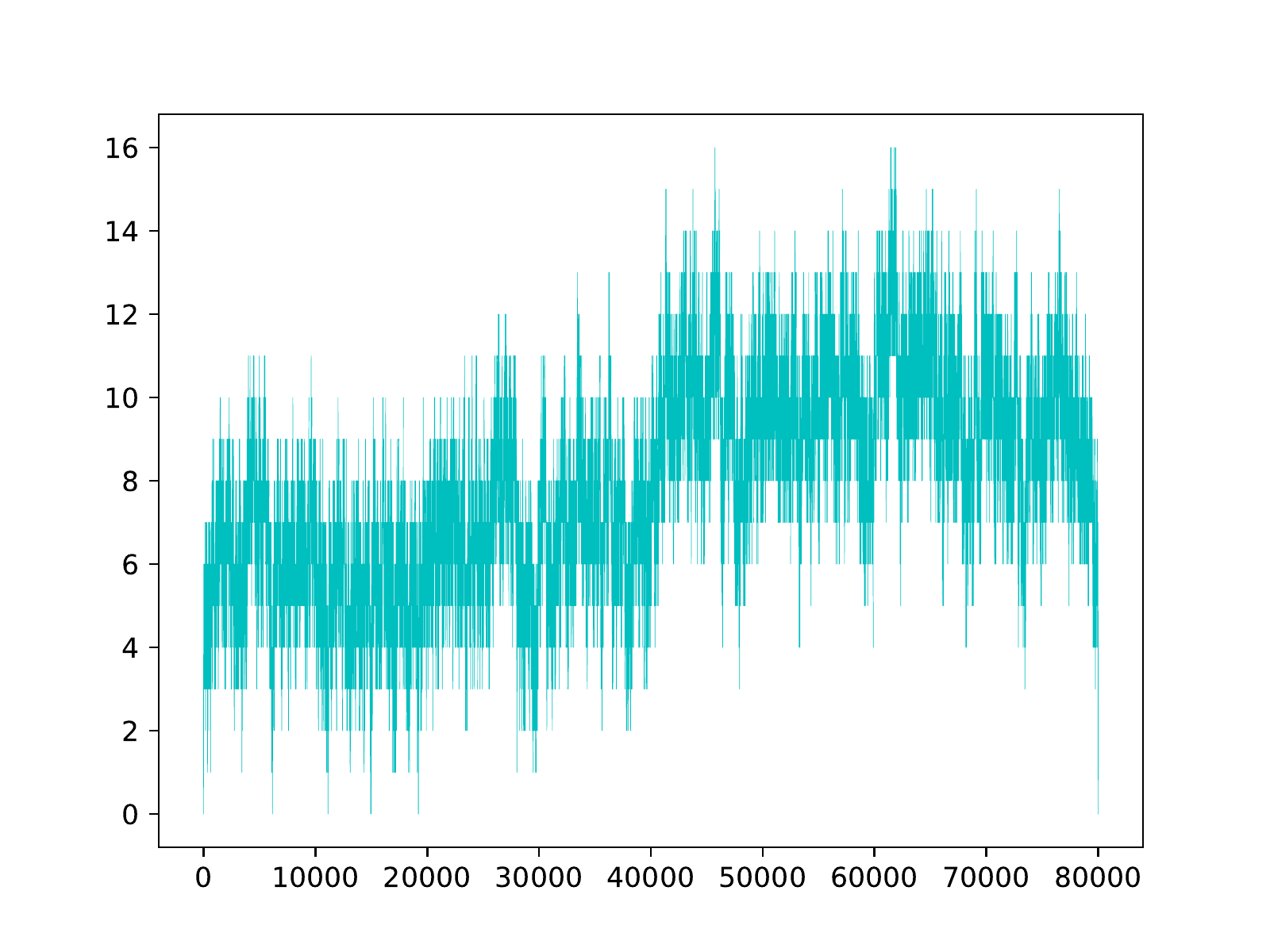}  \includegraphics[scale=0.34]{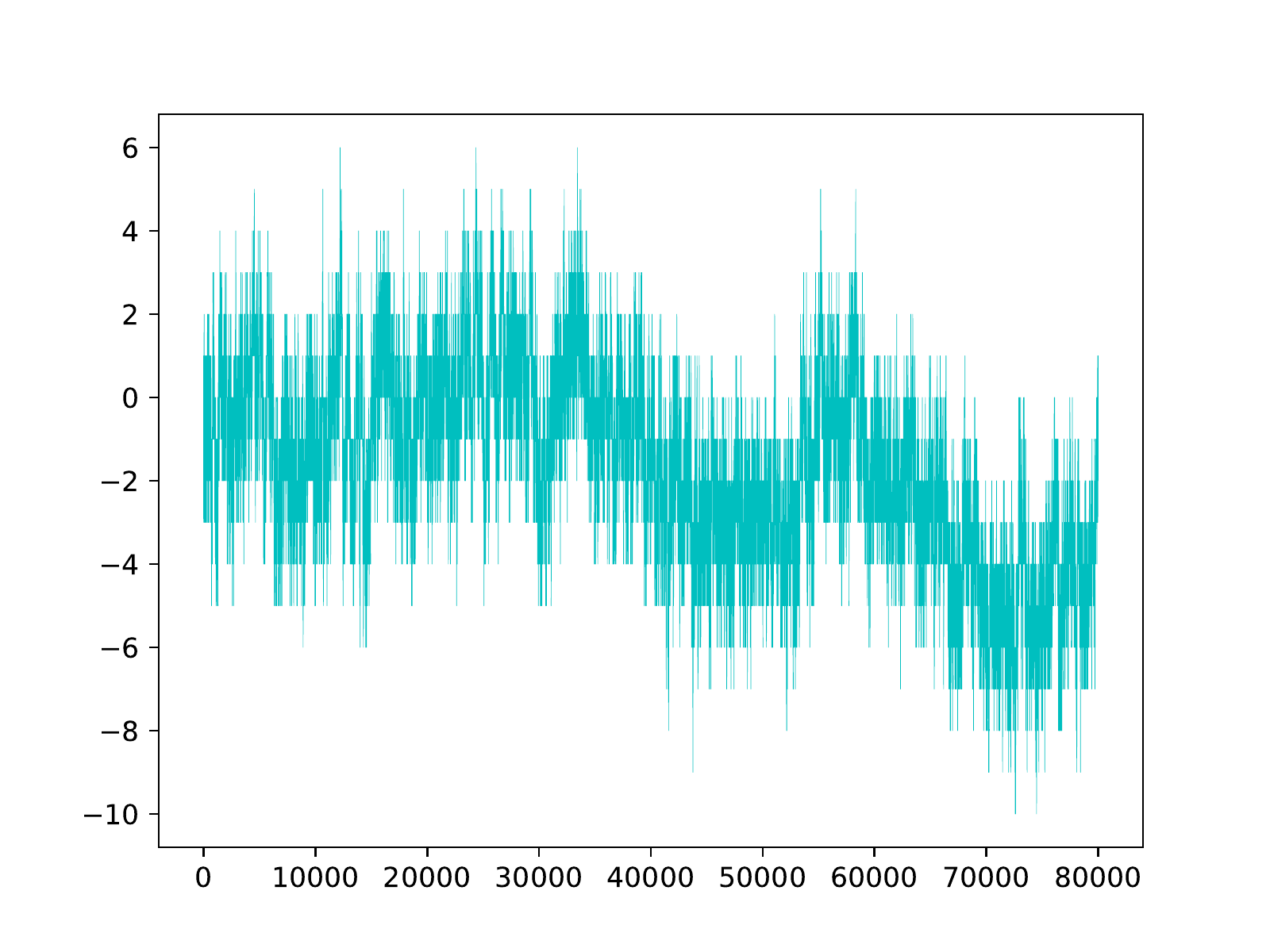} 
\caption{\label{fig:Bij-Sc15}Simulation of $\protect\RBS_{5000}[4]$. On the $k$th line are the processes $\bC^{(k)}_{5000}$ and $\bL^{(k)}_{5000}$.  The range decreases when $k$ increases. The ``irregularities'' of the processes increase with $k$.}   
\end{figure}

\begin{figure}[h!]
  \centerline{ \includegraphics[scale=0.4]{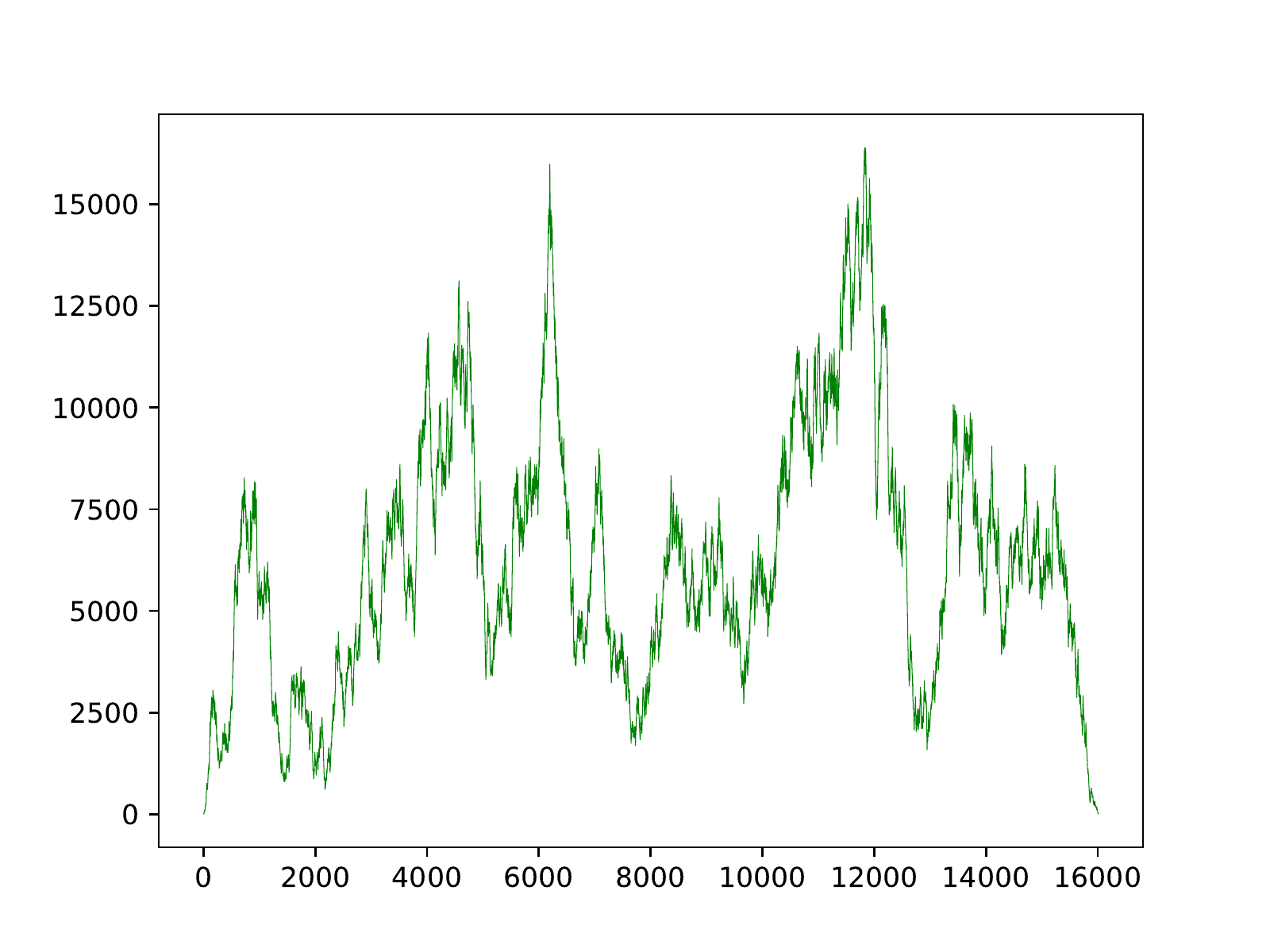}
    \includegraphics[scale=0.4]{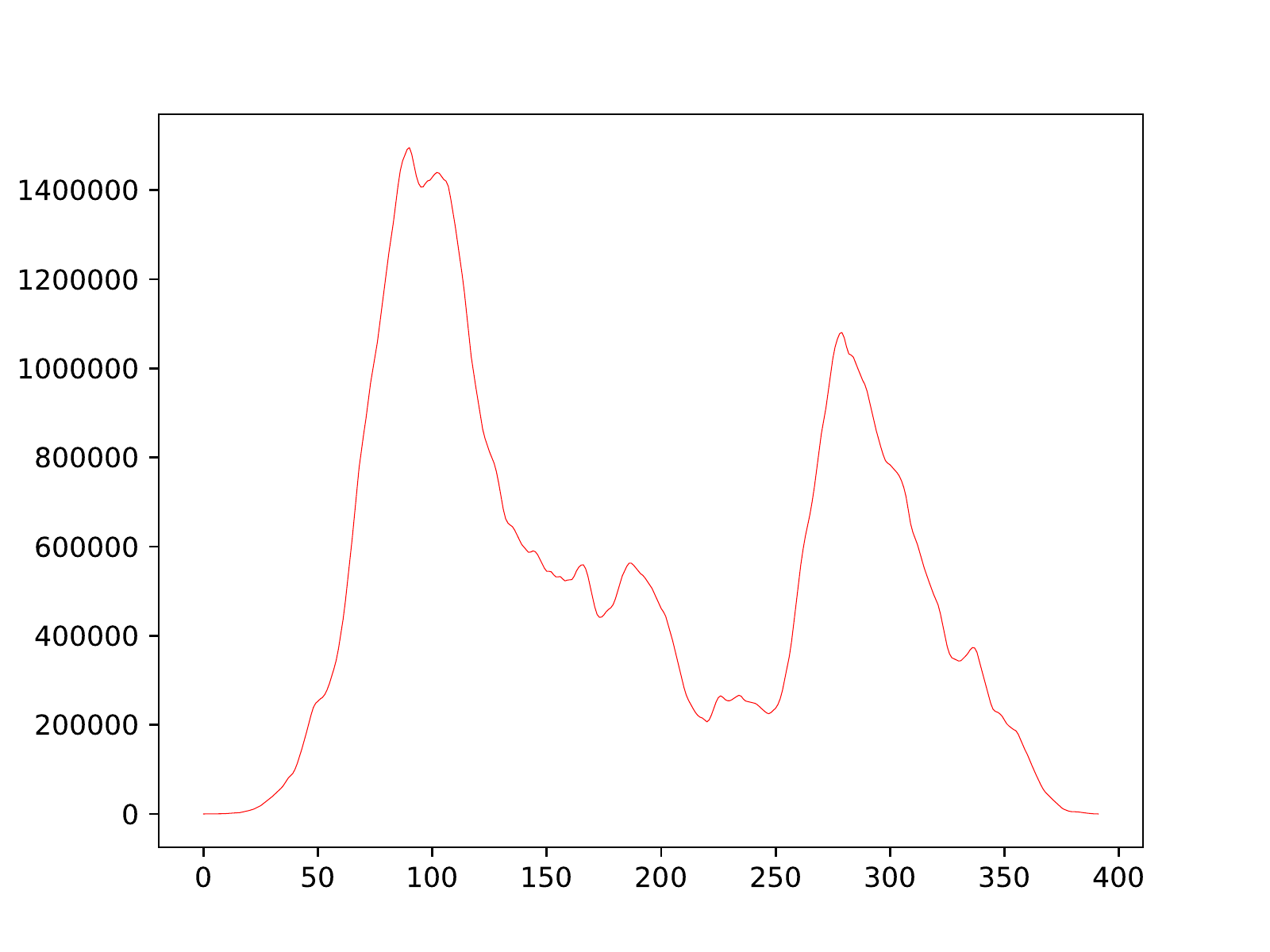}}~\\
  \centerline{  \includegraphics[scale=0.4]{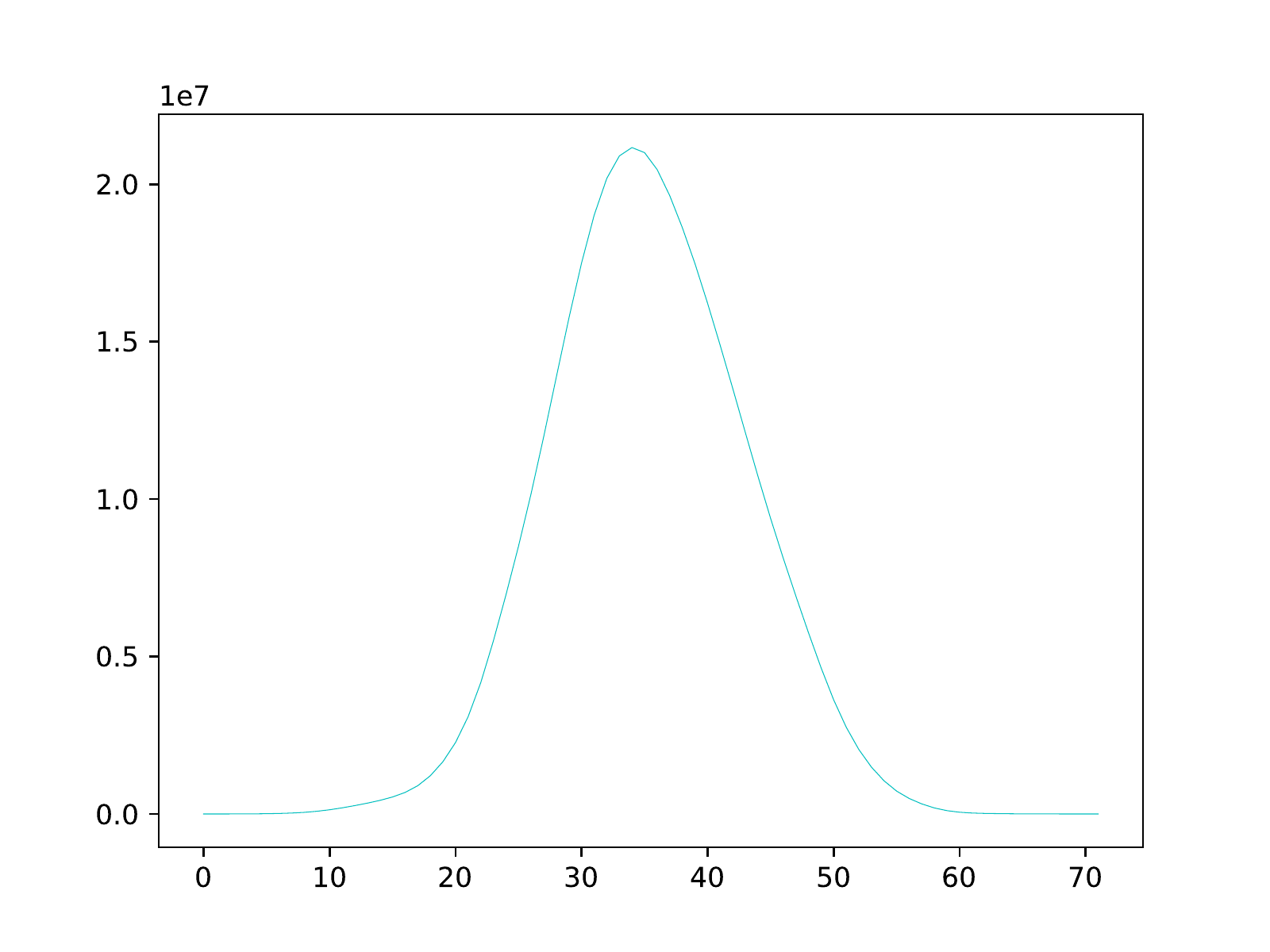}
  \includegraphics[scale=0.4]{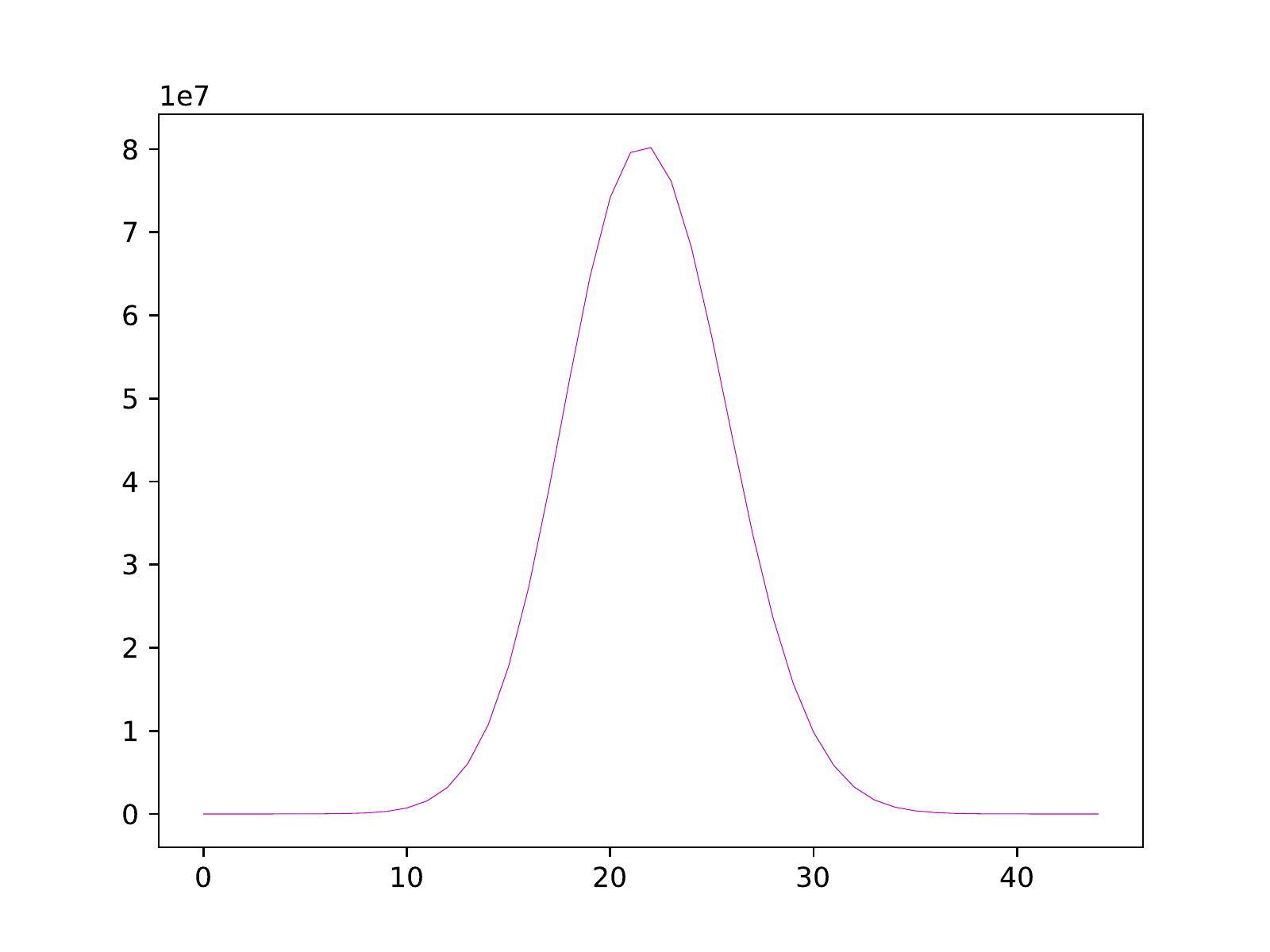}}
  \caption{\label{fig:prof0123}Profiles of the iterated trees 0, 1, 2, 3 starting from a tree with $50\times 10^6$ nodes (the profile of a rooted graph, is the sequence $(N_i,i\geq 0)$, where $N_i$ is the number of nodes at distance $i$ to the root). Observe how the profiles become smoother and smoother: asymptotically, the first profile converges to the local time of the Brownian excursion (Biane \& Yor \cite{BY}), which is known to be itself the Brownian excursion up to a change of time. The second one converges to the density of the integrated super Brownian excursion, the density of ISE (see \cite{MMsnake,janmarck05} and Chassaing \& Louchard \cite{CS} for the same result for random rooted quadrangulations with $n$ faces): it is known to be differentiable (Bousquet-Mélou \& Janson \cite{BMJ}), but is expected not to possess a second derivative; it appears on this simulation that the following profiles  become even smoother.}
  \end{figure}
\noindent\bls\, The discrete conjugation map $\Phi_N^{L\to H}$ is length preserving; its image is the set of height processes of trees with $N$ edges (they are indexed by $\cro{0,N}$). \\
\noindent\bls\, If one observes $L\cro{0,N}$, the two extremal values $L_0$ and $L_N$ correspond to the labels of the same corner of the root. This redundancy is suppressed during the construction of $H=\Phi_N^{L\to H}(L\cro{0,N})$: the $-1$ and $+1$ shifts in \eref{eq:qfqeds} correspond to the creation of a new vertex (which is drawn in blue in Fig.~\ref{fig:Conj}). This size difference is not ``a problem'': the standard encoding of quadrangulations with $n$ faces by pairs of trees shares this characteristic \cite{MM}. Hence, the construction we propose fits in the case $ \D=2$ with the case of rooted pointed quadrangulations. This is the main reason why we use this convention. \\
\bls\, The map $\Phi_N^{H\to C}$ sends $\bbH_N$ onto $\Dyck_{2N}$. Dyck paths indexed by $\cro{0,2N}$ are used to construct branching random walks whose label processes are indexed by $\cro{0,2N}$. Therefore, each iteration in the construction  of the iterated snake multiplies the number of edges by 2. The number of edges of the tree $\btnj n j$   is thus 
\ben
N_n^{(j)} = 2^{j-1}n.
\een

\paragraph{Normalized versions.} Let $\RBS_n[\D]$ be the $\D$th random discrete snake of size $n$.
  We need to fix some normalizing sequences to state the convergence of $\bC^{(j)}_n$ and $\bL^{(j)}_n$: the normalization is fixed so that $\bC^{(1)}_n$, after normalization, converges to the Brownian excursion $\se$ (the right normalization is given in \eref{eq:ehet}). The normalization of $\bL^{(j)}_n$ is chosen in accordance with \eref{eq:dsdsk}. When one iterates, since $\bC^{(j+1)}_n$ (at first order) is obtained by conjugation of $\bL^{(j)}_n$, the normalization of $\bC^{(j+1)}_n$ will be the same as that of  $\bL^{(j)}_n$. Hence, if $\alpha^{(j)}_n$ and $\beta^{(j)}_n$ are the natural normalizations of $\bC_n^{(j)}$ and $\bL_n^{(j)}$, we have  $\alpha^{(1)}_n=\sqrt{2n}$, and for $j\geq 1$,
    \[ \beta^{(j)}_n= \sqrt{(2/3) \alpha^{(j)}_n},~~ \alpha^{(j+1)}_n=\beta^{(j)}_n,\]
      which gives
      \[\alpha^{(j)}_n=  (2n)^{1/2^{j}} (2/3)^{1-1/2^{j-1}},~~
        \beta^{(j)}_n= (2n)^{1/2^{j+1}} (2/3)^{1-1/2^{j}}.\]
We then fix      
 \ben
  \label{eq:norm-processes}
\bpar{lcl}  \bc^{(j)}_n(t)&=&\dis{\bC^{(j)}_n(2^j n t)}\,/\,{\alpha^{(j)}_n}, \\
  \Bell^{(j)}_n(t)&=&\dis{\bL^{(j)}_n(2^j n t)}\,/\,{\beta^{(j)}_n}.\epar
  \een
  The main important feature, is the order $n^{1/2^{j}}$ of (the normalization of) $\bC^{(j)}_n$ and  $n^{1/2^{j+1}}$ of $\bL^{(j)}_n$.

We call {\it normalized $ \D$th random discrete snake} the process
\ben \label{eq:disn}
\rbs_n[ \D]:=\l(\l[ \bc^{(1)}_n,\Bell^{(1)}_n\r], \ldots,\l[ \bc^{( \D)}_n,\Bell^{( \D)}_n\r] \r).
\een
Each of these processes $\bC^{(j)}_n$ and $\bL^{(j)}_n$ is considered as being interpolated between discrete points, and then $\rbs_n[ \D]$ belongs to $\RS^{ \D}$.
 The corner set of  $\bc^{(j)}_n$ is
 \ben \label{eq:ASNj}
 {\sf AS}_{n,j}=\l\{\frac{k}{2N_n^{(j)}},~~ 0\leq k \leq 2N_n^{(j)}\r\}.
\een
The natural corner measure $\lambda^{(j)}_n$ on the normalized tree (denoted by $\btj j_n$) 
encoded by $\bc^{(j)}_n$ is
\ben\label{eq:muj}
\lambda^{(j)}_n=\frac{1}{2N_n^{(j)}}\sum_{x \in {\sf AS}_{n,j}} \delta_x,
\een
which is the uniform measure on the (normalized) corner set of the $j$th tree. The measured version of the normalized $D$th random discrete snake
is
\[\rbs_n[ \D]:=\l(\l[ \bc^{(i)}_n,\Bell^{(i)}_n,\lambda^{(j)}_n\r], 1\leq j \leq n \r).\]

  \subsection{Feuilletages as measured spaces }\label{sec:meas}

We now give the definition of a feuilletage associated to a $ \D$-snake. This generalizes Def.~\ref{defi:iBm} for measured (non-necessarily random) objects.
 \begin{defi}\label{defi:RRR} The $D$-feuilletage associated to a  $ \D$-snake $
 {E_D}=\l( \l[ f^{(j)},w^{(j)},\mu^{(j)}\r] , 1\leq j \leq  \D \r)$ is the space denoted by $\Ref\l(
 {E_D}\r)=[0,1]/\sim_{\D}$, where  $\sim_{\D}$ is the coarsest equivalence relation on $[0,1]$ refining all the following equivalence relations $\sim_{[m]}$ for $1\leq m \leq \D$, defined for  $x,y\in [0,1]$  by  
\ben\label{eq:qsfyur2}
x \sim_{[m]} y 
\equi \l[ D_{f^{(m)}}\l(x-A^{(m)}\mod 1, y-A^{(m)}\mod 1\r)=0 \textrm{ for } x,y \in {\sf Support}(\mu^{(m)})\r],
\een
where $A^{(m)}= \min \argmin w^{(1)}+\cdots+\min \argmin w^{(m-1)}$,
in other words, only the points of $[0,1]$ encoding some corners for $f^{(m)}$ will be identified under condition \eref{eq:qsfyur2}.
\end{defi}

\begin{rem}
The $D$th random feuilletage as defined in Def.  \ref{defi:iBm}) satisfies the identity
  \ben
  \RR{\D} \eqd  \Ref\l([\bh^{(i)},\Bell^{(i)},\lambda],1\leq i \leq \D\r).\een
 \end{rem}
Before representing normalized discrete feuilletages as foldings of some discrete snakes, we discuss the non-normalized feuilletages.

\subsubsection{Discrete iterated feuilletages (before normalization)}

The construction of the $ \D$th random discrete  feuilletage  relies on the  $\D$th random discrete snake of size $n$. It can be viewed as a procedure ``for gluing'' the $N_n^{(j)} +1$ nodes of the $j$th tree in the $N_n^{(j)}=2 N_n^{(j-1)} $ corners of the $(j-1)$th tree, one node per corner, except for the exceeding node, the root, which is not glued. The nodes of the $j$th tree glued in different corners of the same node of the $(j-1)$th tree are identified. This point of view is detailed in Sec.~\ref{sec:GraphSection}.

In the sequel, for  $\RBS_n[ \D]$ the $\D$th random discrete snake, we will set
\ben
{\bf a}^{(j)}_n := \min \argmin \bL^{(j)}_n, ~~\textrm{ for any } 1\leq j \leq  \D.
\een
The discrete conjugation map $\Phi_{N_n^{(j+1)}}^{L\to H}$ sends the label of the corner $c$ in $\bL_n^{(j)}$ (the index of the corner of the tree $\btnj n j$ on which $\bL_n^{(j)}$ has been defined) to the index $\Ba_n^{(j)}+c-1$ of $\bH_n^{(j+1)}$, in other words, the $(\Ba_n^{(j)}+c-1 \mod N_n^{(j+1)})$th node of the tree $\btnj n {j+1}$ encoded by $\bH_n^{(j+1)}$. Hence, in the tree $\btnj n {j+1}$, the $c$th node, for $c\geq 1$
corresponds to the corner $\Ba_n^{(j)}+c-1 \mod N_n^{(j+1)}$ of $\bL_n^{(j)}$. Again, when we deal with iterated identifications as we will do, it is easier to use as reference, the index set of the initial tree $\btnj n 1$. 
\begin{defi} 
\label{defi:RDS} 
Let $ \D\geq 1$ and $n\geq 1$ be two fixed parameters, and $\RBS_n[ \D]=\big(\big[ \bC^{(j)}_n,\bL^{(j)}_n\big], 1\leq j \leq  \D\big)$  a $\D$th   random discrete snake. Let $\btnj n j$ be the random  planar tree with contour process $\bC^{(j)}_n$; we recall that the number of edges of $\btnj n j$ is $N_n^{(j)}=  2^{j-1}\,n$. 

We call $ \D$th  random discrete  feuilletage $\DR{n}{\D}$  of size $n$  the graph  obtained by identification of the nodes of the $\btnj n j$'s, for $j=1,\ldots, \D$ as follows. For all $j\geq 2$, for all pairs $(c,c')\in \cro{1,N_n^{(j)}}^2$ such that the corners $c$ and $c'$ of $\btnj n {j-1}$ are both corners of the same node, identify in $\btnj n j$ the two nodes $\Ba_n^{(j-1)}+c-1 \mod N_n^{(j)}$ and $\Ba_n^{(j-1)}+c'-1 \mod N_n^{(j)}$.
\end{defi}
\noindent\bls\, The edges of $\DR{n}{\D}$ coincide with those of $\btnj n \D$, so that the number of edges of $\DR{n}{\D}$ is $N_n^{(\D)}=  2^{\D-1}\,n$.\\
\noindent\bls\, The vertex set of  $\DR{n}{\D}$  coincides with that  of $\btnj n 1$ union the set $\{r^{(j)}_n, 2\leq j \leq  \D\}$ where $r^{(j)}_n$ is the root of $\btnj n j$, since as discussed above the definition, the root of $\btnj n j$ is new and won't be identified with any node of the $\cup_{m\leq j-1} \btnj n m$. The number of vertices of $\DR{n}{\D}$ is thus $n+\D$.

It remains to normalize this object and to represent it using the map $\Ref$.
\subsubsection{Normalized discrete feuilletage as  foldings of normalized discrete snakes} 
\label{sec:nvotk} Recall the definition of the normalizations \eref{eq:norm-processes}.
\begin{defi}\label{def:discrr} 
  We call $D$th normalized random discrete feuilletage the space
  \ben
  {\bf r}_{n}[\D]= \Ref\l(\l[ \bc^{(j)}_n,\Bell^{(j)}_n,\lambda_n^{(j)}\r], \textrm{ for } j \in \cro{1,D} \r).
  \een
\end{defi}

  \color{black}
Two important remarks have to be done:
\begin{rem} \label{rem:qfzrj} The tree $\btnj n j$ encoded by the contour $\bC_n^{(j)}$ defined in Definition \ref{defi:RDS} must be submitted to a rescaling of order $1/n^{1/2^j}$ to converge, so that the scaling depends strongly on $j$. A priori, this fact makes unnatural the distance $d^{(1)}$ for which walking on $\bt^{(j)}$ has a cost ``independent from $j$'', when these trees appear as limits of the $\btnj n j$ after much different normalizations. The distance $d^{(2)}$ does not have this flaw, since all trees $\btnj n j$ for $j<D$ are used to make identifications, to somehow create shortcuts: their inner distances are not really used.
\end{rem}
The second remark concerns the convergence of (rooted) discrete feuilletages:
\begin{rem} It is possible to equip the space of feuilletages with the induced distance between the measured snakes encoding them, or with the (Prohorov)-Gromov-Hausdorff distance between isometry classes of compact metric spaces, and try to prove that, for one of these metrics, ${\sf\bf r}_{n}[\D]\dd \RR{\D}$, which is reasonable guess for both topologies. However, as said several times before, we are not able to prove it for the moment, for several technical reasons. We will prove the convergence for a pointed version instead, for a metric defined between corresponding pointed snakes.
\end{rem}

\section{Pointed variants}
\label{sec:pointed}
\setcounter{equation}{0}
The previous constructions, discrete or continuous have the disadvantage of relying on $\Conj$ and its discrete analogue $\Phi^{H\to C}\circ\Phi^{L\to H}$, which  is not continuous, as said previously. 
The discontinuity of $\Conj$ comes from the following situation: consider $f\in C^0[0,1]$ such that
\[ 0~ < ~a=\min \argmin f ~<~ b= \max \argmin f ~<~1. \]
It is easy to construct a sequence of functions $(f_n)$ in $C^0[0,1]$ such
that $\|f_n- f\|_{\infty}\to 0$, but $$\min \argmin f_n \to b,$$
or such that $\min \argmin f_n$ has $\{a,b\}$ for set of accumulation points  or even, that does not have $a$ nor $b$ as an accumulation point if $\#\argmin f >2$.\par
This implies that we cannot deduce the convergence of $(f_n,\Conj(f_n))$ to $(f,\Conj(f))$ when $\|f_n-f\|_{\infty}\to 0$, and even if we know that $(f_n,\Conj(f_n))$ converges uniformly to $(f,g)$, we cannot deduce that $g= \Conj(f)$.
The minimal property which would remove this problem would be the proof of the a.s.~uniqueness of $\argmin \Bell^{(j)}$ for all $j$'s, and for the moment, we are not able to prove this uniqueness. \par

The strategy we adopt instead is to use the fact that $\Conj(f_n)$ and $f_n$ seen as $1$-periodic functions are equal up to a change of origin: instead of working on $(C[0,1],\|.\|_{\infty})$, we will use a topology which allows us to identify functions that are equal up to the change of origin evoked above. The chance is that this topology coincides with the right topology on pointed trees (that is, which makes two rooted trees equivalent  if they are rooted at corners of the same root vertex). This permits to still get an intuitive understanding of the phenomenon into play.

\subsection{Pointed real trees}
\label{sec:pointed-trees}
By definition, a rooted tree $T_g=[0,1]/\sim_g$ is rooted at a corner. The different corners of the root are $g^{-1}(0) \setminus \{1\}$.
We call {\it pointed tree} an equivalence class of  trees rooted at the different corners of the same root vertex. For any $a\in [0,1]$,  formally define the $a$-shift $\Psi_a$ as the following map  
defined on $C^0[0,1]$:
\ben\label{eq:Psia}
\app{\Psi_a}{C^0[0,1]}{C^0[0,1]}{g}{x\mapsto\Psi_a(g) (x)= g(x+a \mod 1)-g(a),}
\een
which is a conjugation map, meaning the exchange of ``two sections'' of the graph of $g$, but starting from a given corner $a$ instead of $\min \argmin g$. 
Of course, for any $f\in C^0[0,1]$, 
\ben \label{eq:V} 
 \Conj(f)=\Psi_{\min \argmin(f)}(f).
\een

Introduce the following equivalence relation on $C^0[0,1]$: we say that $f\sim_{\Psi} g$ if there exists $a\in[0,1]$ such that $g=\Psi_a(f)$. Denote the quotient space by
\ben
\QC= C^0[0,1]/{\sim_\Psi},
\een
and denote by $\bar{f}$ the class of a function $f$.
\begin{rem} A measure component can be added: $\Psi_a$ then acts on $C^0[0,1]\times {\cal M}([0,1])$ with values in $C^0[0,1]\times {\cal M}([0,1])$, with (a slight abuse of langage): $\Psi_a(f,\mu)=(f',\mu')$ for $f'=\Psi_a(f)$ as defined above and $\mu'(\cdot)=\mu(\cdot+a \mod 1)$.\end{rem}
The following proposition is proven in Sec.~\ref{sec:pppt}.
\begin{pro}\label{pro:top}
The map $D_\Psi: \QC^2 \to \R^+$ defined by  
\ben\label{eq:fqf}
D_\Psi(\ov{g_1},\ov{g_2})= \inf_{a} \| g_1-\Psi_a(g_2)\|_{\infty}
\een
is a distance on $\QC$ and equipped with this distance, $\QC$ is a Polish space (for the measured extension, add the total variation distance between the measure components).  
\end{pro}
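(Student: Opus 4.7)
The plan is to establish, in order: (a) $D_\Psi$ is well-defined on $\QC$, (b) it satisfies the distance axioms, (c) the space is complete, and (d) the space is separable. The backbone throughout is that $\{\Psi_a\}_{a\in[0,1)}$ is a group acting continuously on $C^0[0,1]$: a direct computation gives $\Psi_a \circ \Psi_b = \Psi_{a+b \bmod 1}$, so the orbit of any $g$ coincides with its $\sim_\Psi$-class $\bar g$. For the measured extension mentioned in the remark preceding the proposition, one equips ${\cal M}([0,1])$ with the rotation action $\mu \mapsto \mu(\cdot + a \bmod 1)$, which is an isometry for $\dVar$ and combines additively with the sup-norm component, so all the arguments below carry over to the product distance.

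For (a), I would verify that if $g_1' = \Psi_b(g_1)$ and $g_2' = \Psi_c(g_2)$, then $\inf_a \|g_1' - \Psi_a(g_2')\|_\infty = \inf_a \|g_1 - \Psi_a(g_2)\|_\infty$. The easy half is the change in $g_2$: by the group property $\Psi_a(g_2') = \Psi_{a+c}(g_2)$, so the infimum is merely reparametrized. The delicate half is the change in $g_1$, where the substitution $y = x + b \bmod 1$ inside $\sup_x |\Psi_b(g_1)(x) - \Psi_a(g_2)(x)|$ produces an additive constant depending on $a$ and $b$, and one must argue that taking the infimum over $a$ absorbs this constant. For (b), nonnegativity is clear; symmetry reduces to (a) plus the substitution $a \mapsto -a \bmod 1$; the triangle inequality uses a standard chain argument, picking near-optimal $a_*, b_*$ in the infima for $(\bar g_1, \bar g_2)$ and $(\bar g_2, \bar g_3)$, applying $\Psi_{a_*}$ to the second estimate and combining via the sup-norm triangle inequality to bound $\|g_1 - \Psi_{a_* + b_*}(g_3)\|_\infty$. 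Identity of indiscernibles follows from extracting a minimizing sequence $(a_n)\subset[0,1]$, taking $a_n \to a_*$ by compactness, and invoking joint continuity of $(a, g) \mapsto \Psi_a(g)$ to conclude $g_1 = \Psi_{a_*}(g_2)$, i.e.\ $\bar g_1 = \bar g_2$.

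For (c), given a Cauchy sequence $(\bar g_n)$ in $\QC$, I would select representatives $h_n \in \bar g_n$ inductively with $\|h_n - h_{n+1}\|_\infty \leq 2\, D_\Psi(\bar g_n, \bar g_{n+1})$; along a fast-enough subsequence this is uniformly Cauchy in the complete space $(C^0[0,1], \|\cdot\|_\infty)$, and its uniform limit $h$ provides the required $\bar h \in \QC$ as a limit of $(\bar g_n)$. For (d), the canonical projection $C^0[0,1] \to \QC$ is $1$-Lipschitz, so the image of any countable dense subset of $C^0[0,1]$ (for instance the piecewise-linear functions with rational breakpoints and rational values vanishing at $0$ and $1$) is countable and dense in $\QC$. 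The step I expect to be the hardest is the well-definedness in (a): the action $\Psi$ is not by isometries (the vertical shift $-g(a)$ is the culprit), so invariance of the infimum under change of representative for $g_1$ is the true heart of the proof, and must be established by carefully tracking how the additive constant introduced by the change of variable interacts with the $\sup_x$ before the $\inf_a$.
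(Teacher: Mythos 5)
Your overall plan matches the paper's, though the paper reorders the pieces: it first shows the infimum in \eqref{eq:fqf} is attained (extract a converging subsequence $a_{n_k}\to b$ from the compact $[0,1]$ and use equicontinuity of the family $\{\Psi_a(g_2):a\in[0,1]\}$), deduces the identity of indiscernibles from attainment, and only then proves the triangle inequality by choosing the optimal shifts $b_1,b_2$ and writing $\|f_2-\Psi_{b_2}(f_3)\|_\infty=\|\Psi_{b_1}(f_2)-\Psi_{b_1+b_2}(f_3)\|_\infty$. Completeness is obtained via Arzela-Ascoli, showing that arbitrary representatives of a Cauchy sequence in $\QC$ are uniformly bounded and equicontinuous (the range and a circular modulus of continuity are class invariants), rather than via your telescoping choice of representatives; separability is handled exactly as you propose, through the $1$-Lipschitz canonical projection.

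The step you flag as the heart of the matter---invariance of $\inf_a\|g_1-\Psi_a(g_2)\|_\infty$ under $g_1\mapsto\Psi_b(g_1)$---is precisely where your proposal stops short of an argument, and you should be careful there: $\Psi_a$ is linear with group law $\Psi_a\circ\Psi_{b}=\Psi_{a+b}$ (indices modulo $1$), but it is \emph{not} an isometry of $(C^0[0,1],\|\cdot\|_\infty)$. Indeed $\|\Psi_a(\phi)\|_\infty=\sup_y|\phi(y)-\phi(a)|$, which equals $\|\phi\|_\infty$ only when $\phi(a)=0$ and can otherwise be as large as $2\|\phi\|_\infty$. So ``taking the infimum over $a$ absorbs this constant'' cannot simply be waved through; for instance with $g_2\equiv 0$ one gets $D_\Psi(\overline{g_1},\overline{0})=\|g_1\|_\infty$, a quantity that depends on the chosen representative as soon as $g_1$ takes both strictly positive and strictly negative values. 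The paper's triangle-inequality identity quoted above is the assertion $\Psi_{b_1}\bigl(f_2-\Psi_{b_2}(f_3)\bigr)=\Psi_{b_1}(f_2)-\Psi_{b_1+b_2}(f_3)$ combined with $\|\Psi_{b_1}(\phi)\|_\infty=\|\phi\|_\infty$, so the same caution applies if you reproduce that step. A clean repair is either to restrict the shifts to those $a$ where the shifted function vanishes (there $\Psi_a$ genuinely is an isometry, and that is all that is geometrically used when rerooting trees whose contour is nonnegative), or to accept that $D_\Psi$ is only two-sided comparable to a genuine metric; both give the same topology and suffice for the convergence statements the paper needs.
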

For any $f\in C^{0}[0,1]$ denote by
\ben
\PTC(f)=\l\{\Psi_a(f), a \in \argmin(f)\r\}.
\een

The following result is immediate: 
\begin{lem} \label{defi:efa}
Two functions $f$ and $g$ in $C^+[0,1]$ are in the same tree class (that is, $\PTC(f)=\PTC(g)$) iff there exists $a\in[0,1]$ such that
\ben\label{eq:fga}
f(\cdot )= g(\cdot +a \mod 1),
\een
in which case \ben\label{eq:qff}
a\in \argmin (g)= g^{-1}(0).
\een
Hence, two rooted trees are in the same pointed tree class if each tree can be obtained from the other by a rerooting at a corner of the root vertex (for the measured version, the push-forward measure of the root corner of $\mu_f$ is required to coincide with that of $\mu_g$, as explained in Rem.~\ref{eq:fsddq}).
\end{lem}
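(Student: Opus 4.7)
The plan is to unfold the definitions, exploiting the crucial simplification that occurs when we are in $C^+[0,1]$ rather than in $C^0[0,1]$. Because any $f \in C^+[0,1]$ satisfies $f \geq 0$ with $f(0)=f(1)=0$, we have $\argmin(f) = f^{-1}(0)$, and for every $a \in \argmin(f)$ the subtraction $-f(a)$ in the definition of $\Psi_a$ vanishes. Hence the first step is to record the clean expression
\[
\PTC(f) = \bigl\{\,f(\cdot + a \mod 1)\ :\ a \in f^{-1}(0)\,\bigr\},
\]
and similarly for $g$. This is the only real reduction needed.

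For the backward implication, suppose $f(\cdot) = g(\cdot + a \mod 1)$ for some $a \in [0,1]$. Evaluating at $0$ yields $g(a) = f(0) = 0$, which simultaneously proves the ``in which case'' clause $a \in g^{-1}(0) = \argmin(g)$ and shows that $f = \Psi_a(g)$. To upgrade this to $\PTC(f) = \PTC(g)$, I would compose shifts: for $b \in f^{-1}(0)$,
\[
\Psi_b(f)(x) \;=\; f(x+b \mod 1) \;=\; g(x + (a+b) \mod 1) \;=\; \Psi_{a+b \mod 1}(g)(x),
\]
where $a+b \mod 1 \in g^{-1}(0)$ because $g(a+b \mod 1) = f(b) = 0$. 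The reverse inclusion is obtained by the substitution $b = c-a \mod 1$ when $c \in g^{-1}(0)$, and one checks symmetrically that $b \in f^{-1}(0)$.

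For the forward implication, observe that $0 \in f^{-1}(0) = \argmin(f)$, so $f = \Psi_0(f) \in \PTC(f) = \PTC(g)$; by the clean description of $\PTC(g)$, there is some $a \in g^{-1}(0)$ with $f(\cdot) = g(\cdot + a \mod 1)$, which is exactly \eqref{eq:fga} together with \eqref{eq:qff}. The geometric reformulation in the last sentence is then just a reading of the fact that shifting the origin by $a \in g^{-1}(0)$ means re-rooting $T_g$ at one of the corners of the root vertex (all such corners correspond precisely to preimages of $0$).

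For the measured extension, the map $\Psi_a$ acts on measures by $\mu \mapsto \mu(\cdot + a \mod 1)$, and all equalities above are inherited componentwise; the only additional observation is that the equality $f = g(\cdot + a \mod 1)$ must be accompanied by $\mu_f = \mu_g(\cdot + a \mod 1)$, matching the push-forward-of-root-corner condition stated in the lemma. There is no real technical obstacle here: the whole argument is a direct verification, and no continuity or uniqueness property of $\argmin$ is invoked (which is precisely what motivates introducing $\PTC$ in place of $\Conj$).
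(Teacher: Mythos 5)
Your proof is correct and complete; the paper itself offers no argument for this lemma, dismissing it as ``immediate,'' and your direct unfolding of the definition of $\PTC$ (using the key simplification that $\Psi_a(f)=f(\cdot+a\mod 1)$ when $f\in C^+[0,1]$ and $a\in f^{-1}(0)$, then composing shifts) is precisely the verification the authors have in mind.
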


The next lemma is a direct consequence of Proposition \ref{pro:top} and its proof.
\begin{lem}\label{lem:Dpsi}
  The map $D_{\Psi}$ is a distance on the set of pointed tree classes (with a simple adaptation for measured pointed tree classes).
\end{lem}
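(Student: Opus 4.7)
The lemma is essentially a corollary of Proposition~\ref{pro:top} together with Lemma~\ref{defi:efa}. The plan is to identify the set of pointed tree classes, viewed as a subset of representatives in $C^+[0,1]$, with its image in $\QC$ under the canonical projection $C^0[0,1] \to \QC$, and then transfer the metric structure directly.

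First I would verify that for $f, g \in C^+[0,1]$, the pointed tree equivalence coincides with the restriction of $\sim_\Psi$ to $C^+[0,1]$. One direction is by definition: if $f$ and $g$ encode the same pointed tree, then by construction of $\PTC$ we have $g = \Psi_a(f)$ for some $a \in f^{-1}(0)$, so $f \sim_\Psi g$. The converse is exactly the content of Lemma~\ref{defi:efa}: if $g = \Psi_a(f)$ with $f, g \in C^+[0,1]$, then the equality $\Psi_a(f)(0) = f(a) - f(a) = 0$ combined with $\Psi_a(f) = g \geq 0$ forces $f(a) = \min f = 0$, so $a \in f^{-1}(0) = \argmin f$, hence $f$ and $g$ lie in the same pointed tree class. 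Consequently the pointed tree classes of $C^+[0,1]$-elements are in bijection with the $\sim_\Psi$-classes of those same elements.

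Then the distance axioms transfer directly: non-negativity, symmetry and triangle inequality pass from $(\QC, D_\Psi)$ to any subset of its classes without modification. For separation, if $D_\Psi(\overline f, \overline g) = 0$ for $f, g \in C^+[0,1]$, Proposition~\ref{pro:top} already yields $\overline f = \overline g$ in $\QC$, and the equivalence proven in the previous paragraph upgrades this to equality of pointed tree classes. The measured adaptation follows the same scheme: one lets $\Psi_a$ act jointly on a pair $(f,\mu)$ by $\Psi_a(f,\mu) = (\Psi_a f,\, \mu(\cdot + a \mod 1))$, and adds a $\dVar$-contribution to \eqref{eq:fqf}; Lemma~\ref{defi:efa} already encodes the matching condition on the measure component (the push-forward compatibility mentioned in Remark~\ref{eq:fsddq}), so the same bijection argument applies.

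I do not expect a genuine obstacle; the lemma is bookkeeping once Proposition~\ref{pro:top} is in hand. The only slightly delicate point is hidden in the separation axiom, where one must know that the infimum in \eqref{eq:fqf} is either attained or achieved along a convergent subsequence of shifts $a_n$; this uses compactness of $[0,1]$ together with joint continuity of $(a,x) \mapsto g(x + a \mod 1) - g(a)$ (itself coming from uniform continuity of the $1$-periodic extension of $g \in C^0[0,1]$), and for the measured version one complements this with a tightness/subsequence argument for the measure component. These steps, however, are precisely those already invoked in the proof of Proposition~\ref{pro:top}, so no new ingredient is required.
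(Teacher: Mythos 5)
Your proof is correct and follows the same route the paper intends: the paper merely asserts the lemma is ``a direct consequence of Proposition~\ref{pro:top} and its proof,'' and you supply exactly the missing bridge, namely that for $f,g\in C^+[0,1]$ the pointed-tree equivalence coincides with the restriction of $\sim_\Psi$ (via Lemma~\ref{defi:efa}), so the pointed tree classes embed as a subset of $\QC$ on which $D_\Psi$ is already a distance. The measured adaptation you sketch is also the intended one.
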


\begin{defi} A sequence of pointed trees with classes of contour processes $\bar{c_n}$ is said to converge to $\bar{c}$ if $D_{\Psi}(\bar{c_n},\bar{c})\to 0$. 
  \end{defi}
The following proposition will be proven in Sec.~\ref{sec:proof-of-prop-cv}.
\begin{pro} 
  \label{pro:cv} Let $\by,\by_0,\by_1,\cdots$ be a sequence of processes taking their values in $C^0[0,1]$ such that $\by_n \dd \by$ in $(C[0,1],\|.\|_{\infty})$. 
  \bir
  \itr The sequence $(\Conj(\by_n), n \geq 0)$ is tight in $C[0,1]$. The limits of the converging subsequences (in $C[0,1]$) are all in the tree class of $\Conj(\by)$.
  \itr If a.s.~$\#\argmin \by=1$, then $\Conj(\by_n)\dd \Conj(\by)$ in $C[0,1]$.

  \itr    $\bar\by_n\dd \bar\by$ in $\QC$.
  \itr     $\bar\Conj(\by_n)\dd \bar\Conj(\by)$ in  $\QC$.
  \eir
\end{pro}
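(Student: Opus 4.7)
I would prove the four parts in the order (iii), (i), (ii), (iv), since (iii) is essentially immediate, (i) carries all of the technical content, and both (ii) and (iv) reduce to (i). Throughout I would invoke Skorokhod's representation theorem on an enlarged probability space to replace $\by_n \dd \by$ by almost sure uniform convergence $\by_n \to \by$ of suitable coupled copies, so that parts (i) and (ii) become deterministic statements about a converging sequence in $(C^0[0,1],\|\cdot\|_\infty)$.

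\textbf{Part (iii).} I first check that the canonical projection $\pi \colon (C^0[0,1],\|\cdot\|_\infty) \to (\QC, D_\Psi)$ is $1$-Lipschitz. For any $g\in C^0[0,1]$ one has $\Psi_0(g)(x)=g(x)-g(0)=g(x)$, so $D_\Psi(\bar f,\bar g)=\inf_a\|f-\Psi_a(g)\|_\infty\le\|f-g\|_\infty$. Continuity of $\pi$ combined with the continuous mapping theorem delivers $\bar\by_n\dd\bar\by$.

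\textbf{Part (i).} Tightness of $\Conj(\by_n)$ in $C[0,1]$ is clear: since $\by_n(0)=\by_n(1)=0$, one can extend $\by_n$ periodically to a continuous function on $\R$, and $\Conj(\by_n)(x)=\by_n\bigl((a(\by_n)+x)\bmod 1\bigr)-m(\by_n)$ is simply a cyclic shift plus an additive constant of this extension, so its modulus of continuity is bounded by that of $\by_n$, and tightness of $(\by_n)$ in $C[0,1]$ transfers. Now assume (after Skorokhod) $\by_n\to\by$ uniformly a.s.\ and let $\Conj(\by_{n_k})\to g$ along some subsequence. Since $a(\by_{n_k})\in[0,1]$ is bounded, extract a further subsequence along which $a(\by_{n_k})\to\alpha\in[0,1]$. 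The crucial identification is $\alpha\in\argmin\by$: uniform convergence gives $\by_{n_k}(a(\by_{n_k}))\to\by(\alpha)$, while continuity of $m$ under $\|\cdot\|_\infty$ gives $m(\by_{n_k})=\by_{n_k}(a(\by_{n_k}))\to m(\by)$, hence $\by(\alpha)=m(\by)$. Passing to the limit in the explicit formula for $\Conj$ then yields $g(x)=\by((\alpha+x)\bmod 1)-m(\by)=\Psi_\alpha(\by)(x)$, and since $\alpha\in\argmin\by$ this element is, by Lemma \ref{defi:efa} and the identity \eref{eq:V}, an element of $\PTC(\Conj(\by))$.

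\textbf{Parts (ii) and (iv).} Under the hypothesis $\#\argmin\by=1$ a.s., the above forces $\alpha=a(\by)$ for every subsequential limit of $a(\by_n)$; compactness of $[0,1]$ then gives $a(\by_n)\to a(\by)$ a.s., and $\Conj(\by_n)\to\Conj(\by)$ uniformly, proving (ii). For (iv), tightness of $\bar{\Conj(\by_n)}$ in $(\QC,D_\Psi)$ follows from (i) together with the continuity of $\pi$. To identify the limit, take any convergent subsequence $\bar{\Conj(\by_{n_k})}\to\bar h$ in $\QC$, extract a further subsequence along which $\Conj(\by_{n_{k_j}})\to g$ in $C[0,1]$ (possible by the tightness in (i)), and apply (i) to conclude $g\in\PTC(\Conj(\by))$, so $\bar h=\bar g=\bar{\Conj(\by)}$. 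Since every subsequential limit equals $\bar{\Conj(\by)}$, the convergence in distribution follows.

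\textbf{Main obstacle.} The only delicate step is the identification $\alpha\in\argmin\by$ in part (i); without it, a subsequential limit of $\Conj(\by_n)$ would have no reason to be a tree-class representative of $\Conj(\by)$. This step is also exactly what makes the pointed formulation succeed: the quotient $\QC$ absorbs precisely the ambiguity in the choice of $\alpha$ among the multiple potential minima of $\by$, bypassing the a.s.\ uniqueness of $\argmin$ that we do not know how to prove.
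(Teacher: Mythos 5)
Your proposal is correct, and the core of it — Skorokhod embedding, Arzel\`a--Ascoli for tightness of $(\Conj(\by_n))$, compactness of $[0,1]$ to extract accumulation points of $a(\by_n)$, and the identification $\by(\alpha)=m(\by)$ via $\by_{n_k}(a(\by_{n_k}))\to\by(\alpha)$ and $m(\by_{n_k})\to m(\by)$ — is exactly the paper's argument for parts (i) and (ii). Where you differ is in the logical organization of (iii) and (iv). You prove (iii) directly by combining the $1$-Lipschitz property $D_\Psi(\bar f,\bar g)\le\|f-g\|_\infty$ of the canonical projection (a fact the paper records but only in the proof of Proposition~\ref{pro:top}) with the continuous mapping theorem; the paper instead deduces (iii) from (i) via a subsequence argument, which is noticeably longer. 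Conversely, for (iv) you go back through (i) with a subsequence extraction, whereas the paper observes that $\Conj(f)=\Psi_{\min\argmin f}(f)$ lies in the same $\sim_\Psi$-class as $f$, so $\bar\Conj(\by_n)=\bar\by_n$ and (iv) is an immediate consequence of (iii). Your route works, but this one-line reduction is both simpler and spares you the implicit re-invocation of Skorokhod in (iv). One minor slip worth flagging: for $f\in C^0[0,1]$ the modulus-of-continuity inequality is $w_\delta(\Psi_a(f))\le 2\,w_\delta(f)$, not $w_\delta(\Psi_a(f))\le w_\delta(f)$ (the two halves of the split across the seam each contribute up to $w_\delta(f)$); the factor $2$ is harmless for tightness, but the inequality as you stated it is false in general.
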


\subsection{Pointed snakes}
\label{sec:pointed-snakes}

We extend the notion of pointed trees to pointed snakes:
\begin{defi} Two  snakes $(f_1,w_1)$ and $(f_2,w_2)$ taken in $\RS$ are said to be the same pointed snake if there exists $a\in[0,1]$ such that,
\ben\label{eq:fd}
\l(f_1(x), w_1(x)\r) = \l(f_2(x+a \mod 1), w_2(x+a \mod 1)\r), ~~~\textrm{ for all } x \in [0,1].
\een
\end{defi}
Again, as for the case of pointed trees (Def.~\ref{defi:efa}), if \eref{eq:fd} holds, then $a\in \argmin (f_2)=f_2^{-1}(0)$ (notice that $w_2(a)=0$ as $a$ is a corner of the root, so that $\Psi_a(w_2)(x)=w_2(x+a \mod 1)$). 
``To be the same pointed snake'' is an equivalence relation $\sim_{\bullet}$. 
The state space of pointed snakes is
\ben
\PS = \RS /\sim_{\bullet}.
\een
Denote by $\pi_{\bullet}$ the canonical projection from $\RS$ to $\PS$. 
A distance on  $\PS$ is given by the following extension of $D_{\Psi}$ (we keep the same notation):
\ben
D_{\Psi}((f_1,w_1),(f_2,w_2))= \inf_a  \l(\|\Psi_a(f_1)-f_2\|_{\infty}+ \|\Psi_a(w_1)-w_2\|_{\infty}\r).
\een
We call $ \D$-pointed snake an element of $\PS{}^{,  \D}:=(\PS{})^{ \D}$. 
We equip this set with the distance
\ben\label{eq:DPK}
D_{\Psi, \D}\Bigl[([f_j,w_j], 1 \leq j \leq  \D),([f'_j,w'_j], 1 \leq j \leq  \D)\Bigr]=\sum_{j=1}^{ \D} D_{\Psi}\l(\l(f_j,w_j\r),\l(f'_j,w'_j\r) \r).
\een
\begin{defi}
\label{def:consistent} An element $\l[(f_i,w_i),1\leq i \leq  \D\r]$ of $\RS^{ \D}$ is said to be consistent if $f_{i+1}\sim_{\Psi} w_i$ for every $1\leq i \leq  \D-1$.
\end{defi}
Again this extends to measured pointed snakes: two measured (rooted) 
snakes  $(f,w,\mu)$ and   $(f',w',\mu')$ are said to be in the same measured pointed snake class, if for some $a\in \argmin f$,
\ben
\bigl(f(a+ x \mod 1), w(a +x \mod 1), \mu(a+ . \mod 1)  \bigr)  = \l( f'(x), w'(x), \mu' \r), ~~\textrm{ for all }x \in[0,1].
\een \normalsize
We extend the projection $\pi_{\bullet}$ to measured (rooted) snakes. 
The set of measured pointed  snakes obtained in such a way can be equipped with a metric which extends $D_{\Psi, \D}$ by adding the total variation distance between the corner measures of the trees.

\subsection*{Branching random walks and pointed snakes} \label{sec:BRWPS}

Consider a branching random walk with underlying tree a rooted tree $T$ as defined in Section \ref{eq:ds}. The branching random walk is defined by some spatial increments placed on the vertices of $T$ different from the root. A change of root corner can be done without modification of the spatial increments, which amounts, eventually, to just shifting the encoding processes of this labeled tree as follows.
Let $(C_T,L_{T})$ be the tour of a discrete snake with underlying rooted tree $T$, where 
$C_T=(|c_t(k)|,0 \leq k \leq 2\|T\|)$ is  the contour process of $T$, and let
\[R=\l(\argmin C_T\r)\setminus \{2\|T\|\}\]
be the set of corners of the root vertex of $T$. For each element $a \in R$,  call $T(a)$ the tree $T$ rerooted at its corner $a$. Since $C_{T(a)}=L_{T(a)}=0$, the tour of the discrete snake now indexed by $T(a)$ is simply, 
\ben
C_{T(a)}(k)& = &C_T(a +k \mod 2\|T\|), \\
L_{T(a)}(k)& = &L_T(a +k \mod 2\|T\|) \textrm{ for }0\leq k \leq 2\|T\|.
\een
The set of discrete snakes $\l((C_{T(a)},L_{T(a)}),a\in R\r)$ forms a discrete (non-normalized) pointed snake.
\begin{rem}\label{rem:qdeth} The normalized processes $\bc^{(j)}_n$,$\Bell^{(j)}_n$ were defined in \eqref{eq:norm-processes}. $\bh^{(j+1)}_n$ is obtained by conjugating $\Bell^{(j)}_n$. We do not impose the consistence condition (Def.~\ref{def:consistent}) in the initial definition of  $ \D$th pointed snakes, because in the discrete case, the processes $\bc^{(j+1)}_n$ and $\Bell^{(j)}_n$ are not in general in the same class modulo $\sim_{\Psi}$ (while the continuous $\D$th Brownian snake is consistent).
\end{rem}

\begin{defi}\label{def:PBS} We call $ \D$th pointed Brownian snake a process taking its values in $\PS{}^{,  \D}$,
  \ben \label{eq:dethher}
  \pbs[ \D]:=\l(\pi_{\bullet}\l[ \bh^{(j)},\Bell^{(j)}\r], 1\leq j \leq  \D \r),
  \een 
  where $\rbs[ \D]=\l(\l[ \bh^{(j)},\Bell^{(j)}\r], 1\leq j \leq  \D \r)$ is a $\D$th Brownian snake. For the measured version, the measure of each tree is $\lambda$.\par
The process
 \ben \label{eq:dethhern}
  \pbs_n[ \D]:=\l(\pi_{\bullet}\l[ \bc^{(j)}_n,\Bell^{(j)}_n\r], 1\leq j \leq  \D \r).
  \een
  is called $\D$th (normalized) random pointed discrete snake. For the measured version, the measure of the $j$th tree is $\lambda_{n}^{(j)}$ as defined in \eref{eq:muj}.
  \end{defi}
 Notice that $\lambda_n^{(j)}$ is invariant by change of root. Also, we only define pointed random discrete objects for the normalized versions, so that we will drop the ``normalized" in the name of the objects to avoid lengthy names.

\subsection*{$ \D$th random pointed discrete snake and convergence}

\begin{theo}\label{theo:CVpointedSnakes} The sequence of (normalized)  $ \D$th random pointed discrete snake converges 
in distribution towards the $\D$th pointed Brownian snake:
\[ \pbs_n[ \D]\dd  \pbs[ \D]\textrm{ in }\l(\PS{}^{, \D},D_{\Psi, \D}\r).\]
The result extends to the measured version.
\end{theo}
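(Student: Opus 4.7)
The plan is to proceed by induction on $\D$, using Proposition \ref{pro:cv} to bypass the non-continuity of the map $\Conj$ at each iteration. The key observation is that the pointed topology on $\QC$ is exactly what is needed so that the successive conjugation steps become continuous in distribution, even without uniqueness of $\argmin$ for the Gaussian label processes.

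For the base case $\D=1$, the normalized contour $\bc^{(1)}_n$ of a uniform planar tree with $n$ edges converges in $(C^+[0,1],\|\cdot\|_\infty)$ to the normalized Brownian excursion $\se = \bh^{(1)}$ (Aldous, see also \cite{mm01}), and conditionally on the tree, the rescaled branching random walk labels $\Bell^{(1)}_n$ converge jointly to the standard Brownian snake label $\Bell^{(1)}$ by the discrete snake invariance principle of \cite{MMsnake,janmarck05}. Since convergence in $C[0,1]$ implies convergence in $\QC$ by Proposition \ref{pro:cv}(iii), the pointed convergence $\pbs_n[1]\dd\pbs[1]$ follows.

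For the inductive step, assume $\pbs_n[\D-1]\dd\pbs[\D-1]$ and, via a Skorokhod coupling, assume the convergence holds almost surely. We must add the pair $\pi_\bullet[\bc^{(\D)}_n,\Bell^{(\D)}_n]$. By construction, the normalized $\bh^{(\D)}_n = \Phi_{N^{(\D-1)}_n}^{L\to H}(\bL^{(\D-1)}_n)$ agrees, up to a vanishing shift of order $n^{-1/2^{\D-1}}$ coming from the $+1$ and $-m(f)$ corrections in \eqref{eq:qfqeds}, with the continuous conjugate $\Conj(\Bell^{(\D-1)}_n)$. Proposition \ref{pro:cv}(iv) then gives $\overline{\bh^{(\D)}_n}\dd\overline{\Conj(\Bell^{(\D-1)})} = \overline{\bh^{(\D)}}$ in $\QC$, and the classical asymptotic equivalence between the height and contour processes of large random trees transfers this convergence to $\overline{\bc^{(\D)}_n}$. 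Given the tree $\btnj n \D$, the increments of $\Bell^{(\D)}_n$ are independent and $\nu$-distributed along branches; the discrete snake convergence theorem of \cite{MMsnake,janmarck05} applied to this random base tree yields that, jointly with $\bc^{(\D)}_n$, the rescaled $\Bell^{(\D)}_n$ converges to the Brownian snake label $\Bell^{(\D)}$ with lifetime $\bh^{(\D)}$, the latter being a continuous process by Theorem \ref{theo:biendefini}. Joint convergence of the $\D$-tuple in $(\PS^{,\D},D_{\Psi,\D})$ follows from the compatibility of the construction, and the measured version is handled by Remark \ref{rem:mesvsnotmes}, since $\lambda_n^{(j)}\to\lambda$ in total variation.

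The main obstacle is Step 2 of the inductive step: applying the discrete snake invariance principle along a random base tree $\btnj n \D$ whose Hölder modulus deteriorates by a factor of $2$ at every iteration. This requires uniform (in $n$) moment and modulus-of-continuity estimates for the normalized contour $\bc^{(\D)}_n$, which are propagated through the induction using the a.s.~Hölder regularity of the limit $\bh^{(\D)}$ with exponent $1/2^\D - \varepsilon$ provided by Theorem \ref{theo:ini-rec}. The pointed topology plays a decisive role throughout: had we worked in $(C[0,1],\|\cdot\|_\infty)$, the induction would collapse because $\Conj$ is not continuous in that topology and the a.s.~uniqueness of $\argmin\Bell^{(j)}$ is not available.
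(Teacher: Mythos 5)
Your proposal follows essentially the same architecture as the paper's proof: a Skorokhod-coupled induction where the pointed topology neutralizes the discontinuity of $\Conj$, and where H\"older regularity is propagated through the iterations. But three steps are glossed over that are genuinely needed, not just expository polish.

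First, you invoke ``the discrete snake convergence theorem of \cite{MMsnake,janmarck05} applied to this random base tree'' to get the joint convergence of $(\bc^{(\D)}_n,\Bell^{(\D)}_n)$. Those references only cover branching random walks whose base tree is a uniform/Galton--Watson tree conditioned by its size; the tree $\btnj n \D$ for $\D\ge 2$ has no such structure. The correct tool is the paper's new Theorem~\ref{theo:ini-rec}, a transfer result valid for \emph{any} sequence of distributions $\mu_n$ on $\bbT_n$ provided the normalized contour converges and is $q$-H\"older tight. You mention Theorem~\ref{theo:ini-rec} only for the H\"older propagation, but the convergence of the label process on the random base tree itself rests on that same theorem, not on the cited literature.

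Second, you never address why the limiting pointed tree class is well-defined despite the possible non-uniqueness of $\argmin\Bell^{(j)}$. The paper proves (third bullet of Theorem~\ref{theo:ini-rec}) that $\Bell^{(j)}$ has a.s.~\emph{no period}, and this is exactly the hypothesis of Proposition~\ref{pro:uqn} and the reason Proposition~\ref{pro:uqn2} gives a unique shift identifying $\bar{\bc^{(j+1)}}$ with $\bar{\Bell^{(j)}}$. Without the no-period property, the identification of the limit of $\bar{\bc^{(\D)}_n}$ as $\bar{\bc^{(\D)}}=\bar{\Conj(\Bell^{(\D-1)})}$ does not go through. Third, a technical gap: Proposition~\ref{pro:cv}(iv) requires $\by_n\dd\by$ in $(C[0,1],\|\cdot\|_\infty)$, but the induction hypothesis only gives convergence in $\QC$. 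The paper closes this gap via Lemma~\ref{lem:qsgdqs}: pick rooted representatives $\bc_n\in C^+[0,1]\cap\bar{\bc_n^{(j)}}$, use relative compactness in $C[0,1]$, extract subsequences on which Theorem~\ref{theo:ini-rec} can be applied, and then observe (recalling Section~\ref{sec:BRWPS}) that the law of $\pi_\bullet(\bc^\star,\Bell)$ does not depend on the chosen representative $\bc^\star\in\PTC(\bar{\bc^{(j)}})$.
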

This will be proven in Sec.~\ref{sec:proof-of-cv-pointed-snakes}.
\begin{rem}
  Theorem \ref{theo:CVpointedSnakes} may seem somewhat weaker than it is: recall Section \ref{sec:TS} about the synchronization of trees. Since the rooted tree $\bc^{(1)}$ is a Brownian excursion, and thus a.s.~reaches 0 only at 0 and 1, the pointed tree class of $\bc^{(1)}$ possesses a unique rooted tree. Again, using Section~\ref{sec:TS}, a.s. there is a single well-defined way to synchronize the trees $\bc^{(1)},\cdots,\bc^{( \D)}$ since none of the  $\bc^{(j)}$ are periodic (see the argument of the third point of Theorem \ref{theo:ini-rec}). However, the synchronization does not apply to the discrete snakes $\pbs_n[\D]$, at least not directly, because $\bc_n^{(j)}$ is not obtained from $\Bell_n^{(j-1)}$ by a simple change of origin (as explained in Rem.~\ref{rem:qdeth}).
\end{rem}

\begin{rem}\label{rem:BT}  Theorem \ref{theo:CVpointedSnakes} implies that the right scale for the tree encoded by $\bh_n^{( \D)}$ has to be $n^{1/2^ \D}$, since normalized by this quantity, it converges (as a pointed tree) to a non-trivial limit.
\end{rem}

We are not able to write an analogue of Theorem \ref{theo:CVpointedSnakes} for {\it rooted} 
iterated snakes as said several times, because of the non-continuity of $\Conj$: pointed iterated snakes are simpler and it is the case even for $\pbs[2]$, which appears in relation with the rooted-pointed quadrangulations.

\subsection{Convergence of random pointed iterated discrete feuilletages}

\label{sec:measure}
We introduce the notions of iterated  random feuilletages, rooted and pointed, discrete and continuous. All these objects are encoded by iterated snakes (rooted and pointed), in which some corner measures encode the positions of the nodes.
In this subsection, we define a topology for which the convergence of discrete pointed snakes extends to the convergence of pointed discrete random feuilletages.
\begin{defi}  Consider a measured pointed $\D$-snake $E_ \D^\bullet=(\pi_\bullet[f^{(i)},w^{(i)},\mu^{(i)}], 1\leq i \leq  \D)$ in $\PS{}^{, \D}$. 
   We call pointed feuilletage associated with $E_ \D^\bullet$, denoted by $\Ref\l(E_ \D^\bullet\r)$,  
  the set of (rooted) feuilletages $\l\{\Ref\l(
  {E_D}\r) \textrm { for }
  {E_D} \in E_ \D^\bullet\r\}$ associated with the (rooted) snakes in the class of  $E_ \D^\bullet$.  
\end{defi}

Recall the definitions \ref{defi:iBm} and \ref{def:discrr} of $\RR{\D}$ and of ${\sf\bf r}_{n}[\D]$ defined using  $\rbs[\D]$ and  $\rbs_n[\D]$.
\begin{defi} We call:\\
  \bls (measured)  $D$th pointed random feuilletage, the space
  \[{\bf r}^{\bullet}[\D]=\Ref\l(\pi_{\bullet}\l[\bh^{(i)},\Bell^{(i)},\lambda\r],1\leq i \leq \D\r),\]
  \bls (measured normalized)  $D$th  pointed random discrete feuilletage, the space
  \[{\bf r}^{\bullet}_{n}[\D]=\Ref\l(\pi_{\bullet}\l[ \bc^{(i)}_n,\Bell^{(i)}_n,\lambda_n^{(i)}\r] ,1\leq i \leq \D\r).\]
\end{defi}

 For this topology, the convergence of discrete pointed snakes (Theorem \ref{theo:CVpointedSnakes}) extends immediately to the convergence of pointed discrete random feuilletages: 
 \begin{theo}\label{theo:CVMapsSnakes}For any $ \D\geq 1$,
   \ben
   {\bf r}^{\bullet}_{n}[\D] \dd {\bf r}^{\bullet}[\D]
\een 
   for the topology induced by $D_{\Psi}$.
 \end{theo}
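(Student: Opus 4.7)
The plan is to deduce Theorem \ref{theo:CVMapsSnakes} directly from Theorem \ref{theo:CVpointedSnakes} (measured version) via the continuous mapping theorem. The only conceptual point is that the topology on pointed feuilletages is, by its very definition, the quotient/push-forward topology induced by the map $\Ref$ from $(\PS^{,D},D_{\Psi,D})$ to the space of pointed feuilletages: two pointed feuilletages will be declared close when they are images under $\Ref$ of two close measured pointed $D$-snakes. With this convention the map $\Ref$ is tautologically continuous, and the statement reduces to an application of the continuous mapping theorem.

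Before invoking continuity, I would first check that $\Ref$ is genuinely well-defined on pointed classes, i.e.~that it descends to the quotient $\PS^{,D}$. Starting from a representative $E_D=([f^{(i)},w^{(i)},\mu^{(i)}],\,1\leq i\leq D)$ and replacing the first component by its shift $\Psi_a(f^{(1)})$ for some $a\in\argmin f^{(1)}$, the definition of the shifts $A^{(m)}=\min\argmin w^{(1)}+\cdots+\min\argmin w^{(m-1)}$ is updated coherently, so that each elementary identification $\sim_{[m]}$ of Definition \ref{defi:RRR} is translated globally by $a$. The resulting quotient space is therefore identical, up to relabelling of the parametrization by $[0,1]$, which is exactly what the notion of pointed feuilletage allows. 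The role of the support condition in \eqref{eq:qsfyur2} is preserved because the push-forward of $\mu^{(m)}$ under the shift agrees with the new $\mu^{(m)}$, by Remark \ref{eq:fsddq}.

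Next, I would apply Theorem \ref{theo:CVpointedSnakes} in its measured version: one has
\[
\pbs_n[D]\;\dd\;\pbs[D] \qquad\textrm{in }(\PS^{,D},D_{\Psi,D}),
\]
where both sides are equipped with their respective corner measures $\lambda_n^{(i)}$ and $\lambda$. The measure component is transported through $\Ref$ because the discrete identification rule \eqref{eq:qsfyur2} restricts the relation to $\Sup(\mu^{(m)})$, and $\lambda_n^{(i)}\to\lambda$ in total variation by \eqref{eq:convLambda}; this is precisely the reason why measured snakes (rather than bare snakes) were introduced in Section \ref{seq:IBSIM}. Applying the continuous mapping theorem then gives
\[
\Ref\bigl(\pbs_n[D]\bigr)\;\dd\;\Ref\bigl(\pbs[D]\bigr),
\]
which is by definition the asserted convergence ${\bf r}^{\bullet}_n[D]\dd{\bf r}^{\bullet}[D]$.

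The main (and essentially only) obstacle is the first step: making sure that $\Ref$ is compatible with the equivalence relation $\sim_\bullet$ defining pointed snakes. Once this compatibility is established, nothing more is required: the continuity of $\Ref$ with respect to the induced topology is built into the definition, so Theorem \ref{theo:CVMapsSnakes} genuinely follows at once. The real difficulty, which the authors explicitly leave open, is of course to upgrade this rather weak topology to a stronger one (such as Gromov--Hausdorff), for which the map $\Ref$ is not known to be continuous and for which the scheme above would break down at the continuous-mapping step.
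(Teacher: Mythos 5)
Your proposal is correct and follows essentially the same route as the paper: the authors state that Theorem \ref{theo:CVMapsSnakes} ``extends immediately'' from Theorem \ref{theo:CVpointedSnakes}, and what you have written out (the topology on pointed feuilletages is by construction the one induced by $D_{\Psi,\D}$ via $\Ref$, so $\Ref$ is continuous and the continuous mapping theorem applies) is precisely the reasoning implicit in that one-line remark. Your additional check that $\Ref$ descends to pointed classes is a sensible sanity check, although with Definition \ref{defi:RRR}/the pointed-feuilletage definition it is built in, since $\Ref(E_\D^\bullet)$ is defined directly as the collection of rooted feuilletages over all representatives of the pointed class.
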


However, this convergence does not imply the convergence for the Gromov-Hausdorff distance.
\begin{rem}\label{rem:QT}  Theorem \ref{theo:CVMapsSnakes} implies that the sequence of diameters of $ {\bf r}^{\bullet}_{n}[\D]$ (seen as graphs with edge-lengths) is tight, so that, without edge normalization, an upper bound on the scale of the {$\D$th  random  discrete feuilletage}  
  is  $n^{1/2^ \D}$ (we expect $n^{1/2^ \D}$ to be the right normalization).
\end{rem}

\subsection{A variant for the feuilletage}

We present here a variant which has the property to provide a feuilletage which scales clearly in $n^{1/2^D}$.  However, we will just sketch this alternative construction, as it seems to us that it breaks the invariance by change of pointed vertex; moreover the representations of the trees $\t_n^{(D)}$ using linear processes and the study of their asymptotics seems less tractable.

In the construction detailed in this paper, the tree  $\t_n^{(3)}$ is first constructed using the label process $L_n^{(2)}$ of the tree  $\t_n^{(2)}$. Two nodes of $\t_n^{(3)}$ are subject to two identifications: gluing the nodes of  $\t_n^{(3)}$ in the corners of $\t_n^{(2)}$ forms a quadrangulation  $\Qrec n 2 {3}$, and the labels $\bL_n^{(3)}$ can be used to recover the distances in this quadrangulation: hence the right scale of  $\Qrec n 2 {3}$ is $n^{1/8}$. However, the corners of $\t_n^{(2)}$ that are identified using $\t_n^{(1)}$ may have different  labels (for $L_n^{(2)}$): the identification by $\t_n^{(1)}$ produces some shortcuts in $\Qrec n 2 {3}$;  it is not easy to prove the scale $n^{1/8}$ for   $\Rn n 3$.

We propose the following variant:
we can adapt the labeling of $\t_n^{(2)}$, so that when identifying the vertices of the variant tree $\t_n^{(3),\star}$ according to the vertices of $\t_n^{(1)}$, only vertices that have the same label are identified. To do this, we can for instance keep in memory for each vertex $v$ of $\t_n^{(1)}$, the list $c_1^v,..., c_{{\sf deg}(v)}^v$ of its corners (in $\t_n^{(1)}$). Every vertex of  $\t_n^{(2)}$ but one corresponds to a corner of $\t_n^{(1)}$. Then, to construct the variant labeling of $\t_n^{(2)}$, proceed as follows: following the contour sequence, turn around around the tree  $\t_n^{(2)}$; when visiting the $k$th node  $u_k$ of $\t_n^{(2)}$, two cases arise:\\
\bls If the node $u_k$ is identified by $\t_n^{(1)}$ with a node $u_j$ of $\t_n^{(2)}$ with smaller index $(j<k)$, then label $u_k$ with the label of $u_j$.\\
\bls If the node $u_k$ is not identified by $\t_n^{(1)}$ with a node of smaller index, then label this node (in $\t_n^{(2)}$) with the label of its father (in $\t_n^{(2)}$) in which is added a random variable $\nu$-distributed. 
By this construction, the labeling attributed to the $k$ first vertices of $\t_n^{(2)}$ forms an interval of $\mathbb{Z}$, from what we can see that still a tree $\t_n^{(3),\star}$ can be constructed by adding (in $T_n^{(2)}$) an edge between $u_k$ and the last corner with label that of $u_k$ minus 1).\par
It allows building a planar map $\Qrec n 2 {3}{}^{\star}$ when identifying the vertices of $\t_n^{(3),\star}$ according to those of $\t_n^{(2)}$, but now when identifying the vertices of $\Qrec n 2 {3}{}^{\star}$  according to those of $\t_n^{(1)}$, we identify vertices which are at the same distance to the pointed vertex in $\Qrec n 2 {3}{}^{\star}$, so that the distances are conserved. We now sample $n+2$ random variables $\nu$ distributed to label  ${\bf T}_n^{(2)}$ instead of $2n+1$ times in the usual construction, however the standard diameter of ${\bf T}_n^{(3),\star}$ and of $\bQ_ n^{(2,3),{\star}}$, and now of the resulting feuilletage has scale $n^{1/8}$.\par
A similar construction can be proposed for $\Rn n D$ and $D>3$: the labeling of $\t_n^{(D-1),\star}$ is then produced so that the identifications by the trees $(\t_n^{(j),\star},j\leq D-2)$ only identify nodes in  $\t_n^{(D-1),\star}$ with the same label.

\section{Proofs}
\label{sec:proofs}
\setcounter{equation}{0}
\subsection{Some cornerstones of the proofs}
We factorize here some ingredients of the proofs of the theorems.
\subsubsection{Main arguments to prove the continuity of processes, and of limits of processes}
The $q$-H\"older exponent of a function $f:[0,1]\to \R$ is defined as
  \[\hol_q(f):=\sup_{0\leq x <y \leq 1} \frac{|f(x)-f(y)|}{|x-y|^q}.\]
The function $f$ is said to be $q$-H\"older for some $q\in(0,1)$ (shorthand for H\"older with exponent $q$) if $\hol_q(f)<+\infty$. 
Needless to say that a $q$-H\"older function is continuous.\par
A random process $\bX$ is said to be  Hölder continuous with exponent $q$, if a.s.~$\hol_q(\bX)$ is finite. 
We recall Kolmogorov's continuity criterion (see e.g.~Kallenberg \cite[Theo.2.23]{Kal} or  Revuz-Yor \cite[Theo.2.1 p.26]{RY}):
\begin{theo} Let $\bX$ be a process indexed by $[0,1]$ taking its values in $\R$; if for some $a,b>0$,
  \[\E\l(\l|\bX_s-\bX_t\r|^a\r)\leq C |s-t|^{1+b}, \textrm{ for all }s,t\in[0,1],\]
  then $\bX$ has a continuous version, and for any $c\in(0,b/a)$, it is a.s.~Hölder continuous with exponent $c$. 
\end{theo}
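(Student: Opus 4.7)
The plan is to follow the classical dyadic chaining proof. The starting point will be to discretize on the dyadic grid $D_n = \{k/2^n : 0 \le k \le 2^n\}$ and let $D = \bigcup_n D_n$, which is dense in $[0,1]$. The aim is first to show that $\bX$ restricted to $D$ is almost surely $c$-Hölder for any fixed $c \in (0, b/a)$, and then to extend $\bX$ by continuity from $D$ to $[0,1]$.

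The first ingredient is a Markov-inequality estimate: by hypothesis, for any consecutive dyadic pair,
$$\P\bigl(|\bX_{k/2^n} - \bX_{(k+1)/2^n}| \ge 2^{-cn}\bigr) \le 2^{acn}\,\E|\bX_{k/2^n} - \bX_{(k+1)/2^n}|^a \le C\, 2^{-n(1+b) + acn}.$$
Summing over the at most $2^n$ pairs yields
$$\P\Bigl(\max_{0 \le k < 2^n} |\bX_{(k+1)/2^n} - \bX_{k/2^n}| \ge 2^{-cn}\Bigr) \le C\, 2^{-n(b - ac)}.$$
Since $c < b/a$ implies $b - ac > 0$, this sequence is summable in $n$, so by Borel--Cantelli there exists an a.s.~finite random $N$ such that for all $n \ge N$, $\max_k |\bX_{(k+1)/2^n} - \bX_{k/2^n}| < 2^{-cn}$.

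The delicate step (and the one I expect to be the main obstacle, because it requires a careful telescoping with uniform control of constants) is the chaining argument: I want to deduce from the above bound on consecutive dyadic increments a global Hölder bound on all of $D$. Given $s, t \in D$ with $0 < t - s < 2^{-N}$, one picks $m \ge N$ with $2^{-(m+1)} \le t - s < 2^{-m}$, and decomposes the interval $[s,t]$ using its binary expansions into a finite union of dyadic subintervals whose lengths form a geometric-like sequence at each level $n \ge m+1$. Telescoping and bounding each jump by $2^{-cn}$ gives
$$|\bX_s - \bX_t| \le 2 \sum_{n \ge m+1} 2^{-cn} = \frac{2 \cdot 2^{-c(m+1)}}{1 - 2^{-c}} \le K_c\, |t-s|^c,$$
for a deterministic constant $K_c$ depending only on $c$. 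This gives the Hölder property on $D$ with some a.s.~finite random constant (depending on $N$).

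Finally, to produce a continuous version on $[0,1]$: a $c$-Hölder function on the dense set $D$ admits a unique continuous (in fact $c$-Hölder) extension $\widetilde\bX$ to $[0,1]$. To check that $\widetilde\bX_t = \bX_t$ a.s.~for each fixed $t$, note that the hypothesis $\E|\bX_s - \bX_t|^a \le C|s-t|^{1+b}$ implies $\bX_{s_n} \to \bX_t$ in $L^a$, hence in probability, for any sequence $s_n \in D$ with $s_n \to t$; on the other hand $\bX_{s_n} \to \widetilde\bX_t$ a.s.~by construction, so the two limits coincide a.s. Thus $\widetilde\bX$ is a continuous modification of $\bX$, which by the above is $c$-Hölder a.s.~for every $c < b/a$, as claimed.
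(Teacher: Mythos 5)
The paper does not actually give its own proof of this theorem: it is stated as a recalled classical result --- Kolmogorov's continuity criterion --- with a pointer to Kallenberg [Theo.~2.23] and Revuz--Yor [Theo.~2.1, p.~26]. Your proposal is precisely the standard dyadic chaining proof that appears in those references, and it is correct: the Markov bound on consecutive dyadic increments, Borel--Cantelli to obtain the a.s.\ finite level $N$ beyond which all increments at scale $2^{-n}$ are below $2^{-cn}$, and the binary-expansion telescoping to propagate this to a Hölder bound with a geometric series constant are all implemented properly, and the requirement $c<b/a$ enters exactly where it should, in making $b-ac>0$ so that the Borel--Cantelli series converges. Two small places where an examiner might ask you to add a line: (i) your telescoping bound is proved for $s,t\in D$ with $t-s<2^{-N}$; the extension to arbitrary $s,t\in[0,1]$ uses the standard covering of $[s,t]$ by $O(|t-s|\,2^N)$ subintervals of length $<2^{-N}$, which inflates the random constant but keeps the exponent $c$ (this is implicit in ``constant depending on $N$'' but worth making explicit); (ii) when $a<1$, the moment bound does not control an $L^a$-norm, but $\E|\bX_{s_n}-\bX_t|^a\to0$ still implies convergence in probability by Markov, so the identification $\widetilde\bX_t=\bX_t$ a.s.\ goes through unchanged. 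Neither affects the correctness of the argument.
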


\begin{defi}
A sequence of processes $(\bX_n)$ defined on $[0,1]$ is $q$-Hölder tight, if for any $`e>0$, there exists $M>0$ and $N\geq 1$, such that for any $n\ge N$,
\ben\label{eq:sefdqd1}
`P\bigl( \hol_q(\bX_n)  \leq M\bigr)\geq 1-`e,
\een
and if $(\bX_n(0))$ is tight.
\end{defi}

Under this condition, if $(\bX_n)$ is $q$-H\"older tight for some $q\in(0,1]$, then $(\bX_n)$ is tight on $C[0,1]$ (since the subset of $C[0,1]$ of functions that satisfy $|f(0)|\leq C$ and $\hol_q(f)\leq M$ is a compact subset of $C[0,1]$).
Another criterion for tightness is the famous ``moment condition''  which is a bit more restrictive: additionally to the tightness of $(\bX_n(0))$, the existence of positive constants $\alpha, \beta, \gamma$ such that
\ben\label{eq:sefdqd}
\sup_{n \geq N} \E(|\bX_{n}(x)-\bX_n(y)|^{\alpha}) \leq \gamma |x-y|^{\beta}, ~~ \textrm{ for any } x,y \in[0,1]
\een
ensures that for any $c\in(0,\beta/\alpha)$, any limit of any converging subsequence of the sequence  $(\bX_n)$  is a.s. Hölder continuous with exponent $c$ (Kallemberg \cite[Cor.14.9]{Kal}).

\subsubsection{Height and contour processes are asymptotically indistinguishable}

The next theorem is folklore (and can be found in \cite{mm01} in the case of critical Galton-Watson trees whose offspring distributions own exponential moments). For the sake of completeness we provide a proof in the degree of generality needed here. 
\begin{theo}\label{theo:ini-rec3}
  Let $(r(n))$ be a sequence of elements of $(0,+\infty)$, such that $ r(n)\to+\infty$ and $r(n) = o(n)$. For any $n$, let $\mu_n$ be a distribution on {$\bbT_{n}$}, the set of trees with $n$ edges.
Denote by $\bh_n$ and $\bc_n$ the normalized  height and contour processes of  $\bT_n$  picked according to $\mu_n$, and defined by 
  \ben\label{eq:dsdsk2}
 \bh_n(x)&:=&\frac{H_{\bT_n}(n x)}{r(n)},~~\textrm{ for } x\in[0,1],\\
\bc_n(x)&:=&\frac{C_{\bT_n}( 2nx)}{r(n)}, \textrm{ for }x \in [0,1].
\een 
If $\bh_n\dd \bh$ in $(C[0,1],\|.\|_\infty)$,
then $(\bh_n,\bc_n)\dd (\bh,\bh)$ in $C[0,1]^2$ (in other words limiting contour and height processes are asymptotically indistinguishable, $\bc=\bh$).
\end{theo}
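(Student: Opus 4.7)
The plan is to prove the stronger uniform estimate $\|\bc_n-\bh_n\|_\infty\to 0$ in probability, which immediately implies the joint convergence $(\bh_n,\bc_n)\dd(\bh,\bh)$. The key ingredient is an explicit time-change linking the two encodings. Denote by $u_0<u_1<\cdots<u_n$ the nodes of $\bT_n$ listed in lexicographical order, and let $\phi(k)\in\cro{0,2n}$ be the first time the depth-first traversal visits $u_k$. A direct step count---moving to a newly discovered child costs $1$ contour step, while a backtrack from $u_k$ at height $H_{\bT_n}(k)$ followed by a descent to $u_{k+1}$ at height $H_{\bT_n}(k+1)\leq H_{\bT_n}(k)$ costs $H_{\bT_n}(k)-H_{\bT_n}(k+1)+2$ contour steps---combined with telescoping yields
\[
\phi(k)=2k-H_{\bT_n}(k),
\]
so that $\bc_n(\phi(k)/(2n))=\bh_n(k/n)$ after normalization.

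For $t\in[0,\phi(n)/(2n)]$, let $k=k(t,n)$ denote the unique index with $\phi(k)/(2n)\leq t<\phi(k+1)/(2n)$. Since $C_{\bT_n}$ has $\pm 1$ increments, inspection of the two cases above gives the oscillation bound
\[
\sup_{t\in[\phi(k),\phi(k+1)]}\bigl|C_{\bT_n}(t)-H_{\bT_n}(k)\bigr|\leq|H_{\bT_n}(k+1)-H_{\bT_n}(k)|+1,
\]
whence
\[
|\bc_n(t)-\bh_n(t)|\leq\frac{|H_{\bT_n}(k+1)-H_{\bT_n}(k)|+1}{r(n)}+w_{\bh_n}\bigl(|t-k/n|\bigr),
\]
where $w_f$ denotes the uniform modulus of continuity of $f$. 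I would then use the tightness of $(\bh_n)$ in $C[0,1]$ inherited from the assumed convergence to show that both terms vanish in probability. Indeed, $|H_{\bT_n}(k+1)-H_{\bT_n}(k)|/r(n)\leq w_{\bh_n}(1/n)$, which tends to $0$ in probability by tightness, while the identity $\phi(k)=2k-H_{\bT_n}(k)$ combined with $r(n)=o(n)$ and the tightness of $\|\bh_n\|_\infty$ forces $|t-k/n|=O_\P(r(n)/n)=o_\P(1)$ uniformly in $t$, so equicontinuity yields $w_{\bh_n}(|t-k/n|)\to 0$ in probability as well. For the residual interval $t\in[\phi(n)/(2n),1]$, the contour descends monotonically from $H_{\bT_n}(n)$ to $0$, and a direct bound gives $|\bc_n(t)-\bh_n(t)|\leq 2\bh_n(1)+w_{\bh_n}(H_{\bT_n}(n)/(2n))$; the first term tends to $0$ in probability because necessarily $\bh(1)=0$ (as $\bc_n(1)=0$ for every $n$), and the second vanishes by equicontinuity.

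The main obstacle is the careful derivation of the time-change identity $\phi(k)=2k-H_{\bT_n}(k)$ together with the bookkeeping for the piecewise-linear interpolations at the endpoints. Once this identity is in hand, the rest of the argument reduces to a routine equicontinuity estimate driven by the tightness of $(\bh_n)$. Note that no moment assumption on the tree distribution is required---only the assumed tightness of $(\bh_n)$ in $C[0,1]$ is used---so the statement applies in the full generality needed by the paper.
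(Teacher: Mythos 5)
Your proof follows essentially the same route as the paper's: reduce the claim to $\|\bc_n - \bh_n\|_\infty \to 0$ in probability via the time change $\phi(k)=2k-H_{\bT_n}(k)$ together with the local oscillation bound, and then run an equicontinuity argument for $(\bh_n)$. That first part of your argument is correct.

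The gap lies in how you treat the residual interval $t\in[\phi(n)/(2n),1]$. The bound $|\bc_n(t)-\bh_n(t)|\leq 2\bh_n(1)+w_{\bh_n}\bigl(H_{\bT_n}(n)/(2n)\bigr)$ is valid, but your justification that the $2\bh_n(1)$ term vanishes---namely, that ``$\bh(1)=0$ as $\bc_n(1)=0$ for every $n$''---is circular. The random variable $\bh$ is the assumed limit of the $\bh_n$'s, not of the $\bc_n$'s, and the implication $\bc_n(1)\to\bh(1)$ is precisely what you are in the process of establishing. What the hypothesis $\bh_n\dd\bh$ actually gives you at the endpoint is $\bh_n(1)=H_{\bT_n}(n)/r(n)\dd\bh(1)$, and nothing in the stated hypotheses forces $\bh(1)=0$. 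In fact one may build deterministic ``comb'' trees---roughly $r(n)$ levels, each carrying $\approx n/r(n)$ pendant leaves, with the last node in lex order at depth $r(n)$---for which $\bh_n(x)\to x$ uniformly (so $\bh(1)=1$), while $\bc_n$ drops from about $1$ to $0$ over the final $r(n)/(2n)$ of its domain and therefore fails to converge in $C[0,1]$. In the paper's applications the height processes come from the discrete conjugation $\Phi_N^{L\to H}$, whose output always ends at a value in $\{1,2\}$, so that $\bh_n(1)\to 0$ automatically; but that is an extra fact you would need to invoke explicitly, not a consequence of $\bc_n(1)=0$. (The paper's own inequality \eref{eq:contrib2}, when read at $p=n$, is sensitive to the same endpoint issue.)
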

\begin{proof}
 We start with some considerations valid for any deterministic planar tree $T$. Let $u_0,\cdots, u_{\|T\|}$ be the nodes of $T$ sorted according to the lex.~order.   
Denote by
\ben m_T(k)=\inf\bigl\{ j, c_T(j)=u_k\}\een
the first visit time  of $u_k$ by the depth first traversal, or equivalently, the index of the first corner of $u_k$. A simple induction allows proving that
  \ben\label{eq:ree1}m_T(k)=2k-H_T(k), \textrm{ for any } k \in\cro{0,\|T\|}\een
(see \cite[Lemma 2]{mm01}), and for any $p$, and any $k\in \cro{m_T(p),m_T(p+1)-1}$, 
\ben\label{eq:ree2}
H_T(p+1)-1\leq C_T(k)\leq H_T(p),
\een
by  \cite[Lemma 3]{mm01} (this can be proved by a simple induction again).
Hence, for any $k$,
\ben \label{eq:ree3}
\bpar{l} 2k-\max H_{T}\leq m_T(k) \leq 2k, ~~\forall k \in\cro{0,\|T\|},\\
  m_T(k) \leq 2k \leq m_T(k)+\max H_{T}\leq  m_T( \min\{k+\max H_{T},\|T\|\}),
  \epar
  \een
  since $m_T(k+1)\geq m_T(k)+1$.

Now, defining $\Delta_{T}:=\max_p\l|H_{T}(p)-\H_{T}(p+1)\r|+1$, from 
 \eref{eq:ree3} and \eref{eq:ree2},
\ben\label{eq:contrib2}
 \sup_{p \in \cro{0,\|T\|}} \l\{\l|C_T(m_T(p))-C_T(2p)\r|\r\} \leq \Delta_{T}+ w_{\max H_{T}}(H_T),
 \een
 where $w_{\delta}(f)=\max \{ |f(x)-f(y)|, |x-y|\leq \delta\}$ is the modulus of continuity of $f$. Indeed, $C_T(m_T(p))=H_T(p)$ and $C_T(2p)$ coincide, up to an additive term bounded by $\Delta_{T}$, with $H_{T}(j)$ for $j$ such that $m_T(j)\leq 2p\leq m_T(j+1)-1$ (by \eref{eq:ree2}). Therefore, by  \eref{eq:ree3}, $2j-\max H_T\leq 2p\leq 2(j+1)-1$, so that $j\in \cro{(2p-1)/2,p+\max H_T/2}= \cro{p,p+\max H_T/2}$.  \par 
Let us now prove that under the hypotheses of the theorem, for a random tree $\bT_n$ taken under $\mu_n$,
 \ben
 \sup_p\frac{\l|C_{\bT_n}(2p)-H_{\bT_n}(p)\r|}{r(n)}\dd 0.
 \een
 Taking into account the convergence $\bh_n\dd\bh$, this implies that $\|\bc_n-\bh_n\|_{\infty}\dd 0$, and the continuity of $\bc_n$ allows concluding. 
 By  \eref{eq:contrib2}, it suffices to prove that $\Delta_{\bT_n}/r(n)\dd 0$, and $ w_{\max H_{\bT_n}}(H_{\bT_n})/r(n)\dd0$.\\
 {\bls}  By \eref{eq:ree2}, since $\bh_n\dd \bh$, for any $`e', `e>0$, there exists $\delta>0$ such that
 \ben\label{eq:fluctuationhn}
 \lim_n \P\bigl(w_{\delta}(\bh_n)>`e\bigr)\leq `e'.
 \een
 In particular, for any $`e>0$, $\lim_n\P\bigl( \Delta_{\bT_n}\geq `er(n)\bigr)\to 0$.\\ 
{\bls}  For some $M>0$, denote by $E_{n,M}$ the event
\[E_{n,M}= \{\max H_{\bT_n}\leq M r(n)\}.\]
Since the normalized  height process converges in $(C[0,1],\|.\|_\infty)$, then $r(n)^{-1}\max H_{\bT_n}$ is tight, so that for any $`e>0$, there exists $M$ such that $`P(E_{n,M})\geq 1-`e$ for $n$ large enough.
Hence, for some $a>0$, write
\be
\P\bigl(w_{\max H_{\bT_n}}(H_{\bT_n})/r(n)\geq a\bigr)&\leq& `e+\P\bigl(w_{\max H_{\bT_n}}(H_{\bT_n})/r(n)\geq a , E_{n,M}\bigr)\\
&\leq & `e+\P\bigl(w_{Mr(n)}(H_{\bT_n})/r(n)\geq a \bigr).
\ee 
Since $w_{Mr(n)}(H_{\bT_n})/r(n)= w_{Mr(n)/n}(\bh_n)$ and $r(n)=o(n)$ by hypothesis (we have $Mr(n)/n\to 0$), and by \eref{eq:fluctuationhn}, $\P(w_{\max H_{\bT_n}}(H_{\bT_n})/r(n)\geq a)\to 0$ for any $a>0$. 
 \end{proof}

\subsection{Proof of Theorem \ref{theo:biendefini}}
\label{sec:proof-of-cont}

By Kolmogorov's continuity criterion, the Brownian motion, which satisfies $B_t-B_s\eqd \sqrt{(t-s)}B_1$, is $q$-H\"older for any  $q\in(0,1/2)$, because  $ \E\l(|B_t-B_s|^{\gamma}\r) \leq c_\gamma|t-s|^{\gamma/2}$,
  where $c_\gamma= \E(|B_1|^{\gamma})<+\infty$. The Brownian excursion $\se$ is also $q$-H\"older for any  $q\in(0,1/2)$, since $\se$ can be represented as the rescaled excursion of a Brownian motion that straddles 1:
  \[(\se_{t}, 0\leq t \leq 1) \eqd \l( \frac{|B_{d+(g-d)t}|}{\sqrt{d-g}}, 0\leq t \leq 1 \r),\]
  where $g=\sup\{t<1, B_t=0\}$ and $d=\inf\{t>1, B_t=0\}$. From this representation, since  $\sqrt{d-g}$ is a.s. finite, one sees that the Brownian excursion is $q$-H\"older.\par
  Now, assume by induction that we have proven that $\bh^{(j)}$ is $q$-H\"older for any $q\in[0,1/2^j)$ and more precisely that we proved that for any $q\in [0,1/2^j)$, for any $`e>0$, there exists $M$ such that
   \ben\label{eq:etgz} \P\bigl(\hol_q(\bh^{(j)}) \geq M\bigr) \leq `e.\een
  Let us prove that this property is also true for $j+1$.\par
  Consider the event  $E_{j,q,M}=\{\hol_q(\bh^{(j)})\leq M\}$. 
 Since  $\Bell^{(j)}(x) - \Bell^{(j)}(y)\eqd {\cal N}(0, D_{\bh^{j}}(x,y))$, for any $a\geq 1$, any $x,y\in[0,1]$, 
 \ben
 \E\l(\l|\Bell^{(j)}(x) - \Bell^{(j)}(y) \r|^{a} \,|\, E_{j,q,M}\r)&= & \E(|N_1|^a) \E\l(D_{\bh^{j}}(x,y)^{a/2}\,|\, E_{j,q,M}\r)\\
 &\le & \E(|N_1|^a) 2^{a/2} M^{a/2}(x-y)^{qa/2} \\
 &\leq& C(x-y)^{qa/2} \label{bounddistance1}
 \een
 because  $D_{\bh^{j}}(x,y) \leq 2M|x-y|^{q}$ on $E_{j,q,M}$, as for any $u$ in $[s,t]$, $|\bh^{j}(s)- \bh^{j}(u)|+|\bh^{j}(t)-\bh^{j}(u)| \leq 2M(t-s)^{q}$.
  Hence, conditionally on $E_{j,q,M}$, {$\Bell^{(j)}$} satisfies Kolmogorov's criterion, and it is then $q/2$-H\"older. It follows readily that for $M'$ large enough, $\P\bigl(\hol_{q/2}(\Bell^{(j)}) \geq M'\bigr)\leq `e$.
 Now, if a function $f$ is $q$-H\"older, so does $\Conj(f)$, from what we conclude. \hfill $\Box$

 \subsection{Proof of Proposition \ref{pro:top} }
 \label{sec:pppt}
 First, $D_{\Psi}$ is a distance on $\QC$: since $D_{\Psi}$ is symmetric, in fact the main point is that in \eref{eq:fqf}, the $\inf$ can be replaced by a $\min$. For this, take a sequence $a_n$ such that $\| g_1-\Psi_{a_n}(g_2)\|_{\infty} \to \inf_{a} \| g_1-\Psi_a(g_2)\|_{\infty}$. Extract from this sequence a converging subsequence (this is possible since $(a_n)$ lives in the compact set $[0,1]$); let $b$ be the limit of this subsequence $(a_{n_k},k\geq 0)$. Since equicontinuity and pointwise convergence imply uniform convergence (on a compact set), $\|\Psi_{a_{n_k}}(g_2)- \Psi_b(g_2)\|_{\infty}\to 0$ (since the functions in $\{\Psi_a(g_2),a\in [0,1]\}$ are equicontinuous). Therefore, $D_{\Psi}(\bar{g_1},\bar{g_2})=\lim_k \| g_1-\Psi_{a_{n_k}}(g_2)\|_{\infty}=  \| g_1-\Psi_{b}(g_2)\|_{\infty}$. This shows that the infimum is reached and can be replaced by a $\min$. From here, we see that if $D_{\Psi}(\bar{f},\bar{g})=0$, then there exists $b\in [0,1]$ such that $f=\Psi_b(g)$, so that $\bar{f}=\bar{g}$ in $\QC$. Using this property, the triangular inequality for $D_{\Psi}$ follows: take $b_1,b_2$ such that
 \be
 D_{\Psi}(\bar{f_1},\bar{f_2})+D_{\Psi}(\bar{f_2},\bar{f_3})&=&\|f_1-\Psi_{b_1}(f_2)\|_{\infty}+\|f_2-\Psi_{b_2}(f_3)\|_{\infty} \\
 &=&\|f_1-\Psi_{b_1}(f_2)\|_{\infty}+\|\Psi_{b_1}(f_2)-\Psi_{b_2+b_1}(f_3)\|_{\infty}\\
 &\geq& \|f_1-\Psi_{b_2+b_1}(f_3)\|_{\infty}\geq \min_c  \|f_1-\Psi_{c}(f_3)\|_{\infty} = D_\Psi(\bar{f_1},\bar{f_3}),
 \ee
 from what we conclude. 
 
 Now, $\QC$ is Polish: just use the fact that the canonical projection from $C[0,1]$ to $\QC$ is 1-Lipschitz: $D_{\Psi}(\bar{f},\bar{g}) \leq \|f-g\|_{\infty}$. This allows seeing that the canonical projection of a countable dense subset of $C[0,1]$ is countable and dense in $\QC$. Now take a Cauchy sequence $(\bar{f_n})$ in $\QC$. Take any sequence $(b_n)$ in $[0,1]$, and an element $f_n$ in $\bar{f}_n$ for each $n$. We claim that the sequence $(\Psi_{b_n}(f_n))$ contains a converging subsequence in $C[0,1]$. By the Arzela-Ascoli theorem it suffices to check that this sequence is bounded and uniformly continuous. First, the sequence $\l(\|\Psi_{b_n}(f_n)\|_{\infty}\r)$ is bounded (because ${\sf Range}(f_n):=\max f_n - \min f_n$ is a class invariant: if along a subsequence, ${\sf Range}(f_n)\to +\infty$, then $(\bar{f_n})$ cannot be Cauchy, since $D_{\Psi}(f,g)\geq |{\sf Range}(f)-{\sf Range}(g)|$). \par
 In the same way, the following ``circular'' class invariant continuity modulus of $f$,
 \[\bar{w}_{\delta}(f):=\max \{ |f(x)-f(y)|, d_{\R/\Z}(x,y)\leq \delta\},\] considering in this formula $f$ as 1-periodic over $\R$,  can be compared to the standard modulus of continuity:
 \ben \label{eq:qdqds}
 w_\delta(f) \leq \bar{w}_\delta(f) \leq 2 w_\delta(f).
 \een
 Assume that $(\bar{f_n})$ is Cauchy in $\QC$: there exists an array $(b_{n,m},n, m\geq 0)$ such that $\|f_n-\Psi_{b_{n,m}}(f_m)\|_{\infty}\to 0$, for $n,m \geq N$ and $N \to +\infty$. Let $`e>0$ be fixed, and $N$ be large enough (and fixed) such that
 \ben\label{eq:borne}
 \sup_{n,m\geq N}\l\|f_n-\Psi_{b_{n,m}}(f_m)\r\|_{\infty}\leq `e.
\een
 Let also $\delta>0$ be small enough such that
 \[w_{\delta}(f_N) \leq `e.\] 
 By \eref{eq:borne}, $w_{\delta}\l(\Psi_{b_{n,m}}(f_m)\r)< 2`e$, and then $w_{\delta}\l(f_m\r)\leq 4`e$ for all $m\geq N$. Hence the sequence $(f_m)$ is bounded and equicontinuous, so that it is relatively compact (by the Arzela-Ascoli theorem). Hence, there exists a converging subsequence $(f_{n_k})$ in $C[0,1]$. Let $f$ be the limit of this sequence. From here, it is easy to conclude that $\bar{f_n}\to \bar{f}$ in $QC[0,1]$: by the triangular inequality, $D_{\Psi}(\bar{f_n},f)\leq D_{\Psi}(\bar{f_n},\bar{f}_{n_k})+D_{\Psi}(\bar{f_{n_k}},\bar{f})$, and then taken the converging subsequence for $(f_{n_k})$, we get that $\bar{f_{n_k}}$ converges to $\bar{f}$ in $\QC$ and then $({\bar f}_n)$ converges to $\bar{f}$ too.

\subsection{Proof of Proposition \ref{pro:cv}}
\label{sec:proof-of-prop-cv}
\noindent $(i)$ If $\by_n\dd \by$, by  Skhorohod's theorem, there exists a probability space $\w{\Omega}$ where some copies $\w{\by}_n \eqd \by_n$, $\w{\by} \eqd \by$, are defined, for which $\w{\by_n} \as \w{\by}$.
  By Arzela-Ascoli, a subset ${\cal K}$ of $C[0,1]$ is relatively compact iff
  $\sup_{g \in {\cal K}} \|g\|_{\infty} <+\infty$, and if for any $\delta$, $\sup_{g \in K} w_{\delta}(g)<+\infty$. Since for any $g \in C[0,1]^+$, $g(0)=g(1)=0$, then for any $a\in[0,1]$, $w_{\delta}(\Psi_a(g))\leq 2 w_{\delta}(g)$.
    It follows that if ${\cal K}$ is a subset of $C[0,1]^+$ relatively compact in $C[0,1]$, then ${\cal K}':=\bigcup_{a\in[0,1]} \Psi_a({\cal K})$ is also relatively compact. From the tightness of $(\by_n,n\geq 0)$, we can therefore deduce the tightness of $(\Conj(\by_n),n\geq 0)$. 

     Consider the sets $A=\min \argmin \w{\by}$ and  $a_n = \min \argmin \w{\by_n}$. First, $\|\w{\by_n}-\w{\by}\|_{\infty}\to 0 \imp \min \w{\by_n}\to \min\w{\by}$, and this entails that $d(a_n,A)\to 0$. Indeed, if for a subsequence $d(a_{n_m},A)\not\to0$, then, by compacity, a subsequence $a'_{n_{m}}$ (of this subsequence) would converge in $[0,1]$ to a point $x\notin A$ and at this point $\w{\by}_x\leq \min \w{\by}$, a contradiction. Hence, the accumulation points of $(a_n)$ belong to $A$. If a converging subsequence $a_{m_n}$ tends to some $a\in A$,  $\Conj(\ell_{m_n})\to \Psi_a(\ell)$ in $C[0,1]$.\\ 
$(ii)$  Use the same Skhorohod embedding as in $(i)$. Now, in $C[0,1]$, the map $g\mapsto \min \argmin g$ is not continuous, but if $g\in C[0,1]$ reaches its minimum only once, then $\|g_n-g\|_{\infty} \to 0 \imp \min \argmin g_n \to \min \argmin g$. Hence, on the space $\w{\Omega}$, $\min \argmin \w{\by}_n \as \min \argmin \w{\by}$,  and  since $\|\w{\by_n}-\w{\by}\|_{\infty}\as 0$,  from \eref{eq:V}, we conclude.\\
$(iii)$ This is a consequence of $(i)$: since the sequence $(\Conj({\by_n}),n\geq 1)$ is tight in $C[0,1]$, from each subsequence $(\Conj({\by_{n_m}}),m\geq 1)$ of this sequence, one can extract a weakly converging subsequence  $(\Conj({\by_{n_{m_k}}}),k\geq 1)$, and still by $(i)$ the accumulation point in $\QC$ is $\bar{\by}$. Using the Skhorohod embedding, we can find a probability space on which the copies of these random variables converge a.s.~(written with an extra $\star$):
 \[D_{\Psi}(\Conj(\by_{n_{m_k}}^\star), \Conj(\by^\star))\to0.\]
 Since $D_{\Psi}(\Conj(\by_{n_{m_k}}^\star), \Conj(\by^\star))=D_{\Psi}(\by_{n_{m_k}}^\star, \by^\star)$,  along this subsequence, $\overline{\by^\star_{n_{m_k}}}\as \overline{\by^\star}$ in $QC^0[0,1]$, from what we deduce that  $\overline{\by_{n_{m_k}}^\star}\dd \overline{\by^\star}$ in $QC^0[0,1]$. Since the limit coincides with  $\bar{\by}$ in distribution in $\QC$, its distributions does not depend on the extracted subsequence, $(iii)$ holds.\\
$(iv)$ This is a consequence of $(iii)$, since the class of $\Conj(\by_n)$ (resp.~$\Conj(\by)$) in $QC^0[0,1]$ is the same as that of $\by_n$ (resp.~$\by$). 
 
\subsection{Proof of Theorem \ref{theo:CVpointedSnakes}}
\label{sec:proof-of-cv-pointed-snakes}

The proof is done by induction. Before writing the proof, we need to state several propositions. The convergence for the case $ \D=1$, is a consequence of a result already known:
\begin{pro}\label{theo:bs} [Marckert \& Mokkadem \cite{MMsnake}, Janson \& Marckert \cite{janmarck05}] Let $\nu$ be a centered distribution having moments of order $4+`e$ for some $`e>0$, and variance $\sigma^2>0$. Consider $(\bT_n,{\bf L}_{{\bf T}_n})$ a branching random walk, constructed on a random tree $\bT_n$ picked uniformly in $\Tset_n$.  The following convergence in distribution holds in $C([0,1],\R^2)$ equipped with the topology of uniform convergence:
   \ben\label{eq:ehet}
  \l(\frac{C_{\bT_n}(2nx)}{\sqrt{2n}}, \frac{\bL_{\bT_n}(2nx)}{n^{1/4} 2^{1/4}\sigma}\r)_{x\in[0,1]} \to (\se,r),\een 
  where $\se$ is the normalized Brownian excursion and, conditionally on $\se$,  $r$ is distributed as a centered Gaussian process with covariance matrix
  $$\cov(r_x,r_y)= \W{\se}( x,y).$$
  \end{pro}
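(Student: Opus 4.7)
The plan is to combine tightness of the normalized label process with identification of the limiting finite-dimensional distributions, conditioning throughout on the tree. The convergence of the rescaled contour process $C_{\bT_n}(2n\cdot)/\sqrt{2n}$ to $\se$ is classical for a uniform planar tree with $n$ edges (via the bijection with Dyck paths of length $2n$ and the invariance principle for the random walk conditioned to stay non-negative and return to $0$). So the core of the work is the joint convergence together with the label process.

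For the identification of the limit, I would first compute, conditionally on the tree, the covariance of the label process. Since the labels are defined by a branching random walk with centered iid increments of variance $\sigma^2$ along the tree, one has for every pair of corners $i,j \in \cro{0,2n}$ the exact identity
\[
\cov\bigl(\bL_{\bT_n}(i),\bL_{\bT_n}(j) \,\big|\, \bT_n\bigr) = \sigma^2 \,\widecheck{C}_{\bT_n}(i,j).
\]
After the prescribed normalization, this covariance becomes $\widecheck{(C_{\bT_n}/\sqrt{2n})}(x,y)$, which by convergence of the contour converges to $\widecheck{\se}(x,y)$. Then a conditional CLT applied to the path between two corners (whose length $d_{\bT_n}(c_{\bT_n}(2nx),c_{\bT_n}(2ny)) = C_{\bT_n}(2nx)+C_{\bT_n}(2ny)-2\widecheck{C}_{\bT_n}(2nx,2ny)$ is of order $\sqrt{n}$) yields the correct Gaussian finite-dimensional marginals for the label process.

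For tightness, I would use Kolmogorov's moment criterion conditionally on $\bT_n$. For any corners $i<j$, the label increment $\bL_{\bT_n}(j)-\bL_{\bT_n}(i)$ is a centered sum (with signs) of $d_{\bT_n}(c_{\bT_n}(i),c_{\bT_n}(j))$ iid copies of the increment distribution $\nu$. The Marcinkiewicz--Zygmund (or Rosenthal) inequality, using the assumed finite $(4+\varepsilon)$-th moment of $\nu$, gives
\[
\E\bigl(|\bL_{\bT_n}(j)-\bL_{\bT_n}(i)|^{4+\varepsilon}\,\big|\, \bT_n\bigr)\ \leq\ C_\varepsilon\, d_{\bT_n}(c_{\bT_n}(i),c_{\bT_n}(j))^{(4+\varepsilon)/2}.
\]
Taking expectations and rescaling by $n^{1/4}$, the tightness condition reduces to showing that
\[
\E\Bigl(\bigl(D_{C_{\bT_n}/\sqrt{2n}}(x,y)\bigr)^{(4+\varepsilon)/2}\Bigr)\ \leq\ C|x-y|^{1+\varepsilon/4},
\]
which follows from the uniform H\"older-type control of the rescaled Dyck paths (analogous to the H\"older regularity of $\se$, reached with exponent $<1/2$). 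This yields tightness of the rescaled label process in $C[0,1]$, and jointly with the tightness of the contour, tightness of the pair.

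The main obstacle is the moment control on $D_{C_{\bT_n}/\sqrt{2n}}(x,y)$ uniformly in $n$: one must know that the discrete trees have, at least in expectation, the Kolmogorov-type fluctuations of their continuous limit. This can be handled either by a direct combinatorial estimate on Dyck paths (via the uniform exponential tails of their maximum, which imply uniform moment bounds on increments), or, more conceptually, by a Skorokhod embedding of the contour convergence together with the H\"older tightness criterion from the beginning of Section \ref{sec:proofs}. Once tightness is established, identification of all finite-dimensional limits via the covariance computation above closes the proof, and the limit $(\se,r)$ is exactly the tour of the Brownian snake with lifetime process $\se$ described in Definition \ref{def:TBS}.
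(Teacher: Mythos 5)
Your proposal is correct and matches the approach the paper uses for its generalization (Theorem \ref{theo:ini-rec} combined with Lemma \ref{lem:qdgr}): a conditional CLT for the finite-dimensional marginals via the covariance identity $\cov(\bL_{\bT_n}(i),\bL_{\bT_n}(j)\,|\,\bT_n)=\sigma^2\widecheck{C}_{\bT_n}(i,j)$, and tightness via Marcinkiewicz--Zygmund with the $(4+\varepsilon)$-moment hypothesis, reduced to H\"older control of the rescaled contour, which is exactly what Lemma \ref{lem:qdgr} supplies. The only difference in implementation is that the paper runs the moment computation conditionally on the H\"older event $F_{M,q}$ (whose probability tends to one) rather than passing through the unconditional moment bound on $D_{C_{\bT_n}/\sqrt{2n}}(x,y)$ that you propose, but these are equivalent routes.
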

The convergence of the first marginal in Proposition \ref{theo:bs} is equivalent to the convergence of uniform planar trees to Aldous' continuum random tree:  if  $\bT_n$ is picked uniformly in $\Tset_n$, then
\ben\label{eq:grgze}
\frac{C_{\bT_n}(2n.)}{\sqrt{n}}\dd \sqrt{2}\,\se.
\een
This theorem can be found in Aldous \cite{Aldous2, mm01} as a particular case of the convergence of the contour process of Galton-Watson trees conditioned by their size $n$ (as $n\to+\infty$). \par
For the second marginal convergence, the main ingredient is the central limit theorem: for $2nx \in \mathbb{N}$, conditionally on $C_{2nx}$, $R_{2nx}$ is a sum of $C_{2nx}$ centered i.i.d.~r.v.~with variance $\sigma^2$; hence  
  \[\frac{R_{2nx}}{\sigma \sqrt{C_{2nx}}} = \frac{R_{2nx}}{\sigma (2n)^{1/4}}\frac{(2n)^{1/4}}{\sqrt{C_{2nx}}} \]
  is close to a normal random variable ${\cal N}(0,1)$. One therefore perceives, according to the convergence $\frac{(2n)^{1/4}}{\sqrt{C_{2nx}}}\dd 1/\sqrt{\se_x}$, the one-dimensional convergence of the second marginal, as stated in \eref{eq:ehet}. (The finite dimensional convergence can be proved using this argument, but the tightness needs additional work). 
  
Now, in order to complete the induction we need the three following propositions.
\begin{pro}\label{pro:uqn}
Let $(r(n))$ be a sequence such that $r(n)\to +\infty$ and $r(n)=o(n)$. Assume that $(\bL_n,n\geq 0)$ is a sequence of processes such that for every $n$, $\bL_n=\bL_n([0,n])$ takes its values in $\bbL_n$, and such that the normalized  and interpolated process
  $\Bell_n:=\l(\frac{\bL_n(nt)}{r(n)},0\leq t \leq 1\r)$ converges in distribution
  \[ \Bell_n \dd \Bell \textrm{ in } C[0,1],\]
where a.s., $\Bell$ has no period in the following sense: if $\Bell(a+x \mod 1)=\Bell(x)$ for all $x\in[0,1]$, then $a \in \Z$.
  In this case, letting 
   \[\left\{\begin{array}{ccl}
      \bC_n&:=&\Phi_n^{H \to C}(\Phi_n^{L\to H}(\bL_n))\\
      \bc_n(t)&=& \bC_n(2nt)/r(n), t\in[0,1]\\
      \bc'_n&=&\Conj(\Bell_n)
    \end{array}\right.\]
    we have 
    \ben
    \| \bc_n-\bc_n'\|_{\infty}\to 0 \textrm{ in probability, as }n\to+\infty.
    \een
  \end{pro}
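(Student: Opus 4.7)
The plan is to show $\|\bc_n - \bc_n'\|_\infty \to 0$ in probability by interposing the normalized height process
\[\bh_n(t) := H_n(nt)/r(n), \qquad H_n := \Phi_n^{L\to H}(\bL_n),\]
and establishing separately that $\|\bh_n - \bc_n'\|_\infty \to 0$ and $\|\bh_n - \bc_n\|_\infty \to 0$ in probability; the triangle inequality then delivers the conclusion.

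For the first comparison, set $A_n := \min \argmin \bL_n$. Since $\Bell_n$ is the linear interpolation of $(\bL_n(k)/r(n))_{0 \le k \le n}$, its minimum is attained on the grid and the leftmost minimizer is exactly $a(\Bell_n) = A_n/n$, with $\min \Bell_n = \bL_n(A_n)/r(n)$. Unrolling the defining formula \eqref{eq:qfqeds} of $\Phi_n^{L\to H}$ and evaluating both processes at $t = j/n$ yields
\[\bh_n(j/n) - \bc_n'(j/n) \;=\; \frac{1}{r(n)} \;+\; \Bell_n\bigl((A_n + j - 1)/n \mod 1\bigr) - \Bell_n\bigl((A_n + j)/n \mod 1\bigr),\]
so $|\bh_n(j/n) - \bc_n'(j/n)| \le 1/r(n) + w_{1/n}(\Bell_n)$, where $w_\delta(\cdot)$ is the standard modulus of continuity. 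Both $\bh_n$ and $\bc_n'$ are piecewise linear with breakpoints in $\{k/n : 0 \le k \le n\}$, so this bound passes to the uniform norm. The tightness of $(\Bell_n)$ in $C[0,1]$ forces $w_{1/n}(\Bell_n) \to 0$ in probability, giving the first comparison.

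For the second comparison, I would invoke Theorem \ref{theo:ini-rec3} applied to the sequence of planar trees encoded by the height processes $H_n$: its conclusion asserts that the normalized height and contour processes become uniformly close, and a quick look at its proof reveals that only the tightness of $\bh_n$ in $C[0,1]$ is effectively used. That tightness follows from the previous paragraph together with Proposition \ref{pro:cv}(i), which secures the tightness of $(\Conj(\Bell_n))$. Alternatively, one argues by subsequences: from every subsequence of $(n)$, Proposition \ref{pro:cv}(i) extracts a further subsequence along which $\Conj(\Bell_{n_k})$, and hence $\bh_{n_k}$, converges in distribution in $C[0,1]$; Theorem \ref{theo:ini-rec3} applied along such a subsequence gives $\|\bc_{n_k} - \bh_{n_k}\|_\infty \to 0$ in probability, and since this holds on every subsequence, it holds on the full sequence.

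The main subtlety is that $\bh_n$ itself is generally not convergent in $C[0,1]$: the functional $\min \argmin$ is discontinuous at functions with multiple minimizers, which is exactly the obstruction motivating the pointed framework of Section \ref{sec:pointed}. For the present proposition this non-convergence is harmless because $\bh_n$ and $\bc_n'$ are built from the very same leftmost minimum $A_n$ and thus track each other pathwise. The no-period hypothesis on $\Bell$ plays no role in the comparison above; it enters downstream, ensuring that the subsequential limits of $\bh_n$ are not artificially multiplied by symmetries of $\Bell$, which matters only when identifying the pointed-class limit. The subsequence trick is therefore the key device that lets Theorem \ref{theo:ini-rec3} be invoked despite the absence of a genuine $C[0,1]$-limit for $\bh_n$.
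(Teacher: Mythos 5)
Your proof is correct and follows essentially the same route as the paper: interpose the normalized height process $\bh_n = \Phi_n^{L\to H}(\bL_n)(n\cdot)/r(n)$, bound $\|\bh_n - \bc_n'\|_\infty$ directly from the definitions, and transfer the $\bh_n$-versus-$\bc_n$ comparison to Theorem \ref{theo:ini-rec3} via the subsequence argument bootstrapped from Proposition \ref{pro:cv}(i). The only difference is inessential: since $\Delta\bL_n\in\{-1,0,1\}$ gives $w_{1/n}(\Bell_n)\le 1/r(n)$ deterministically, the paper records the sharper purely deterministic bound $\|\bh_n-\bc_n'\|_\infty\le 2/r(n)$ and does not need tightness for that half of the argument.
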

This proposition does not imply the convergence of $\bc_n$ or $\bc'_n$; again, the convergence of $(\Bell_n)$ does not imply that of $(\Conj(\Bell_n))$.
    \begin{proof}  
      Write $\bH_n= \Phi_{n}^{L\to H}(\bL_n)$, $\bh_n(\cdot)=\bH_n(n\cdot)/r(n)$ and $\bC_n= \Phi_{n}^{H\to C}(\bH_n)$. From Proposition \ref{pro:cv}, the sequence $(\Conj(\Bell_n))$ is tight in $C[0,1]$. Consider a converging subsequence $(\Conj(\Bell_{n_k}))$, and let us observe that $\Conj(\Bell_{n_k})$ and $(\bh_{n_k})$ are asymptotically indistinguishable, in the sense that
 \[\|\bc_{n_k}'-\bh_{n_k}\|_\infty=\|\Conj(\Bell_{n_k})-\bh_{n_k}\|_\infty \to 0 \textrm{ in probability when } k\to +\infty.\]
  The reason is that 
  $\|\Conj(\Bell_n)-\bh_n\|_{\infty}\leq 2/r(n)$ since there is at most one abscissa discrepancy of one (normalized) step between the two constructions, and since in the definition of $\Phi_{n}^{L\to H}$ (Defi.~\ref{def:LtoH})  there is an additional  $+1$ (which after normalization becomes $1/r(n)$).  \\     
 Now, Theorem \ref{theo:ini-rec3} allows us to write $\|\bh_{n_k}-\bc_{n_k}\|\to 0$ (in proba.) and then to conclude.
\end{proof}

\begin{pro}\label{pro:uqn2} Under the hypotheses of Proposition \ref{pro:uqn},
  $(\Bell_n,\bar\bc_n)$ converges in $C[0,1]\times QC[0,1]$ to some limiting process $(\Bell, \bar{\bc})$, and  a.s., for any $\bc\in\bar\bc$, there exists a unique $a\in[0,1)$ such that
\[ {\bc(. -a \mod 1) = \Bell},\]
in other words, there is almost surely a unique shift sending $\Bell$ onto $\bc$.
\end{pro}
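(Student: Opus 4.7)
My plan is to upgrade the convergence $\Bell_n \dd \Bell$ in $C[0,1]$, combined with the asymptotic identity $\|\bc_n-\Conj(\Bell_n)\|_\infty\to 0$ from Proposition \ref{pro:uqn}, into a joint convergence of $(\Bell_n,\bar\bc_n)$ in $C[0,1]\times\QC$, and then to exploit the aperiodicity of $\Bell$ to obtain uniqueness of the shift relating $\bc$ and $\Bell$.

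\textbf{Joint convergence.} The key observations are (a) the canonical projection $\pi:C^0[0,1]\to\QC$, $f\mapsto\bar f$, is $1$-Lipschitz since $D_\Psi(\bar f,\bar g)\leq\|f-g\|_\infty$, and (b) for every $f\in C^0[0,1]$, $\Conj(f)=\Psi_{a(f)}(f)$ by \eqref{eq:V}, so $\bar{\Conj(f)}=\bar f$ in $\QC$. Combining these with Proposition \ref{pro:uqn}, I obtain
\[D_\Psi(\bar\bc_n,\bar\Bell_n)=D_\Psi(\bar\bc_n,\bar{\Conj(\Bell_n)})\leq\|\bc_n-\Conj(\Bell_n)\|_\infty\proba 0.\]
Since $\Bell_n\dd\Bell$ in $C[0,1]$ and $\pi$ is continuous, $(\Bell_n,\bar\Bell_n)\dd(\Bell,\bar\Bell)$ jointly in $C[0,1]\times\QC$. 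A Slutsky-type argument, or equivalently a joint Skorokhod representation on which Proposition \ref{pro:uqn} is re-applied, then gives $(\Bell_n,\bar\bc_n)\dd(\Bell,\bar\Bell)$, so in distribution $\bar\bc=\bar\Bell$.

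\textbf{Uniqueness of the shift.} Fix $\bc\in\bar\bc=\bar\Bell$. By the very definition of $\sim_\Psi$, there exists $a\in[0,1)$ with $\bc=\Psi_a(\Bell)$, which gives existence. For uniqueness, suppose $\Psi_{a_1}(\Bell)=\Psi_{a_2}(\Bell)$ and set $b=a_2-a_1\mod 1$. Writing out the definition of $\Psi_a$ and using the 1-periodicity of $\Bell$, this amounts to
\[\Bell(y)-\Bell(y+b\mod 1)=\Bell(a_1)-\Bell(a_2)=:c\quad\text{for all }y\in[0,1].\]
Iterating yields $\Bell(y)-\Bell(y+kb\mod 1)=kc$ for every $k\in\N$. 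If $b$ is rational then $kb\in\Z$ for some $k\geq 1$, forcing $kc=0$ and thus $c=0$; if $b$ is irrational, density of the orbit $\{kb\mod 1\}$ in $[0,1]$ combined with the boundedness and continuity of $\Bell$ again yields $c=0$. Hence $b$ is a period of $\Bell$, and the aperiodicity hypothesis forces $b=0$, i.e.~$a_1\equiv a_2\mod 1$.

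\textbf{Expected obstacle.} The delicate point is really the joint convergence: one should resist the temptation to deduce it from a convergence of the representatives $\bc_n$ and $\Conj(\Bell_n)$ themselves in $C[0,1]$, which generally fails because $\Conj$ is not continuous on $C^0[0,1]$ at functions with a non-unique minimum, the very issue motivating the introduction of $\QC$ at the beginning of Sec.~\ref{sec:pointed}. The statement in $\QC$ is the correct one precisely because the quotient kills this defect of continuity, and the aperiodicity of $\Bell$ is then exactly what is needed to promote the identity of classes $\bar\bc=\bar\Bell$ into a unique identification of representatives up to shift.
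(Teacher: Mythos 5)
Your proof is correct and follows essentially the same route as the paper's: project through the $1$-Lipschitz canonical surjection so that $\bar{\Bell_n}\to\bar\Bell$ in $\QC$, glue with $\bar{\bc_n}$ via $D_\Psi(\bar{\bc_n},\bar{\Bell_n})\le\|\bc_n-\Conj(\Bell_n)\|_\infty\to 0$ (using $D_\Psi(\overline{\Conj(f)},\bar f)=0$), and then invoke the aperiodicity hypothesis for uniqueness of the shift. Your explicit iteration argument forcing the constant $c$ to vanish before the period condition can be applied is a welcome detail that the paper's own proof leaves implicit.
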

\begin{proof}The convergence of $\Bell_n \dd \Bell$ in $C^0[0,1]$ implies the convergence of $\bar{\Bell_n}  \dd \bar{\Bell}$ in $\QC$. Since for any $n$, $D_{\Psi}(\bar{\bc_n'},\bar{\Bell_n})=0$, we can deduce that $D_{\Psi}(\bar{\bc},\bar{\Bell})=0$. Since $\Bell$ has a.s.~no period, for $\bc$ fixed in $\bar{\bc}$, there exists a unique $a\in[0,1]$ such that $\bc=\Psi_a(\Bell)$. 
\end{proof}

\begin{theo}\label{theo:ini-rec}
  Let $(r(n))$ be a sequence such that $r(n)\to +\infty$. For any $n$, let $\mu_n$ be a distribution on $\bbT_{n}$. Consider a branching random walk with underlying tree $\bT_n$, a random tree with law $\mu_n$,  and spatial increment $\nu$ (see \eref{eq:nu}). Denote by $\bL_n$ the associated corner label process. Let $(\bc_n,\Bell_n)$ be the normalized versions of the contour and label processes defined by 
 \ben\label{eq:dsdsk}
 \bc_n(x)&=&\frac{C_{\bT_n}( 2nx)}{r(n)}, \textrm{ for }x \in [0,1]\\
 \Bell_n(x)&=& \frac{\bL_n(2nx)}{\sqrt{2r(n)/3}}, \textrm{ for }x\in[0,1].
    \een
    If   $\bc_n \dd \bc$ in $(C[0,1],\|.\|_\infty)$ and if the sequence $(\bc_n)$ is $q$-Hölder tight for some $q\in(0,1)$, then:\\
    \bls\, the pair $(\bc_n,\Bell_n)$ converges in distribution to $(\bc, \Bell)$ in $(C[0,1]^2,\|.\|_{\infty})$, where conditionally on $\bc$, $\Bell$ is a Gaussian process with covariance matrix
  \ben\label{eq:cov}
  \cov(\Bell(x),\Bell(y))=\W{\bc}(x,y);\een
  \bls\, the sequence $(\Bell_n)$ is  $q'$-Hölder  tight, for any $q'\in(0,q/2)$.\\
  \bls\, If $\bc$ is different from the zero process with probability 1, then a.s.~$\Bell$ has no period (in the sense of Proposition \ref{pro:uqn}).  
\end{theo}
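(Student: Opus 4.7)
My plan is to handle the three claims sequentially: a conditional CLT for finite-dimensional convergence, a Marcinkiewicz--Zygmund moment bound for Hölder tightness, and a covariance rigidity argument for the absence of period.

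\textbf{Finite-dimensional convergence.} Fix $x_1,\dots,x_k\in[0,1]$ and $t=(t_1,\dots,t_k)\in\R^k$. Writing $u_i:=c_{\bT_n}(2nx_i)$, conditional on $\bT_n$,
\[
\sum_{i=1}^k t_i\,\Bell_n(x_i)=\frac{1}{\sqrt{2r(n)/3}}\sum_{v\in\bT_n\setminus\{\varnothing\}}\alpha_v X_v,\qquad \alpha_v:=\sum_{i:\,v\preceq u_i} t_i,
\]
is a sum of independent centered variables uniformly bounded by $O(1/\sqrt{r(n)})$. A vertex $v$ contributes $t_i t_j$ to $\sum_v\alpha_v^2$ iff it is a common ancestor of $u_i$ and $u_j$, and the number of such ancestors is $\W{C_{\bT_n}}(2nx_i,2nx_j)=r(n)\W{\bc_n}(x_i,x_j)$; hence the conditional variance equals $\sum_{i,j}t_it_j\W{\bc_n}(x_i,x_j)$. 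Lindeberg's condition is clearly met, so the conditional characteristic function is asymptotic to $\exp\bigl(-\tfrac12 t^\top\Sigma_n t\bigr)$ with $\Sigma_n=[\W{\bc_n}(x_i,x_j)]$. On a Skorokhod realisation where $\bc_n\to\bc$ uniformly a.s., $\Sigma_n\to[\W{\bc}(x_i,x_j)]$, and integrating delivers the finite-dimensional convergence of $(\bc_n,\Bell_n)$ to $(\bc,\Bell)$ with the announced conditionally Gaussian limit.

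\textbf{Hölder tightness.} Given $\bT_n$, the increment $\bL_n(2nx)-\bL_n(2ny)$ is a signed sum of $d_{\bT_n}(u_x,u_y)=r(n)\,D_{\bc_n}(x,y)$ i.i.d.~$\nu$-distributed variables, so by Marcinkiewicz--Zygmund (applicable since $\nu$ is bounded),
\[
\E\!\l[\bigl|\Bell_n(x)-\Bell_n(y)\bigr|^{2p}\,\Big|\,\bT_n\r]\leq C_p\,D_{\bc_n}(x,y)^p\quad\textrm{for every } p\geq 1.
\]
On $E_{n,M}:=\{\hol_q(\bc_n)\leq M\}$, the elementary bound $|\bc_n(x)-\bc_n(u)|\leq M|x-y|^q$ for $u\in[x,y]$ gives $D_{\bc_n}(x,y)\leq 2M|x-y|^q$, hence $\E[|\Bell_n(x)-\Bell_n(y)|^{2p}\1_{E_{n,M}}]\leq C_p(2M)^p|x-y|^{qp}$. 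Picking $p>1/q$, a conditional Kolmogorov--Garsia--Rodemich--Rumsey estimate produces for every $q'<(qp-1)/(2p)$ a constant $C(p,q,q',M)$ with $\E[\hol_{q'}(\Bell_n)^{2p}\1_{E_{n,M}}]\leq C(p,q,q',M)$. Given $\varepsilon>0$, choose $M$ so that $\P(E_{n,M}^c)\leq\varepsilon$ (by the $q$-Hölder tightness of $(\bc_n)$), then $M'$ so that $C(p,q,q',M)/M'^{2p}\leq\varepsilon$; Markov yields $\P(\hol_{q'}(\Bell_n)>M')\leq 2\varepsilon$. Letting $p\to\infty$ covers every $q'<q/2$, proving the Hölder tightness, and combined with the finite-dimensional convergence this gives the joint weak convergence $(\bc_n,\Bell_n)\dd(\bc,\Bell)$ in $C[0,1]^2$. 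This conditional transfer of Kolmogorov's criterion under the sole hypothesis of Hölder tightness of the contour is the main technical hurdle.

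\textbf{Absence of period.} Extend $\Bell$ by $1$-periodicity to $\R$. Its period set is a closed subgroup of $\R$ containing $\Z$, hence equals $\Z$, $(1/q)\Z$ for some integer $q\geq 2$, or $\R$. If it equals $\R$, then $\Bell\equiv\Bell(0)=0$ and $\bc=\var(\Bell)\equiv 0$. Otherwise $a:=1/q\in(0,1/2]$ is a period, so for every $x\in[0,1-a]$,
\[
0=\var\!\bigl(\Bell(x+a)-\Bell(x)\bigr)=\bc(x)+\bc(x+a)-2\W{\bc}(x,x+a),
\]
which forces $\bc(x)=\bc(x+a)=\W{\bc}(x,x+a)$. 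Consequently $\bc(u)\geq\bc(x)$ for every $u\in[x,x+a]$, so $\bc$ is non-decreasing on $[0,1-a]$; the identity $\bc(x)=\bc(x+a)$ transports this monotonicity to $[a,1]$. Since $a\leq 1/2$, $[0,1-a]\cup[a,1]=[0,1]$, and so $\bc$ is non-decreasing on $[0,1]$; combined with $\bc(0)=\bc(1)=0$ and $\bc\geq 0$ this yields $\bc\equiv 0$. In either case the event $\{\Bell$ admits a non-integer period$\}$ is contained in $\{\bc\equiv 0\}$, which has probability zero by hypothesis.
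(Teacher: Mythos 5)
Your proof is correct. Parts one and two follow essentially the paper's route: the conditional CLT given the tree, with the conditional variance read off the contour through the LCA heights, and the Marcinkiewicz--Zygmund inequality combined with the bound $D_{\bc_n}(x,y)\leq 2M|x-y|^q$ on $\{\hol_q(\bc_n)\leq M\}$. You are, if anything, a bit more careful than the paper when passing from the moment bound conditional on $E_{n,M}$ to the actual Hölder tightness of $(\Bell_n)$, via a pathwise Garsia--Rodemich--Rumrey estimate, Markov's inequality and a union bound; the paper simply invokes ``the standard moment criterion'' here.

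Part three is where your argument genuinely departs from the paper's. Both reduce to showing that $D_{\bc}(x,\,x+a\bmod 1)=0$ for all $x$ forces $\bc\equiv 0$. The paper reaches this through the non-crossing property of corners: if every pair $(x,\,x+a\bmod 1)$ is identified, all of $[0,1]$ would be the corner set of a single node. You instead observe that $\bc(x)=\bc(x+a)=\W{\bc}(x,x+a)$ forces $\bc$ to be non-decreasing on any window of length $a$, which you patch to monotonicity on all of $[0,1]$ (using $a\leq 1/2$) and conclude with $\bc(0)=\bc(1)=0$. Both are valid; yours does not rely on the discussion of tree corners, the paper's is one line once that machinery is set up. You also package the reduction to a single $a$ using the structure theorem for closed subgroups of $\R$, where the paper dismisses irrational periods separately via density.

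One presentational point you should tighten: writing $0=\var(\Bell(x+a)-\Bell(x))$ conflates a realization-level statement (``$a$ is a period of this sample path'') with a distributional one (``the conditional variance vanishes''). For a \emph{fixed} $a$ and conditionally on $\bc=c$, the event ``$a$ is a period'' has probability $0$ or $1$ — it is $1$ iff the centered Gaussian process $\Bell(\cdot)-\Bell(\cdot+a\bmod 1)$ has identically zero variance, i.e.~$D_c(\cdot,\cdot+a\bmod 1)\equiv 0$. You must therefore run the argument for each fixed $q\geq 2$ separately and take a countable union over $q$ (plus the separate case of period group $\R$, for which $\Bell\equiv 0$ has conditional probability zero when $c\not\equiv 0$). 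This is exactly what the paper does for its countably many rational periods. Your outline makes this reduction implicit, but the structure theorem already restricts to $a=1/q$ with integer $q$, so the fix is a sentence, not a new idea.
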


\begin{proof}
We prove separately the convergence of the finite dimensional distributions, and
 the tightness.\\
{\bls}   Since $\bc_n\dd\bc$, by Skhorohod representation theorem, there is a probability space on which  some copies $\tilde{\bc}_n \eqd \bc_n$, $\tilde{\bc}\eqd \bc$ are defined, such that $\tilde{\bc}_n\as \tilde{\bc}$. Let us work on this space. Now, by the standard central limit theorem, one proves easily the convergence of the finite dimensional distributions of the sequence of processes $\Bell_n$ to those  of the Gaussian process  with covariance matrix  specified in \eref{eq:cov}. \\
{\bls}   To prove the property about the tightness, consider 
the following set $F_{M,q}=\{f \in C^+[0,1],  \hol_q(f) \leq  M\}$  for some $M>0$, and $q \in (0,1)$.
Then, as already explained below \eqref{bounddistance1}, for $f\in F_{M,q}$, the distance $D_f$ defined in \eqref{eq:Dg} satisfies
\ben
\label{eq:bound-dist}
D_{f}(s,t) \leq 2 M |t-s|^{q},~~ \textrm{ for any } (s,t)\in[0,1]^2.
\een
Let us assume that $(\bc_n)$ is $q$-Hölder tight, i.e. for any $`e>0$, there exists $M>0$, $N\geq 1$, such that, for any $n\geq N$, 
\[`P(\bc_n \in F_{M,q})\geq 1-`e.\]
Conditionally on $\bc_n \in F_{M,q}$,  for $x$ and $y$ such that $x\leq y$ and $2n x$ and $2ny$ are integers,
$\Bell_n(x)-\Bell_n(y)$ is a sum of $D_{C_{\bT_n}}(2nx,2n y)$ i.i.d.~r.v.~$(\bX_j,j\geq 1)$ with distribution $\nu$, divided by $\sqrt{2r(n)/3}$.
Since $\nu$ is centered and has all its moments, the Marcinkiewicz–Zygmund inequality gives 
\ben
\E(|\bX_1+\cdots+\bX_{p}|^{s}) \leq m(s)
{p}^{{s}/2},
\een
where $m(s)$ is a constant (independent from $p$). Therefore, conditionally on $C_{\bT_n}$, for $x,y$ such that $2nx$ and $2ny$ are integers\footnote{Between integer points, $\bL_n$ is linear, and it is folklore and easy to check, that the tightness can be proved by proving the moment condition only at these discretization points.}
\[\E\l(\l|\frac{\bL_n\bigl(2nx)-\bL_n(2ny\bigr)}{\sqrt{r(n)}} \r|^
{s} \Big| C_{\bT_n}\r)\leq m(
{s})~\l(\frac{D_{C_{\bT_n}}\bigl(2nx,2ny\bigr)}{r(n)}\r)^{
{s}/2} = m(s)\l(D_{\bc_n} \bigl(x,y\bigr)\r)^{s/2},  \]
by the normalization \eref{eq:dsdsk}. Using the fact that $(\bc_n)$ is $q$-Hölder tight and  \eqref{eq:bound-dist}, this gives
\[\E\l(\bigl|\sqrt{2/3}\l(\Bell_n(x)-\Bell_n(y)\r) \bigr|^s\Big|C_{\bT_n}, F_{M,q}\r)\leq 
 m'(s, M)~\l|x-y\r|^{ 
 sq/2},  \]
with $m'(s, M)=m(s)(2M)^{s/2}$. The standard moment criterium \eref{eq:sefdqd} allows  us to conclude.\\
\bls Use again the probability space of the first point, on which  some copies $\tilde{\bc}_n \eqd \bc_n$, $\tilde{\bc}\eqd \bc$ are defined, such that $\tilde{\bc}_n\as \tilde{\bc}$. Take any element $c\in C^+[0,1]$ different from the zero function. Conditionally on $\tilde{\bc} =c$, the tree $T_c$  possesses a  non-trivial branch, so that a.s.~$\Bell$ is not identically zero (its range contains that of the Brownian motion living on any non-trivial branch of $T_c$). Now, under this condition, if $\Bell$ is periodic, its period must be rational: otherwise, since $\Bell_0=0$, $\Bell$ would be identically 0 which is excluded (because $\Bell$ is a.s.~continuous, and if $q$ is a period, then $nq$ is too for $n\in \mathbb{Z}$, and $nq \mod 1$ is dense in $[0,1]$ for $q$ non-rational). 

  Since $\mathbb{Q}$ is countable, it suffices then to show that for a rational number $q \in (0,1)$, with probability 1, $q$ is not a period of $\Bell$.  
  We start to show that there exists $x\in [0,1]$ so that $D_c(x,x+q)\neq 0$. Indeed,  $D_c(x,x+q)=0$ implies that $x$ and $x+q$ are corners of the same node in $T_c$; since nodes are non-crossing, $D_c(x,x+q)=0$ for every $x$, implies that $[0,1]$ is the set of corners of a unique node, so that $c$ is identically 0, which is excluded. 
Now, take $x$ such that $D_c(x,x+q)\neq 0$: for this $x$, since $\Bell$ is the label process of $c$, $\Bell(x)-\Bell(x+q)\sim {\cal N}(0,D_c(x,x+q))$, and then, a.s. $\Bell(x)\neq \Bell(x+q)$ and then $q$ is not a period of $\Bell$.
\end{proof} 

We go on with a kind of converse of Proposition \ref{pro:cv}:
\begin{lem}\label{lem:qsgdqs} Let $\bar{c}, \bar{c_1},\bar{c_2},\cdots$ be a deterministic sequence in $\QC$ such that $\bar{c_n}\to \bar{c}$ in $(\QC,D_{\Psi})$. Consider for each $n$ an element $c_n$ taken in $\bar{c_n}$, and such that $c_n\in C^+[0,1]$. The sequence $(c_n)$ is relatively compact in $C[0,1]$: every accumulation  point of this sequence belongs to $\PTC(\bar{c})$.
\end{lem}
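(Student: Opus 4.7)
The plan is to apply the Arzela--Ascoli theorem, using the fact that two natural ``class invariants'' -- the range and the circular modulus of continuity -- descend well to $\QC$, and then to match any accumulation point with the prescribed pointed class.

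First, I would establish relative compactness in $C[0,1]$. For the uniform bound: the range ${\sf Range}(f):=\max f - \min f$ depends only on $\bar f\in \QC$ (since $\Psi_a$ only permutes arguments cyclically and shifts by a constant), and satisfies $|{\sf Range}(f)-{\sf Range}(g)|\leq 2\|f-g\|_\infty$. Since $D_\Psi(\bar c_n,\bar c)\to 0$, this forces ${\sf Range}(c_n)\to {\sf Range}(c)$; combined with $c_n\in C^+[0,1]$ (so $\min c_n=0$), this yields $\sup_n\|c_n\|_\infty<+\infty$. For equicontinuity, I would use the circular modulus $\bar w_\delta(\cdot)$ introduced around \eref{eq:qdqds}: since the periodic shift $x\mapsto x+a\mod 1$ preserves the circular distance, $\bar w_\delta$ is also a class invariant, and $|\bar w_\delta(f)-\bar w_\delta(g)|\leq 2\|f-g\|_\infty$. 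Fixing any representative $c$ of $\bar c$ and shifts $a_n$ realising the minimum in $D_\Psi(\bar c_n,\bar c)$ (which exists by Proposition~\ref{pro:top}), I get $\bar w_\delta(c_n)= \bar w_\delta(\Psi_{a_n}(c))+o(1)=\bar w_\delta(c)+o(1)$. Using $w_\delta\leq \bar w_\delta$ from \eref{eq:qdqds} and the uniform continuity of $c$, this gives $\lim_{\delta\to 0}\sup_n w_\delta(c_n)=0$, and Arzela--Ascoli concludes.

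Next, I would identify any accumulation point. Suppose $c_{n_k}\to c^\star$ in $C[0,1]$ along a subsequence. Since the canonical projection $C^0[0,1]\to \QC$ is $1$-Lipschitz (directly from the definition of $D_\Psi$), $\bar c_{n_k}\to \bar{c^\star}$ in $\QC$. By hypothesis $\bar c_{n_k}\to \bar c$, so by uniqueness of limits $\bar{c^\star}=\bar c$, that is, $c^\star=\Psi_{a^\star}(c)$ for some $a^\star\in[0,1)$ and any chosen representative $c$ of $\bar c$. Because $c_n\in C^+[0,1]$ (non-negative, vanishing at $0$ and $1$), so is $c^\star$ by pointwise passage to the limit. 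Writing $c^\star(x)=c(x+a^\star\mod 1)-c(a^\star)\geq 0$ forces $c(a^\star)=\min c$, i.e.~$a^\star\in\argmin c$. Hence $c^\star\in \PTC(c)=\PTC(\bar c)$, as claimed.

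The main point is really the recognition that ${\sf Range}$ and $\bar w_\delta$ are class invariants, which is what turns an ``intrinsic'' convergence in $(\QC,D_\Psi)$ into the two uniform estimates needed for Arzela--Ascoli on the specific representatives $c_n$; the identification step is then forced by the $1$-Lipschitz projection and by the sign constraint coming from $c_n\in C^+[0,1]$. I expect no real obstacle beyond bookkeeping.
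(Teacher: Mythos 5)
Your proof is correct and takes essentially the same route as the paper's: relative compactness via Arzel\`a--Ascoli using that $D_\Psi$-convergence transfers boundedness and equicontinuity from a fixed representative of $\bar c$ to the $c_n$, followed by the $1$-Lipschitz projection $C^0[0,1]\to\QC$ to pin down $\bar{c^\star}=\bar c$ and the sign constraint $c^\star\in C^+[0,1]$ to conclude $c^\star\in\PTC(\bar c)$. You simply spell out the two class invariants (${\sf Range}$ and $\bar w_\delta$) and the $a^\star\in\argmin c$ step more explicitly than the paper, which handles them implicitly.
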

\begin{proof} As in the proof of  Proposition \ref{pro:top} (section \ref{sec:pppt}), we start by observing that the sequence $(c_n)$ is relatively compact: the reason is that $\bar{c_n}\to \bar{c}$ in $\QC$ is equivalent to the existence of a sequence $(a_n)$ such that $\|c_n(a_n+. \mod 1)-c\|_{\infty}\to 0$; from there follows the fact that $(\|c_n\|_{\infty})$ is bounded and $(c_n)$ equicontinuous. The sequence $(c_n)$ is therefore relatively compact by Arzela-Ascoli, and owns some accumulation points (that are clearly elements of $C^+[0,1]$). Consider an accumulation point $c^{\star}\in C^+[0,1]$. Since along a subsequence $\|c_{n_k}-c^{\star}\|_{\infty}\to 0$, clearly $D_\Psi\l(\bar{c_{n_k}},\bar{c^\star}\r)\to 0$ and since by hypothesis $D_\Psi\l(\bar{c_{n}},\bar{c}\r)\to 0$, we deduce that $D_{\Psi}(\bar{c^\star},\bar{c})=0$ so that $\bar{c}^{\star}$ and $\bar{c}=\QC$.
  \end{proof}
\begin{lem}\label{lem:qdgr} Let $\bC_n$ be a uniform Dyck path taken in ${\sf Dyck}_{2n}$, and $\bc_n(\cdot)=\frac{\bC_n(2n\cdot)}{\sqrt{2n}}$. The sequence $(\bc_n)$ is $q$-H\"older tight for any $q<1/2$.
  \end{lem}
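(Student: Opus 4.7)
The plan is to establish a uniform polynomial moment estimate on two-point increments of $\bc_n$ and then invoke the moment criterion recalled in \eqref{eq:sefdqd}. Concretely, the aim is to prove that for every integer $k\geq 1$ there exists a constant $C_k$, independent of $n$, such that
\[
\E\bigl[(\bc_n(t)-\bc_n(s))^{2k}\bigr]\;\leq\; C_k\,(t-s)^k,\qquad 0\leq s\leq t\leq 1.
\]
Plugging this into \eqref{eq:sefdqd} with $\alpha=2k$ and $\beta=k-1$, the sequence $(\bc_n)$ is $c$-Hölder tight for every $c<(k-1)/(2k)$; the auxiliary condition on $(\bc_n(0))$ is trivial since $\bc_n(0)=0$ deterministically. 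Letting $k\to+\infty$ yields $q$-Hölder tightness for every $q<1/2$, as announced.

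For the moment estimate itself, a self-contained route goes via the cyclic lemma. Let $B=(B_k)_{0\leq k\leq 2n}$ be a uniform $\pm 1$ bridge of length $2n$ and set $A:=\min\argmin B$; then the cyclic shift $\tilde B_k:=B_{(k+A)\,\mathrm{mod}\,2n}-B_A$ is distributed as $\bC_n$. For $0\leq i<j\leq 2n$, writing $p:=(i+A)\,\mathrm{mod}\,2n$ and $q:=(j+A)\,\mathrm{mod}\,2n$, one of two cases occurs: either the shift does not wrap around on $[i,j]$ and $\tilde B_j-\tilde B_i=B_q-B_p$ with $q-p=j-i$; or it wraps around and $\tilde B_j-\tilde B_i=(B_{2n}-B_p)+(B_q-B_0)$ with $(2n-p)+q=j-i$, using $B_0=B_{2n}=0$. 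In both cases, $\tilde B_j-\tilde B_i$ is the sum of at most two bridge increments whose linear lengths add up to $j-i$. Combining Minkowski in $L^{2k}$ with the elementary convexity inequality $(a^{1/2}+b^{1/2})^{2k}\leq 2^{2k-1}(a+b)^k$, the problem reduces to the uniform bridge bound
\[
\E\bigl[(B_q-B_p)^{2k}\bigr]\;\leq\; K_k\,(q-p)^k,\qquad 0\leq p<q\leq 2n,
\]
and after dividing by $(2n)^k$ we obtain the desired estimate for $\bc_n$.

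The main obstacle is the uniform bridge moment bound, which follows from a standard local-limit-theorem computation. Writing $B_q-B_p$ under the bridge law as $S_q-S_p$ for a simple random walk $S$ conditioned on $S_{2n}=0$, and using the independence of the increments of $S$, one has
\[
\P[B_q-B_p=x]\;=\;\frac{\P[S_{q-p}=x]\,\P[S_{2n-(q-p)}=-x]}{\P[S_{2n}=0]}.
\]
The local limit theorem shows that $\P[S_{2n-(q-p)}=-x]/\P[S_{2n}=0]$ is bounded uniformly in $n$, in $x$, and in $p<q$ with $q-p\leq n$, so that $\E[(B_q-B_p)^{2k}]\leq\mathrm{const}\cdot\E[S_{q-p}^{2k}]\leq K_k\,(q-p)^k$ by the Marcinkiewicz–Zygmund inequality for sums of bounded centered i.i.d.\ random variables. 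Increments of length $>n$ are reduced to the range $q-p\leq n$ by the symmetry $B_k\eqd B_{2n-k}$ of the bridge. Once this uniform bound is in hand, the cyclic argument above transfers it to $\bC_n$ and the Hölder tightness follows as described.
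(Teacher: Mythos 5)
The overall plan --- transfer to moment estimates for increments of a random-walk bridge via a cyclic-shift/Vervaat argument, prove those via a local-limit estimate, and invoke the Kolmogorov/moment criterion --- is of the same flavor as the paper's, and the local-limit and Marcinkiewicz--Zygmund steps are sound. However there is a genuine error at the very first step that invalidates the argument as written.

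You assert that if $B$ is a uniform $\pm1$ bridge of length $2n$ with $B_0 = B_{2n} = 0$ and $A = \min\argmin B$, then $\tilde B_k = B_{(k+A) \bmod 2n} - B_A$ is distributed as $\bC_n$. This is false: the shift-at-argmin map from length-$2n$ $\pm1$ bridges to Dyck paths does \emph{not} have constant fiber sizes. Already for $n = 2$ there are $\binom{4}{2}=6$ bridges and $C_2 = 2$ Dyck paths, but the Vervaat map sends four of the six bridges to $(0,1,2,1,0)$ and only two to $(0,1,0,1,0)$, so the pushforward law puts masses $2/3$ and $1/3$ rather than $1/2$ and $1/2$. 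Worse, the discrepancy is unbounded in $n$: the "sawtooth" Dyck path $D=(0,1,0,1,\ldots,0)$ always has a Vervaat fiber of size $2$, hence mass $2/\binom{2n}{n}$ under the Vervaat pushforward but $1/C_n = (n+1)/\binom{2n}{n}$ under the uniform law, a Radon--Nikodym ratio of order $n$. Consequently, the moment inequalities you establish hold for a different random Dyck path than $\bC_n$ and cannot be transferred to $\bC_n$ by any absolute-continuity argument.

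The repair is exactly the paper's: replace length-$2n$ bridges from $0$ to $0$ by $\pm1$ paths of length $2n+1$ from $0$ to $-1$. By the cycle lemma, each Dyck path of length $2n$ (with a final $-1$ step appended) arises from exactly $2n+1$ such paths under cyclic shift, and $\binom{2n+1}{n}/(2n+1) = C_n$, so the pushforward of the uniform law \emph{is} uniform. The rest of your computation --- the at-most-two-piece decomposition of a cyclically shifted increment of total length $j-i$, the bridge-to-walk density $\P[B_q - B_p = x] = \P[S_{q-p}=x]\,\P[S_{(2n+1)-(q-p)}=-1-x]\,/\,\P[S_{2n+1}=-1]$, the local-limit bound on the ratio for $q-p\leq n$ together with its companion for $q-p>n$, and the Marcinkiewicz--Zygmund estimate for $\E[S_m^{2k}]$ --- then carries over with only notational changes and yields the stated Hölder tightness. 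With that repair your argument is a correct variant of the paper's: you compute increment moments directly, whereas the paper proves the domination $\P(A\mid S_{2n+1}=-1)\leq c\,\P(A)$ for events $A$ depending on the first half of the walk and imports Hölder tightness wholesale from the unconditioned walk; the local-limit ingredient is the same in both.
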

As stated in \eref{eq:grgze},  $\bc_n\dd \se$.  
\begin{proof} This property is folklore: it can be transferred from the classical simple random walk (with increment $+1$ or $-1$) which owns the same property (with the same rescaling), fact that follows a simple application of the moment condition. To transfer this property from the random walk to the Dyck paths there is a two steps argument (\cite[Proof of Lemma 1]{janmarck05}).\\
  -- There is a 1 to $2n+1$ map which sends Dyck paths (at which an additional step $-1$ is appended) to bridges of size $2n+1$, which are paths with steps $+1$ and $-1$ with length $2n+1$ ending at $-1$: the $2n+1$ bridges associated to a (single) Dyck path are obtained by conjugations at one of the $2n+1$ abscissa from 0 to $2n$. These maps multiply $q$-H\"older exponents by at most 2, so that, it suffices to transfer the tightness property from random walks to bridges.\\
  -- Now, bridges are invariant by translation, so that it suffices to prove that their first half are $q$-H\"older tight for any $q<1/2$.
  Consider then a simple random walk $(S_0,\cdots,S_{2n+1})$, and $A$ be an event $(S_0,\cdots,S_{n+1})$ measurable (which depends on the first ``half'' of the trajectory).\\
  {\bf Claim:} Up to a multiplicative constant, the probability of $A$ under the bridge condition is bounded by the probability of $A$ for the random walk:
  \ben\label{eq:AS} \P(A~|~S_{2n+1}=-1)\leq  c\,\P(A)\een
  for a universal finite constant $c$ (independent from $n$ and from $A$).
  
Taking for a moment this claim as granted, the fact that the rescaled random walk is $q$-H\"older tight on $[0,1/2]$ (for any $q\in(0,1/2)$) implies that it is also the case for the rescaled bridge (taking some events of the type $A=\{|S_{ns}-S_{nt}|/\sqrt{n}\leq C|s-t|^q, s,t\leq n+1\})$ to use the claim). To prove the claim, write
  \ben\label{eq:ethjjta}
  \frac{\P(A,S_{2n+1}=-1)}{\P(S_{2n+1}=-1)}&=&\sum_{k=-n-1}^{n+1}  \frac{\P(A,S_{2n+1}=-1,S_{n+1}=k)}{\P(S_{2n+1}=-1)}\\
 \label{eq:ethjjtb} &=&\sum_{k=-n-1}^{n+1}  \frac{\P(S_{2n+1}=-1|S_{n+1}=k,A)}{\P(S_{2n+1}=-1)}  \P(A,S_{n+1}=k)
 \een
 (sum which must be restricted to the $k$ for which $\P(A,S_{n+1}=k)>0$),
  and by the Markov property of random walk,
  \[\sup_n\sup_k\frac{\P(S_{2n+1}=-1|S_{n+1}=k,A)}{\P(S_{2n+1}=-1)} \leq c:=\sup_n\sup_k \frac{\P(S_{2n+1}=-1|S_{n+1}=k)}{\P(S_{2n+1}=-1)}=\sup_n\sup_k\frac{\P(S_{n}=-k-1)}{\P(S_{2n+1}=-1)},
    \]
    since $\sup_k \P(S_{n}=-k-1)=\P(S_n\in\{0,1\})$ (depending on the parity of $n$), using Stirling formula, one sees that $c<+\infty$. Using this bound in \eref{eq:ethjjtb} allows us to get \eref{eq:AS}.
\end{proof}

\color{black}

\subsection*{End of proof of Theorem \ref{theo:CVpointedSnakes}}

In the case $\D=1$, the convergence in $C[0,1]^2$ stated in Proposition \ref{theo:bs} can be restated as $(\bc_n^{(1)},\Bell_n^{(1)})\dd(\bc^{(1)}, \Bell^{(1)})$ in $C[0,1]^2$: it implies the convergence in $\PS{}^{,1}$ of $\pi_{\bullet}(\bc_n^{(1)},\Bell_n^{(1)})\dd\pi_{\bullet}(\bc^{(1)}, \Bell^{(1)})$.   Lemma \ref{lem:qdgr}, allows us to see that $(\bc_n^{(1)})$ is $q$-H\"older tight for $q\in(0,1/2)$; Theorem \ref{theo:ini-rec} allows us to deduce that $(\Bell_n^{(1)})$ is $q$-H\"older tight for $q\in(0,1/4)$, and since  $\bc^{(1)}\eqd \sqrt{2}\se$ is a.s. different from the zero process, that $\Bell^{(1)}$ has a.s.~no period.
Propositions \ref{pro:cv} and \ref{pro:uqn} allow deducing from the convergence $\Bell_n^{(1)}\to \Bell^{(1)}$, the fact that $\bc_{n}^{(2)}\to \bc^{(2)}:=\Conj(\Bell^{(1)})$ in $\QC$.\par

Now, the proof is basically a proof by induction: by the incremental nature of the construction of the iterated discrete snakes, we have for any $j\geq 2$:
\[{\sf Law}\l(\l(\bc_n^{(j)},\Bell_n^{(j)}\r)~\Big|~\l(\l(\bc_n^{(m)},\Bell_n^{(m)}\r), m\leq j-1\r)\r)
= {\sf Law}\l(\l(\bc_n^{(j)},\Bell_n^{(j)}\r)~\Big|~\l(\bc_n^{(j-1)},\Bell_n^{(j-1)}\r)\r).\]
\par
    In order to prove the convergence of $ \pi_{\bullet}\bigl(\bc_n^{(j)},\Bell_n^{(j)}\bigr)$ knowing the convergence of $\bigl( \pi_{\bullet}\bigl(\bc_n^{(m)},\Bell_n^{(m)}\bigr),m\leq j-1\bigr)$  the simplest method consists in working on a space on which some copies of these processes are defined, that converge almost surely (whose existence is guaranteed by Skhorohod representation theorem). 
    On this space, since $\bar{\Bell_n^{(j-1)}}\to \bar{\Bell^{(j-1)}}$ a.s.~(in $\QC$), by the same argument as above (mainly Propositions \ref{pro:cv} and \ref{pro:uqn}), $\bar{\bc_n^{(j)}}$ converges to some process, $\bar{\bc^{(j)}}$ in $\QC$.
We suppose also by induction that  $(\bc_n^{(m)})$ is $q$-H\"older tight for $q\in(0,1/2^m)$  and $(\Bell_n^{(m)})$ is  $q$-H\"older tight for $q\in(0,1/2^{m+1})$ for $m\leq j-1$.

First, the fact that $(\bc_n^{(j)})$ is $q$-H\"older tight for any $q\in(0,1/2^j)$ is a consequence of the preservation of this property by conjugation, and is then inherited from this property of $(\Bell_n^{(j-1)})$. 
 To discuss the convergence of the label process of the normalized branching random walk having $\bar{\bc_n^{(j)}}$ as an underlying tree, let us 
 come back to the world of rooted trees: by Lemma \ref{lem:qsgdqs}, choose an element $\bc_n\in C^{+}[0,1] \cap \bar{\bc_n^{(j)}}$ (for example take $\bc_n=\bc_n^{(j)}$)
 for each $n$, build a rooted tree with this contour, and on this tree a branching random walk as explained in \eref{eq:ds}. After that, build the corresponding normalized tour of snake $(\bc_n,\Bell_n)$ associated to this rooted tree with the normalization specified in Theorem \ref{theo:CVpointedSnakes} (recall Section \ref{sec:BRWPS} for the effect of choosing $\bc_n$ or another tree in the tree class of $\bar{\bc_n^{(j)}}$). From Lemma \ref{lem:qsgdqs}, $(\bc_{n})$ has all its accumulation points in $\PTC\bigl(\bar{\bc^{(j)}}\bigr)$. Take a converging subsequence $(\bc_{n_k})$ in $(C[0,1],\|.\|_\infty)$, and let $\bc^{\star}\in C^+[0,1]$  be its limit.
 By Theorem \ref{theo:ini-rec}, we get $(\bc_{n_k},\Bell_{n_k})\dd (\bc^\star,\Bell)$,
where $\Bell$ is a Gaussian process with covariance matrix
$\cov(\Bell(x),\Bell(y))=\W{\bc^\star}(x,y)$. We get also by this same theorem, that $(\Bell_{n})$ is $q$-H\"older tight for any $q\in(0,1/2^{j+1})$ (for a bound on the $q$-H\"older exponent independent from the subsequence, since the $q$-H\"older exponents of the different elements $\bc_n \in  \bar{\bc_n^{(j)}}$ have their ratio bounded by $2$). 
Hence, taking into account that $\pi^{\bullet}(\bc^{\star},\Bell)$ is a random variable in $\PS$ having a distribution which does not depend on the element $\bc^{\star} \in \bar{\bc_n^{(j)}}$, we conclude.

\section{Combinatorial aspects of maps and iterated  discrete feuilletages}
\label{sec:GraphSection}\setcounter{equation}{0}
The aim of this section is twofold. We first come back on some known aspects of combinatorial maps. There are several points of view on these objects, some being suitable for combinatorial purposes, some for topological considerations, some allow making computations as exemplified in the paper, and eventually, allow producing asymptotic results and limit objects. After presenting these aspects shortly and providing references, we will stress a bit on quadrangulations encoded by labeled trees, since they are at the origin of the Brownian map (the second random  feuilletage $\RR{2}$ as presented in Section~\ref{seq:IBSIM}), and since we will generalize this encoding as a pair of trees, or equivalently as a pair of non-crossing partitions on an ordered set, for iterated discrete feuilletages.  

We then review the iterative construction of the discrete feuilletages in the combinatorial map picture, compute the critical exponent given by their asymptotic enumeration, and provide an encoding using $\D$ nested non-crossing partitions defined on an initial ordered  set. We briefly review other notions of generalized maps, and comment on their use in finding families of generalized maps that could lead to new asymptotic regimes.

\subsection{Different points of view on maps}

 Some references on maps include the books of Mohar  \cite{GraphsOnSurfaces}, of Landau \& Zvonkin \cite{LZ}, of Goulden and Jackson  \cite{GoulJack}, or the lecture notes of G.~Miermont and Le Gall (for instance \cite{MJFLG}).

\subsubsection{Maps as equivalence classes of graph drawings up to homeomorphisms}

Given a surface $S_g$ of genus $g\geq 0$, a ``pre-map'' is a drawing ${\cal G}$ of a connected graph on this surface, which satisfies the following properties:\\
-- the drawings of edges are injective and homeomorphic to segments,\\
-- the drawings of different edges $(e_1,e_2)$ and $(e_3,e_4)$ do not intersect, except  at their extremities if they correspond to the same vertices,\\
-- the complement of the drawing $S_g \setminus {\cal G}$ is cellular, that is, it is the union of some connected components (the so-called ``faces'') that are homeomorphic to disks.\\
Maps are equivalence classes of ``pre-maps'' for the following equivalence relation: two ``pre-maps'' are equivalent if one can be sent to the other by a direct homeomorphism of the surface $S_g$. Direct homeomorphisms preserve the genera of surfaces, as well as cyclic orders of edges of pre-maps around vertices. 
In order to avoid any non-trivial automorphisms, some additional features/markings are often considered: labeling of vertices, or distinction of some vertices (pointed maps) or half-edges (rooted maps), or both.
 \begin{figure}[h!]
    \centerline{\includegraphics{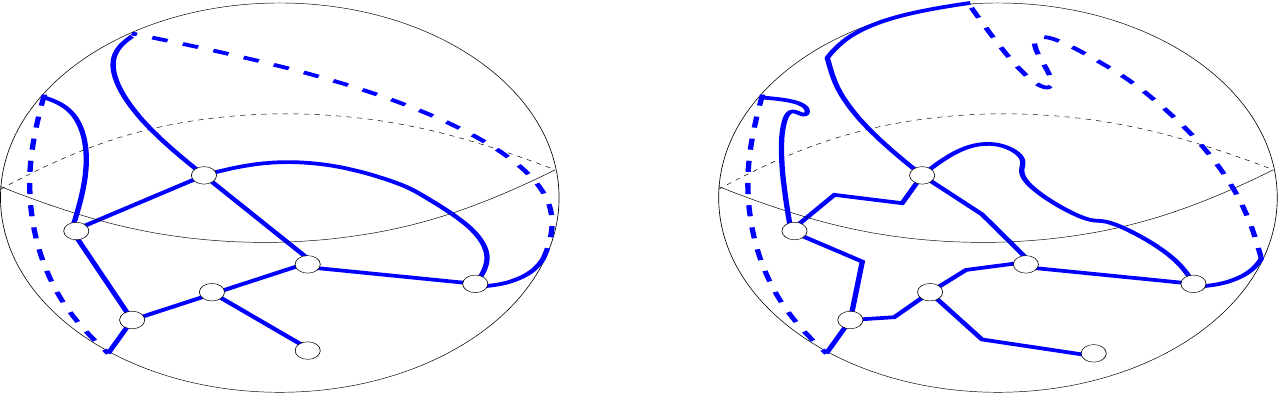}}
    \caption{\label{fig:PM}Two representations of the same planar map.}
  \end{figure}

\vspace{-0.7cm}
\subsubsection{Maps as gluings of polygons}

The preceding description of maps as graph drawings considered up to homeomorphisms produces a cellular decomposition of the surface ${\cal G}$. Topologically, each face $f$ is surrounded by a finite cyclic sequence of edges (or equivalently vertices), and then topologically, each face is a polygon. Hence, the surface ${\cal G}$ may be seen as a finite set of polygons that have been identified: each of the edges of every polygons has been identified with another unique edge of some polygon (which can be the same).

Conversely, if one has in hand a finite set of polygons with an even  total number of edges, and if one identifies the edges two-by-two in such a way to construct a connected surface, then naturally this surface comes with a notion of faces (the polygons), together with the ``drawing'' of the polygon boundaries, which are sequences of edges by definition; this produces a map, but the obtained surface can be non-orientable, and can have any non-negative genus.

\subsubsection{Maps as constrained permutations}
\label{sec:maps-as-perm}

In fact, maps as graph drawings up to homeomorphisms are combinatorial objects: there exists a finite number of maps with $n$ edges. One way to see this is to encode maps using permutations. Assume that a given map has $n$ edges, and label each half-edge by a number -- called an index -- from $0$ to $2n-1$.
The set of edges is then naturally encoded by a set $\alpha$ of pairs of half-edges, and each vertex by the cycle of edges around it (with a precise cyclic order): the set of vertices $\sigma$ -- as well as $\alpha$ can be seen as determining some permutations through their cycles. The main point here is that these permutations fully characterize the map.
Let us be more precise on the constraints inherited by these permutations.

\begin{defi}
\label{def:Maps} 
A connected indexed map\footnote{The common denomination would be {\it labeled map}, but we chose {\it indexed} to avoid confusion with the labeling of the trees and the label processes in the rest of the text.} with $n$ edges, is a pair $\cM=(\sigma, \alpha)$, where 
\begin{itemize}
\item $\sigma$ is a permutation on $\llbracket 0, 2n-1 \rrbracket$,
\item $\alpha$ is a fixed-point free involution on $\llbracket 0, 2n-1 \rrbracket$,
\item the group $<\sigma,\alpha>$ generated by  $\sigma$ and $\alpha$ acts transitively on $\llbracket 0, 2n -1\rrbracket$.
\end{itemize}
\end{defi}
The transitivity condition stands for the connectivity of the map. {\it Vertices} (resp.~{\it edges}, resp.~{\it faces}) correspond to the cycles in the unique decomposition of the permutations of $\sigma$ (resp.~$\alpha$, resp.~$\sigma\circ\alpha$) in disjoint cycles.  The elements of $\llbracket 0, 2n-1 \rrbracket$ are therefore seen as half-edges.

The number $V$, $n$ and $F$ of vertices, edges, and faces of a map $(\sigma, \alpha)$  satisfy Euler's formula:
\be
\label{eq:Euler}
V -n + F = 2 - 2g,
\ee
where $g$ is a non-negative integer, called the genus of a map (the genus of the surface on which the graph is drawn). Planar maps are maps with genus 0.

The {\it distance} in a map is the length of the shortest path between two vertices.

\
A map is said to be {\it pointed} if a particular vertex is distinguished.
Two consecutive half-edges $j=\sigma(i)$ and $i$ define a {\it corner} of the map.  
A map is said to be {\it rooted} if a particular corner is distinguished. It is equivalent to provide an orientation to a given edge, say from $i$ to $\alpha(i)$ (i.e.~to specify the first element of one of the disjoint pairs in $\alpha$). We call root vertex (or simply root) the vertex incident to the root corner.

Combinatorial maps are usually considered up to reordering of the half-edges. In this case, an unindexed map is an equivalence class $\{(\rho \circ \sigma\circ \rho^{-1}, \rho \circ \alpha\circ \rho^{-1})\mid \rho \in \cS_{2n}\}$.  
In this paper, we generally consider rooted unindexed maps\footnote{In the case of trees however, we rather consider rooted trees together with a canonical indexing of the half-edges, as detailed below.} for which the simultaneous conjugation of $ \sigma$ and $ \alpha$ by any $\rho \in \cS_{2n}$ leads to a different indexed combinatorial map, so that there are $(2n)!$ indexed rooted maps 
 corresponding to  the same unindexed map. 

\paragraph{Bipartite maps.}

A bipartite map is a map for which the vertices are colored in black and white, so that the edges only link black and white vertices. A bipartite map with $n$ edges is defined  by a pair of permutations $(\alpha_\circ, \alpha_\bullet)$ on $\llbracket1,n\rrbracket$. The elements of $\llbracket1,n\rrbracket$ correspond to the edges, and the cycles of $\alpha_\circ$ and  $\alpha_\bullet$ respectively define the white and black vertices. The faces correspond to the cycles of $\alpha_\circ\circ\alpha_\bullet$. To recover a map with the usual Definition \ref{def:Maps}, one has to take a black copy and a white copy of the edge-set, and define a canonical involution $\sigma$ which for $i \in \llbracket1,n\rrbracket$ maps $i_\circ$ to  $i_\bullet$.

\paragraph{Trees.}

A rooted tree with $n+1$ vertices is a rooted (connected) map with $n$ edges. From \eqref{eq:Euler}, we see that this imposes $F=1$ and $g=0$: the map has a single face and is planar. The face $\sigma\circ\alpha$ is a cycle of length $2n$ which organizes all the corners: using the root as a starting point (indexed 0), the corners can be re-indexed from 0 to $2n-1$: the ``counterclockwise corner sequence" of the tree. 
Note that this is not the convention used for processes in Sec.~\ref{sec:ISIMCO}, in which the corners of the trees are labeled from 0 to $2n-1$ going around the tree {\it clockwise}.  This should be kept in mind when comparing the various exemples. 
Apart from that, one then recovers the contour and height sequences of the tree, as defined in Sec.~\ref{sec:ISIMCO}. 

Labelings of trees have been defined in Def. \ref{def:LT}. A labeling provides an integer to every vertex (its label), such that the label of the root vertex is 0, and the labels of the extremities of any edge differ by at most 1.
The set of labeled rooted trees $(T,\ell)$ with $n$ edges will be denoted by $\LT n$,
and we define
\begin{equation} 
\label{eq:Min-Label}
m(T,\ell)=\min_{v\in\cV(T)} \ell(v).
\end{equation} 
\paragraph{Non-crossing partitions and trees.} Another encoding of a rooted planar tree with $n$ edges, which we will use later in this section, is as a pair $(C,\sigma),$ where $C$ is an ordered set $C=\{1,\ldots, 2n\}$ (the corner sequence of the tree), and $\sigma$ is a non-crossing permutation on this set, that is, a permutation for which the disjoint cycles respect the cyclic ordering of $C$ and have supports which do not cross for this ordering. 
\begin{defi} 
We say that a permutation $\sigma$ on a totally ordered set $C$ respects the ordering of $C$ if for each of its cycles $c$, there are only two consecutive elements $a<b$ in $c$ for which $\sigma(a)>\sigma(b)$.
\end{defi}

\begin{defi} 
A partition $C=\sqcup_{i} V_i$ of a finite totally ordered set $C$ is said to be non-crossing if there are no elements $p_1<q_1<p_2<q_2$ such that $p_1, p_2\in V_i$ and $q_1, q_2\in V_j$ with $i\neq j$. We say that a permutation $\sigma$ on $C$ is non-crossing if the partition it induces is non-crossing, and if in addition it respects the ordering of $C$.
\end{defi}

In addition, we require that the  
{\it Kreweras complement} \cite{KREWERAS1972333, nica_speicher_2006} of $\sigma$ on $C$ is a (non-crossing) {\it matching}, where a matching is a partition in pairs, and:
\begin{defi} 
Consider the totally ordered set $C=\{1<2<\ldots<N\}$ and make a copy $\bar C=\{\bar1<\bar 2<\ldots<\bar N\}$ of this set, so that $C\sqcup \bar C$ is ordered as $\{1<\bar 1< 2<\bar 2\ldots<N<\bar N\}$. If $\pi$ is a non-crossing partition on $C$, we define its Kreweras complement $\bar \pi$ as the  
maximal non-crossing partition on $\bar C$ for the inclusion order such that $\pi \cup \bar \pi $ is a non-crossing partition on $C\sqcup \bar C$, where   
the inclusion order is defined as  $C=\sqcup_{i} V_i \leq C'=\sqcup_{i} V'_i $ 
 if each $V'_i$ is the union of one or several $V_j$. 
\end{defi}
This definition is better understood on an exemple, see Fig.~\ref{fig:Kreweras-Dual}.
\begin{figure}[!h]
\centering
  \includegraphics[scale=0.8]{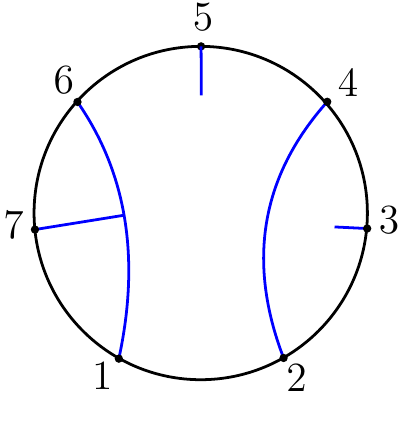}
  \hspace{1cm}
   \includegraphics[scale=0.8]{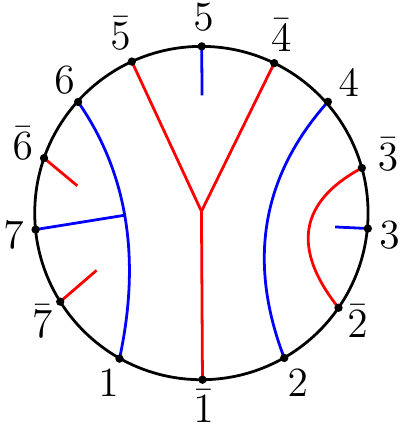}
    \hspace{1cm}
   \includegraphics[scale=0.8]{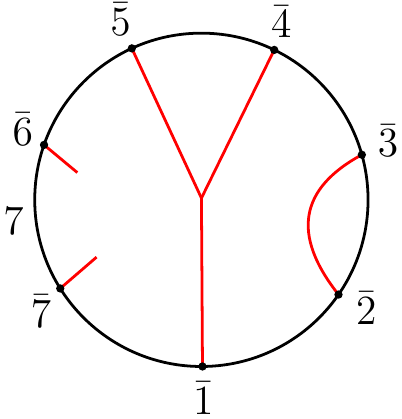}
\caption{\label{fig:Kreweras-Dual}A partition $\pi$ on an ordered set $C$ (left), and its Kreweras complement $\bar \pi$ on $\bar C$ (right), and the non-crossing partition $\pi \cup \bar \pi $ on $C\sqcup \bar C$ (middle). }
\end{figure}

The rooted planar tree  is then obtained from  $(C,\sigma)$  by drawing the ordered set $C$ on a circle and the cycles of $\sigma$ as shaded regions, adding vertices in the shaded regions, and edges between two vertices whenever the corresponding shaded regions face each other, as shown in Fig.~\ref{fig:Dual-nncross-tree}\footnote{Note that the information in the permutation is redundant: a tree is always bipartite, and one can delete the regions corresponding to e.g.~the white vertices without losing information. There are two bijections, one between planar trees with $n$ edges and non-crossing partitions on $n$ ordered elements, and one between between planar trees with $n$ edges and non-crossing partitions on $2n$ ordered elements, such that the Kreweras complement is a matching. One goes from the first description to the second one by completing the non-crossing partition on $n$ elements by its Kreweras complement $C \rightarrow C\sqcup \bar C$.  In the context of this paper, the permutations we obtain naturally have matchings as Kreweras complements. }. 
\begin{figure}[!h]
\centering
  \includegraphics[scale=1.1]{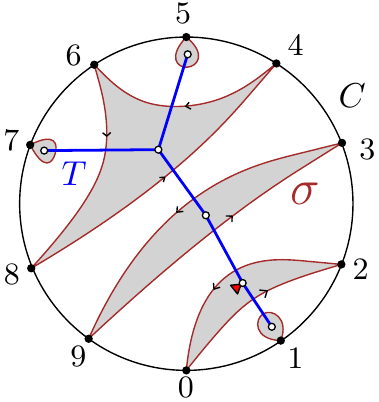}
\caption{\label{fig:Dual-nncross-tree}Encoding of a rooted planar tree by its counterclockwise corner sequence, together with a non-crossing permutation, whose Kreweras complement is a matching. }
\end{figure}

\subsection{Planar quadrangulations and labeled trees
}
\label{sec:CVS-bij}

\paragraph{The Cori-Vauquelin-Schaeffer bijection.}
A quadrangulation is a map whose faces all have length four. We denote by  $\Qset n$ the set of rooted and pointed quadrangulations with $n$ faces. 

\begin{theo}[Cori-Vauquelin 1981 \cite{CV}, Schaeffer 1998 \cite{Schae}] There is a 1-to-2 mapping between rooted and pointed quadrangulations with $n$ faces in $\Qset n$, and labeled trees with $n$ edges in  $\LT n$.

Furthermore, if the quadrangulation $Q$ is the image of $(T,\ell)$,  then the set of nodes of $T$ is sent bijectively onto the set of nodes of $Q$ deprived from the 
pointed vertex; if the node $u\in T$ is sent onto $v$, then, up to a global translation, $\ell(u)$ coincides with the distance of $v$ to the  pointed vertex in $Q$:
 $$\label{eq:qfq}  d_Q(\nu, w) = \ell(w) - m(T,\ell) + 1,$$ where $m(T,\ell)$ is the minimum of $\ell$ (defined in \eqref{eq:Min-Label}). 
\end{theo}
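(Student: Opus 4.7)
The plan is to exhibit the classical Schaeffer construction and its inverse, which gives an explicit realization of the 1-to-2 correspondence. In one direction, start from $(Q,\nu)\in\Qset{n}$ and label each vertex $v$ by $d_Q(\nu,v)$. Because any quadrangulation is bipartite, for each face $f$ the cyclic sequence of labels on its four corners is either of type I, namely $(k,k+1,k,k+1)$, or of type II, namely $(k,k+1,k+2,k+1)$, for some $k\ge 0$. Define one new edge $e_f$ per face by the local rule: for type I connect the two corners labeled $k+1$; for type II connect the unique corner labeled $k+2$ to the corner labeled $k+1$ opposite to it. Equivalently: from every corner carrying a local-maximal label, draw an edge to the next corner, in counterclockwise order around the face, whose label is one less.

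The central claim to establish is that the graph $T$ formed by the $n$ new edges, on vertex set $\cV(Q)\setminus\{\nu\}$, is a planar tree with $n$ edges, whose labeling $\ell(v):=d_Q(\nu,v)-1$ is a valid tree labeling (the fact that the labels of endpoints of any new edge differ by $\pm 1$ is immediate from the local rules). The vertex count is fixed by removing $\nu$, and the edge count equals $|F(Q)|=n$ by construction, so by Euler's formula $V-n+F=2$ applied to the planar $Q$ we have $|\cV(T)|=|E(T)|+1$. Hence it suffices to prove connectedness, or equivalently absence of cycles. The standard argument constructs, from every vertex $v\ne\nu$, a canonical path of new edges along which the label $\ell$ strictly decreases, and shows that all such paths merge (because of the local rule, two corners never produce crossing successor edges), eventually terminating at the unique vertex whose label equals $m(T,\ell)$; this yields connectedness and incidentally proves the distance formula $d_Q(\nu,w)=\ell(w)-m(T,\ell)+1$ by induction on $\ell(w)$.

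The 1-to-2 factor is accounted for by the root: the root corner of $Q$ determines, via the local rule, a distinguished corner of $T$, and there is a residual sign $\epsilon\in\{+,-\}$ telling whether the root edge of $Q$ points toward or away from $\nu$ in terms of the labels. So the map $\Phi:\Qset n\to \LT n\times\{+,-\}$ is well defined. For the inverse, given $(T,\ell,\epsilon)$, draw $T$ in the plane, add a vertex $\nu$ in its unique face labeled $m(T,\ell)-1$, and for every corner $c$ of $T$ draw a new edge from $c$ to its \emph{successor}, defined as the next corner in the contour order (cyclically) with label $\ell(c)-1$, or to $\nu$ if $\ell(c)=m(T,\ell)$. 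After deleting the edges of $T$, one checks that every face of the resulting map is a quadrangle (each edge of $T$ is traversed twice by the contour, and the two corners on either side of it are matched into exactly one quadrilateral face), producing a map in $\Qset n$ rooted according to $\epsilon$.

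Finally, one verifies that $\Phi$ and $\Psi$ are mutual inverses by checking that the new edges of $\Phi$ coincide with the successor edges of $\Psi$ (both being dictated by the same local "decrement of label in counterclockwise order" prescription), and that the distance formula $d_Q(\nu,w)=\ell(w)-m(T,\ell)+1$ ensures the vertex labels reconstructed by $\Psi$ match the distances used by $\Phi$. The subtle step is the proof that the new edges form a tree (acyclicity via the successor-path argument), which is the only place where combinatorial work is really required; the rest reduces to a local face-by-face check.
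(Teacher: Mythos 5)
Your proposal follows essentially the same route as the paper's Section~\ref{sec:CVS-bij}: describe the two directions of the Cori--Vauquelin--Schaeffer construction (from a labeled tree to a quadrangulation via successor edges, and back via a per-face local rule on the vertex labels $d_Q(\nu,\cdot)$), assert the two key structural facts (the new edges form a tree in one direction and a quadrangulation in the other), sketch the standard successor-path argument for connectedness --- which also yields the distance formula --- and account for the factor of $2$ via the root-orientation bit. The paper records these same steps without reproving them in detail, citing~\cite{CV,Schae}.

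Two of your formulations are imprecise and would have to be corrected to make the sketch exact. First, in a type~II face with cyclic corner labels $(k,k+1,k+2,k+1)$, the corner opposite $k+2$ carries label $k$, not $k+1$; what you mean is one of the two $k+1$ corners \emph{adjacent} to $k+2$, singled out by a fixed rotational convention as in Fig.~\ref{fig:Faces-Bij}. Second, the ``equivalently'' restatement --- draw an edge from every local-maximum corner to the next corner bearing a label one less --- does not reproduce the type~I rule: a type~I face $(k,k+1,k,k+1)$ contains two local maxima, so the restated rule would add two edges inside it instead of one, whereas the correct rule draws a single edge between the two $k+1$ corners. You do state the type~I rule correctly in the preceding sentence, so these are slips of phrasing rather than conceptual gaps, but they blur precisely the invariant (one new edge per face) on which your Euler-formula vertex/edge count for tree-ness relies.
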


\begin{figure}[!h]
\centering
\raisebox{+1cm}{
 \includegraphics[scale=1]{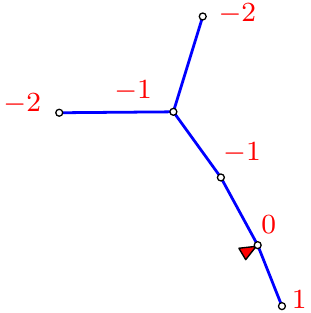}}
 \hspace{1.2cm}
 \includegraphics[scale=1]{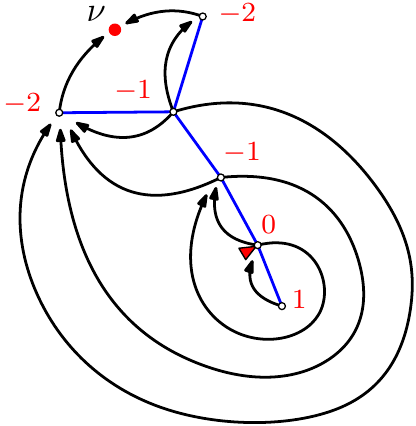}
 \hspace{1.2cm}
  \includegraphics[scale=1]{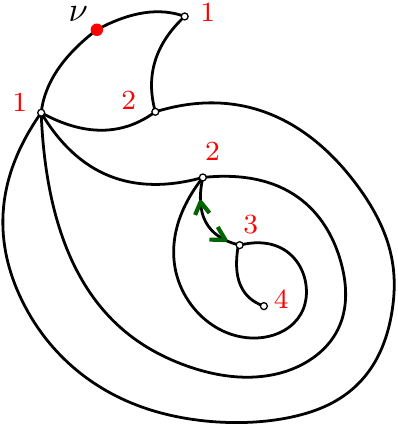}
\caption{\label{fig:Bij-Sc}Building two rooted pointed planar quadrangulations from a rooted labeled tree. They differ due to the orientation of the root-edge, that depends on the value of the parameter $\eta$.}
\end{figure}

Let us detail this mapping $\LT n \times \{0,1\} \leftrightarrow \Qset n$.\\

\noindent \emph{From  $\LT n \times \{0,1\}$ to $\leftrightarrow \Qset n$:} Consider a labeled rooted tree $(T,\ell)$,
and a parameter $\eta$  in $ \{0,1\}$. 
As usual, the depth traversal function $c_T$ of $T$ associates with each integer $k$ in  $\llbracket 0, 2n-1\rrbracket$, the node $c_T(k)$ (and again $c_T(k)$ can be seen as a corner, recall \eref{eq:corner}). Note that here however, we use the {\it counterclockwise}
contour sequence of $T$ to label the corners from 0 to $2n-1$. Consider also $L(k)=\ell(c_T(k))$, the label of the $k$th corner of $T$, for $k$ in  $\llbracket 0, 2n-1\rrbracket$. 
The description of the bijection is as follows: first draw  in {\bf red} a special vertex $\nu$ in the plane, and draw in {\bf blue} the tree $T$ in the plane (so that the drawing avoids $\nu$).\\

Then, treat successively all the corners $(c_T(k),0\leq k <2n-1)$ (starting from the root corner $c_T(0)$) as follows:  
to each corner $c_T(k)$,
\begin{enumerate}[--]
\item if $L(k)=m(T,\ell)$ (see \eqref{eq:Min-Label}), add a {\bf black} edge between $c_T(k)$ and the special vertex $\nu$, 
\item if not, add a {\bf black} edge between $c_T(k)$ and the {\it preceding} 
corner with label $L(k)-1$. 
\end{enumerate}
At each step, this is done so that the {\bf black} map remains planar. There is always a unique corner on either $\nu$ or the preceding vertex carrying the label $L(k)-1$ such that it is possible. 

We now remove the tree $T$, that is the {\bf blue edges}, keeping only the vertices and all the {\bf black} edges. It can be proved that the resulting map is a connected planar quadrangulation. The map is pointed at $\nu$, and rooted by choosing an orientation for the edge that had been added at the very first step, in one of the two following ways:  if  $\eta=0$,  this edge is oriented {\it from} the corner indexed 0, and if $\eta=1$, this edge is oriented {\it towards} the corner indexed 0. 

\

\noindent \emph{From  $\Qset n$ to $\LT n \times \{0,1\}$:} We start by drawing in {\bf black} a connected, rooted planar quadrangulation, pointed at a vertex $\nu$. All the vertices are then labeled with their graph distance to $\nu$, and this labeling  is denoted by $d_\nu$. Because the map is planar and all its faces have even length, it is also bipartite. Consider a geodesic between some vertex and the root. Along this path, the vertices are alternatively black and white. All the vertices at a given distance from the root thus have the same color. Therefore, two vertices linked by an edge cannot have the same label, as their colors differ. 
As two such vertices have labels differing by at most 1, there are only two possibilities for the label patterns of the  vertices around a face, depicted in Fig.~\ref{fig:Faces-Bij}. 
\begin{figure}[!h]
\centering
 \includegraphics[scale=0.6]{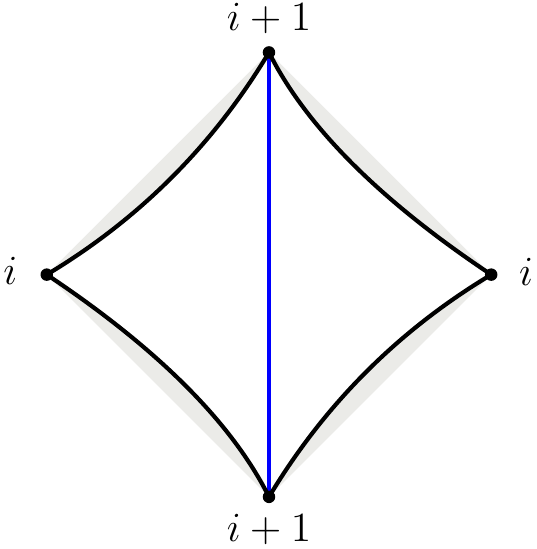}
 \hspace{1.5cm}
 \includegraphics[scale=0.6]{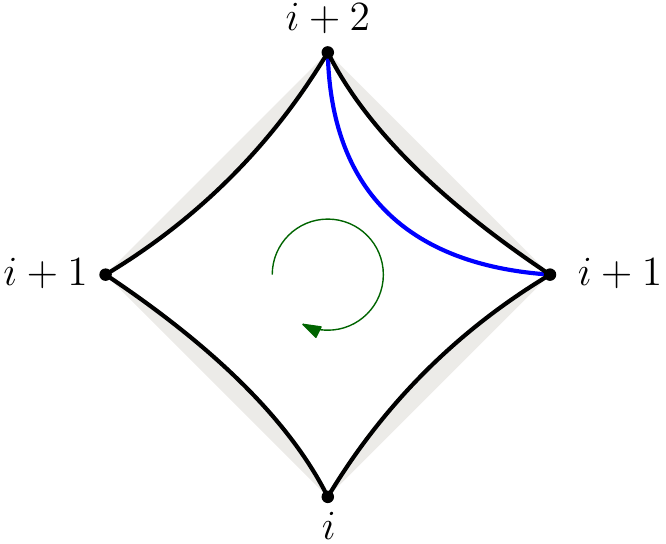}
\caption{\label{fig:Faces-Bij}The two possible patterns for the labels around a face of a planar quadrangulation. The blue edges are added when constructing a rooted tree from a quadrangulation. 
}
\end{figure}

We add one {\bf blue} edge per face of the quadrangulation as done in Fig.~\ref{fig:Faces-Bij}: between the corners incident to the vertices of higher labels for the case on the left of Fig.~\ref{fig:Faces-Bij}, and between the corner incident to the vertex of highest label and the corner that follows around the face, clockwise (if the face is the external face however, we take the following corner counterclockwise instead\footnote{This is because we represent the map on the plane, while it should be understood as drawn on the sphere.}). 
The labeled tree associated with the map is the {\bf blue} tree, and it is rooted and labeled as follows (the fact that it is a tree is not obvious): \\
-- the root edge in the quadrangulation was linking two corners of the {\bf blue} tree; we  choose the corner of higher label 
  as the root $c_0$ of the tree $T$, and the parameter $\eta$ takes the value 0 if that corner is at the origin of the root-edge of the quadrangulation, and 1 otherwise;\\
-- let $d_0$ denote the distance from $c_0$ to $\nu$ in the quadrangulation $Q$. For $w\in\cV(T)$, we define 
 $$\ell(w)=d_Q(\nu, w) - d_0,
 $$
where $d_Q(\nu, w)$ is the distance  from the vertex $w$ to $\nu$ in the quadrangulation.   
From the construction, $\ell$~defines a labeling on the rooted tree $T$, i.e. the label of the root vertex is 0, and the labels of two edges linked by an edge differ by at most 1.

\subsection{Edge disconnections in maps and tree-decompositions}

Let us consider a quadrangulation $Q\in \Qset n$  with $n$ faces, and denote by $\big(\t_n^{(1)},L_n^{(1)}\big)$ the corresponding labeled rooted tree in $\LT n$. We can draw the tree $\tt n 1$ on $Q$ by adding an edge in each one of its faces, as in Fig.~\ref{fig:Faces-Bij}.  The corresponding map, which contains both $\tt n 1$ and $Q$ as submaps, is denoted by $\tilde Q$.

\paragraph{The trees $\tt n 2$ and $\ttau_n^{(2)}$.}
Following the construction of \cite{MM}, another tree $\ttau_n^{(2)}$ with
$2n$ edges  is extracted from $\tilde Q$, by following the contour of $\tt n 1$, and ``ungluing" the vertices of $Q$ along the corners of $\tt n 1$ as shown in Fig.~\ref{fig:MatingTrees}.
Strictly speaking, the tree  thus obtained is naturally pointed  and rooted, because the quadrangulation is (the red vertex and  green arrows in Fig.~\ref{fig:MatingTrees}). In this section, we rather choose for practical reasons to root $\ttau_n^{(2)}$ on the corner that was glued to the root corner of $\tt n 1$\footnote{Note that this does not correspond  to the rooting induced by that of $Q$, for instance when the parameter $\eta$ is one. } (it simplifies the combinatorial encoding of feuilletages using nested non-crossing permutations in Sec.~\ref{sec:NPIM}). To obtain the tree $\tt n 2$ defined in the previous sections, the tree $\ttau_n^{(2)}$ has to be rerooted  on the corner of the pointed vertex that faces the min argmin of the labeling $L_n^{(1)}$ of $\tt n 1$\footnote{More precisely,  the min argmin is a corner of $\tt n 1$, and if $e$ is the edge added from this corner towards the pointed vertex $\nu$, $\ttau_n^{(2)}$ has to be rerooted on the corner of $\nu$ which precedes $e$ on $\nu$ counterclockwise in order to obtain $\tt n 2$ (see the orange arrow in Fig.~\ref{fig:MatingTrees}).}. This choice of root was needed to show the convergence. In the following, we use $\ttau_n^{(2)}$ (and its generalizations $\ttau_n^{(j)}$), but the reader should keep in mind that $\tt n 2$ (or $\tt n j$) is obtained by a simple rerooting.
\begin{figure}[!h]
\centering
 \includegraphics[scale=1.1]{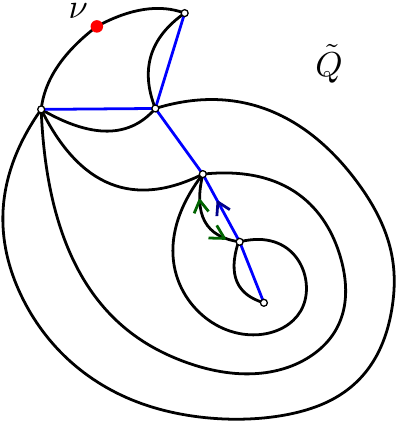}
 \hspace{0.25cm}
 \includegraphics[scale=1.1]{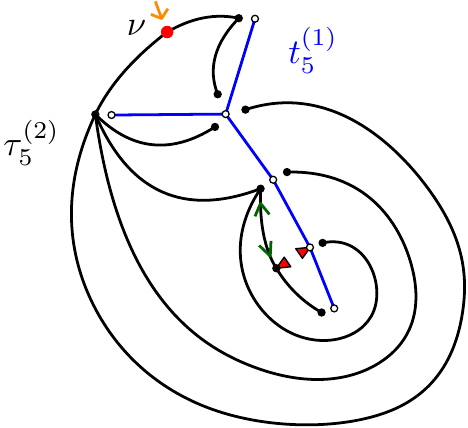}
 \hspace{0.25cm}
  \includegraphics[scale=1.1]{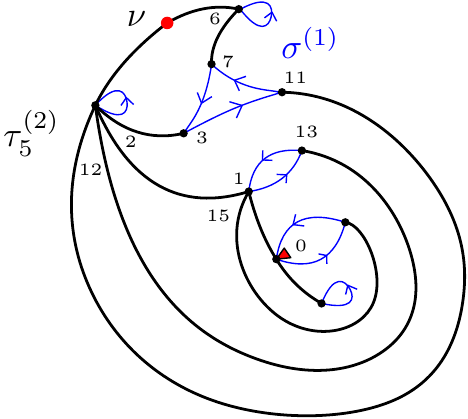}
\caption{\label{fig:MatingTrees}On the left is the map $\tilde Q$, with the edges of both the quadrangulation $Q$ and the tree $\tt n 1$. $\tilde Q$ can be understood as a discrete ``mating" of two trees $\tt n 1$ and $\ttau_n^{(2)}$, as illustrated in the central figure.  The red triangles indicate the root-corners of the two trees, while the green arrows indicate the two possible orientations for the root edge in the quadrangulation $Q$, and the orange arrow indicates the  corner on which to reroot in order to obtain the tree $\tt n 2$ of Sec.~\ref{sec:ISIMCO}, Fig.~\ref{fig:HtoT}. Equivalently, on the right, the tree $\tt n 1$ can be seen as a permutation $\sigma^{(1)}$ gluing some corners of $\ttau_n^{(2)}$ to form the vertices of $Q$ (see Fig.~\ref{fig:Dual-nncross-tree}  to recover $\tt n 1$ from $\sigma^{(1)}$). }
\end{figure}

The tree $\tt n 2$ is made of a subset of the shortest paths to the pointed vertex $\nu$ in the quadrangulation (it contains precisely one ``geodesic" to $\nu$ for every corner of $\tt n 1$).
As for the tree $\tt n 1$, in a sense it  represents a ``ridgeline" of the quadrangulation: on the left of Fig.~\ref{fig:Faces-Bij}, the edges of $\tt n 1$ locally connect vertices which are far from $\nu$, and on the right of Fig.~\ref{fig:Faces-Bij}, the edge of $\tt n 1$ necessarily connects two of the vertices that are the further away from $\nu$.

\paragraph{Encoding $Q$ with two permutations.} Equivalently, the map $Q$ can be encoded by the tree $\ttau_n^{(2)}$, together with a non-crossing \permutation on (a subset of) its  
corner sequence,  
whose complement is a matching. Indeed, we see that the vertices of $\tt n 1$ encode the gluing of the vertices of $\ttau_n^{(2)}$ to form the vertices of $Q$ (middle of Fig.~\ref{fig:MatingTrees}). For each vertex of $Q$, this gluing takes the form of a cycle on the corners of the vertices of $\ttau_n^{(2)}$ which are glued together (right of Fig.~\ref{fig:MatingTrees}). All of these disjoint cycles form a permutation $\sigma^{(1)}$, defined on a subset of corners of $\ttau_n^{(2)}$, with one corner per non-pointed vertex of $\ttau_n^{(2)}$ (or non-rooted for $\tt n 2$).

The planarity of $Q$ is encoded in the fact that $\tt n 1$ is a tree, or equivalently that $\sigma^{(1)}$ is a non-crossing \permutation on  
the {\it clockwise}  corner sequence 
$\bar C_2=\{0<4n-1<4n-2<\ldots< 1\}$ (the cycles of $\sigma^{(1)}$  induce a non-crossing partition on $C_2=\{0<\ldots<4n-1\}$, but it is $(\sigma^{(1)})^{-1}$ that respects the ordering of $C_2$, since when going around  $\tt n 1$ counterclockwise, we go around $\ttau_n^{(2)}$ clockwise). The fact that the Kreweras complement of $\sigma^{(1)}$ is a matching is equivalent to  $Q$ being a quadrangulation. Other characteristics of $Q$, such as  
the information on the distances to $\nu$ for instance,
are encoded in other properties of $\ttau_n^{(2)}$ and of $\sigma^{(1)}$. 
To summarize, consider the encoding of the tree $\ttau_n^{(2)}$ as a pair $(C_2, \sigma^{(2)})$, where $C_2$ is the ordered set $\{0<\ldots< 4n-1\}$, and $\sigma^{(2)}$ is a non-crossing \permutation on $C_2$, then $Q$ can be encoded as the triplet $$(C_2, \sigma^{(2)}, \sigma^{(1)}),$$ where $\sigma^{(1)}$ is also a non-crossing \permutation on a subset of $\bar C_2$. In addition, $\sigma^{(1)}$ and $\sigma^{(2)}$ satisfy other constraints: there is one element of the support of $\sigma^{(1)}$ per disjoint cycle of $\sigma^{(2)}$ appart from the cycle containing 1, their Kreweras complements are both matchings, etc. From the triplet $(C_2, \sigma^{(2)}, \sigma^{(1)})$, the tree $\t_n^{(1)}$ is recovered as $(\bar C_1, \sigma^{(1)})$, where $\bar C_1$ is $C_2$ with the opposite ordering (but still starting at 0), and restricted to the support of $\sigma^{(1)}$.  
In the example of Fig.~\ref{fig:MatingTrees} for instance, we have $\ttau_5^{(2)} =(C_2,\sigma^{(2)})$ and $\tt 5 1 =(\bar C_1,\sigma^{(1)})$, with 
$$
C_2=\{0<1<\cdots<19\},\qquad  \sigma^{(2)} = (0,18)(1,15,17)(2,4,10,12,14)(3)(5,9)(6,8)(7)(11)(13)(16), 
$$
$$
 \bar C_1=\{0<19< 16 < 13< 11< 8< 7< 4< 3< 1\},\qquad  \sigma^{(1)} = (0,16)(1,13)(3,11,7)(4)(8)(19), 
$$
where we recall that $C_2$ (resp.~$\bar C_1$) is the {\it counterclockwise} corner sequence of $\ttau_5^{(2)}$ (resp.~$\tt 5 1$). With this encoding, the corners of $\tt 5 1$ have the same indices in $C_2$ as a subset of the corners in $\ttau_5^{(2)}$: in the following we will say that the corners that have the same index are {\it dual} one to another.

\begin{rem} 
  \label{rem:Discrete-mating} As for the map $\tilde Q$, which contains both the edges of $\tt n 1$ and $\ttau_n^{(2)}$, it can be understood as a ``discrete mating" of the trees $\tt n 1$ and $\ttau_n^{(2)}$ : following the clockwise corner sequence of $\tt n 1$ and the counterclockwise corner sequence of $\ttau_n^{(2)}$, some of the corners of the trees are glued together to recover $\tilde Q$ (the dual corners). It is some kind of discrete version of the  mating of tree introduced in  \cite{MatingTrees}, but where one tree is twice the size of the other, and the trees are not glued along all of their corners or all of their leaves, but rather all of the corners of the small tree are glued to a subset of the corners of the bigger tree.
 \end{rem}

\paragraph{More generalities on graph explosions.}  More generally, from the Chapuy-Marcus-Schaeffer bijection \cite{ChapMarcSchaeff}, non-planar quadrangulations are bijectively encoded by ``discrete matings" of trees and maps of genus $g$ with a single face (unicellular), for which the edges of the unicellular map are then removed, or equivalently as a triplet $(C_2, \sigma^{(2)}, \sigma^{(1)})$, where $\sigma^{(2)}$ is still a non-crossing \permutation on $C_2$, but $\sigma^{(1)}$ is now a permutation of genus $g$,  defined as the genus of the bipartite map $(f_0, \sigma^{(1)})$, where if $C_1=\{0<i_1<\ldots<i_{k}\}$, then $f_0$ is the cycle $(0 i_1 \ldots i_{k})$  \cite{CoriGenPerm}, that is,  
\be
\label{eq:genus-perm}
 2g(\sigma^{(1)}) = 1+n - \#(\sigma^{(1)}) - \#(f_0\circ\sigma^{(1)}),
 \ee
where we have denoted by $\#$ the number of disjoint cycles of a permutation. Note that the unicellular map $U_M$  which generalizes $\tt n 1$ when the quadrangulation has genus $g$ is the map  whose vertices and edges are respectively given by $\sigma^{(1)}$ and $(\fC\circ\sigma^{(1)})^{-1}$.

The deconstruction of the map $Q$ as a tree and a permutation that encodes a gluing of a subset of its corners falls in the more general context of ``graph explosion". Trees are connected maps with no loops: if one takes a connected map $M$, removing successively some well-chosen edges of $M$ -- each edge being chosen among those belonging to cycles -- allows going from $M$ to a tree $T$. 

Another way to decompose maps relies on the ``explosion'' of some vertices. The idea is to preserve the edges and to rather modify the vertices by adding some copies of them. More precisely, for each cycle of the permutation encoding the vertices in Def.~\ref{def:Maps},  we may choose to split it in two or more disjoint consecutive cycles e.g.~$(123456)\rightarrow(1)(234)(56)$. This is illustrated in Fig.~\ref{fig:vertex-splitting}. When exploding a vertex, a corner is naturally distinguished on each copy of the vertex (the corners $(1), (4,2)$, and $(6,5)$ in this example).  Consider a map $M$ and explode some of its vertices to obtain a map $M'$; gathering the disjoint cycles $\sigma_v$ for each exploded vertex $v$, the explosion is thus encoded as a permutation $\sigma$ defined on some of the corners of $M'$, which allows reversing the explosion and reconstructing the vertices of $M$.
\begin{figure}[h!]
 \centering
\raisebox{+0.2cm}{ \includegraphics[scale=1.4]{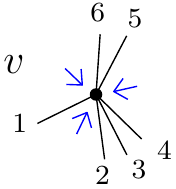}}
\hspace{1.7cm}\raisebox{+1.3cm}{$\rightarrow$}\hspace{1.5cm}
\includegraphics[scale=1.4]{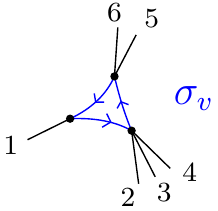}
\caption{The explosion of a vertex $v$ of a map $M$ is encoded by a cycle $\sigma_v$ on the distinguished corners of the resulting map $M'$. }
\label{fig:vertex-splitting}
\end{figure}

Now suppose that the map obtained after exploding some of the vertices of a map $M$ is a tree $M'=T$, encoded by its corner sequence (an ordered subset $C'$) and a non-crossing \permutation $\sigma'$ on $C'$. Then the support of the permutation $\sigma=\prod_v \sigma_v$ just described is a subset of $C'$, so that the map $M$ can be encoded by the triplet $(C',\sigma', \sigma)$. In addition, there is at most one element of the support of $\sigma$ per disjoint cycle of $\sigma'$.\footnote{Note that this last condition can be lifted, which corresponds to authorizing the explosion of the vertices in more than one cycle each. More precisely, if $\gamma^v$ is a cycle coding a vertex $v$ of $M$, exploded in several disjoint (non-necessarily consecutive) cycles $\gamma^v_1,\ldots, \gamma^v_p$, we can still encode the explosion of $v$ using some disjoint cycles $\sigma^v_1,\ldots, \sigma^v_p$ defined on the corners of the exploded map $M'$, provided that $\gamma^v_1\circ\ldots\gamma^v_p$ forms a non-crossing partition of  $\gamma$.} 
\

There are plenty of examples in the literature of such decompositions of maps in pairs of a tree $T$ (the tree $\tt n 2$ in our case) and a unicellular map $U_M$ of genus $g$, or equivalently an ordered set $C'$, a non-crossing \permutation $\sigma'$, and a permutation $\sigma$ of genus $g$, such as for instance: the construction of CVS \cite{CV, Schae} and Marckert-Mokkadem \cite{MM} detailed above, its generalization for quadrangulations of genus g \cite{ChapMarcSchaeff}, its generalization for maps with faces of even lengths \cite{Bouttier2004}, the bijection of Bernardi \cite{Bernardi06} for tree-rooted maps, 
or the decomposition in C-decorated trees of \cite{ChapFerFus} for unicellular maps. 

In these decompositions, the tree $T$ is often of a very specific kind. We would like to stress that on the other hand, some aspects of these decompositions are very general and do not rely on the details of the maps (restriction on the length of the faces or bipartiteness for instance) or on the way that the tree $T$ is extracted from the map (the rule to explode the vertices is based on the distances to the pointed vertex in CVS \cite{CV, Schae}, \cite{MM}, \cite{ChapMarcSchaeff}, and \cite{Bouttier2004}], on an orientation of the edges according to the spanning tree in Bernardi's bijection \cite{Bernardi06}  and on the notion of trisection in \cite{ChapFerFus}). Indeed, it is clear that given {\it any} map, there are many ways to explode it in a pair of a tree $T$ and a permutation $\sigma$ (or a unicellular map $U_M$), or equivalently of an ordered set $C'$, a non-crossing \permutation $\sigma'$, and a permutation $\sigma$, and conversely, {\it any} pair/triplet of such objects are combined to form a map. Furthermore, {\it for any explosion of a map $M$ in a tree $T$}, the genus of the map $M$  is  the genus of the permutation $\sigma$ as defined in \eqref{eq:genus-perm} (or equivalently the genus/excess of the unicellular map $U_M$). In particular, pairs of trees or pairs of non-crossing partitions on an ordered set always encode planar maps. This motivates the degree of generality chosen for the notion of $\D$-general feuilletage in Def.~\ref{def:general-refoldings}.

\subsection{Iterative construction and new propositions for the notion of  $\D$-combinatorial map}
\label{sec:NPIM}

\paragraph{Discrete  iterated feuilletages.}
To build a rooted pointed planar quadrangulation $\Qrec n 1 2\in \Qset n$ (the quadrangulation $Q$ of the previous section), a first rooted planar tree $\tt n 1$ is considered, which is then dressed with a labeling, from which the edges of $\Qrec n 1 2$ are built according to the rule of the CVS bijection  of Sec.~\ref{sec:CVS-bij}, after what the edges of  $\tt n 1$ are deleted and a tree $\ttau_n^{(2)}$ with $2n$ edges is extracted: the vertices of $\tt n 1$ encode the gluings of the vertices of $\ttau_n^{(2)}$, while the fact that $\tt n 1$ is a tree accounts for the planarity of $\Qrec n 1 2$.
\begin{figure}[!h]
\centering
\raisebox{+1.1cm}{ \includegraphics[scale=0.7]{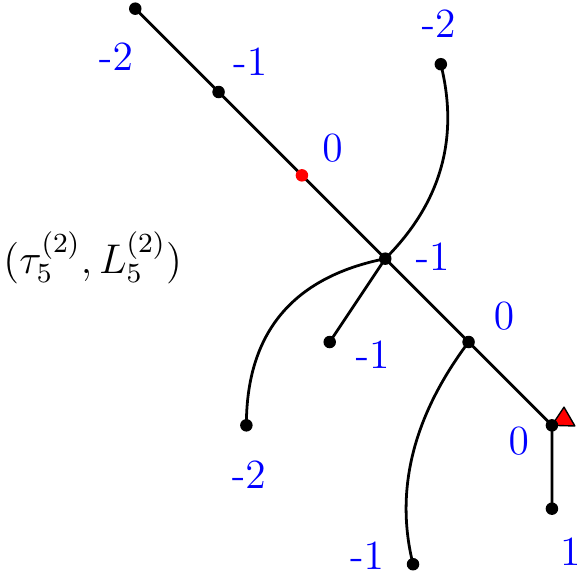}}
 \hspace{0.4cm}
 \includegraphics[scale=0.7]{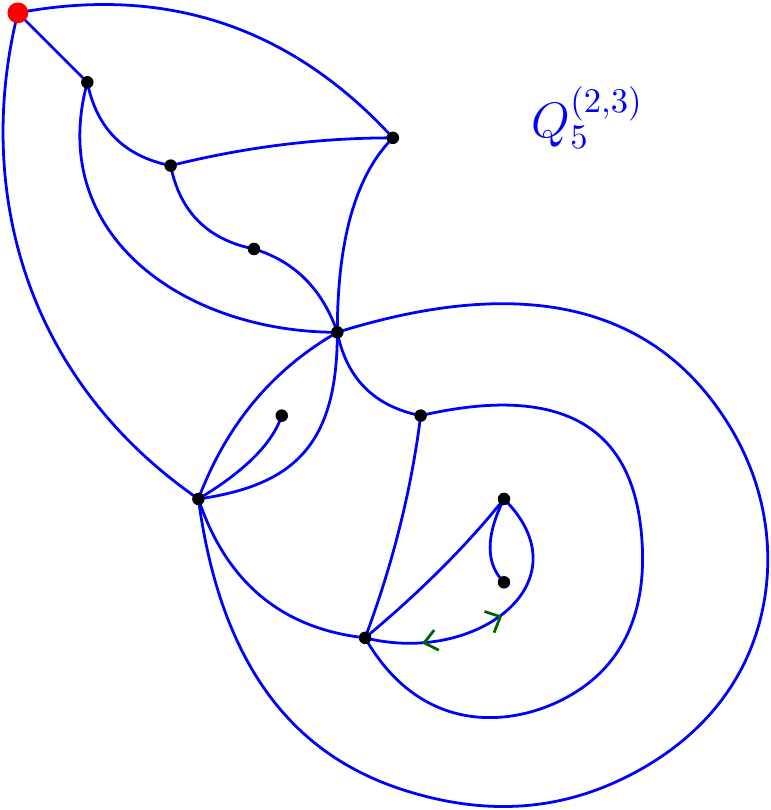}
 \hspace{0.4cm}
  \includegraphics[scale=0.7]{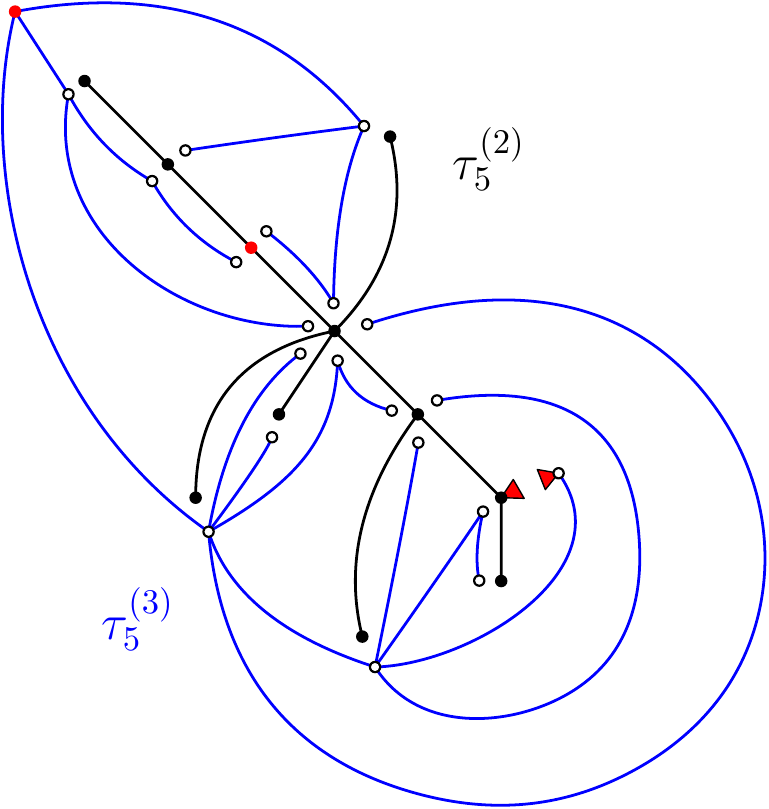}
\caption{\label{fig:Third-Tree-etc}The tree $\ttau_n^{(2)}$ is in turn provided with a labeling $L_5^{(2)}$ (left), from which a second rooted pointed planar quadrangulation $\Qrec 5 2 3$ is built (middle), as well as a third tree  $\ttau_5^{(3)}$ (right).}
\end{figure}

Now dress in turn  $\ttau_n^{(2)}$ with a labeling, and add new edges according to the rules of the  CVS bijection (see Fig.~\ref{fig:Third-Tree-etc}). Deleting the edges of  $\ttau_n^{(2)}$, we obtain a rooted pointed planar quadrangulation $Q_n^{(2,3)}\in \Qset{2n}$. If we also identify the vertices of $\ttau_n^{(2)}$ according to the vertices of $\tt n 1$, we obtain a 3-discrete  feuilletage  $\Rn n 3$ (right of Fig.~\ref{fig:Third-Recursive-Refolding}).

Just as for  $\ttau_n^{(2)}$ at the previous step, we can extract a rooted planar tree  $\ttau_n^{(3)}$ with $4n$ edges from the map $ \tilde Q_n^{(2,3)}$, which is  $ \Qrec n 2 3$, together with the edges of $\ttau_n^{(2)}$  (right of Fig.~\ref{fig:Third-Tree-etc}).  $\Rn n 3$ is encoded by the tree $\ttau_n^{(3)}$, identified by both the vertices of $\ttau_n^{(2)}$  and of $\tt n 1$. 
The corners of $\ttau_n^{(2)}$ are dual to corners of $\ttau_n^{(3)}$ (dual corners are introduced above Rem.~\ref{rem:Discrete-mating}), and the permutation $\sigma^{(1)}$ (or equivalently the vertices of $\tt n 1$) glues corners of $\ttau_n^{(3)}$ together, and consequently glues vertices  of $\Qrec n 2 3$ (Fig.~\ref{fig:Third-Recursive-Refolding}). We see that the planar quadrangulation  $\Qrec n 2 3$,  which has $2n$ faces, $4n$ edges, $2(n+1)$ vertices, is ``folded" repeatedly into an object which has as many vertices as $\tt n 1$ plus the pointed vertices of $\tt n 2$ and $\tt n 3$, i.e.~$n+3$,  thus the name {\it discrete  iterated feuilletage}.

\

In this description, we have been vague about the way the objects were rooted. The discrete  feuilletage $\Rn n 3$  described above has one oriented edge, which induces a rooting on $\Qrec n 2 3$, $\Qrec n 1 2$, $\ttau_n^{(3)}$, $\ttau_n^{(2)}$, and $\tt n 1$, and has two pointed vertices, one on $\Qrec n 2 3$, and one on $\Qrec n 1 2$. Indeed,  the oriented edge in $\Rn n 3$ is also an oriented edge of $\ttau_n^{(3)}$, or equivalently a root corner of $\ttau_n^{(3)}$. The root corner of $\ttau_n^{(2)}$ is the one facing the root corner of $\ttau_n^{(3)}$ (its dual corner), and the root corner of $\tt n 1$ is that dual to the root corner of $\ttau_n^{(2)}$. In $\ttau_n^{(3)}$, when adding the distances to the pointed vertex, there is only one edge incident to the root vertex, and whose other endpoint has a smaller label. The root of $\Qrec n 2 3$ is obtained by orienting this edge in one or the other way, according to the parameter $\eta$ of the CVS bijection. The root of $\Qrec n 1 2$ is obtained the same way from the root corner of $\ttau_n^{(2)}$. 
In the rooting convention of this section,  $\Rn n 3$ is thus a rooted discrete  feuilletage which in addition is pointed twice, once by distinguishing a vertex of $\Qrec n 2 3$, and once a vertex of $\Qrec n 1 2$.   Note that $\Rn n 3$ as just described is not rooted and pointed as the discrete feuilletages introduced in Sec.~\ref{sec:ISIMCO}, which are the deterministic combinatorial objects in the support of  $\DR{n}{3}$. The latter are indeed obtained by starting from $\tt n 3$ instead of $\ttau_n^{(3)}$. However, starting from  $\ttau_n^{(3)}$ simplifies the encoding using non-crossing partitions, which is why we make this choice in the present section.

\begin{figure}[!h]
\centering
 \includegraphics[scale=0.75]{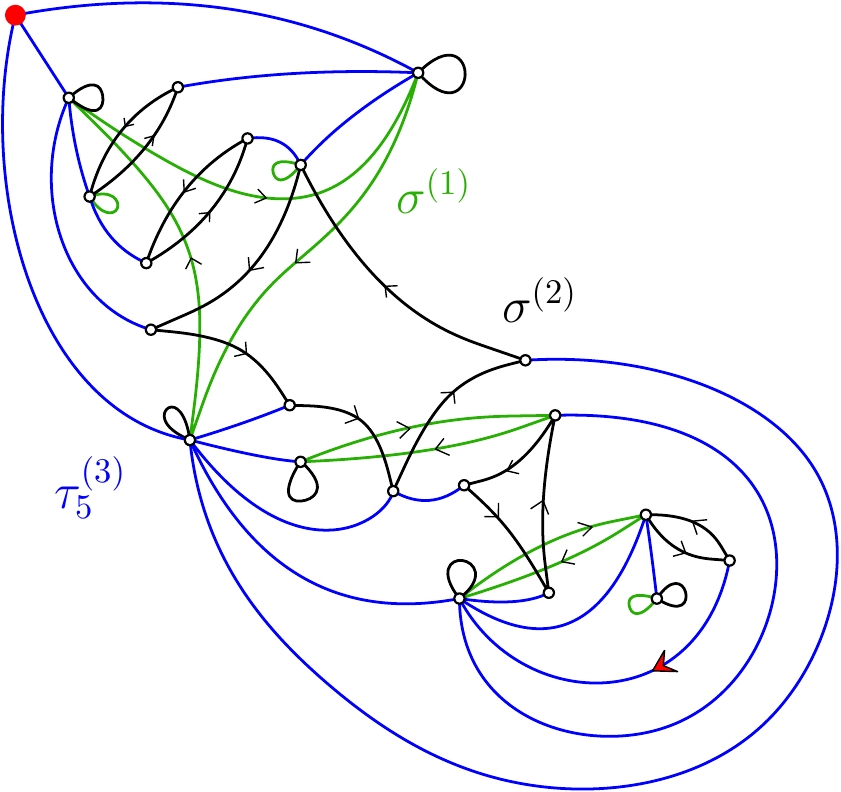}
 \hspace{-0.5cm}
 \includegraphics[scale=0.75]{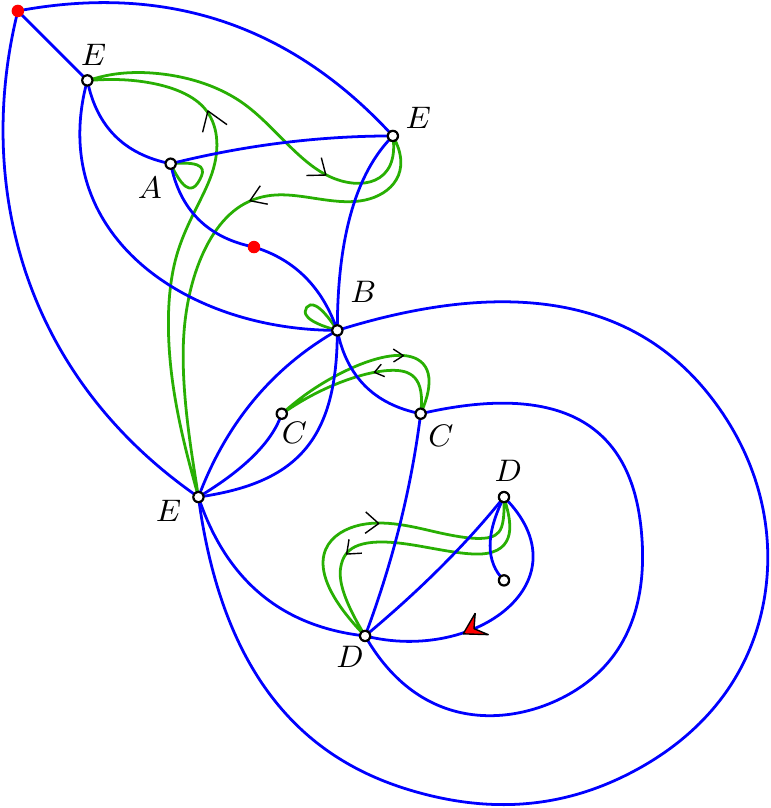}
 \hspace{-0.35cm}
 \raisebox{+0.9cm}{ 
 \includegraphics[scale=0.75]{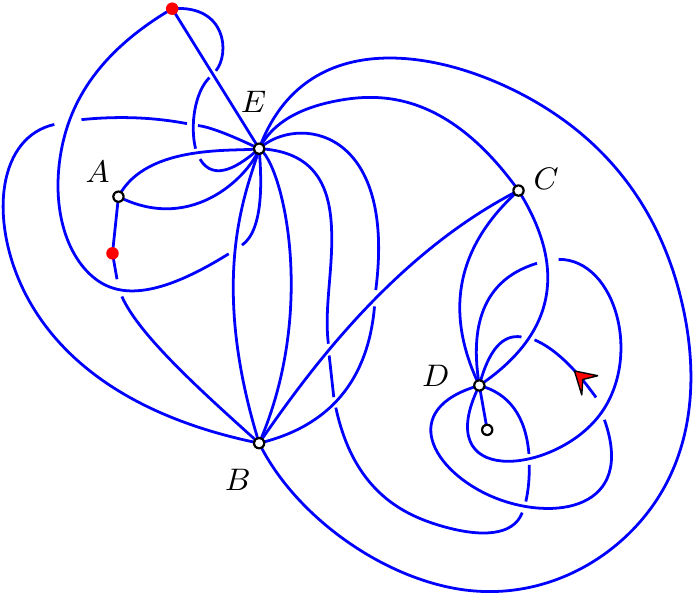}
 }
\caption{\label{fig:Third-Recursive-Refolding}The resulting 3-discrete  feuilletage $\Rn 5 3$, represented as $\ttau_5^{(3)}$ glued by $\sigma^{(2)}$ and $\sigma^{(1)}$ (left), as $\Qrec 5 2 3$ (ur to a rerooting) glued by $\sigma^{(1)}$ (middle), and as the completely folded object (right). To represent it, we used the convention \eqref{fig:Convention-foldings-to-map} (see Rem.~\ref{rem:Convention-foldings-to-map}).}
\end{figure}

\begin{rem}
\label{rem:Convention-foldings-to-map}
While the corners of $\ttau_n^{(3)}$ are dual to corners of $\ttau_n^{(2)}$ and in turn to corners of $\tt n 1$, the corners of $\ttau_n^{(3)}$ are not dual to corners of $\Qrec n 2 3$. However we may choose a convention, such as the following.
\ben
\label{fig:Convention-foldings-to-map}
 \includegraphics[scale=1.1]{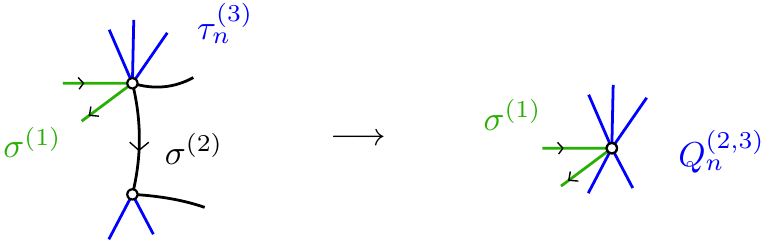}
\een
With such a choice, the 3-discrete feuilletage {$\Rn n 3$} can be seen as $\Qrec n 2 3$ (up to a rerooting), together with an additional permutation $\sigma^{(1)}$ now defined on its {\it corners}. The result is now a combinatorial map in the usual sense, of arbitrary genus (an embedding of {$\Rn n 3$} on a surface of a priori high genus). This is the convention used to draw the figure on the right  of Fig.~\ref{fig:Third-Recursive-Refolding}. For random objects, and with this convention, the genus of the map corresponding to $\DR n\D$ is obviously expected not to be bounded asymptotically.  
\end{rem}
This process can be iterated: dressing $\ttau_n^{(\D)}$ with a labeling, we build a rooted planar tree $\ttau_n^{(\D+1)}$ with $2^\D n$ edges {and $n+\D$ vertices}, such that $\ttau_n^{(\D+1)}$ identified according to the vertices of $\ttau_n^{(\D)}$ (up to a rerooting) forms a pointed rooted planar map $\Qrec n \D {\D+1}\in \Qset{2^\D n}$, and a discrete feuilletage {$\Rn n {\D+1}$} is obtained by identifying the  vertices of every  $\ttau_n^{(j)}$ for $3\le j\le D+1$ according to the vertices of $\ttau_n^{(j-1)}$ (as well as $\tt n 1$ for $j=2$). A {$\D$}-discrete  feuilletage has one oriented edge, which induces a rooting of all the  iterated trees $\ttau_n^{(j)}$ and all the quadrangulations $\Qrec n j {j+1}$, and is pointed $D-1$ times, once per quadrangulation $\Qrec n  j {j+1}$. Some examples of planar maps $\Qrec n j {j + 1}$ are shown in Fig.~\ref{fig:Qkkp1} for $j=1,2,3,4$.

\begin{figure}[h!]
  \centerline{ \includegraphics[scale=0.12]{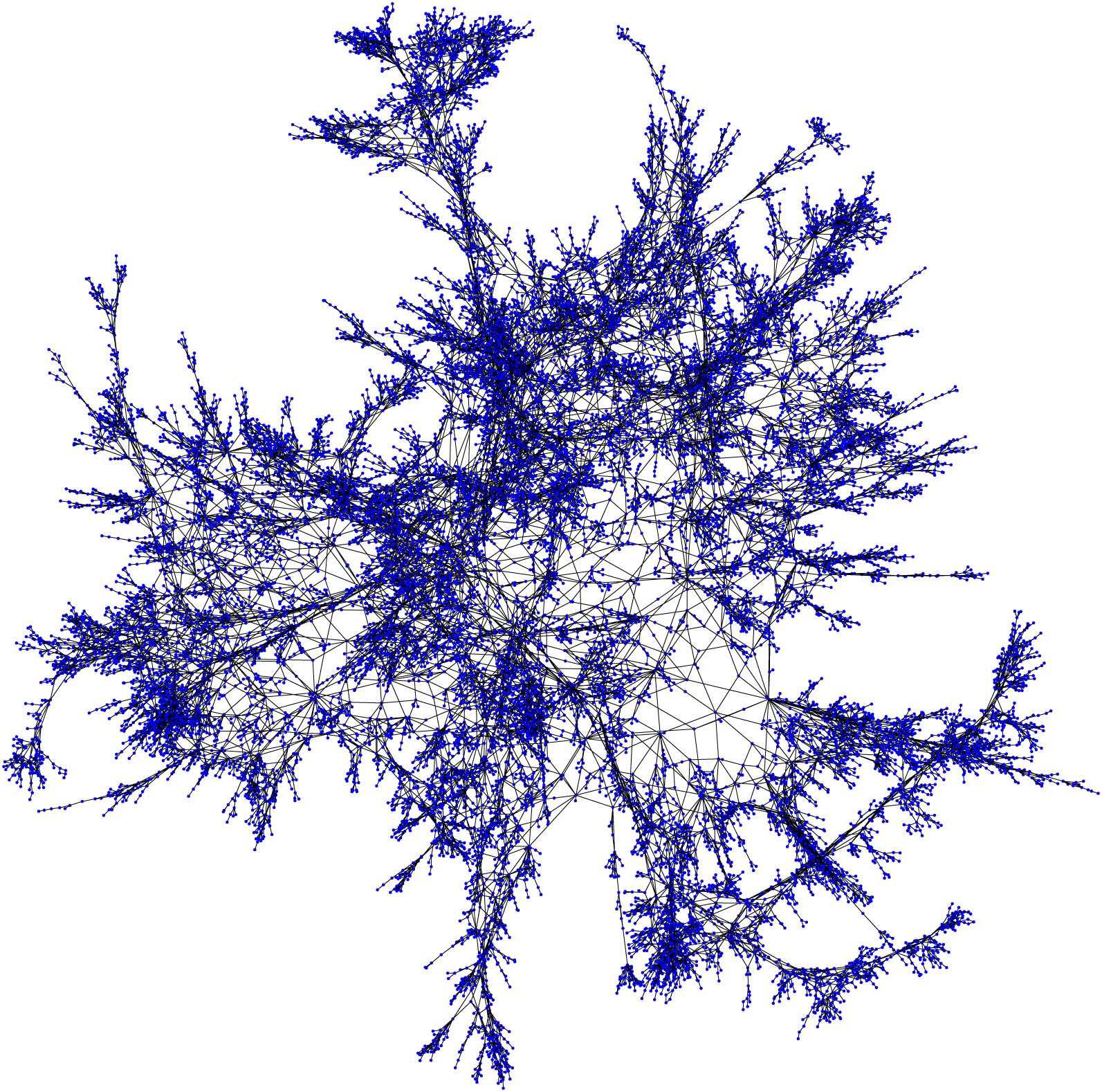}~~~ \includegraphics[scale=0.12]{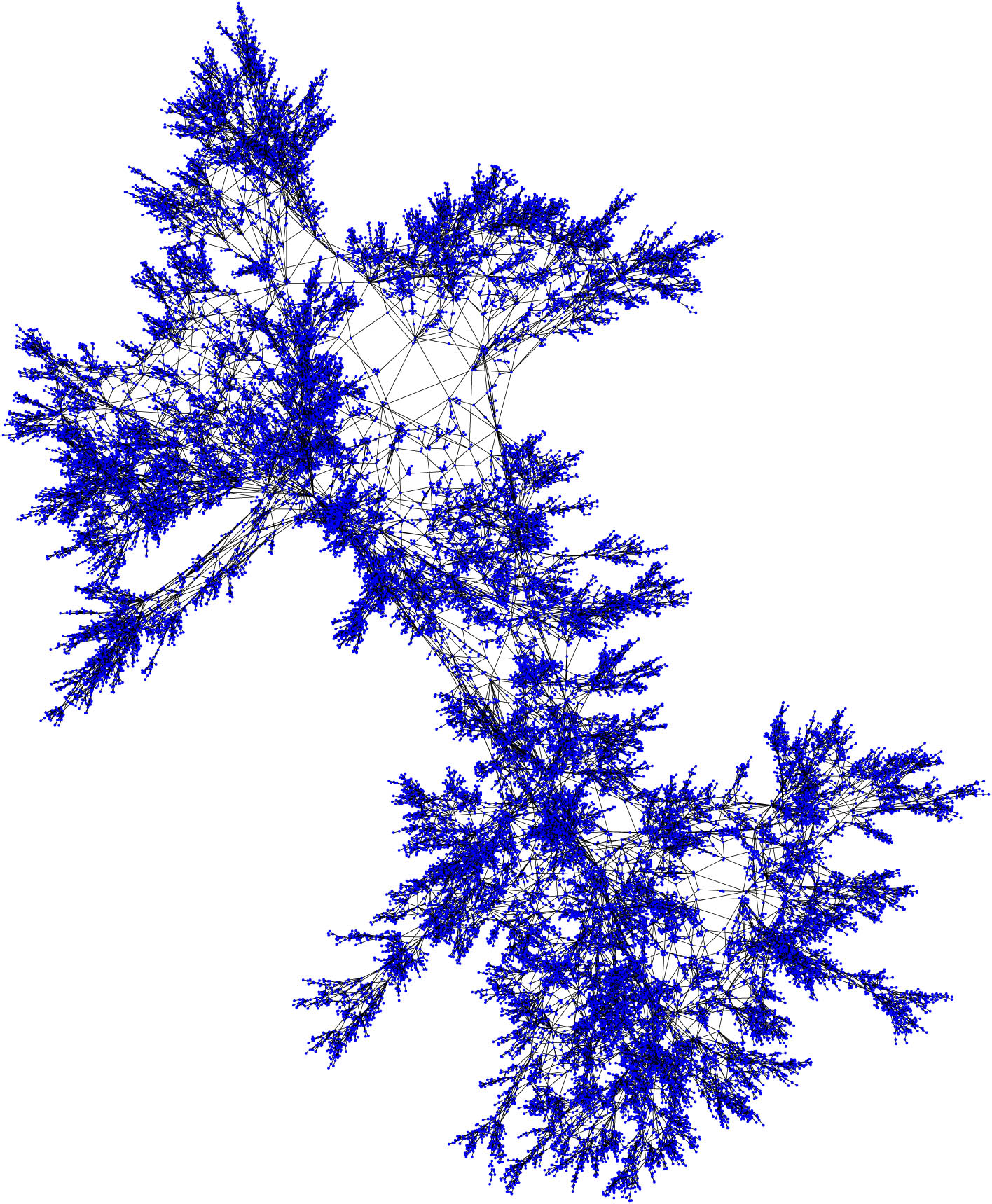}}~\\
  \centerline{\includegraphics[scale=0.12]{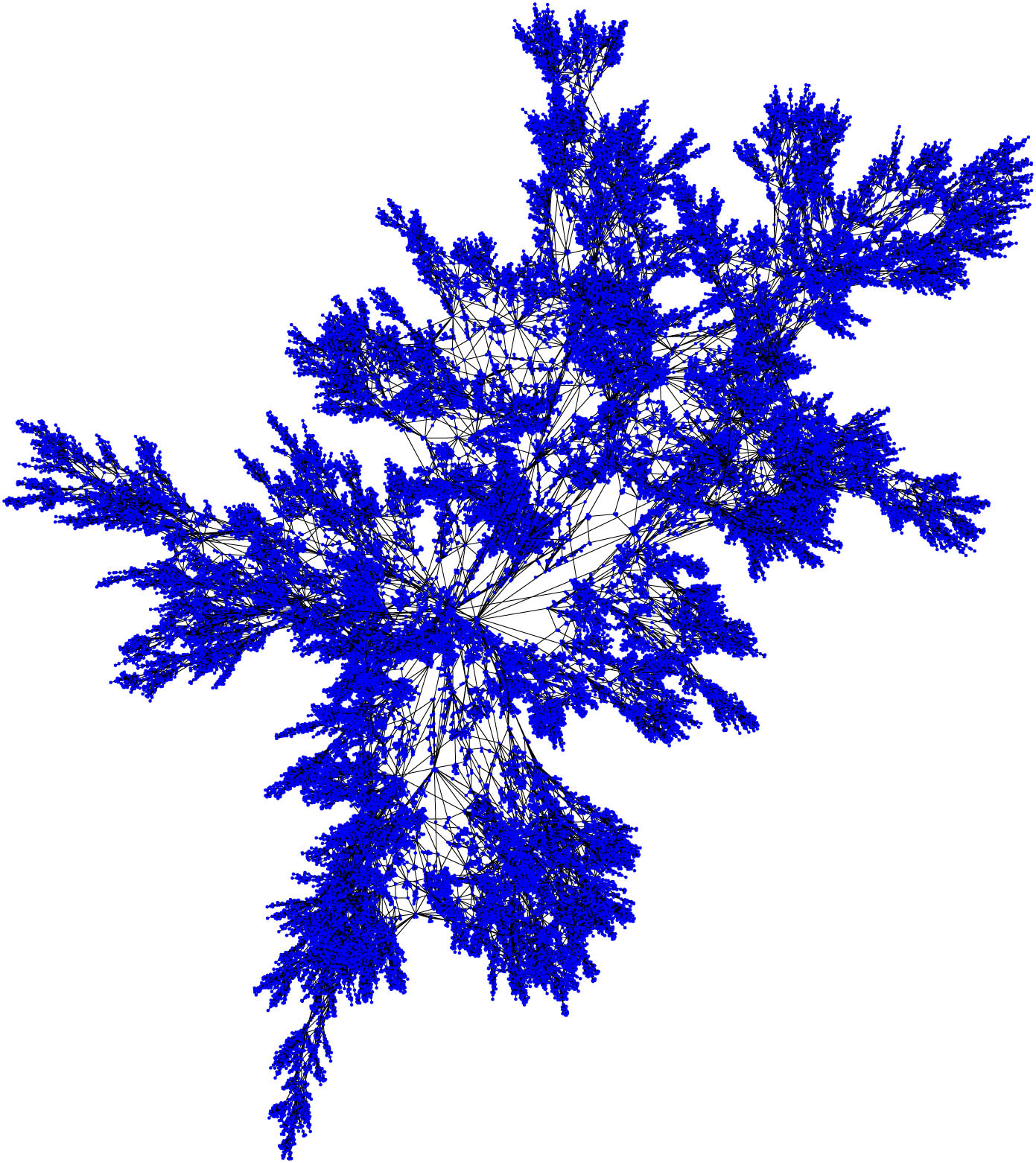}~~~\includegraphics[scale=0.13]{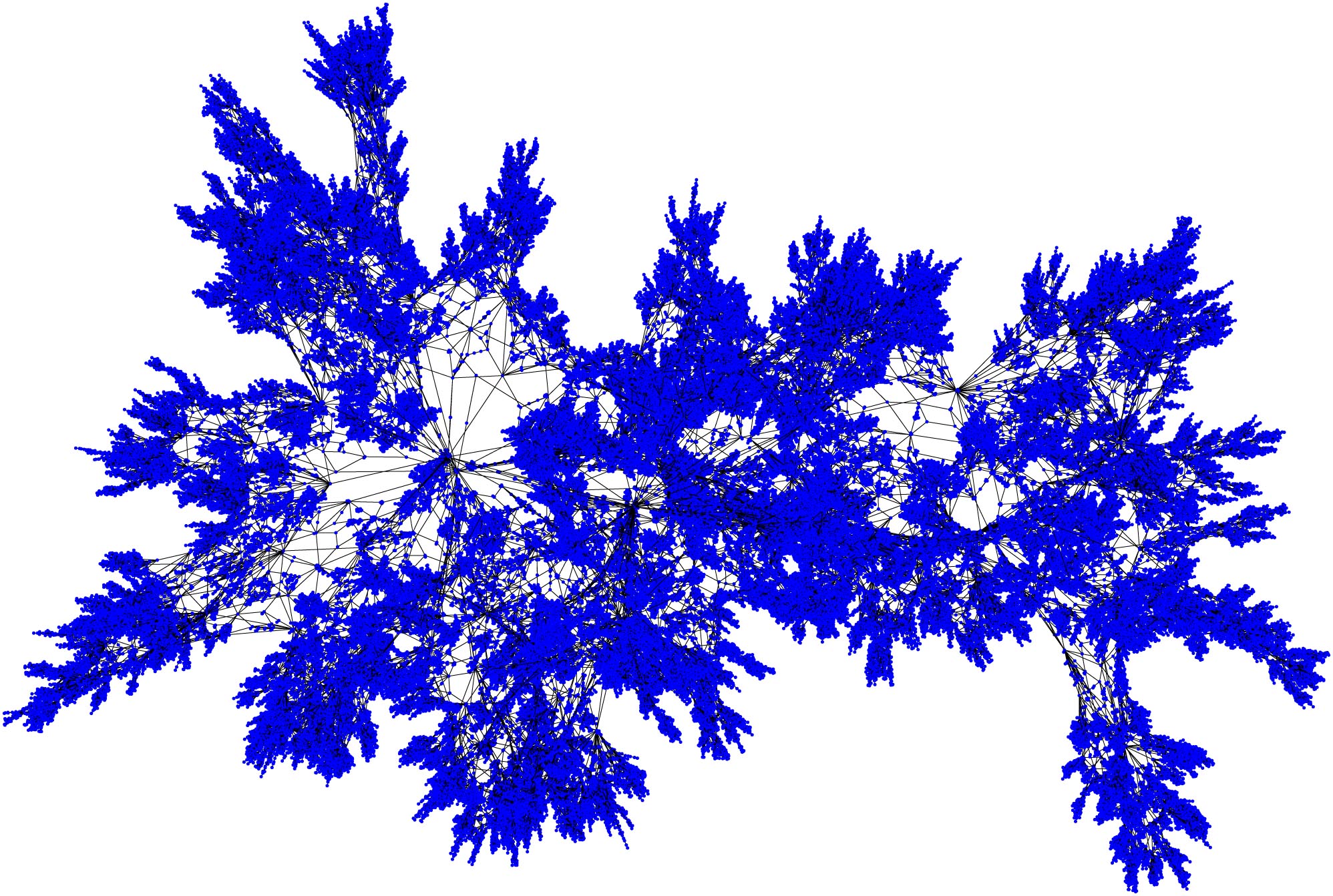}}
    \caption{\label{fig:Qkkp1}Planar maps $\Qrec n j {j + 1}$ obtained thanks to some labeled trees $(C_{25000}^{(j)},L_{25000}^{(j)})$ for $j$ from 1 to 4. Loops and multiples edges have been discarded for the drawing (first map has 25000 vertices, and for the next ones 50k, 100k, 200k). The diameter of $\Qrec n j {j + 1}$ scales asymptotically as $n^{1/2^{j+1}}$.}
  \end{figure}

\paragraph{Asymptotic enumeration of pointed and non-pointed discrete  iterated feuilletages.} The CVS bijection allows computing the numbers $m^{\bullet (2)}_n$ and  $m^{(2)}_n$ of rooted pointed and rooted planar quadrangulations respectively as
\ben 
m^{\bullet (2)}_n = 2\times 3^nC_n \qquad \text{and} \qquad(n+2)m^{(2)}_n = 2\times 3^n C_n,
\een
where the factor 2 comes from the choice of  parameter $\eta$ in the CVS bijection, $n+2$ comes from the choice of a pointed vertex in the quadrangulation $\Qrec n 1 2$ which has $n+2$ vertices, and $3^n$ comes from the choice of labeling. 

Using the fact that asymptotically, $C_n\sim \pi^{-1/2}4^nn^{-3/2}$, we obtain the asymptotics for $m^{\bullet (2)}_n$ and  $m^{(2)}_n$:
\ben
m^{\bullet (2)}_n   \sim 2\pi^{-1/2}\cdot 12^{n}\cdot n^{\gamma_2 - 1} \qquad \text{and} \qquad m^{(2)}_n  \sim 2\pi^{-1/2}\cdot 12^{n}\cdot n^{\gamma_2 - 2},
\een
with $\gamma_2 = -1/2 $. This critical exponent (also called string susceptibility in physics) is to be compared with that for trees, $\gamma_1 = 1/2 $. As this critical exponent is important in theoretical physics discussions, we compute it in the case of iterated discrete feuilletages, pointed or not.

We do not provide an exact formula for the numbers $m^{\bullet (\D)}_n$ and  $m^{(\D)}_n$ of rooted pointed and rooted non-pointed $\D$-discrete feuilletages. This depends on the precise definition of the rooting for instance, and at this point it is difficult to retain one or the other as more natural, from the point of view of the folded object. For instance, one has to decide how exactly the $\D-1$ pointed vertices are chosen: among the $n+\D$ vertices of $\Rn n 3$ (laying a factor $(n+\D)^{\D-1}$) or on the various quadrangulations (laying a factor $\prod_{j=0}^{\D-2} (2^jn+2)$). In any case, the asymptotics for these numbers behave as  
\ben
m^{\bullet (\D)}_n  \sim c_\D\cdot \lambda_\D^{n}\cdot n^{\gamma_\D +\D - 3} \qquad \text{and} \qquad m^{(\D)}_n \sim c_\D\cdot \lambda_\D^{n}\cdot n^{\gamma_\D - 2},
\een
where $\lambda_\D$ has a contribution of the form $3^{\sum_{j=0}^{\D-2} 2^j}$ coming from the enumeration of the iterative labelings of the trees, and the critical exponent is now
\ben
\gamma_\D = \frac 3 2 - \D.
\een
Again, this generalizes the well known critical exponents for rooted planar trees and maps, $\gamma_1=1/2$ and $\gamma_2=-1/2$.

\paragraph{A simple encoding using nested permutations.}

The planar tree $\ttau_n^{(\D)}$ is encoded by its corner sequence $C_\D=\{0<1<\cdots<2^{\D}n - 1\}$, together with a non-crossing \permutation $\sigma^{(\D)}$ on $C_\D$. The identification of the vertices of $\ttau_n^{(\D)}$ by the vertices of $\ttau_n^{(\D-1)}$ to obtain the planar map $\Qrec n {\D-1} \D$ takes the form of a non-crossing \permutation $\sigma^{(\D-1)}$ on a subset of $\bar C_\D$, which is $C_\D$ with the reversed order:  $\bar C_\D=\{0<2^{\D}n-1<2^{\D}n-1\cdots<1\}$. The support of  $\sigma^{(\D-1)}$ contains one element of $C_\D$ per cycle of $\sigma^{(\D)}$ at most. From this permutation, the tree $\ttau_n^{(\D-1)}$ is obtained as $(\bar C_{\D-1},  \sigma^{(\D-1)})$, where $\bar C_{\D-1}$ is $\bar C_\D$ restricted to the support of $\sigma_{\D-1}$. 

The identification of the vertices of $\ttau_n^{(\D-1)}$ by the vertices of $\ttau_n^{(\D-2)}$ takes the form of another non-crossing \permutation $\sigma^{(\D-2)}$ on a subset of $C_\D$, such that the support of  $\sigma^{(\D-2)}$ is included in that of $\sigma^{(\D-1)}$, and contains one element of $C_\D$ per cycle of $\sigma^{(\D-1)}$ at most, and so on. 
We propose to define a notion of ``$\D$-general feuilletages'' which encompasses our notion of  $\D$-discrete  feuilletages:
\begin{defi}
\label{def:general-refoldings}
We call  $\D$-general feuilletage a $(\D+1)$-uplet 
$$
\cM_\D=(C_\D, \sigma^{(\D)}, \ldots, \sigma^{(1)}),
$$ 
where \begin{itemize} 
\item $C_\D$ is the totally ordered set $C_\D=\{0 < 1,\cdots < N-1\}$ (the counterclockwise corner sequence of $\ttau_n^{(\D)}$, with $N=2^{\D}n$), 
\item $\sigma^{(\D)}$ is a non-crossing \permutation on $C_\D$, 
\item  for $1\le j\le \D$, $\sigma^{(j)}$ is a permutation on $C_\D$ whose support $C_j$ is included in that of $\sigma^{(j+1)}$ (the permutations are nested), $\sigma^{(j)}$ is a non-crossing \permutation on $C_j^\star$, and it has one element per disjoint cycle of $\sigma^{(j+1)}$ at most.\end{itemize}
\end{defi} 
We have used the notation $C_j^\star$, which is $C_j=\{0<i_1<\ldots < i_p\}$ if $j$ and  $\D$ have the same parity, and $\bar C_j=\{0<i_p<i_{p-1}\ldots < i_1\}$ otherwise, so that $\sigma^{(j)}$ is a non-crossing \permutation on $C_j^\star$ iff it induces a non-crossing partition on $C_j$ (and thus on $C_\D$), and it respects the ordering of $C_\D$ if  $j\equiv 0 \mod \D$ and of $\bar C_\D$ if $j\equiv 1 \mod \D$.
 
 For any $j$,  $(C_j^\star, \sigma^{(j)})$ is a planar tree (the tree $\ttau_n^{(j)}$ for a discrete feuilletage). Gluing the vertices of this planar tree according to the cycles of $\sigma^{(j-1)}$ provides a planar map, encoded by the triplet $(C_j^\star, \sigma^{(j)}, \sigma^{(j-1)})$ (for a discrete feuilletage, it is a planar quadrangulation $\Qrec n {j-1} j$)\footnote{The reason why we don't have to shift the first element of the set $C_j$ to obtain the various  iterated trees is precisely because the root of $\ttau_n^{(j+1)}$ is chosen as dual to the root of $\ttau_n^{(j)}$. }.

Given a  $\D$-general feuilletage $\cM_\D$ and for $k>1$,  $\cM_{\D-k+1}=(C_\D, \sigma^{(\D)}, \ldots, \sigma^{(k)})$ is of the same form as a $(\D-k+1)$-general feuilletage. For $k=\D$, it is a planar tree, for $k=\D-1$ it is a planar map, and $\cM_{\D-k+1}$ is obtained from $\cM_{\D-k}$ by folding it as many times as there are disjoint cycles in $\sigma^{(k)}$.

\paragraph{Another encoding as gluings of trees.} While the encoding above renders the  iterative gluings of the objects according to the cycles of the permutations $\sigma^{(j)}$, which applies well to the discrete  iterated feuilletages and to the distance $d^{(2)}_{\RR{\D}}$, we provide another combinatorial description for the object for which all the edges of every $\ttau_n^{(j)}$ are kept (the generalization of  $\tilde Q_n^{(j, j+1)}$).

\begin{defi} 
The second kind of  $\D$-generalized maps, with colored edges, is defined as
\begin{itemize}
\item{ $\D$ sets of darts $\cD_1, \ldots, \cD_D$. For $c\in\{1,\ldots,D\}$, the set $\cD_c$ has darts of color $c$.}
\item{ $\D$ involutions without fixed points $\alpha_1,\ldots, \alpha_D$, which assemble the darts into colored edges. }
\item{For every $c\in\{1,\ldots,D-1\}$, a permutation $\Sigma_{c, c+1}$ on $\cD_c\cup \cD_{c+1}$ such that the map $$ (\cD_c\cup \cD_{c+1}, \Sigma_{c, c+1}, \alpha_c\cup \alpha_{c+1})$$ is planar (it has vanishing genus \eqref{eq:Euler}).  
}
\item For $2\le c\le D-1$, the restriction of $\Sigma_{c, c+1}$ to $\cD_c$ coincides with the explosion of $\Sigma_{c-1, c}$ by $\cD_c$, and we denote it  by  $\Sigma_c$. We extend this notation for $\Sigma_1=\Sigma_{1, 2}\vert_{\cD_1}$ and $\Sigma_D=\Sigma_{D-1, D}$ exploded by $\cD_D$. For $1\le c\le D$, the map $(\cD_c, \Sigma_{c}, \alpha_c)$ must be a tree.
\end{itemize}
\end{defi}

This definition uses the explosion of $\Sigma_{c-1, c}$ by $\cD_c$, defined similarly as above. We illustrate it on an example: if $\gamma = (a_1, a_2, b_1, b_2, b_3, a_3, b_4, b_5, a_4)$ is a cycle of $\Sigma_{c-1, c}$ where the $a_i$ are in $\cD_{c-1}$ and the $b_i$ in $\cD_{c}$,   the disjoint cycles $(b_1, b_2, b_3)$, $(b_4, b_5)$ obtained by splitting $\gamma$ along the $a_i$ are disjoint cycles of the explosion of $\Sigma_{c-1, c}$ by $\cD_c$.

\subsection{Review on existing generalizations of maps in ``higher dimensions''}
\label{sec:RevGM}

\subsubsection{Colored triangulations}
\label{sec:TM}  
Colored triangulations are  $\D$-dimensional simplicial pseudo-complexes formed by considering  $\D$-simplices whose vertices are colored from 0 to  $\D$ (in this section, by ``triangulation" we always mean $\D$-dimensional triangulation). Such triangulations appear in the topology (see \cite{ItalianSurvey} for a survey) and combinatorial (see e.g.~\cite{Bonz18, CP18}) literature, and in the context of random tensor models \cite{GurauInvitation, GurauBook}. 

 To glue two  $\D$-simplices along two $(D-1)$-simplices, one has to match the colors of the incident vertices. This means that the information on the triangulation is fully encoded in its dual graph, which has a vertex for each  $\D$-simplex, and an edge of color $c\in\{0,\ldots,D\}$ between two vertices if they are glued along their $(D-1)$-simplices that are not incident to the vertex of color $c$.
A triangulation is orientable iff its dual graph is bipartite. 
The dual graphs of orientable colored triangulations are therefore $(D+1)$-regular, properly-edge-colored bipartite graphs. For triangulations with black and white simplices each labeled from 1 to $n$, we encode such a graph as a $(D+1)$-uplet of permutations $(\rho_0,\ldots,\rho_{D})$ acting on the $n$ white vertices, so that $\rho_c(k)=p$ iff an edge of color $c$ goes between the white vertex $k$ and the black vertex $p$. One can always fix $\rho_0$ to be the identity, in which case only $\D$ permutations are required.

\

Note that there is a bijection \cite{SWMaps, Bonzom:2016vc, Lionni:17} between orientable colored triangulations labeled this way and so-called $(D+1)$-constellations \cite{BOUSQUETMELOU2000337} (but without any assumption on the genus of the constellation),  
encoded the same way using a $(D+1)$-uplet of permutations $(\rho_0,\ldots,\rho_{D})$. The latter are maps with white vertices labeled from 1 to $n$ each with half-edges of each one of the $(D+1)$ colors, and vertices of color $c$ for $c\in\{0,\ldots,D\}$, given by the disjoint cycles of $\rho_c$. A slight modification of this mapping provides a bijection between  $\D$-dimensional colored triangulations rooted on a $(D-1)$-simplex and  $\D$-constellations encoded by a  $\D$-uplet of permutations $(\rho_1,\ldots,\rho_{D})$ \cite{Lionni:17, FLT18}. 

\subsubsection{$\D$-G-maps and  $\D$-maps}
\label{sec:DS}

In the constructions by Lienhardt \cite{genMapsLien}, two generalizations of maps are defined on a set of darts $\cD$. The notion of  $\D$-G-map is suited for orientable or non-orientable topological spaces. A  $\D$-G-map without boundary is defined by a $(D+2)$-uplet $(\cD,\alpha_0,\ldots,\alpha_{D})$, where the $\alpha_i$ are involutions on $\cD$ without fixed points. In addition, for $0\le i\le D-2$ and $i+2\le j\le D$, $\alpha_i\alpha_j$ is also an involution. The involution $\alpha_0$ gathers the darts into edges, $\alpha_1$ groups edges into vertices of valency 2, thus forming faces, $\alpha_2$ groups pairs of edges together thus gluing faces, and so on. When two darts are identified by $\alpha_2$, the other two-darts of the edges they belong to, must also be paired by $\alpha_2$, so that $\alpha_0\alpha_2$ is an involution, and so on.

A  $\D$-map is defined similarly as a $(D+1)$-uple $(\cD,\alpha_0,\ldots,\alpha_{D-1})$, where for $1\le i\le D-2$, $\alpha_i$ is an involution on $\cD$ without fixed point, $\alpha_{D-1}$ is a permutation on $\cD$, and for $0\le i< i+2< j\le D-1$, $\alpha_i\alpha_j$ is an involution.  $\D$-maps are suited to describe oriented spaces.

\subsubsection{Remarks on the various notions of generalized maps}

The notion of $\D$-generalized maps we introduced in Defi.~\ref{def:general-refoldings} has in common with the other notions reviewed above, their encodings by $\D$ or ($\D+1$)-uplets of permutations, with different types of constraints (both in the case of colored triangulations and $\D$-maps, we have encodings by $\D$-uplet as well as ($\D+1$)-uplets of permutations). As mentioned in Sec.~\ref{sec:intro-phys} of the introduction, the issue in trying to obtain interesting asymptotic objects as limits of  random generalized maps in the case of Sections~\ref{sec:TM} and \ref{sec:DS} is to find a suitable criterion of selection for generalized maps such that uniform generalized maps in the corresponding subset have interesting asymptotic properties. Indeed, as mentioned in the introduction:
\begin{enumerate}[$-$]
\item Maximizing the discrete Regge curvature for triangulations with $n$ $\D$-simplices and with edges of unit length, which is the natural approach in discrete quantum gravity, leads to the CRT in the continuum.
\item Considering all random colored triangulations with $\D+1$ permutations taken uniformly, the diameter is a.s.~bounded when the number of $\D$-simplices goes to infinity \cite{Carrance-Uniform}. This is also the case, at least numerically, for all random triangulations of the sphere  \cite{Ambjorn1992, Thorleifsson1999}.  These cases correspond to the so-called ``crumpled phase" in physics, for which most likely no scaling limit can be defined. This is also to be expected for uniform $3$-maps or 3-G-maps without any restrictions or for uniform $3$-maps or 3-G-maps representing the 3-sphere. Similarly, it is natural to conjecture that uniform random maps whose genus grows linearly with the number of faces (in expectation, \cite{Carrance-Uniform}) have a diameter that grows asymptotically logarithmically, because of their hyperbolic behavior \cite{Budz-Louf}.
\item There are ways to restrict the set of colored $\D$-dimensional triangulations that should lead to the Brownian map as a scaling limit (see the footnote \ref{foot:note-0}). 
\end{enumerate}
But so far, there are no selection criteria for the generalized maps of Sections~\ref{sec:TM} and \ref{sec:DS} that may lead to new scaling limits. On the other hand, although we do not have a representation of the iterated random discrete feuilletages as gluings of elementary $\D$-dimensional volumes, we do have the convergence to the iterated random continuous feuilletages, which are possible candidates to play the role of the Brownian map in higher dimensions. A possible line of research in this direction would be to search for models of $\D$-dimensional triangulations, for instance, that possess an encoding as $\D$-uplets of nested non-crossing partitions.

\section{Simulation of large discrete feuilletages} 
\label{sec:Simulations}
\setcounter{equation}{0}
To simulate the $ \D$th  discrete snake $\RBS_n[\D]$  
and then $\DR{n} \D$, we need:\\
$(1)$ an initial Dyck path with $2n$ steps,\\
$(2)$ to sample the label process $\Bell^{(1)}_n$,\\
$(3)$ to associate the height process $\bH_{n}^{(2)}$,\\
and then the iteration will create for each $j$ in $\cro{2, \D}$ the subsequent object (a Dyck path with size $4n$, label process of size $4n$,...), with a size doubling at each iteration. Some additional processes encoding the first corner of each node for each tree are also needed to compute the final map, in which all identifications need to be done.
This provides two types of problems for the simulation:
\begin{itemize}
\item[--] Imagine that $n=10^8$  and $ \D=5$ say, we need to sample about 20 processes whose lengths vary from $2\times n$ to $2^5 \times n$. This is really time consuming, but can be done on a standard personal computer in few minutes (around 30 min.~for this size, with a C program), by freeing the memory as soon as possible during the construction.
\item[--] The random tree has a natural scale $n^{1/2}$, random maps $n^{1/4}$, and, we hope $\DR n 3$ to have diameter $n^{1/8}$ (in any case  it is an upper bound). For $ \D=3$ and $n=10^8$, $n^{1/2^\D}=10$ (see the simulation Fig.~\ref{fig:prof0123}). Hence, the diameter of $\DR{n}{ \D}$  is expected to be around a small number of dozens. If one wants to have a diameter, say 10 times greater, the size $n$ has to be taken $10^8$ times larger. For current computers, it is absolutely out of question to simulate an object with $10^{16}$ nodes iterated $3$ times: millions of GB would be needed to store the data. We made some simulations presented below (with maximum size $250\times 10^6$, and $ \D=3$) for which we will see that the natural scaling $n^{1/2^ \D}$ and the size of the balls $B(r)$ around a random node which should be at least $r^{2^ \D}$ (which should reflect the asymptotic  Hausdorff dimension) are not apparent yet at this size.
\end{itemize}

The profile of a $\DR n D$ is the sequence $(q(i),i \geq 0)$, where $q(i)$ is the number of nodes at distance $i$ (for the graph distance) from the root vertex.
In Figure \ref{fig:MP}, two thousands $\DR n D$ of each size $(500k, 5M, 50M, 250M)$ have been simulated for  $ \D$ going from 2 (first picture, corresponding to unif.~quadrangulations) to $ 4$ (last picture). For each size and each $ \D$, the profile has been computed, and the mean profile (over the two thousands sampled objects) have been taken. For $ \D$ the iteration index and $n$ the size, the profile has been normalized in abscissa as follows:
\[\l(\frac{q\l(i/n^{1/2^ \D}\r)}{n^{1-1/2^ \D}},i\geq 0\r).\]
  \begin{figure}[h!]
    \centerline{\includegraphics[width=22cm]{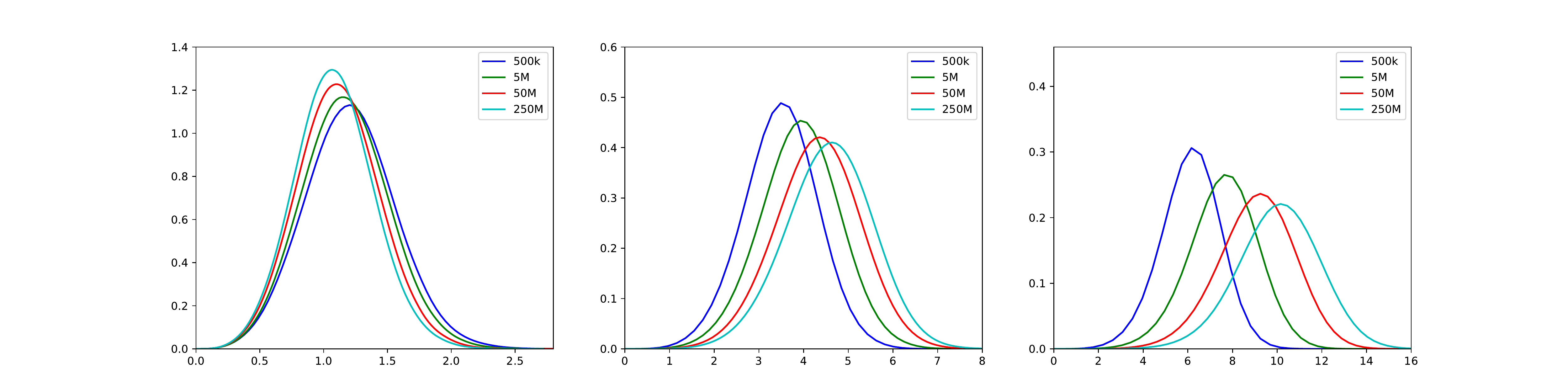}}
    \caption{\label{fig:MP}Simulation of the mean profile of $\DR{n} \D$ (for $ \D=2, 3$ and $4$) and the sizes $n$ are given in the upper right corner of each picture.}
    \end{figure}
    The convergence of the mean profile is expected for this normalization, or for a smaller normalization (since the profile could be smaller than $n^{1/2^ \D}$ for $ \D>2$). If on the first picture, one perceives some stabilizations, on the second and third ones, the mass still moves a lot when the size increases.

    In Fig.~\ref{fig:MP}, for the same statistical groups as those of Fig.~\ref{fig:MP}, we have computed the mean number $N_r$ of elements in the ball of radius $r$ around the root vertex for $ \D=2$, $3$ and $4$, and for the sizes given in the legends. We plot
    \[\log(N_r)/\log((r+1)).\]
    This quantity has been shown to be of order 4 in the first case (see Chassaing \& Durhuus \cite{CD2006}) and it is excepted to be resp.~at least 8 and 16 in the two next ones. In Figure \ref{fig:MP2} picture 2 and 3, the growth of the maximum of $\log(N_r)/\log((r+1))$ is apparent but even with the size $250\times 10^6$, it is absolutely clear that we are still very far from 8 and 16, which are lower bounds on the asymptotic regime.
  \begin{figure}[h!]
    \centerline{\includegraphics[width=22cm]{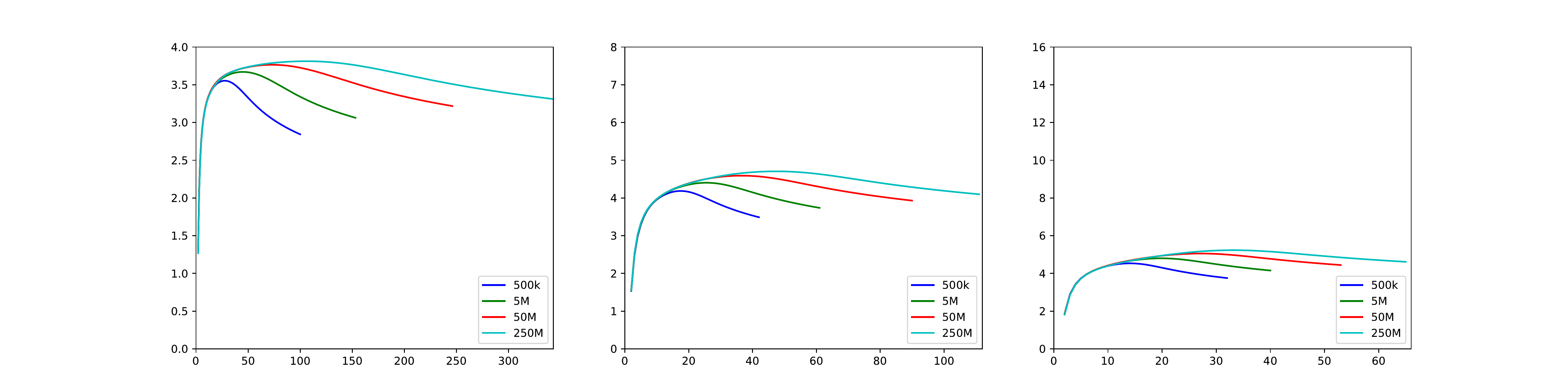}}
    \caption{\label{fig:MP2}Simulation of the mean size of balls of radius $r$ (in abscissa), in  $\DR{n} \D$ (for $ \D=2, 3$ and $4$).}
    \end{figure}
 
\renewcommand*{\refname}{}

\section*{References}
\small
\bibliographystyle{abbrv}
\vspace{-1.0cm}


\begin{thebibliography}{10}

\bibitem{abraham2016}
C.~Abraham.
\newblock Rescaled bipartite planar maps converge to the brownian map.
\newblock {\em Ann. Inst. H. Poincar\'e Probab. Statist.}, 52(2):575--595, 05
  2016.

\bibitem{BrownMapOdd}
L.~{Addario-Berry} and M.~{Albenque}.
\newblock {Convergence of odd-angulations via symmetrization of labeled trees}.
\newblock {\em arXiv:1904.04786}, Apr 2019.

\bibitem{Aldous2}
D.~Aldous.
\newblock The continuum random tree. {II}. an overview.
\newblock In {\em Stochastic analysis (Durham, 1990)}, pages 23--70. Cambridge
  Univ. Press, 1991.

\bibitem{QuantumGeom}
J.~{Ambj{\o}rn}, B.~Durhuus, and T.~Jonsson.
\newblock {\em Quantum geometry: A statistical field theory approach}.
\newblock Cambridge Monographs on Mathematical Physics. 2005.

\bibitem{AMBJORN2001CDT}
J.~Ambj\o{}rn, J.~Jurkiewicz, and R.~Loll.
\newblock Dynamically triangulating lorentzian quantum gravity.
\newblock {\em Nucl. Phys. B}, 610(1):347 -- 382, 2001.

\bibitem{Ambjorn1992}
J.~Ambj{\o}rn and S.~Varsted.
\newblock Three-dimensional simplicial quantum gravity.
\newblock {\em Nucl. Phys. B}, 373(2):557 -- 577, 1992.

\bibitem{Bernardi06}
O.~Bernardi.
\newblock Bijective counting of tree-rooted maps and shuffles of parenthesis
  systems.
\newblock {\em EJC}, 14(R9), 2007.

\bibitem{BrownMapUniform}
J.~Bettinelli, E.~Jacob, and G.~Miermont.
\newblock The scaling limit of uniform random plane maps, via the
  {A}mbj{\o}rn-{B}udd bijection.
\newblock {\em Electron. J. Probab.}, 19:16 pp., 2014.

\bibitem{BY}
P.~Biane and M.~Yor.
\newblock Valeur principales associ\'ees aux temps locaux browniens.
\newblock {\em Bull. Sciences Maths, 2\`eme s\'eries}, 111:23--101, 1987.

\bibitem{Bonz18}
V.~Bonzom.
\newblock Maximizing the number of edges in three-dimensional colored
  triangulations whose building blocks are balls.
\newblock {\em arXiv:1802.06419}, 2018.

\bibitem{Enhanced}
V.~Bonzom, T.~Delepouve, and V.~Rivasseau.
\newblock Enhancing non-melonic triangulations: {A} tensor model mixing melonic
  and planar maps.
\newblock {\em Nucl. Phys. B}, 895:161--191, 2015.

\bibitem{Bonzom:2016vc}
V.~Bonzom and L.~Lionni.
\newblock {Counting gluings of octahedra}.
\newblock {\em arXiv:1608.00347}, 2016.

\bibitem{SWMaps}
V.~Bonzom, L.~Lionni, and V.~Rivasseau.
\newblock {Colored triangulations of arbitrary dimensions are stuffed Walsh
  maps}.
\newblock {\em EJC}, 24(1):{\#}P1.56, 2017.

\bibitem{BMJ}
M.~Bousquet-M\'elou and S.~Janson.
\newblock The density of the ise and local limit laws for embedded trees.
\newblock {\em Ann. Appl. Probab.}, 16(3):1597--1632, 08 2006.

\bibitem{BOUSQUETMELOU2000337}
M.~Bousquet-Mélou and G.~Schaeffer.
\newblock Enumeration of planar constellations.
\newblock {\em Adv. Appl. Math.}, 24(4):337 -- 368, 2000.

\bibitem{Bouttier2004}
J.~Bouttier, P.~D. Francesco, and E.~Guitter.
\newblock Planar maps as labeled mobiles.
\newblock {\em EJC}, 11(1):R69, 27 p., 2004.

\bibitem{Budz-Louf}
T.~{Budzinski} and B.~{Louf}.
\newblock {Local limits of uniform triangulations in high genus}.
\newblock {\em arXiv:1902.00492}, Feb 2019.

\bibitem{Bur93}
K.~Burdzy.
\newblock Some path properties of iterated brownian motion.
\newblock In {\em Seminar on Stochastic Processes, 1992}, pages 67--87.
  Springer, 1993.

\bibitem{Carrance-Uniform}
A.~Carrance.
\newblock Uniform random colored complexes.
\newblock {\em RSA}, page 1– 34, 2019.

\bibitem{ChapFerFus}
G.~Chapuy, V.~F\'eray, and E.~Fusy.
\newblock A simple model of trees for unicellular maps.
\newblock {\em JCTA}, 120(8):2064 -- 2092, 2013.

\bibitem{ChapMarcSchaeff}
G.~Chapuy, M.~Marcus, and G.~Schaeffer.
\newblock A bijection for rooted maps on orientable surfaces.
\newblock {\em SIAM J. Discret. Math.}, 23(3):1587--1611, Nov. 2009.

\bibitem{CP18}
G.~Chapuy and G.~Perarnau.
\newblock On the number of coloured triangulations of $d$-manifolds.
\newblock {\em arXiv:1807.01022}, 2018.

\bibitem{CD2006}
P.~Chassaing and B.~Durhuus.
\newblock Local limit of labeled trees and expected volume growth in a random
  quadrangulation.
\newblock {\em Ann. Probab.}, 34(3):879--917, 05 2006.

\bibitem{CS}
P.~Chassaing and G.~Schaeffer.
\newblock Random planar lattices and integrated super{B}rownian excursion.
\newblock {\em Probab. Theory Rel.}, 128(2):161--212, 2004.

\bibitem{CoriGenPerm}
R.~Cori and G.~Hetyei.
\newblock Counting genus one partitions and permutations.
\newblock {\em S\'eminaire Lotharingien de Combinatoire}, 70(B70e), 2014.

\bibitem{CV}
R.~Cori and B.~Vauquelin.
\newblock Planar maps are well labeled trees.
\newblock {\em Canad. J. Math.}, 33(5):1023--1042, 1981.

\bibitem{FirstPassPerc}
N.~{Curien} and J.-F. {Le Gall}.
\newblock {First-passage percolation and local modifications of distances in
  random triangulations}.
\newblock {\em arXiv:1511.04264 (to appear in Ann. Sci. ENS)}, Nov 2015.

\bibitem{CFTQG2}
F.~David.
\newblock Conformal field theories coupled to 2-d gravity in the conformal
  gauge.
\newblock {\em Modern Physics Letters A}, 03(17):1651--1656, 1988.

\bibitem{David1992}
F.~David.
\newblock Simplicial quantum gravity and random lattices.
\newblock In {\em NATO Advanced Study Institute: Les Houches Summer School in
  Theoretical Physics, Session 57}, pages 679--750, 1992.

\bibitem{Liouv}
F.~David, A.~Kupiainen, R.~Rhodes, and V.~Vargas.
\newblock Liouville quantum gravity on the {Riemann} sphere.
\newblock {\em Communications in Mathematical Physics}, 342(3):869--907, Mar.
  2016.

\bibitem{MatrixReview}
P.~Di~Francesco, P.~Ginsparg, and J.~Zinn-Justin.
\newblock {2D} gravity and random matrices.
\newblock {\em Phys. Rep.}, 254(1):1 -- 133, 1995.

\bibitem{CFTQG3}
J.~Distler and H.~Kawai.
\newblock Conformal field theory and {2D} quantum gravity.
\newblock {\em Nucl. Phys. B}, 321(2):509 -- 527, 1989.

\bibitem{MatingTrees}
B.~{Duplantier}, J.~{Miller}, and S.~{Sheffield}.
\newblock {Liouville quantum gravity as a mating of trees}.
\newblock {\em arXiv:1409.7055}, Sep 2014.

\bibitem{DuplSheff}
B.~Duplantier and S.~Sheffield.
\newblock Liouville quantum gravity and {KPZ}.
\newblock {\em Invent. math.}, 185(2):333--393, Aug 2011.

\bibitem{Duq}
T.~Duquesne.
\newblock The coding of compact real trees by real valued functions.
\newblock {\em arxiv.org/abs/math/0604106}, 2006.

\bibitem{TDLFLG}
T.~Duquesne and J.-F.~L. Gall.
\newblock The hausdorff measure of stable trees.
\newblock {\em ALEA Lat. Am. J. Probab. Math. Stat.}, 1:393--415, 2006.

\bibitem{DuLG}
T.~Duquesne and J.~{Le Gall}.
\newblock Random trees, {L}\'evy processes and spatial branching processes.
\newblock {\em Ast\'erisque}, 281, 2002.

\bibitem{ItalianSurvey}
M.~Ferri, C.~Gagliardi, and L.~Grasselli.
\newblock A graph-theoretical representation of {PL}-manifolds -- {A} survey on
  crystallizations.
\newblock {\em Aequationes mathematicae}, 31(1):121--141, Dec 1986.

\bibitem{FLT18}
E.~Fusy, L.~Lionni, and A.~Tanasa.
\newblock Combinatorial study of graphs arising from the
  {S}achdev-{Y}e-{K}itaev model.
\newblock {\em arXiv:1810.02146}, 2018.

\bibitem{LGW}
J.-F.~L. Gall and M.~Weill.
\newblock Conditioned brownian trees.
\newblock {\em Ann. Inst. H. Poincar\'e Probab. Statist.}, 42(4):455 -- 489,
  2006.

\bibitem{GoulJack}
I.~P. Goulden and D.~M. Jackson.
\newblock {\em Combinatorial Enumeration}.
\newblock Dover Books on Mathematics. 2004.

\bibitem{GurauInvitation}
R.~Gurau.
\newblock Invitation to random tensors.
\newblock {\em SIGMA}, 12:094, 2016.

\bibitem{GurauBook}
R.~Gurau.
\newblock {\em Random Tensors}.
\newblock Oxford University Press, 2016.

\bibitem{Gurau2014}
R.~Gurau and J.~P. Ryan.
\newblock Melons are branched polymers.
\newblock {\em Ann. Henri Poincar{\'e}}, 15(11):2085--2131, Nov 2014.

\bibitem{janmarck05}
S.~Janson and J.-F. Marckert.
\newblock Convergence of discrete snakes.
\newblock {\em J. Theor. Probab.}, 18(3):615--645, 2005.

\bibitem{Kal}
O.~Kallenberg.
\newblock {\em Foundations of modern probability}.
\newblock Springer-Verlag, New York, second edition, 2002.

\bibitem{KPZ}
V.~G. Knizhnik, A.~M. Polyakov, and A.~B. Zamolodchikov.
\newblock Fractal structure of 2d-quantum gravity.
\newblock {\em Modern Physics Letters A}, 03(08):819--826, 1988.

\bibitem{KREWERAS1972333}
G.~Kreweras.
\newblock Sur les partitions non crois\'ees d'un cycle.
\newblock {\em Discrete Mathematics}, 1(4):333 -- 350, 1972.

\bibitem{LZ}
S.~Lando and A.~Zvonkin.
\newblock {\em Graphs on Surfaces and Their Applications}.
\newblock Springer Berlin Heidelberg, 2003.

\bibitem{MJFLG}
J.~{Le Gall} and G.~Miermont.
\newblock Scaling limits of random trees and planar maps.
\newblock In {\em Probability and Statistical Physics in Two and More
  Dimensions, Clay Math. Proc.}, volume~15, pages 155--211. AMS-CMI, 2012.

\bibitem{legall2005}
J.-F. {Le Gall}.
\newblock Random trees and applications.
\newblock {\em Probab. Surveys}, 2:245--311, 2005.

\bibitem{LG7}
J.-F. {Le Gall}.
\newblock The topological structure of scaling limits of large planar maps.
\newblock {\em Invent. math.}, 169:621--670, 2007.

\bibitem{GallHaus}
J.-F. Le~Gall.
\newblock The topological structure of scaling limits of large planar maps.
\newblock {\em Invent. math.}, 169(3):621--670, Sep 2007.

\bibitem{legall2013}
J.-F. {Le Gall}.
\newblock Uniqueness and universality of the brownian map.
\newblock {\em Ann. Probab.}, 41(4):2880--2960, 07 2013.

\bibitem{LG19}
J.-F. {Le Gall}.
\newblock Brownian geometry.
\newblock {\em Japan J. Math.}, To appear.

\bibitem{LGP}
J.-F. {Le Gall} and F.~Paulin.
\newblock Scaling limits of bipartite planar maps are homeomorphic to the
  2-sphere.
\newblock {\em Geom. Funct. Anal.}, 18(3):893--918, 2008.

\bibitem{genMapsLien}
P.~Lienhardt.
\newblock N-dimensional generalized combinatorial maps and cellular
  quasi-manifolds.
\newblock {\em Int. J. Comput. Geom. Ap.}, 04(03):275--324, 1994.

\bibitem{Lionni:17}
L.~Lionni.
\newblock {\em Colored discrete spaces: higher dimensional combinatorial maps
  and quantum gravity}.
\newblock PhD thesis, Universit\'e Paris-Sud 2017, Springer thesis 2018, 2017.

\bibitem{LollCDTReview}
R.~{Loll}.
\newblock {Quantum Gravity from Causal Dynamical Triangulations: A Review}.
\newblock {\em arXiv:1905.08669}, May 2019.

\bibitem{marckert2007InvPrinc}
J.-F. Marckert and G.~Miermont.
\newblock Invariance principles for random bipartite planar maps.
\newblock {\em Ann. Probab.}, 35(5):1642--1705, 09 2007.

\bibitem{mm01}
J.-F. Marckert and A.~Mokkadem.
\newblock The depth first processes of {G}alton-{W}atson trees converge to the
  same {B}rownian excursion.
\newblock {\em Ann. Probab.}, 31(3):1655--1678, 2003.

\bibitem{MMsnake}
J.-F. Marckert and A.~Mokkadem.
\newblock States spaces of the snake and its tour---convergence of the discrete
  snake.
\newblock {\em J. Theoret. Probab.}, 16(4):1015--1046 (2004), 2003.

\bibitem{MM}
J.-F. Marckert and A.~Mokkadem.
\newblock Limit of normalized quadrangulations: the {B}rownian map.
\newblock {\em Ann. Probab.}, 34(6):2144--2202, 2006.

\bibitem{MarzoukDegree}
C.~Marzouk.
\newblock Scaling limits of random bipartite planar maps with a prescribed
  degree sequence.
\newblock {\em Random Structures \& Algorithms}, 53(3):448--503, 2018.

\bibitem{miermont2008}
G.~Miermont.
\newblock On the sphericity of scaling limits of random planar
  quadrangulations.
\newblock {\em Electron. Commun. Probab.}, 13:248--257, 2008.

\bibitem{Miermont2013}
G.~Miermont.
\newblock The brownian map is the scaling limit of uniform random plane
  quadrangulations.
\newblock {\em Acta Math.}, 210:319--401, 2013.

\bibitem{MilSheff}
J.~Miller and S.~Sheffield.
\newblock Liouville quantum gravity and the {Brownian} map i: {The QLE(8/3,0)}
  metric.
\newblock {\em arXiv}, July 2015.

\bibitem{MilSheff3}
J.~Miller and S.~Sheffield.
\newblock Liouville quantum gravity and the {Brownian} map ii: geodesics and
  continuity of the embedding.
\newblock {\em arXiv}, May 2016.

\bibitem{MilSheff2}
J.~Miller and S.~Sheffield.
\newblock Liouville quantum gravity and the {Brownian} map iii: the conformal
  structure is determined.
\newblock {\em arXiv}, Aug. 2016.

\bibitem{GraphsOnSurfaces}
B.~Mohar and C.~Thomassen.
\newblock {\em Graphs on Surfaces}.
\newblock Johns Hopkins University Press, 2001.

\bibitem{nica_speicher_2006}
A.~Nica and R.~Speicher.
\newblock {\em Lectures on the Combinatorics of Free Probability}.
\newblock London Mathematical Society Lecture Note Series. Cambridge University
  Press, 2006.

\bibitem{Polya}
A.~M. Polyakov.
\newblock Quantum geometry of bosonic strings.
\newblock {\em Physics Letters B}, 103(3):207 -- 210, 1981.

\bibitem{Regge}
T.~E. Regge.
\newblock General relativity without coordinates.
\newblock {\em Nuovo Cimento}, 19:558--571, 1961.

\bibitem{RY}
D.~Revuz and M.~Yor.
\newblock {\em Continuous martingales and {B}rownian motion 3-rd ed.}
\newblock Springer, Berlin, 1998.

\bibitem{Schae}
G.~Schaeffer.
\newblock {\em Conjugaison d'arbres et cartes combinatoires al\'eatoires}.
\newblock PhD thesis, Universit\'e Bordeaux, 1998.

\bibitem{Thorleifsson1999}
G.~Thorleifsson.
\newblock Lattice gravity and random surfaces.
\newblock {\em Nucl. Phys. B - Proceedings Supplements}, 73(1):133 -- 145,
  1999.

\bibitem{Vervaat1979}
W.~Vervaat.
\newblock A relation between {B}rownian bridge and {B}rownian excursion.
\newblock {\em Ann. Probab.}, 7(1):143--149, 1979.

\end{thebibliography}

\end{document}